\newcommand{\vcentered}[1]{\begingroup
\setbox0=\hbox{#1}%
\parbox{\wd0}{\box0}\endgroup}
\newcounter{kommentar}
\numberwithin{equation}{section}
\def\l{\lambda} \def\s{\sigma} \def\R{\mathbb{R}}
\def\I{\mathbb{I}}
\def\<{\langle} \def\>{\rangle}
\def\inv{^{-1}}
\renewcommand\ge{\geqslant}
\renewcommand\le{\leqslant}
\newcommand\ie{i.e.}
\newcommand\eg{e.g.}
\newcommand\forget[1]{}
\DeclareMathOperator{\End}{End}
\DeclareMathOperator{\Aut}{Aut}
\renewcommand{\Im}{\mathop{\rm Im}\nolimits}
\newcommand{\Ker}{\mathop{\rm Ker}\nolimits}
\renewcommand{\L}{\Lambda}
\newenvironment{smatrix}{\left(\begin{smallmatrix}}{\end{smallmatrix}\right)}
\newcommand\smatr[1]{\begin{smatrix}#1\end{smatrix}}
\newcommand\dm[1]{\mathop{\mathcal{D}^\mathrm{b}}(#1)}
\newcommand{\dml}{\dm{\L}}
\newcommand{\modC}{\mod\mathcal{C}}
\newcommand{\modCG}{\mod(\mathcal{C}/G)}
\DeclareMathOperator{\Supp}{\operatorname{Supp}}
\DeclareMathOperator{\rad}{\operatorname{rad}}
\DeclareMathOperator{\Ext}{Ext}
\renewcommand{\mod}{\mathop{\rm mod}\nolimits}
\DeclareMathOperator{\Mod}{Mod}
\DeclareMathOperator{\stmod}{\underline{mod}}
\DeclareMathOperator{\add}{add}
\DeclareMathOperator{\Hom}{Hom}
\DeclareMathOperator{\stend}{\underline{End}}
\newcommand{\gldim}{\mathop{{\rm gl.dim}}\nolimits}
\newcommand{\injdim}{\mathop{{\rm inj.dim}}\nolimits}
\newcommand{\projdim}{\mathop{{\rm proj.dim}}\nolimits}
\DeclareMathOperator{\proj}{proj}
\DeclareMathOperator{\inj}{inj}
\DeclareMathOperator{\ind}{ind}
\DeclareMathOperator{\lcm}{lcm}
\DeclareMathOperator{\RF}{RF}
\newtheorem{thm}{Theorem}[section]
\newtheorem{lma}[thm]{Lemma}
\newtheorem{prop}[thm]{Proposition}
\newtheorem{cor}[thm]{Corollary}
\newtheorem{question}[thm]{Question}
\newtheorem{conj}[thm]{Conjecture}
\theoremstyle{definition}
\newtheorem{dfn}[thm]{Definition}
\theoremstyle{remark}
\newtheorem{rmk}[thm]{Remark}
\theoremstyle{definition}
\newtheorem{ex}[thm]{Example}
\renewcommand{\theenumi}{\alph{enumi}}
\newcommand{\U}{\mathcal{U}}
\newcommand{\C}{\mathcal{C}}
\newcommand{\V}{\mathcal{V}}
\newcommand{\TT}{\mathcal{T}}
\def\Z{\mathbb{Z}}
\title{$d$-representation-finite self-injective algebras}
\author{Erik Darp\"o}
\author{Osamu Iyama}
\address{Graduate School of Mathematics, Nagoya University, Furocho, Chikusaku, Nagoya, Japan}
\email[Darp\"o]{darpo@math.nagoya-u.ac.jp}
\email[Iyama]{iyama@math.nagoya-u.ac.jp}
\urladdr[Iyama]{http://www.math.nagoya-u.ac.jp/~iyama/}
\thanks{The first author was partially supported by JSPS Grant-in-Aid for Scientific
  Research (C) 18K03238. The second author was partially supported by JSPS Grant-in-Aid for Scientific Research (B) 16H03923, (C) 23540045 and (S) 15H05738, and both authors were partially supported by JSPS Grant-in-Aid for Scientific Research 11F01753.}
\thanks{{\em Mathematics Subject Classification 2010:} 16G10 (16D50, 16G70, 18A25, 18E30)}
\thanks{{\em Key words and phrases:} self-injective algebra, repetitive category, Galois
  covering, $d$-representation-finite algebra, $d$-cluster-tilting,
  Auslander--Reiten theory.} 
\begin{document}
\date{}

\begin{abstract}
In this paper, we initiate the study of higher-dimensional Auslander--Reiten theory
of self-injective algebras.
We give a systematic construction of (weakly) $d$-representation-finite
self-injective algebras as orbit algebras of the repetitive categories of algebras
of finite global dimension satisfying a certain finiteness condition for the Serre
functor. The condition holds, in particular, for all fractionally Calabi-Yau algebras of
global dimension at most $d$.
This generalizes Riedtmann's classical construction of rep\-re\-sen\-ta\-tion-finite
self-injective algebras.
Our method is based on an adaptation of Gabriel's covering theory for $k$-linear
categories to the setting of higher-dimensional Auslander--Reiten theory.

Applications include $n$-fold trivial extensions and (classical and higher) preprojective algebras, which
are shown to be $d$-representation-finite in many cases. We also get a complete
classification of all $d$-representation-finite self-injective Nakayama algebras for
arbitrary $d$. 
\end{abstract}
\maketitle 
\tableofcontents

\section{Introduction}
In representation theory of finite-dimensional algebras, one of the fundamental problems
is to classify re\-pre\-sen\-ta\-tion-finite algebras, that is, algebras having only
finitely many indecomposable modules up to isomorphism.
A famous result by Gabriel \cite{gabriel} characterizes
representation-finite hereditary algebras over algebraically closed fields as path
algebras of Dynkin quivers. 
Another well-known result is the classification of representation-finite
self-injective algebras, due chiefly to Riedtmann
\cite{BLR,riedtmann80,riedtmann83} (see also \cite{hw83,waschbusch,waschbusch2}).
Her result, in characteristic different from $2$,
can be summarized as follows (see, \eg, \cite[Theorem~3.5]{skowronski06}, or
\cite[Theorem~8.1]{sy08}\forget{ and the sentence following it}):

\begin{thm}\label{riedtmann}
\begin{enumerate}
\item Let $\L$ be a tilted algebra of Dynkin type and $\phi$ an admissible automorphism
  of the repetitive category $\widehat\L$ of $\L$.
  Then $\widehat\L/\phi$ is a representation-finite self-injective algebra.
\item Every representation-finite self-injective algebra over an algebraically
closed field of characteristic different from $2$ is obtained in this way.
\end{enumerate}
\end{thm}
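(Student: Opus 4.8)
The plan is to establish the two statements separately, using in both directions Happel's fundamental theorem that for an algebra $\L$ of finite global dimension the stable category $\stmod\widehat\L$ of the repetitive category is triangle-equivalent to the bounded derived category $\dm{\mod\L}$. Since a tilted algebra of Dynkin type $\Delta$ has finite global dimension and satisfies $\dm{\mod\L}\simeq\dm{\mod k\Delta}$, this links the module theory of $\widehat\L$ to the combinatorially transparent category $\dm{\mod k\Delta}$, whose indecomposables are the shifts of the finitely many indecomposable $k\Delta$-modules and whose Auslander--Reiten quiver is $\Z\Delta$.

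For part (a), I would first observe that an admissible automorphism $\phi$ acts freely with finitely many orbits on the objects of $\widehat\L$, so the canonical functor $\widehat\L\to\widehat\L/\phi$ is a Galois covering with infinite cyclic group $\langle\phi\rangle$, and the orbit algebra is finite-dimensional and self-injective because $\widehat\L$ is a self-injective category on which $\langle\phi\rangle$ acts admissibly. The push-down functor then sends indecomposable $\widehat\L$-modules to indecomposable $\widehat\L/\phi$-modules, realizes every indecomposable $\widehat\L/\phi$-module, and identifies two of them exactly when they share a $\phi$-orbit. It therefore suffices to check that $\widehat\L$ has finitely many indecomposables up to $\phi$: the non-projective ones correspond via Happel's equivalence to the indecomposable objects of $\dm{\mod k\Delta}$, on which $\phi$ induces an autoequivalence with finitely many orbits, so $\widehat\L/\phi$ is representation-finite.

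For part (b), let $A$ be representation-finite self-injective over an algebraically closed field $k$ with $\chr k\neq 2$. The central input is Riedtmann's structure theorem for the stable Auslander--Reiten quiver: $\Gamma_s(A)\cong\Z\Delta/\Pi$ for a Dynkin diagram $\Delta$ and an admissible automorphism group $\Pi$ acting freely on $\Z\Delta$ with finite quotient. I would then pass to the universal cover determined by this quiver, producing via covering theory a self-injective category $\widetilde A$ with stable quiver $\Z\Delta$ together with a Galois covering $\widetilde A\to A$ of group $\Pi$. The shape $\Z\Delta$ identifies $\stmod\widetilde A$ with $\dm{\mod k\Delta}$, and the converse to Happel's theorem forces $\widetilde A$ to be Morita equivalent to the repetitive category $\widehat\L$ of a tilted algebra $\L$ of type $\Delta$; the group $\Pi$ then acts by admissible automorphisms, so that $A\cong\widehat\L/\phi$.

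The \emph{main obstacle} is precisely this reconstruction step: showing that a self-injective algebra whose stable quiver has tree class $\Delta$ and is simply connected is Morita equivalent to the repetitive category of a tilted algebra of type $\Delta$, and that the deck group acts through admissible automorphisms. This rests on the full combinatorial force of Riedtmann's analysis of $\Z\Delta/\Pi$ combined with covering theory, and it is exactly here that the hypothesis $\chr k\neq 2$ enters, excluding the anomalous fundamental groups and non-standard components that arise in characteristic $2$ and guaranteeing that the universal cover is the expected repetitive category.
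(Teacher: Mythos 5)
Your part (a) is correct and is, at bottom, the same argument the paper uses. The paper proves only part (a), by applying Theorem~\ref{basic corollary} with $d=1$ to a tilting complex $T\in\dml$ with $\End_{\dml}(T)=H$ hereditary of Dynkin type: since $\U_1(T)=\dml$ (which is exactly your observation that the indecomposables of $\dm{k\Delta}$ are the shifts of finitely many modules, so $\widehat\L$ is locally representation-finite), $\phi$-equivariance is automatic and the general orbit-category result applies. Your rendition unwinds this machinery into classical Gabriel covering theory (push-down preserves indecomposability, is dense on indecomposables, identifies precisely the $\phi$-orbits), which is the engine behind the paper's Theorem~\ref{stronger theorem} anyway. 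Two points you assert without justification deserve attention. First, finiteness of the number of $\phi$-orbits on indecomposables does not follow from admissibility alone: one needs that the induced action on indecomposable modules is free (Lemma~\ref{indlemma}, using that $\langle\phi\rangle\simeq\Z$ is torsion-free) and that $\phi_*$ commutes with the Serre functor, after which Lemma~\ref{ab} converts the finiteness of $\tau$-orbits on $\Z\Delta$ into finiteness of $\phi$-orbits; without freeness, a power of $\phi$ could fix every object of a $\tau$-orbit and produce infinitely many $\phi$-orbits. Second, the classical density theorem for the push-down is traditionally proved over algebraically closed fields, whereas the direction of Corollary~\ref{bijection}(a) that part (a) needs is established in the paper over an arbitrary field via the $d$-almost-split-sequence characterization (Proposition~\ref{exseq}); as written, your argument silently assumes $k$ algebraically closed, which the statement of (a) does not.

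For part (b) the comparison is different in kind: the paper gives no proof at all, quoting the statement as Riedtmann's classification \cite{BLR,riedtmann80,riedtmann83} (see also \cite{hw83,waschbusch,waschbusch2}). Your sketch is an outline of Riedtmann's original strategy, and you correctly isolate the crux -- that the universal cover attached to the stable quiver $\Z\Delta/\Pi$ is the repetitive category of a tilted algebra of type $\Delta$, with the deck group acting admissibly -- but you do not prove it, and nothing in this paper would prove it either; it is precisely the content of the cited works, where the hypothesis $\chr k\neq2$ enters. So as a proof of (b) your text has a genuine, acknowledged gap; as a citation-level treatment it matches the paper's own handling, and it should be presented as a reference to Riedtmann's theorem rather than as an argument.
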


\noindent
An autoequivalence $\phi$ of a Krull--Schmidt category $\C$ is said to be
\emph{admissible} if $\phi^i$ acts freely on isomorphism classes of indecomposable objects
in $\C$ for all $i>0$.
For $\C=\widehat{\L}$, this is equivalent to $\widehat\L/\phi$ being a finite-dimensional
algebra (see Lemma~\ref{admissibility}).

After Gabriel's and Riedtmann's seminal results, the theory of representation-finite
algebras has had two culmination points.
One is the paper \cite{bgrs85} by Bautista--Gabriel--Roiter--Salmeron showing that it is a
finite problem, concerning the combinatorics of ray categories, to determine whether or
not a given algebra over an algebraically closed field is representation finite 
(see also \cite[Chapters~13, 14]{gabriel-roiter}).
The other one is the characterization of Auslander--Reiten quivers of
representation-finite algebras by Igusa--Todorov \cite{it84} and Brenner \cite{br86}
(see also \cite{iyama05}).
In view of Auslander's bijection \cite{a71} between representation-finite algebras and
Auslander algebras (that is, algebras of global dimension at most two and dominant
dimension at least two), the latter result can be seen as a structure theorem for
Auslander algebras.

Seeking to generalize Auslander's result, one is naturally led to consider
\emph{$d$-Auslander-algebras}, i.e., algebras of global dimension at most $d+1$ and
dominant dimension at least $d+1$. Such algebras correspond bijectively to equivalence
classes of so-called \emph{$d$-cluster-tilting} modules \cite{iyama07b} (see
Definition~\ref{CTdef} below).
The question thus occurs if there are generalizations of the fundamental results about
representation-finite algebras to algebras possessing a $d$-cluster-tilting module, 
that is, to \emph{$d$-rep\-re\-sen\-ta\-tion-finite} algebras.
Significant progress in this and related directions has been made in recent years; see, for
example, 
\cite{ABa,asai18,egl19,gls06,gls07,guo13,hi11a,hi11b,HIO,HZ,io11,io13,IW,jasso15a,jasso16,JK,Jo,Miz,OT,pasquali17}.
Amongst all $d$-Auslander-algebras, the ones corresponding to $d$-cluster-tilting
modules of algebras that are self-injective are characterized by the property that the
class of projective-injective modules is closed under the Nakayama functor.
The present paper provides new tools to study this class of algebras,
  through the corresponding self-injective algebras and their $d$-cluster-tilting
  modules.

Our aim is to generalize Riedtmann's construction from the point
of view of higher-dimensional Auslander--Reiten theory.
The fundamental idea is to construct $d$-representation-finite
self-injective algebras as orbit algebras of the repetitive categories of certain algebras $\L$
of finite global dimension, called \emph{$\nu_d$-finite} algebras. 
This class of algebras includes $d$-representation-finite algebras of global dimension $d$
(which are a higher-dimensional analogue of representation-finite hereditary algebras)
and, more generally, (twisted) fractionally Calabi--Yau algebras. 
The bounded derived category $\dml$ of a $\nu_d$-finite algebra $\L$ has a
$d$-cluster-tilting subcategory $\U$ of a certain nice form --
which we call an \emph{orbital} $d$-cluster-tilting subcategory.

Taking the orbit algebra of $\widehat{\L}$ with respect to an automorphism $\phi$ that
preserves $\U$ gives a $d$-representation-finite
self-injective algebra $\widehat{\L}/\phi$ (see Theorem~\ref{basic corollary}).
Large classes of self-injective algebras, including many instances of $n$-fold trivial
extension algebras (Corollaries \ref{fracCY}, \ref{trivext}, \ref{trivrf}) and higher
preprojective algebras (Corollary \ref{preproj}), can be shown to be
$d$-representation-finite using this construction.
We apply these results to give examples amongst canonical algebras of tubular type and
tensor products of Dynkin quivers. Algebras of tame as well as wild representation type
occur in these constructions. 
It is known by \cite{eh08} that any module of a $d$-re\-pre\-sen\-ta\-tion-fi\-nite self-injective
algebra has complexity at most one, i.e., the dimensions of the terms in a minimal
projective resolution of such a module are bounded from above by a constant.
The results in this paper give an explicit construction of self-injective algebras with
this rather particular property.

In the formulation of Riedtmann's classification in terms of orbit algebras, Gabriel's
covering theory \cite{gabriel2} plays an important role. 
Let $k$ be a field. A central idea in this theory is to extend the notion of
representation-finiteness from finite-dimensional algebras to $k$-linear categories, including
repetitive categories, through the concept of local representation-finiteness.
Crucially, Gabriel proved that local representation-finiteness is preserved under taking
orbit categories. 
To develop an analogue of this theory, we introduce the notion of \emph{local
$d$-representation-finiteness} (see Definition~\ref{lrfdef}), 
which is a categorical correspondent to $d$-representation-finiteness in algebras.
One of our main results, Corollary~\ref{bijection}, is that local
$d$-representation-finiteness is preserved under taking orbit categories satisfying a
natural invariance property.
This, together with the observation that orbital $d$-cluster-tilting subcategories of
$\dml$ are locally bounded, provides the foundation for the results quoted in the
previous paragraph.
Our strategy can be summarized as follows.
{\small\[\begin{xy}
(30,32)*{\L},
(70,32)*{\widehat{\L}},
(109,32)*{\widehat{\L}/\phi},
(0,20)*\txt{Classical\\ setting},
(30,20)*+[F:<5pt>]{\txt{representation-finite\\ hereditary $k$-algebra\\ $\cap$\\
representation-finite\\ tilted $k$-algebra}}="1",
(70,20)*+[F:<5pt>]{\txt{locally\\ representation-finite\\ $k$-linear category}}="2",
(109,20)*+[F:<5pt>]{\txt{representation-finite\\ self-injective\\ $k$-algebra}}="3",
(0,0)*\txt{Our setting},
(30,0)*+[F:<5pt>]{\txt{$d$-representation-finite\\ $k$-algebra of $\gldim$ $d$\\ $\cap$\\
$\nu_d$-finite algebra}}="4",
(70,0)*+[F:<5pt>]{\txt{locally\\ $d$-representation-finite\\ $k$-linear category}}="5",
(109,0)*+[F:<5pt>]{\txt{$d$-representation-finite\\ self-injective\\ $k$-algebra}}="6",
\ar@{~>}"1";"2"
\ar@{~>}"2";"3"
\ar@{~>}"4";"5"
\ar@{~>}"5";"6"
\end{xy}\]}

The notion of cluster tilting gives a strong link between higher-dimensional
Auslander--Reiten theory on one hand, and categorification of Fomin and Zelevinsky's
cluster algebras on the other. 
In our context, this connection comes into play in 
Section~\ref{section: higher preprojective} through the use of quivers with potential, 
and in Section~\ref{further}, where we use cluster categories to infer the existence of
non-orbital $2$-cluster-tilting subcategories in the derived categories of certain
algebras of global dimension $2$.

We remark that some of the results in the present paper have been applied in the
  construction of higher Nakayama algebras by Jasso and K\"ulshammer \cite{JK}.

This paper is organized as follows.
In Section~\ref{basicconstruction}--\ref{galois coverings}, main results are
stated. Section~\ref{basicconstruction} contains our generalization of
Riedtmann's construction, which in Section~\ref{section: applications} is applied to show
$d$-re\-pre\-sen\-ta\-tion-fin\-ite\-ness of some important classes of algebras.
In Section~\ref{galois coverings} we give a general result saying $d$-cluster-tilting
subcategories are, under certain conditions, preserved by Galois coverings. 
Section~\ref{section: notation} contains explanations of concepts and notation used in the
paper. 
Proofs of the results in Section~\ref{ourresults} are given in Section~\ref{proofs}.
In Section~\ref{exandappl}, we further investigate some examples and applications of the
general theory, while Section~\ref{section: nakayama} features a complete
  characterization of all $d$-re\-pre\-sen\-ta\-tion-fin\-ite self-injective Nakayama algebras.
We conclude by posing some open questions, together with a few partial results, in
Section~\ref{further}.

\section{Our results} \label{ourresults}

Throughout this article, $d$ is a positive integer, $k$ a field, and $\L$ a
finite-dimensional $k$-algebra of finite global dimension.
While most of the theory is valid for general fields, some results require additional
assumptions. This will then be stated at each such instance.
In order not to make the introductory parts of this paper overly technical, the
definitions of some of the concepts used in this section have been postponed to
Section~2.4.

We start by defining the fundamental concepts of our investigation.
\begin{dfn} \label{CTdef}
 Let $\C$ be an abelian or a triangulated category, and $A$ a
  finite-dimensional $k$-algebra.
\begin{enumerate} \renewcommand{\labelenumi}{(\roman{enumi})} 
\item  A full subcategory $\U$ of $\C$
  is \emph{$d$-cluster-tilting} if it is functorially finite in $\C$ and
  \begin{eqnarray*}
    \U&=&\{X\in\C\mid\Ext_{\C}^i(\U,X)=0\ \mbox{ for all }\ 1\le i\le d-1\}\\
    &=&\{X\in\C\mid\Ext_{\C}^i(X,\U)=0\ \mbox{ for all }\ 1\le i\le d-1\}.
  \end{eqnarray*}
\item  A finitely-generated module $M\in\mod A$ is called a \emph{$d$-cluster-tilting
  module} if its additive closure $\add M$ forms a $d$-cluster-tilting subcategory of
  $\mod A$. 
\item
  The algebra $A$ is said to be \emph{$d$-re\-pre\-sen\-ta\-tion-fin\-ite} if it has a
  $d$-cluster-tilting module.
\end{enumerate}
\end{dfn}
In the case $d=1$, the classical situation is recovered: 
a $1$-cluster-tilting $A$-module is nothing but an additive generator of $\mod A$, and an
algebra is $1$-re\-pre\-sen\-ta\-tion-fin\-ite if and only if it is representation finite in the classical sense.
It is easy to see that if $\mathcal{C}$ is a Frobenius category then the
  natural functor $\mathcal{C}\to\underline{\mathcal{C}}$ induces a bijection between
  $d$-cluster-tilting subcategories of $\mathcal{C}$ and $\underline{\mathcal{C}}$ respectively.
Note that, in contrast with several earlier papers (\eg, \cite{hi11a,hi11b,io11,io13}) we
do not assume $d$-re\-pre\-sen\-ta\-tion-fin\-ite algebras to have global dimension at most $d$.

\begin{dfn} \label{lrfdef}
  A locally bounded $k$-linear Krull-Schmidt category $\C$ is 
  \emph{locally $d$-re\-pre\-sen\-ta\-tion-fin\-ite} if $\mod\C$ has a locally bounded 
  $d$-cluster-tilting subcategory.
\end{dfn}

We remark that a $d$-cluster-tilting subcategory $\U$ of $\mod\C$ is locally bounded if,
and only if, for any $x\in\C$ there exist only finitely many isomorphism classes of
indecomposable objects $U\in\U$ such that $U(x)\neq0$
(Lemma~\ref{lbddlma}\eqref{lbddcriterion}). Thus, for $d=1$ our definition is equivalent
to the classical one \cite[2.2]{bg81}.
Clearly, a finite-dimensional $k$-algebra $A$
is $d$-re\-pre\-sen\-ta\-tion-fin\-ite if and only if the category $\proj A$ of finitely
generated projective $A$-modules is locally $d$-re\-pre\-sen\-ta\-tion-fin\-ite.

\subsection{Basic construction} \label{basicconstruction}
We denote by $\dml$ the bounded derived category of the category $\mod\L$ of
finitely generated right $\L$-modules, and by $\widehat\L$ the repetitive category of $\L$
(see Section \ref{section: notation}).
Then the category $\mod\widehat\L$ of finitely presented $\widehat\L$-modules is Frobenius
and therefore, its stable category $\stmod\widehat{\L}$ is triangulated, with
suspension functor given by the inverse of Heller's syzygy functor $\Omega$. 
Moreover, there exists a triangle equivalence \cite{happel88} (see also
\cite[Section 3.4]{yamaura})
\begin{equation}\label{happel equivalence}
\dml\simeq \stmod\widehat{\L}
\end{equation}
restricting to the identity functor on $\mod\L$, which is a full subcategory of
both $\dml$ and $\stmod\widehat\L$. 
In what follows, we will often view this equivalence as an identification and,
consequently, we generally make no distinction between subcategories of $\dml$ and
$\stmod\widehat{\L}$.

Let $\phi$ be an automorphism of $\widehat{\L}$. We denote by
$\phi_*:\mod\widehat\L\to\mod\widehat\L$ the induced automorphism of the module category,
defined by precomposition with $\phi\inv$, 
and by $F_*:\mod\widehat{\L}\to\mod(\widehat{\L}/\phi)$ the push-down functor,
 induced by the covering functor $F:\widehat{\L}\to\widehat{\L}/\phi$ 
(see Section~\ref{galois coverings}).
A subcategory $\U$ of $\dml\simeq\stmod\widehat{\L}$ is said to be
\emph{$\phi$-equivariant} if $\U$ and $\phi_*(\U)$ have the same isomorphism closure.

The following result is of fundamental importance for our investigation.
It is obtained as an application of Corollary~\ref{bijection} in 
Section~\ref{galois coverings}.

\begin{thm} \label{basic construction}
Let $\L$ be a basic finite-dimensional $k$-algebra of finite global dimension, and
  $\phi$ an admissible automorphism of the repetitive category $\widehat\L$. 
\begin{enumerate}
\item 
If $\dml$ contains a locally bounded $\phi$-equivariant $d$-cluster-tilting subcategory
$\U$, then the algebra $\widehat\L/\phi$ is $d$-re\-pre\-sen\-ta\-tion-fin\-ite.
  The converse implication also holds in case $k$ is algebraically closed.
  \label{basic1}
\item 
If, in \eqref{basic1}, $S$ is a cross-section of the $\phi_*$-orbits of $\ind\U$, then
   \[V=(\widehat{\L}/\phi)\oplus\bigoplus_{X\in S} F_*(X)\]
  is a basic $d$-cluster-tilting $(\widehat{\L}/\phi)$-module.
  \label{basic2}
\end{enumerate}
\end{thm}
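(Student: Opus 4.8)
The plan is to derive both parts from the covering-theoretic statement of Corollary~\ref{bijection}, applied to the Galois covering $F\colon\widehat\L\to\widehat\L/\phi$ with group $\langle\phi\rangle\cong\Z$. Since $\phi$ is admissible, Lemma~\ref{admissibility} guarantees that $\widehat\L/\phi$ is a finite-dimensional, hence self-injective, algebra, and that $\langle\phi\rangle$ acts freely on isomorphism classes of indecomposables; this freeness is what makes the orbit bookkeeping in part~\eqref{basic2} meaningful.

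First I would pass from the triangulated picture to the module-category picture required by Definition~\ref{lrfdef} and Corollary~\ref{bijection}. Recall that $\mod\widehat\L$ is Frobenius, so under Happel's equivalence~\eqref{happel equivalence} the given subcategory $\U\subseteq\dml\simeq\stmod\widehat\L$ is a $d$-cluster-tilting subcategory of the stable category. I would then invoke the standard correspondence, for a Frobenius category, between $d$-cluster-tilting subcategories of the stable category and those $d$-cluster-tilting subcategories of the ambient category that contain all projective-injectives (the defining $\Ext$-vanishing conditions translate directly via $\stend$ and $\Omega$). Thus $\U'\colonequals\add(\U\cup\proj\widehat\L)$ is $d$-cluster-tilting in $\mod\widehat\L$. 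Because $\U$ is $\phi$-equivariant and $\phi_*$ permutes the indecomposable projectives of $\widehat\L$, the subcategory $\U'$ is again $\phi$-equivariant; and since $\widehat\L$ is locally bounded, $\U'$ is locally bounded as well, as for fixed $x$ only finitely many indecomposable projectives $P$ satisfy $P(x)\neq0$ (Lemma~\ref{lbddlma}). Hence $\widehat\L$ is locally $d$-representation-finite via the locally bounded $\phi$-equivariant subcategory $\U'$.

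Now I would apply Corollary~\ref{bijection}: it yields that $\widehat\L/\phi$ is locally $d$-representation-finite, with the push-down functor $F_*$ carrying $\U'$ to a $d$-cluster-tilting subcategory $F_*(\U')$ of $\mod(\widehat\L/\phi)$. As $\widehat\L/\phi$ is a finite-dimensional algebra, local $d$-representation-finiteness coincides with $d$-representation-finiteness, which proves~\eqref{basic1}. To pin down the explicit module in~\eqref{basic2}, I would read off the indecomposables of $F_*(\U')$ using the covering property of $F_*$, namely that it sends indecomposables to indecomposables and that $F_*(X)\cong F_*(Y)$ holds precisely when $Y\cong\phi_*^iX$ for some $i$. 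The push-downs of the indecomposable projectives of $\widehat\L$ assemble to the indecomposable projective $(\widehat\L/\phi)$-modules, that is, to $\widehat\L/\phi$ regarded as a module over itself, while the remaining indecomposable summands are exactly the $F_*(X)$ with $X$ ranging over a cross-section $S$ of the $\phi_*$-orbits of $\ind\U$. Therefore $V=(\widehat\L/\phi)\oplus\bigoplus_{X\in S}F_*(X)$ is a basic additive generator of $F_*(\U')$, hence the desired $d$-cluster-tilting module; freeness of the action together with local boundedness ensures $S$ is finite, so $V$ is genuinely finite-dimensional.

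For the converse in~\eqref{basic1}, assuming $k$ algebraically closed and $\widehat\L/\phi$ to be $d$-representation-finite, I would run Corollary~\ref{bijection} in the opposite direction: a $d$-cluster-tilting $(\widehat\L/\phi)$-module pulls back to a $\phi$-equivariant $d$-cluster-tilting subcategory of $\mod\widehat\L$, using the classical fact from Gabriel's covering theory that over an algebraically closed field every indecomposable $(\widehat\L/\phi)$-module lies in the essential image of $F_*$; removing the projective part and transporting through~\eqref{happel equivalence} produces the required $\U\subseteq\dml$. The genuinely delicate points are the translation between the stable and the module category (and the compatibility of $F_*$ with Happel's equivalence), and checking that every finiteness and equivariance condition survives these translations. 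The substantive cluster-tilting content, however, is entirely absorbed into Corollary~\ref{bijection}, which we are free to cite, so the remaining work is organizational rather than conceptual.
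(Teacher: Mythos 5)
Your proposal is correct and takes essentially the same route as the paper's own proof: the paper likewise replaces $\U$ by its preimage $\U'$ under the natural functor $\mod\widehat\L\to\stmod\widehat\L\simeq\dml$ (your $\add(\U\cup\proj\widehat\L)$), applies Corollary~\ref{bijection} in both directions for part~\eqref{basic1} and its converse, and deduces part~\eqref{basic2} from the bijection between $\ind F_*(\U')$ and the $\phi_*$-orbits of $\ind\U$ together with the indecomposable projectives. The only difference is one of explicitness: you spell out steps the paper leaves implicit, such as the Frobenius-category correspondence behind the preimage construction, the verification of local boundedness and functorial finiteness via Lemma~\ref{lbddlma}, and the finiteness of the cross-section $S$.
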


The following diagram illustrates the correspondence between $d$-cluster-tilting
  subcategories of $\dml$ and $\mod(\widehat{\L}/\phi)$ underlying
Theorem~\ref{basic construction}.
\[\xymatrix@R1.5em{
\dml 
\ar@{}[r]|{\mbox{\large{$\simeq$}}}
& \stmod\widehat\L & \mod\widehat\L\ar[l]_{\rm nat}\ar[r]^{F_*} & \mod(\widehat\L/\phi) \\
\U\ar@{<.>}[rrr]\ar@{}[u]|{\bigcup}   &&&                           \add V\ar@{}[u]|{\bigcup}
}\]

  \begin{rmk} \label{stable_lbdd}
  In the setting of Theorem~\ref{basic construction}, a $\phi $-equivariant
  $d$-cluster-tilting subcategory $\mathcal {U}$ of $\mathop {\mathcal {D}^\mathrm {b}}(\Lambda  )$ is locally bounded if and only if
  its preimage under the natural functor $\mathop{\mathrm{mod}}\nolimits\widehat\Lambda\to\stmod\widehat\Lambda$ is locally bounded, if and only if
  the number of $\phi_*$-orbits of $\ind \mathcal {U}$ is finite.
A proof of this is given at the end of Section~\ref{lemmata}.
\end{rmk}

Next, we give a systematic construction of pairs $\L$ and $\phi$ satisfying the conditions
in Theorem~\ref{basic construction}, and thus giving rise to self-injective algebras that
are $d$-re\-pre\-sen\-ta\-tion-fin\-ite.
Let $\L$ be a finite-dimensional $k$-algebra of finite global dimension, and let
\begin{equation} \label{derivednu}
\nu=D\circ\mathbb{R}\!\Hom_{\L}(-,\L) \simeq - \otimes^{\mathbb{L}}_{\L}D\L :\dml\to\dml
\end{equation}
be the \emph{Nakayama functor} of $\dml$.
Here $D=\Hom_k(-,k)$ is the usual $k$-dual.
The Nakayama functor satisfies the functorial isomorphism 
$$\Hom_{\dml}(X,Y)\simeq D\Hom_{\dml}(Y,\nu X) \,,$$
in other words, it is a Serre functor on $\dml$.
We denote
\begin{equation}
  \nu_d=\nu\circ[-d]:\dml\to\dml \,,
\end{equation}
and say that $\L$ is \emph{$\nu_d$-finite} if $\gldim\L < \infty$ and
\[\nu_d^i\left(\mathcal{D}^{\ge0}(\L)\right)\subset\mathcal{D}^{\ge1}(\L)\]
for some $i\ge0$. Here $\mathcal{D}^{\ge \ell}(\Lambda)$ denotes the full subcategory of
$\dml$ formed by all objects $X\in\dml$ satisfying $H^j(X)=0$ for all $j<\ell$.
The property of being $\nu_d$-finite is preserved under derived equivalences
\cite[Lemma~5.6]{iyama11}.
All algebras of global dimension at most $d-1$, as well as all $d$-re\-pre\-sen\-ta\-tion-fin\-ite
algebras of global dimension $d$, are $\nu_d$-finite.
In particular, path algebras of quivers of Dynkin type are $\nu_d$-finite for all
$d\ge1$.
Note that $\nu_1=\nu\circ[-1]$ is the Auslander--Reiten translation on $\dml$
\cite[Proposition~I.2.3]{RV}, \cite[Theorem~I:4.6]{happel88}.
Hence, a hereditary algebra is $\nu_1$-finite if and only if it is representation finite.

From \cite[Proposition~5.4, Lemma~5.6, Theorem~1.23]{iyama11} we get the following
result, which plays an important role in the present paper.
For any $T\in\dml$, set
\begin{equation} \label{def:unl}
  \U_d(T)=\add\{\nu_d^i(T)\mid i\in\Z\} \subset \dml \,.
\end{equation}
\begin{prop}\label{U_n}
If $\L$ is $\nu_d$-finite and $T\in\dml$ is a tilting complex such that the algebra
$\End_{\dml}(T)$ has global dimension at most $d$, then $\U_d(T)$ is a 
locally bounded $d$-cluster-tilting subcategory of $\dml$.
\end{prop}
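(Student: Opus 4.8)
The plan is to reduce, via the tilting complex $T$, to the case of an algebra of global dimension at most $d$, where the statement is essentially \cite[Theorem~1.23]{iyama11}. Write $\Gamma=\End_{\dml}(T)$. Since $T$ is a tilting complex, there is a triangle equivalence $G\colon\dml\xrightarrow{\sim}\dm{\Gamma}$ sending $T$ to $\Gamma$, viewed as a stalk complex in degree $0$. The Nakayama functor is defined intrinsically through Serre duality, hence is unique up to isomorphism and commutes with every triangle equivalence between bounded derived categories of finite-dimensional algebras; so $G\nu_d\cong\nu_d G$ and therefore $G(\U_d(T))=\U_d(\Gamma)$. Because $\nu_d$-finiteness is preserved under derived equivalence \cite[Lemma~5.6]{iyama11}, the algebra $\Gamma$ is again $\nu_d$-finite, and $\gldim\Gamma\le d$ by hypothesis. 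It thus suffices to treat the pair $(\Gamma,\Gamma)$ and transport the conclusion back along $G^{-1}$.

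For such a $\Gamma$ the fact that $\U_d(\Gamma)$ is a $d$-cluster-tilting subcategory of $\dm{\Gamma}$ is \cite[Theorem~1.23]{iyama11} (combined with \cite[Proposition~5.4]{iyama11}). I would recall its two ingredients. Rigidity, $\Ext^{j}_{\dm{\Gamma}}(\U_d(\Gamma),\U_d(\Gamma))=0$ for $1\le j\le d-1$, reduces through the autoequivalence $\nu_d$ to the computation of $\Hom_{\dm{\Gamma}}(\Gamma,\nu_d^{m}\Gamma[j])$, which one controls by combining Serre duality with the bound $\gldim\Gamma\le d$. The harder ingredient is the maximality encoded in the two perpendicular equalities of Definition~\ref{CTdef}, together with functorial finiteness; this is precisely where $\nu_d$-finiteness enters, guaranteeing that the $\nu_d$-orbit of $\Gamma$ exhausts the relevant perpendicular category, and I expect it to be the main obstacle. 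Applying $G^{-1}$ then shows that $\U_d(T)$ is $d$-cluster-tilting in $\dml$.

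Local boundedness is the content of \cite[Proposition~5.4]{iyama11}, and I would argue it directly. The indecomposables of $\U_d(T)$ are the summands of the $\nu_d^{i}T$, and $\Hom_{\dml}(\nu_d^{i}T,\nu_d^{j}T)\cong\Hom_{\dml}(T,\nu_d^{j-i}T)$ by the autoequivalence property, so it is enough that $\Hom_{\dml}(T,\nu_d^{m}T)=0$ for all but finitely many $m\in\Z$. Choose $b$ with $T\in\mathcal{D}^{\le b}(\L)$. Iterating the defining inclusion $\nu_d^{i}(\mathcal{D}^{\ge0}(\L))\subset\mathcal{D}^{\ge1}(\L)$ pushes all cohomology upward, so $\nu_d^{m}T\in\mathcal{D}^{\ge b+1}(\L)$ once $m\gg0$; since $\Hom_{\dml}(\mathcal{D}^{\le b}(\L),\mathcal{D}^{\ge b+1}(\L))=0$ by the t-structure axioms, the Hom-space vanishes for $m\gg0$. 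The case $m\ll0$ is handled symmetrically, using Serre duality to convert it into a statement about $\nu_d^{m'}T$ with $m'\gg0$. Hence $\U_d(T)$ is locally bounded.

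Finally, for the ``in particular'' clause I would transport $\U_d(T)$ across Happel's equivalence \eqref{happel equivalence} to a locally bounded $d$-cluster-tilting subcategory of $\stmod\widehat\L$, and then lift it to $\mod\widehat\L$ as $\add(\U_d(T)\cup\proj\widehat\L)$, using the standard correspondence between $d$-cluster-tilting subcategories of the Frobenius category $\mod\widehat\L$ and those of its stable category. This yields a locally bounded $d$-cluster-tilting subcategory of $\mod\widehat\L$, so $\widehat\L$ is locally $d$-representation-finite by Definition~\ref{lrfdef}.
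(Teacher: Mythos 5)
Your proof is correct and takes essentially the same route as the paper, which derives Proposition~\ref{U_n} directly from \cite[Proposition~5.4, Lemma~5.6, Theorem~1.23]{iyama11}: your reduction along the derived equivalence induced by $T$ (Lemma~5.6), the appeal to Theorem~1.23 for the $d$-cluster-tilting property, and your direct verification of local boundedness (which is the content of Proposition~5.4) simply unpack that citation. Your lifting of $\U_d(T)$ across Happel's equivalence to $\add(\U_d(T)\cup\proj\widehat\L)$ for the ``in particular'' clause likewise matches the step the paper carries out in the proof of Theorem~\ref{basic construction}\eqref{basic1}.
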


We call a $d$-cluster-tilting subcategory $\U$ of $\dml$ \emph{orbital} if it can be
written as $\U=\U_d(T)$ for some $T$ as in Proposition~\ref{U_n}. 
These will be our primary examples of $d$-cluster-tilting subcategories of
$\dml\simeq\stmod\widehat\L$.
The following result, which is an immediate consequence of 
Theorem~\ref{basic construction}, Remark~\ref{stable_lbdd}  and Proposition~\ref{U_n}, allows us to construct large classes of
$d$-re\-pre\-sen\-ta\-tion-fin\-ite self-injective algebras.

\begin{thm} \label{basic corollary}
Let $\L$ be a finite-dimensional $\nu_d$-finite $k$-algebra, and $\phi$ an admissible
automorphism of the repetitive category $\widehat\L$ of $\L$.
\begin{enumerate}
\item If $\dml$ contains a tilting complex $T$ satisfying
$\gldim(\End_{\dml}(T))\le d$, such that $\U_d(T)$ is $\phi$-equivariant, then
$\widehat{\L}/\phi$ is $d$-re\-pre\-sen\-ta\-tion-fin\-ite and self-injective.
\label{basiccor1}
\item If, in \eqref{basiccor1}, $S$ is a cross-section of the $\phi_*$-orbits of
  $\ind\left(\U_d(T)\right)$, then 
  \[V=(\widehat{\L}/\phi)\oplus\bigoplus_{X\in S} F_*(X)\]
  is a basic $d$-cluster tilting $(\widehat{\L}/\phi)$-module.
\end{enumerate}
\end{thm}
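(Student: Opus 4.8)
The plan is to derive Theorem~\ref{basic corollary} as a straightforward combination of the two preceding results, Proposition~\ref{U_n} and Theorem~\ref{basic construction}. The key observation is that Proposition~\ref{U_n} supplies exactly the hypothesis that Theorem~\ref{basic construction} needs: a locally bounded $d$-cluster-tilting subcategory of $\dml$. So the argument should require almost no genuinely new work, only the verification that the objects produced by one result are eligible as inputs to the other.

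First I would invoke Proposition~\ref{U_n}. Since $\L$ is $\nu_d$-finite by hypothesis, and $T\in\dml$ is a tilting complex with $\gldim(\End_{\dml}(T))\le d$, the proposition tells us that $\U_d(T)$ is a \emph{locally bounded} $d$-cluster-tilting subcategory of $\dml$. The assumption of part~\eqref{basiccor1} adds that $\U_d(T)$ is $\phi$-equivariant. Thus $\U=\U_d(T)$ satisfies precisely the three conditions required by Theorem~\ref{basic construction}\eqref{basic1}: it is $d$-cluster-tilting, locally bounded, and $\phi$-equivariant. One should also note that $\phi$ is admissible and $\L$ is finite-dimensional of finite global dimension, as needed; here $\nu_d$-finiteness includes $\gldim\L<\infty$, and basicness may be assumed (or reduced to, as the statement of Theorem~\ref{basic construction} requires a basic algebra). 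Applying Theorem~\ref{basic construction}\eqref{basic1} then yields that $\widehat\L/\phi$ is $d$-representation-finite.

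Next I would establish self-injectivity. This is where the repetitive category enters essentially: the orbit algebra $\widehat\L/\phi$ of a repetitive category by an admissible automorphism is self-injective. I expect this to follow from standard properties of repetitive categories---$\widehat\L$ is a Frobenius category (as recalled before \eqref{happel equivalence}, $\mod\widehat\L$ is Frobenius), and this Frobenius/self-injective structure descends to the finite-dimensional orbit algebra $\widehat\L/\phi$. Since admissibility guarantees (by Lemma~\ref{admissibility}) that $\widehat\L/\phi$ is finite-dimensional, it is a finite-dimensional self-injective algebra. Combining this with the previous paragraph proves part~\eqref{basiccor1}.

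For part~(b), the statement is identical to the conclusion of Theorem~\ref{basic construction}\eqref{basic2} with the specific choice $\U=\U_d(T)$. Thus, having already placed ourselves in the setting of Theorem~\ref{basic construction}, we simply read off the description of the basic $d$-cluster-tilting $(\widehat\L/\phi)$-module $V=(\widehat\L/\phi)\oplus\bigoplus_{X\in S}F_*(X)$, where $S$ is a cross-section of the $\phi_*$-orbits of $\ind(\U_d(T))$. The main (and only) obstacle I anticipate is bookkeeping rather than mathematics: ensuring that the self-injectivity claim is properly justified and that the basicness hypothesis of Theorem~\ref{basic construction} is either inherited or harmlessly arranged. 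Everything substantive has been front-loaded into Proposition~\ref{U_n} and Theorem~\ref{basic construction}, so the proof is essentially a one-line deduction together with the observation that orbit algebras of repetitive categories are self-injective.
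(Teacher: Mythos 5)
Your proposal is correct and matches the paper exactly: the paper states Theorem~\ref{basic corollary} as ``an immediate consequence of Theorem~\ref{basic construction} and Proposition~\ref{U_n}'', which is precisely your deduction, with self-injectivity following (as you say) from the standard fact, recorded in Section~\ref{section: notation}, that orbit categories of self-injective categories are again self-injective, together with Lemma~\ref{admissibility} for finite-dimensionality.
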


Denote by $\widehat\nu=\nu_{\widehat\L}$ the Nakayama automorphism of $\widehat{\L}$.
If $\L$ is a $\nu_d$-finite algebra of global dimension at most $d$, then $\U_d(\L)$,
viewed as a $d$-cluster-tilting subcategory of $\stmod\widehat{\L}$, has the form 
$$\U_d(\L)=\add\{(\widehat{\nu}_*\Omega^{d+1})^i(\L)\mid i\in\Z\} \,.$$ 
If this subcategory is $\phi$-equivariant then, by 
Theorem~\ref{basic corollary}\eqref{basiccor1}, it follows that $\widehat{\L}/\phi$ is a
$d$-re\-pre\-sen\-ta\-tion-fin\-ite self-injective algebra.

As an illustration, we give a simple example of an application of 
Theorem~\ref{basic corollary}.

\begin{ex} \label{firstexample}
Let $\L$ be the finite-dimensional $k$-algebra given by the quiver
$\xymatrix@C1em{
1\ar[r]^a&2\ar[r]^b&3
}$ 
with the relation $ab=0$.
Being derived equivalent to a hereditary algebra of Dynkin type $A_3$, the algebra
$\L$ is $\nu_2$-finite; moreover, $\gldim\L=2$. Hence, $\U_2(\L)$ is a $2$-cluster-tilting
subcategory of $\dml$, by Proposition~\ref{U_n}.
The Auslander--Reiten quiver of $\dml$ is depicted below, with the objects of $\U_2(\L)$
written in circles. The object $X\in\dml$ is the cone of a non-zero morphism
${\begin{smallmatrix}2\\ 3\end{smallmatrix}}\to{\begin{smallmatrix}1\\ 2\end{smallmatrix}}$.
\[\begin{xy}0;<3em,0pt>:<0pt,3em>::
(-0.5,1) *+{{\begin{smallmatrix}\cdots\end{smallmatrix}}},
(0,1) *+{}="01",
(0,0) *+{{\begin{smallmatrix}1\\ 2\end{smallmatrix}}\strut\kern-3pt{\begin{smallmatrix}[-1]\end{smallmatrix}}}="00",
(0,2) *+[Fo]{{\begin{smallmatrix}3\end{smallmatrix}}}="02",
(1,1) *+{{\begin{smallmatrix}X[-1]\end{smallmatrix}}}="11",
(2,0) *+[Fo]{{\begin{smallmatrix}2\\ 3\end{smallmatrix}}}="20",
(2,2) *+{{\begin{smallmatrix}1[-1]\end{smallmatrix}}}="22",
(3,1) *+{{\begin{smallmatrix}2\end{smallmatrix}}}="31",
(4,0) *+{{\begin{smallmatrix}3[1]\end{smallmatrix}}}="40",
(4,2) *+[Fo]{{\begin{smallmatrix}1\\ 2\end{smallmatrix}}}="42",
(5,1) *+{{\begin{smallmatrix}X\end{smallmatrix}}}="51",
(6,0) *+[Fo]{{\begin{smallmatrix}1\end{smallmatrix}}}="60",
(6,2) *+{{\begin{smallmatrix}2\\ 3\end{smallmatrix}}\strut\kern-3pt{\begin{smallmatrix}[1]\end{smallmatrix}}}="62",
(7,1) *+{{\begin{smallmatrix}2[1]\end{smallmatrix}}}="71",
(8,0) *+{{\begin{smallmatrix}1\\ 2\end{smallmatrix}}\strut\kern-3pt{\begin{smallmatrix}[1]\end{smallmatrix}}}="80",
(8,2) *+[Fo]{{\begin{smallmatrix}3[2]\end{smallmatrix}}}="82",
(9,1) *+{{\begin{smallmatrix}X[1]\end{smallmatrix}}}="91",
(10,0) *+[Fo]{{\begin{smallmatrix}2\\ 3\end{smallmatrix}}\strut\kern-3pt{\begin{smallmatrix}[2]\end{smallmatrix}}}="100",
(10,2) *+{{\begin{smallmatrix}1[1]\end{smallmatrix}}}="102",
(11,1) *+{{\begin{smallmatrix}2[2]\end{smallmatrix}}}="111",
(12,0) *+{{\begin{smallmatrix}3[3]\end{smallmatrix}}}="120",
(12,1) *+{}="121",
(12,2) *+[Fo]{{\begin{smallmatrix}1\\ 2\end{smallmatrix}}\strut\kern-3pt{\begin{smallmatrix}[2]\end{smallmatrix}}}="122",
(12.5,1) *+{{\begin{smallmatrix}\cdots\end{smallmatrix}}},
\ar"00";"11",
\ar"02";"11",
\ar"11";"20",
\ar"11";"22",
\ar"20";"31",
\ar"22";"31",
\ar"31";"40",
\ar"31";"42",
\ar"40";"51",
\ar"42";"51",
\ar"51";"60",
\ar"51";"62",
\ar"60";"71",
\ar"62";"71",
\ar"71";"80",
\ar"71";"82",
\ar"80";"91",
\ar"82";"91",
\ar"91";"100",
\ar"91";"102",
\ar"100";"111",
\ar"102";"111",
\ar"111";"120",
\ar"111";"122",
\ar@{.}"00";"20",
\ar@{.}"20";"40",
\ar@{.}"40";"60",
\ar@{.}"60";"80",
\ar@{.}"80";"100",
\ar@{.}"100";"120",
\ar@{.}"01";"11",
\ar@{.}"11";"31",
\ar@{.}"31";"51",
\ar@{.}"51";"71",
\ar@{.}"71";"91",
\ar@{.}"91";"111",
\ar@{.}"111";"121",
\ar@{.}"02";"22",
\ar@{.}"22";"42",
\ar@{.}"42";"62",
\ar@{.}"62";"82",
\ar@{.}"82";"102",
\ar@{.}"102";"122",
\end{xy}\]
The repetitive algebra $\widehat{\L}$ of $\L$ is given by the following quiver with mesh
relations.
\[\begin{xy}0;<2.5em,0pt>:<0pt,2.5em>::
(-0.5,1) *+{\cdots},
(0,0) *+{1''}="00",
(0,1) *+{}="01",
(0,2) *+{3'''}="02",
(1,1) *+{2''}="11",
(2,0) *+{3''}="20",
(2,2) *+{1'}="22",
(3,1) *+{2'}="31",
(4,0) *+{1}="40",
(4,2) *+{3'}="42",
(5,1) *+{2}="51",
(6,0) *+{3}="60",
(6,2) *+{{}`1}="62",
(7,1) *+{{}`2}="71",
(8,0) *+{{}``1}="80",
(8,1) *+{}="81",
(8,2) *+{{}`3}="82",
(8.5,1) *+{\cdots},
\ar"00";"11",
\ar"02";"11",
\ar"11";"20",
\ar"11";"22",
\ar"20";"31",
\ar"22";"31",
\ar"31";"40",
\ar"31";"42",
\ar"40";"51",
\ar"42";"51",
\ar"51";"60",
\ar"51";"62",
\ar"60";"71",
\ar"62";"71",
\ar"71";"80",
\ar"71";"82",
\ar@{.}"00";"20",
\ar@{.}"20";"40",
\ar@{.}"40";"60",
\ar@{.}"60";"80",
\ar@{.}"01";"11",
\ar@{.}"11";"31",
\ar@{.}"31";"51",
\ar@{.}"51";"71",
\ar@{.}"71";"81",
\ar@{.}"02";"22",
\ar@{.}"22";"42",
\ar@{.}"42";"62",
\ar@{.}"62";"82",
\end{xy}\]
We denote by $\sigma$ the involution of $\widehat\L$ given by reflection in the central
horizontal line. Then $\widehat{\nu}$ and $\sigma$ generate a subgroup of
$\Aut(\widehat\L)$ which is isomorphic to $\Z\times(\Z/2\Z)$.

The Auslander--Reiten quiver of $\mod\widehat\L$ is the following, where the projective
$\widehat\L$-modules are written in squares and the objects in the $2$-cluster-tilting
subcategory $\U_2(\L)\subset\stmod\widehat{\L}$ are indicated with circles.
\[
\begin{xy}0;<3em,0pt>:<0pt,3em>::
(-0.8,1) *+{{\begin{smallmatrix}\cdots\end{smallmatrix}}},
(0,0) *+{{\begin{smallmatrix}{}`1\end{smallmatrix}}}="00",
(0,1) *+[F]{{\begin{smallmatrix}2\\ {}`1\ 3\\ {}`2\end{smallmatrix}}}="01",
(0,2) *+[Fo]{{\begin{smallmatrix}3\end{smallmatrix}}}="02",
(1,1) *+{{\begin{smallmatrix}2\\ {}`1\ 3\end{smallmatrix}}}="11",
(2,0) *+[Fo]{{\begin{smallmatrix}2\\ 3\end{smallmatrix}}}="20",
(2,2) *+{{\begin{smallmatrix}2\\ {}`1\end{smallmatrix}}}="22",
(3,-.8) *+[F]{{\begin{smallmatrix}3'\\ 2\\ 3\end{smallmatrix}}}="30",
(3,1) *+{{\begin{smallmatrix}2\end{smallmatrix}}}="31",
(3,2.8) *+[F]{{\begin{smallmatrix}1\\ 2\\ {}`1\end{smallmatrix}}}="32",
(4,0) *+{{\begin{smallmatrix}3'\\ 2\end{smallmatrix}}}="40",
(4,2) *+[Fo]{{\begin{smallmatrix}1\\ 2\end{smallmatrix}}}="42",
(5,1) *+{{\begin{smallmatrix}1\ 3'\\ 2\end{smallmatrix}}}="51",
(6,0) *+[Fo]{{\begin{smallmatrix}1\end{smallmatrix}}}="60",
(6,1) *+[F]{{\begin{smallmatrix}2'\\ 1\ 3'\\ 2\end{smallmatrix}}}="61",
(6,2) *+{{\begin{smallmatrix}3'\end{smallmatrix}}}="62",
(7,1) *+{{\begin{smallmatrix}2'\\ 1\ 3'\end{smallmatrix}}}="71",
(8,0) *+{{\begin{smallmatrix}2'\\ 3'\end{smallmatrix}}}="80",
(8,2) *+[Fo]{{\begin{smallmatrix}2'\\ 1\end{smallmatrix}}}="82",
(9,-.8) *+[F]{{\begin{smallmatrix}3''\\ 2'\\ 3'\end{smallmatrix}}}="90",
(9,1) *+{{\begin{smallmatrix}2'\end{smallmatrix}}}="91",
(9,2.8) *+[F]{{\begin{smallmatrix}1'\\ 2'\\ 1\end{smallmatrix}}}="92",
(10,0) *+[Fo]{{\begin{smallmatrix}3''\\ 2'\end{smallmatrix}}}="100",
(10,2) *+{{\begin{smallmatrix}1'\\ 2'\end{smallmatrix}}}="102",
(11,1) *+{{\begin{smallmatrix}1'\ 3''\\ 2'\end{smallmatrix}}}="111",
(12,0) *+{{\begin{smallmatrix}1'\end{smallmatrix}}}="120",
(12,1) *+[F]{{\begin{smallmatrix}2''\\ 1'\ 3''\\ 2'\end{smallmatrix}}}="121",
(12,2) *+[Fo]{{\begin{smallmatrix}3''\end{smallmatrix}}}="122",
(12.8,1) *+{{\begin{smallmatrix}\cdots\end{smallmatrix}}},
\ar"00";"11",
\ar"01";"11",
\ar"02";"11",
\ar"11";"20",
\ar"11";"22",
\ar"20";"30",
\ar"20";"31",
\ar"22";"31",
\ar"22";"32",
\ar"30";"40",
\ar"31";"40",
\ar"31";"42",
\ar"32";"42",
\ar"40";"51",
\ar"42";"51",
\ar"51";"60",
\ar"51";"61",
\ar"51";"62",
\ar"60";"71",
\ar"61";"71",
\ar"62";"71",
\ar"71";"80",
\ar"71";"82",
\ar"80";"90",
\ar"80";"91",
\ar"82";"91",
\ar"82";"92",
\ar"90";"100",
\ar"91";"100",
\ar"91";"102",
\ar"92";"102",
\ar"100";"111",
\ar"102";"111",
\ar"111";"120",
\ar"111";"121",
\ar"111";"122",
\ar@{.}"00";"20",
\ar@{.}"20";"40",
\ar@{.}"40";"60",
\ar@{.}"60";"80",
\ar@{.}"80";"100",
\ar@{.}"100";"120",
\ar@{.}"11";"31",
\ar@{.}"31";"51",
\ar@{.}@/^.3pc/"51";"71",
\ar@{.}"71";"91",
\ar@{.}"91";"111",
\ar@{.}"02";"22",
\ar@{.}"22";"42",
\ar@{.}"42";"62",
\ar@{.}"62";"82",
\ar@{.}"82";"102",
\ar@{.}"102";"122",
\end{xy}
\]
Then again, $\sigma_*$ acts as reflection in the central horizontal line, and
$\widehat{\nu}_*\simeq\tau^{-3}$, where $\tau$ denotes the Auslander--Reiten translation
in $\stmod\widehat\L$.
Thus, for example, the automorphisms $\widehat{\nu}\sigma$ and $\widehat{\nu}^2$ of
$\widehat{\L}$ satisfy $(\widehat{\nu}\sigma)_*(\U)=\U$ and $(\widehat{\nu}^2)_*(\U)=\U$
 and consequently, the orbit algebras $\widehat{\L}/(\widehat{\nu}\sigma)$ and
 $\widehat{\L}/(\widehat{\nu}^2)$ are $2$-re\-pre\-sen\-ta\-tion-fin\-ite self-injective.
We remark that $\widehat{\L}/(\widehat{\nu}\sigma)$ is isomorphic to the preprojective algebra
$\Pi_2(A_3)$ of the Dynkin diagram $A_3$, while $\widehat{\L}/(\widehat{\nu}^2)$ is the
$2$-fold trivial extension algebra of $\L$ (see Section~\ref{section: applications}).
\end{ex}

Another example illustrating Theorem~\ref{basic corollary} is given in
Section~\ref{toyex}.

\subsection{Applications} \label{section: applications}
Below we give some examples of how Theorem~\ref{basic corollary} can be used to show
$d$-re\-pre\-sen\-ta\-tion-fin\-ite\-ness of some well-known types of self-injective algebras.
First, we show how the first part of Riedtmann's result follows from
Theorem~\ref{basic corollary}.

\begin{proof}[Proof of Theorem~\ref{riedtmann}(a)]
Let $\L$ be a tilted algebra of Dynkin type and $\phi$ an admissible automorphism
of $\widehat\L$. Then the corresponding path algebra $H$ may be viewed as the
endomorphism algebra of a tilting complex $T$ in $\dml$, and
$\gldim(\End_{\dml}(T))=\gldim H\le1$. 
Moreover, since $\U_1(H)=\dm{H}\simeq\dml$, it follows that $\U_1(T)=\dml$, so
$\U_1(T)$ is clearly $\phi$-equivariant.
Therefore $\widehat\L/\phi$, is $1$-re\-pre\-sen\-ta\-tion-fin\-ite and self-injective by
Theorem~\ref{basic corollary}.
\end{proof}

It is natural to view $d$-re\-pre\-sen\-ta\-tion-fin\-ite algebras of global dimension $d$ as
generalizations of re\-pre\-sen\-ta\-tion-fin\-ite hereditary algebras.
If $\L$ is $d$-re\-pre\-sen\-ta\-tion-fin\-ite and $\gldim\L=d$ then $\mod\L$ has a unique basic
$d$-cluster-tilting module $M$, given as $M=\bigoplus_{\ell\ge0}\tau_d^{\ell}(I)$, where
  $I$ is a basic injective cogenerator of $\L$ and 
\[\tau_d=D\Ext^d_{\L}(-,\L):\mod\L\to\mod\L\]
the \emph{$d$-Auslander--Reiten translation} \cite[Proposition~1.3]{iyama11}.

Corollary~\ref{trivext} below extends a famous theorem of Tachikawa--Yamagata
\cite{tachikawa80,yamagata81} stating that the trivial extension algebras of
re\-pre\-sen\-ta\-tion-fin\-ite hereditary algebras are again representation finite.

The \emph{$n$-fold trivial extension algebra} of $\L$ is defined as
$T_n(\L)=\widehat\L/\widehat{\nu}^n$; where $\widehat{\nu}=\nu_{\widehat{\L}}$ is the Nakayama 
automorphism of $\widehat\L$, and $n\ge1$.
In particular, $T_1(\L)$ is the usual trivial extension algebra $T(\L)$.
For $n\ge2$, $T_n(\L)$ may be viewed as the matrix algebra
$$
\begin{pmatrix}
\L &&&&D\L \\
D\L&\L\\
&\ddots&\ddots \\
&&\ddots&\L\\
&&&D\L& \L
\end{pmatrix}
\:,$$ 
where the multiplications $\Lambda\cdot D\Lambda$ and $D\Lambda\cdot\Lambda$ are defined
by the $\L$--$\L$-bimodule structure of $D\L$.
We remark that $T_n(\L)$ is a finite-dimensional self-injective $k$-algebra, and that
there is a natural embedding
$\mathop{\mathrm{mod}}\nolimits \Lambda \to \mathop{\mathrm{mod}}\nolimits T_n(\Lambda)$, given by the projection $T_n(\Lambda)\to \Lambda$ onto the first diagonal entry.
The functor $(\nu_{T_n(\L)})_*:\mod T_n(\L)\to \mod T_n(\L)$, induced by the Nakayama
automorphism $\nu_{T_n(\L)}$ of $T_n(\L)$, satisfies 
$(\nu_{T_n(\L)})_* \simeq D\Hom_{T_n(\L)}(-,T_n(\L))$. 

\begin{cor}\label{trivext}
  Let $\L$ be a basic $d$-re\-pre\-sen\-ta\-tion-fin\-ite algebra of global dimension $d$, and $M$
  the unique basic $d$-cluster-tilting $\L$-module. 
  Then, for any $\ell\ge1$, the $d\ell$-fold trivial extension algebra $T_{d\ell}(\L)$ is
    $d$-rep\-re\-sen\-ta\-tion-finite, and the $T_{d\ell}(\L)$-module 
    \begin{align*}
      U &= T_{d\ell}(\L) \oplus \left(\bigoplus_{i=0}^{\ell(d+1)-1}\left((\nu_{T_{d\ell}(\L)})_*\Omega\right)^i(M/\L)\right) \oplus
      \left(\bigoplus_{j=0}^{\ell-1}\left((\nu_{T_{d\ell}(\L)})_*\Omega^{d+1}\right)^j(\L)\right) 
    \end{align*}
    is a basic $d$-cluster-tilting module.
\end{cor}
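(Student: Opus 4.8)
The plan is to obtain the statement from Theorem~\ref{basic corollary}, applied to the tilting complex $T=\L\in\dml$ and the automorphism $\phi=\widehat{\nu}^{\,d\ell}$ of $\widehat{\L}$, for which $\widehat{\L}/\phi=T_{d\ell}(\L)$. Being $d$-representation-finite of global dimension $d$, the algebra $\L$ is $\nu_d$-finite, and $\gldim\End_{\dml}(\L)=\gldim\L=d$; hence Proposition~\ref{U_n} shows that $\U:=\U_d(\L)=\add\{\nu_d^i(\L)\mid i\in\Z\}$ is a locally bounded $d$-cluster-tilting subcategory of $\dml\simeq\stmod\widehat{\L}$. Since $T_{d\ell}(\L)$ is finite-dimensional, $\phi$ is admissible by Lemma~\ref{admissibility}. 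It remains to verify that $\U$ is $\phi$-equivariant and to identify the resulting $d$-cluster-tilting module.

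The crucial point is that the Nakayama (Serre) functor preserves $\U$, that is, $\nu(\U)=\U$. On the one hand, $\nu(\L)=D\L=I$ is the injective cogenerator, hence a summand of $M=\bigoplus_{s\ge0}\tau_d^s(I)$ (its $s=0$ term); since $\add M=\U\cap\mod\L$ by \cite[Theorem~1.23]{iyama11}, we get $D\L\in\U$, whence $\nu(\U)=\add\{\nu_d^i(D\L)\mid i\in\Z\}\subseteq\U$ by $\nu_d$-stability of $\U$. On the other hand, $d$-representation-finiteness forces every indecomposable projective $P$ to occur as a summand of $M$, say $P=\tau_d^{s}(J)=\nu_d^{s}(J)$ for an indecomposable injective $J$ and some $s\ge0$ (again by \cite[Theorem~1.23]{iyama11}); hence $\L\in\add\{\nu_d^i(D\L)\}=\nu(\U)$, and therefore $\U\subseteq\nu(\U)$ as well. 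Thus $\nu(\U)=\U$, and consequently $\nu^m(\U)=\U$ for all $m\in\Z$.

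Equivariance is then a one-line computation. Under \eqref{happel equivalence} one has $\widehat{\nu}_*\simeq\nu\Omega^{-1}$ (equivalently $\nu\simeq\widehat{\nu}_*\Omega$, as used in the formula $\U_d(\L)=\add\{(\widehat{\nu}_*\Omega^{d+1})^i(\L)\}$ above), while $\nu_d=\nu\Omega^{d}$ by definition. Therefore
\[
\phi_*=\widehat{\nu}_*^{\,d\ell}=\nu^{d\ell}\Omega^{-d\ell}=\nu^{\ell(d+1)}\circ\nu_d^{-\ell},
\]
and since both $\nu^{\ell(d+1)}$ and $\nu_d^{-\ell}$ preserve $\U$, so does $\phi_*$; that is, $\U$ is $\phi$-equivariant. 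Theorem~\ref{basic corollary}\eqref{basiccor1} now yields that $T_{d\ell}(\L)$ is $d$-representation-finite and self-injective. To exhibit the module, I would apply the second part of Theorem~\ref{basic corollary}: for any cross-section $S$ of the $\phi_*$-orbits of $\ind\U$, the module $V=(\widehat{\L}/\phi)\oplus\bigoplus_{X\in S}F_*(X)$ is basic $d$-cluster-tilting. I would choose $S$ to be the union of $\{\nu^i(X)\mid X\in\ind(M/\L),\ 0\le i<\ell(d+1)\}$ and $\{\nu_d^{\,j}(P)\mid P\in\ind\L,\ 0\le j<\ell\}$, splitting $\ind M$ into its non-projective and projective summands. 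Using $F_*\widehat{\nu}_*\simeq(\nu_{T_{d\ell}(\L)})_*F_*$ and $F_*\Omega\simeq\Omega F_*$, together with $\nu=\widehat{\nu}_*\Omega$ and $\nu_d=\widehat{\nu}_*\Omega^{d+1}$, the push-downs of these objects become $((\nu_{T_{d\ell}(\L)})_*\Omega)^i(M/\L)$ and $((\nu_{T_{d\ell}(\L)})_*\Omega^{d+1})^j(\L)$, which is exactly the module $U$.

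The main obstacle is the verification that the proposed $S$ really is a cross-section, i.e.\ that these two families exhaust the $\phi_*$-orbits of $\ind\U$ without repetition. This requires a combinatorial analysis of the $\nu$-orbits of $\ind\U$ --- locating the injective and projective summands of $M$ within them --- and of the action of $\phi_*=\nu^{\ell(d+1)}\nu_d^{-\ell}$ on these orbits; the periods $\ell(d+1)$ and $\ell$ appearing on the non-projective and projective parts respectively reflect the two descriptions of $\U$, via the Serre functor $\nu$ and via $\nu_d$.
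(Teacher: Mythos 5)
Your strategy coincides with the paper's: apply Theorem~\ref{basic corollary} to $\phi=\widehat{\nu}^{d\ell}$, establish $\nu(\U_d(\L))=\U_d(\L)$, compute $\phi_*\simeq\nu^{\ell(d+1)}\nu_d^{-\ell}$ (this is exactly the paper's computation \eqref{nuofU}, via \eqref{serrecomm}), and then read off the module from a cross-section of the $\phi_*$-orbits of $\ind\U_d(\L)$. One genuine local difference: for $\nu(\U_d(\L))=\U_d(\L)$ the paper simply cites \cite[Theorem~3.1]{io13}, whereas you argue directly from the structure of $M=\bigoplus_{s\ge0}\tau_d^s(D\L)$, using $\add M=\U_d(\L)\cap\mod\L$ and the identification of $\tau_d^s(J)$ with $\nu_d^s(J)$ from \cite[Theorem~1.23]{iyama11}; that substitute argument is sound and essentially self-contained, which is a small gain in transparency.

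The genuine gap is exactly where you flag it: you never prove that your proposed set $S$ is a cross-section of the $\phi_*$-orbits, and this is the only technically substantive content of the second assertion. Without it you obtain $d$-representation-finiteness of $T_{d\ell}(\L)$, but not that the displayed module $U$ is $d$-cluster-tilting and basic: a priori your two families could miss an orbit, or hit one twice (say $\nu^i(X)\simeq\phi_*^m(\nu_d^j(P))$ with $m\neq0$), and ruling this out is not a formality. The paper closes precisely this gap with the elementary combinatorial Lemma~\ref{cross} on commuting permutations: taking $I=\ind\U_d(\L)$ and $I_+=\ind\{\nu_d^{-i}(\L)\mid i>0\}$, both $f=\nu_d^{-1}$ and $g=\nu$ stabilize $I_+$, their iterates exhaust $I$ and have empty intersection ($\nu_d$-finiteness enters here), the fundamental domains $I(f,i)=f^i(I_+)\setminus f^{i+1}(I_+)$ satisfy $I(\nu_d^{-1},-1)=\ind(\add\L)$ and $I(\nu,0)=\ind(\add(M/\L))$ (cf.\ \cite[Lemma~4.9]{io11}), and part (2) of the lemma then yields your $S$ as a cross-section for $f^{\ell}g^{\ell(d+1)}=\phi_*$. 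So your guess for $S$ is correct and the final push-down identification via $F_*\widehat{\nu}_*\simeq(\nu_{T_{d\ell}(\L)})_*F_*$ matches the paper, but to complete the proof you must supply this lemma (or an equivalent orbit-counting argument) and verify its hypotheses for $\nu_d^{-1}$ and $\nu$; the one-sentence appeal to ``a combinatorial analysis'' does not yet constitute that verification.
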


Setting $d=\ell=1$ above recovers the classical result by Tachikawa--Yamagata.
Another instance of Corollary~\ref{trivext} was given in Example~\ref{firstexample}, where we
saw that $T_{2}(\L)=\widehat{\L}/\widehat{\nu}^{2}$ was $2$-re\-pre\-sen\-ta\-tion-fin\-ite.

We now consider a wider class of algebras.
Let $a,b\in\Z_{>0}$. An algebra $\L$ of finite global dimension is said to be
\emph{fractionally $\frac{b}{a}$-Calabi--Yau} if $\nu^a\simeq[b]$ holds in $\dml$, and
\emph{twisted fractionally $\frac{b}{a}$-Calabi--Yau} if there exists an automorphism
$\phi$ of $\L$ such that $\nu^a\simeq[b]\circ\phi_*$, 
where $\phi_*:\dml\to\dml$ is the functor induced by the functor
$\phi_*:\mod\L\to\mod\L$ (see Section~\ref{galois coverings}).
For example, any $d$-re\-pre\-sen\-ta\-tion-fin\-ite algebra of global dimension $d$ is
twisted fractionally Calabi--Yau \cite[Theorem~1.1(a)]{hi11a}.

\begin{cor} \label{fracCY}
Let $\L$ be a basic $k$-algebra that is twisted $\frac{b}{a}$-Calabi--Yau, and $d$ a positive integer satisfying
$d\ge\gldim\L$. Set $g=\gcd(d+1,a+b)$, and $n=\ell(ad-b)/g$, where $\ell\ge1$. Then
$T_n(\L)$ is $d$-re\-pre\-sen\-ta\-tion-fin\-ite, and 
\begin{equation}\label{twfraccy}
  V = \, T_n(\L) \, \oplus \bigoplus_{i=0}^{\ell(a+b)/g-1}\left((\nu_{T_n(\L)})_*\Omega^{d+1}\right)^i(\L) 
\end{equation}
is a basic $d$-cluster-tilting $T_n(\L)$-module.
\end{cor}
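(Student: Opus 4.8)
The plan is to apply Theorem~\ref{basic corollary} to the tilting complex $T=\L\in\dml$ and the automorphism $\phi=\widehat\nu^{\,n}$ of $\widehat\L$, whose orbit algebra is $\widehat\L/\widehat\nu^{\,n}=T_n(\L)$ by definition. Since $\End_{\dml}(\L)=\L$ has global dimension $\gldim\L\le d$, the hypotheses left to check are that $\L$ is $\nu_d$-finite, that $\widehat\nu^{\,n}$ is admissible, and --- the crux --- that $\U_d(\L)$ is $\widehat\nu^{\,n}$-equivariant. For $\nu_d$-finiteness, let $\psi$ denote the twisting automorphism of the twisted Calabi--Yau property, so that $\nu^a\simeq[b]\circ\psi_*$ in $\dml$. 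Then $\nu_d^a=\nu^a[-da]\simeq[-(ad-b)]\circ\psi_*$, and since $\psi_*$ preserves the standard $t$-structure while $[-(ad-b)]$ sends $\mathcal{D}^{\ge0}(\L)$ into $\mathcal{D}^{\ge ad-b}(\L)$, we obtain $\nu_d^a(\mathcal{D}^{\ge0}(\L))\subseteq\mathcal{D}^{\ge1}(\L)$ as soon as $ad-b\ge1$ (and $ad>b$ is forced by $n\ge1$). Thus $\L$ is $\nu_d$-finite, so by Proposition~\ref{U_n} the subcategory $\U_d(\L)$ is a locally bounded $d$-cluster-tilting subcategory of $\dml\simeq\stmod\widehat\L$, of the form $\U_d(\L)=\add\{(\widehat\nu_*\Omega^{d+1})^i(\L)\mid i\in\Z\}$.

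The heart of the argument is a bookkeeping computation in the abelian group of autoequivalences of $\stmod\widehat\L$ generated by $\widehat\nu_*$ and $\Omega$, which commute up to natural isomorphism. Under the identification $\nu\simeq\widehat\nu_*\Omega$ and $[1]=\Omega^{-1}$, the twisted Calabi--Yau relation becomes $\widehat\nu_*^{\,a}\Omega^{a+b}\simeq\psi_*$; as $\psi$ is an automorphism of $\L$ we have $\psi_*(\L)\cong\L$, so, recording an autoequivalence $\widehat\nu_*^{\,c}\Omega^{e}$ by its exponent vector $(c,e)$, the element $(a,a+b)$ fixes $\L$ and, commuting with $\nu_d$, fixes every object in the $\nu_d$-orbit of $\L$. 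With $\nu_d\leftrightarrow(1,d+1)$ and $\widehat\nu_*^{\,n}\leftrightarrow(n,0)$, a short calculation shows that whenever $s(ad-b)=n(d+1)$ for some integer $s$, setting $j=i-\ell(a+b)/g$ gives $(n+i-j,(i-j)(d+1))=s(a,a+b)\in\langle(a,a+b)\rangle$ and hence $\widehat\nu_*^{\,n}(\nu_d^{\,i}\L)\cong\nu_d^{\,j}\L$. The choice $n=\ell(ad-b)/g$ with $g=\gcd(d+1,a+b)$ yields $s=\ell(d+1)/g\in\Z$ (because $g\mid d+1$) and $j=i-\ell(a+b)/g\in\Z$ (because $g\mid a+b$); moreover $g\mid ad-b$ since $ad-b=a(d+1)-(a+b)$, so $n$ is a genuine positive integer. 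Therefore $\widehat\nu_*^{\,n}(\nu_d^{\,i}\L)\cong\nu_d^{\,i-\ell(a+b)/g}(\L)\in\U_d(\L)$ for all $i$, which gives $\widehat\nu_*^{\,n}(\U_d(\L))=\U_d(\L)$, i.e.\ the required $\widehat\nu^{\,n}$-equivariance.

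With $\widehat\nu^{\,n}$ admissible (the standard fact underlying finite-dimensionality of $T_n(\L)$, \cf\ Lemma~\ref{admissibility}), Theorem~\ref{basic corollary}\eqref{basiccor1} now shows that $T_n(\L)$ is $d$-representation-finite and self-injective. For the explicit module we invoke part~(b). Since $\nu_d^a\simeq[-(ad-b)]\circ\psi_*$ raises the minimal cohomological degree by $ad-b>0$, the minimal degree of $\nu_d^{\,i}\L$ is unbounded in $i$; hence the $\nu_d^{\,i}\L$ are pairwise non-isomorphic and the $\nu_d$-orbit is free. Combined with the index shift $i\mapsto i-\ell(a+b)/g$ computed above, this shows that $\{\nu_d^{\,i}\L\mid 0\le i<\ell(a+b)/g\}$ is a cross-section $S$ of the $\widehat\nu_*^{\,n}$-orbits of $\ind\U_d(\L)$. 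Finally, as the push-down functor $F_*$ intertwines $\widehat\nu_*$ with $(\nu_{T_n(\L)})_*$ and commutes with $\Omega$, and $F_*(\L)=\L$, we get $F_*(\nu_d^{\,i}\L)\cong\bigl((\nu_{T_n(\L)})_*\Omega^{d+1}\bigr)^i(\L)$; substituting into the formula $V=(\widehat\L/\phi)\oplus\bigoplus_{X\in S}F_*(X)$ of Theorem~\ref{basic corollary}(b) produces exactly~\eqref{twfraccy}.

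The main obstacle is the equivariance-and-counting step of the second and third paragraphs. One must justify that $\widehat\nu_*$, $\Omega$ and $\psi_*$ commute up to natural isomorphism so that the exponent-vector calculus is legitimate, verify the translation $\widehat\nu_*^{\,a}\Omega^{a+b}\simeq\psi_*$ of the twisted Calabi--Yau identity through Happel's equivalence, and --- for the module in part~(b) --- establish the freeness of the $\nu_d$-orbit, so that the cross-section has exactly $\ell(a+b)/g$ terms rather than a proper divisor thereof. By contrast, the intertwining properties of $F_*$ and the admissibility of $\widehat\nu^{\,n}$ are routine consequences of the Galois-covering machinery of Section~\ref{galois coverings}.
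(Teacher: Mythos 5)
Your proposal is correct and takes essentially the same route as the paper's proof: both apply Theorem~\ref{basic corollary} with $T=\L$ and $\phi=\widehat{\nu}^{\,n}$, both deduce $\nu_d$-finiteness from $\nu_d^a\simeq[-(ad-b)]\circ\psi_*$ with $ad-b>0$ (the paper cites \cite[Proposition~2.7(c)]{himo14} for $b/a<\gldim\L$, whereas you extract positivity from $n\ge1$), and your exponent-vector bookkeeping is precisely the paper's gcd-arithmetic with $p=(d+1)/g$, $q=(a+b)/g$ establishing $\widehat{\nu}_*^{\,n}\simeq\nu_d^{-\ell q}\circ\psi_*^{\,\ell p}$, from which equivariance and the cross-section follow. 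One fine point: for the cross-section and basicness of $V$ you need not merely that the objects $\nu_d^{\,i}\L$ are pairwise non-isomorphic (your minimal-degree argument) but that their sets of indecomposable summands are pairwise disjoint, which follows from $\Hom_{\dml}(\nu_d^{\,i}\L,\nu_d^{\,j}\L)=0$ for $i<j$ (a consequence of $\gldim\L\le d$, recorded in the proof of Lemma~\ref{preproj-orbit}) --- a step the paper's own proof likewise leaves implicit.
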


\begin{ex}\label{tubular}
Let $\L$ be a canonical algebra of tubular type, that is, an algebra of one of the four
types listed in Figure~\ref{fig:tubular}.

\begin{figure}[h]
\begin{minipage}[b]{.35\textwidth}
\[\xymatrix@R=.8em{
&\bullet\ar[dr]|{x_1}\\
\bullet\ar[r]|{x_2}\ar[ru]|{x_1}\ar[rd]|{x_3}\ar[rdd]|{x_4}&\bullet\ar[r]|{x_2}&\bullet\\
&\bullet\ar[ru]|{x_3}\\
&\bullet\ar[ruu]|{x_4}
}\]
\center{\small $x_1^2+x_2^2+x_3^2=0,\; x_1^2+\lambda x_2^2+x_4^2=0$,\; $\l\in k\setminus\{0,1\}$,
}
\caption*{Type $(2,2,2,2)$}
\end{minipage}
\begin{minipage}[b]{.60\textwidth}
\[
\xymatrix@R=1.25em{
&&\bullet\ar[rrd]|{x_1}\\
\bullet\ar[r]|{x_2}\ar[rru]|{x_1}\ar[rd]|{x_3}&\bullet\ar[r]|{x_2}&\bullet\ar[r]|{x_2}&\bullet\ar[r]|{x_2}&\bullet\\
&\bullet\ar[r]|{x_3}&\bullet\ar[r]|{x_3}&\bullet\ar[ru]|{x_3}
}\]
\center{\small $x_1^2+x_2^4+x_3^4=0$,}
\caption*{Type $(2,4,4)$}
\end{minipage}

\vspace*{1em}
\begin{minipage}[b]{.35\textwidth}
\[
\xymatrix@R=1.1em{
&\bullet\ar[r]|{x_1}&\bullet\ar[dr]|{x_1}\\
\bullet\ar[r]|{x_2}\ar[ru]|{x_1}\ar[rd]|{x_3}&\bullet\ar[r]|{x_2}&\bullet\ar[r]|{x_2}&\bullet\\
&\bullet\ar[r]|{x_3}&\bullet\ar[ru]|{x_3}
}\]
\center{\small $x_1^3+x_2^3+x_3^3=0$,
}
\caption*{Type $(3,3,3)$}
\end{minipage}
\begin{minipage}[b]{.60\textwidth}
\[
\xymatrix@R=1.15em@C=2em{
&&&\bullet\ar[rrrd]|{x_1}\\
\bullet\ar[rr]|{x_2}\ar[rrru]|{x_1}\ar[rd]|{x_3}&&\bullet\ar[rr]|{x_2}&&\bullet\ar[rr]|{x_2}&&\bullet\\
&\bullet\ar[r]|{x_3}&\bullet\ar[r]|{x_3}&\bullet\ar[r]|{x_3}&\bullet\ar[r]|{x_3}&\bullet\ar[ru]|{x_3}
}\]
\center{\small $x_1^2+x_2^3+x_3^6=0$.}
\caption*{Type $(2,3,6)$}
\end{minipage}
\caption{Canonical algebras of tubular type.}\label{fig:tubular}
\end{figure}

These are tame algebras of polynomial growth \cite{sy08}, and moreover, they are 
$\frac{p}{p}$-Calabi--Yau, where $p$ is given by the following list.
\[\begin{array}{|c||c|c|c|c|} \hline
\mbox{type}&(2,2,2,2)&(3,3,3)&(2,4,4)&(2,3,6)\\ \hline\hline
p&2&3&4&6\\ \hline
n&\frac{2(d-1)}{\gcd(4,d+1)}&\frac{3(d-1)}{\gcd(6,d+1)}&\frac{4(d-1)}{\gcd(8,d+1)}&\frac{6(d-1)}{\gcd(12,d+1)}\\ \hline
\end{array}\]
Since canonical algebras have global dimension 2, Corollary~\ref{fracCY} implies that, for
$n$ as in the list, $T_{n\ell}(\L)$ is $d$-re\-pre\-sen\-ta\-tion-fin\-ite for any $\ell\ge1$ and
$d\ge2$.

In particular, for any $\ell\ge1$:
if $\L$ is of type $(2,2,2,2)$ then $T_\ell(\L)$   is $3$-re\-pre\-sen\-ta\-tion-fin\-ite,
if $\L$ is of type $(3,3,3)$   then $T_\ell(\L)$   is $2$-re\-pre\-sen\-ta\-tion-fin\-ite,
if $\L$ is of type $(2,4,4)$   then $T_{2\ell}(\L)$ is $3$-re\-pre\-sen\-ta\-tion-fin\-ite, and
if $\L$ is of type $(2,3,6)$   then $T_{2\ell}(\L)$ is $2$-re\-pre\-sen\-ta\-tion-fin\-ite.
\end{ex}

We give one more example of an application of Theorem~\ref{basic corollary} to a particular class of $n$-fold trivial extension algebras.
Let $r$ be a positive integer. A $d$-re\-pre\-sen\-ta\-tion-fin\-ite algebra $\L$ of global
dimension $d$ is said to be \emph{$r$-homogeneous} if $\nu_d^r(\L)\simeq\L[-d]$ in
$\dm{\L}$ or, equivalently, $\tau_d^{r-1}(D\L)\simeq\L$ in $\mod\L$ \cite{hi11a}.

\begin{cor}\label{trivrf}
Let $\L$ be a basic $r$-homogeneous $d$-re\-pre\-sen\-ta\-tion-fin\-ite algebra of global dimension
$d$. Then the $(r-1)$-fold trivial extension algebra $T_{r-1}(\L)$ of $\L$ is
$(r-1)(d+1)$-re\-pre\-sen\-ta\-tion-fin\-ite, and
$M=T_{r-1}(\L)\oplus \L$ is a basic $(r-1)(d+1)$-cluster-tilting $T_{r-1}(\L)$-module.
\end{cor}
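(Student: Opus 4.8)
The plan is to exhibit $T_{r-1}(\L)$ as a special case of Corollary~\ref{fracCY}, once we record that an $r$-homogeneous algebra is twisted fractionally Calabi--Yau with the expected fraction. First I would unwind the homogeneity hypothesis: $r$-homogeneity means $\nu_d^r(\L)\simeq\L[-d]$ in $\dml$, and since $\nu_d=\nu[-d]$ this reads $\nu^r(\L)\simeq\L[(r-1)d]$ on the object $\L$. Because $\L$ is $d$-representation-finite of global dimension $d$, it is twisted fractionally Calabi--Yau by \cite[Theorem~1.1(a)]{hi11a}, and the relation just obtained identifies the fraction: $\L$ is twisted $\tfrac{(r-1)d}{r}$-Calabi--Yau, \ie, $\nu^r\simeq[(r-1)d]\circ\phi_*$ for some automorphism $\phi$ of $\L$. (We may assume $r\ge2$; the case $r=1$ forces $\gldim\L=0$ and is vacuous.)

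Then I would invoke Corollary~\ref{fracCY} with $a=r$, $b=(r-1)d$, the integer $m:=(r-1)(d+1)$ in the role of ``$d$'', and $\ell=1$; the requirement $m\ge\gldim\L=d$ holds for $r\ge2$. The decisive point is a numerical collapse. Since
\[ m+1=(r-1)(d+1)+1=(r-1)d+r=a+b, \]
we get $g=\gcd(m+1,a+b)=m+1$, and then $am-b=r(r-1)(d+1)-(r-1)d=(r-1)\bigl((r-1)d+r\bigr)=(r-1)(m+1)$, so that
\[ n=\frac{\ell(am-b)}{g}=\frac{(r-1)(m+1)}{m+1}=r-1. \]
Corollary~\ref{fracCY} therefore yields that $T_{r-1}(\L)$ is $m$-representation-finite, that is, $(r-1)(d+1)$-representation-finite (and self-injective, being a trivial extension).

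It remains to read off the module. In \eqref{twfraccy} the summation index runs over $0\le i\le\ell(a+b)/g-1$, and here $\ell(a+b)/g=(m+1)/(m+1)=1$, so the sum degenerates to its single term $i=0$, namely $\L$ itself (viewed in $\mod T_{r-1}(\L)$ via the natural embedding $\mod\L\hookrightarrow\mod T_{r-1}(\L)$). Hence the module produced by Corollary~\ref{fracCY} is exactly $V=T_{r-1}(\L)\oplus\L=M$, as claimed.

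I expect the one genuine obstacle to lie in the first paragraph: upgrading the orbit isomorphism $\nu^r(\L)\simeq\L[(r-1)d]$ to the functorial twisted Calabi--Yau identity with the correct exponent $a=r$, which is where \cite{hi11a} is essential; the remaining steps are the arithmetic above. As a self-contained alternative bypassing Corollary~\ref{fracCY}, one can apply Theorem~\ref{basic corollary} directly with the tilting complex $T=\L$ and the admissible automorphism $\phi=\widehat\nu^{r-1}$ (admissible precisely because $T_{r-1}(\L)=\widehat\L/\widehat\nu^{r-1}$ is finite-dimensional, by Lemma~\ref{admissibility}): here $\End_{\dml}(\L)=\L$ has global dimension $d\le m$, and $\L$ is $\nu_m$-finite as $\gldim\L\le m-1$; using $\U_m(\L)=\add\{(\widehat\nu_*\Omega^{m+1})^i(\L)\mid i\in\Z\}$ from Proposition~\ref{U_n} together with the homogeneity relation $\Omega^{m+1}(\L)\simeq\widehat\nu_*^{-r}(\L)$, one finds $\U_m(\L)=\add\{(\widehat\nu_*^{\,r-1})^i(\L)\mid i\in\Z\}$, which is manifestly $\widehat\nu^{r-1}$-equivariant, so Theorem~\ref{basic corollary} applies.
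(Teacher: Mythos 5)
Your main argument is correct, and it takes a genuinely different route from the paper's. The paper proves the corollary directly from Theorem~\ref{basic corollary}: citing \cite[Theorem~1.3]{hi11a}, $r$-homogeneity yields the functorial identity $\nu\simeq\nu_d^{1-r}\phi_*$ for some $\phi\in\Aut(\L)$, whence $\widehat{\nu}_*^{r-1}\simeq\nu_{(r-1)(d+1)}\inv\phi_*$ on $\stmod\widehat{\L}\simeq\dml$; this gives $\widehat{\nu}^{r-1}$-equivariance of $\U_{(r-1)(d+1)}(\L)$ and shows that $\ind\L$ is a cross-section of the $\widehat{\nu}_*^{r-1}$-orbits, so $T_{r-1}(\L)\oplus\L$ is basic and $(r-1)(d+1)$-cluster-tilting. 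You instead specialize Corollary~\ref{fracCY} with $(a,b)=(r,(r-1)d)$ and $m=(r-1)(d+1)$ in the role of $d$; your arithmetic is right ($a+b=m+1$ forces $g=m+1$ and $n=r-1$, and the sum in \eqref{twfraccy} collapses to the single summand $\L$), and the side conditions $m\ge\gldim\L$ and $r\ge2$ are handled. This is exactly the move the paper itself makes in Proposition~\ref{homogeneoustrivext} (there for $T_{d\ell}$), so your route buys a uniform treatment of both corollaries at the price of running through the heavier machinery inside the proof of Corollary~\ref{fracCY}, while the paper's direct computation is shorter and produces the cross-section explicitly.

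One point deserves tightening. You cite \cite[Theorem~1.1(a)]{hi11a} and then let the orbit relation $\nu^r(\L)\simeq\L[(r-1)d]$ ``identify the fraction''; but that relation pins down only the ratio $b/a$, whereas the conclusion of Corollary~\ref{fracCY} depends on $a$ and $b$ individually through $g=\gcd(m+1,a+b)$ and $n=\ell(am-b)/g$ --- with exponents $(cr,\,c(r-1)d)$ one would conclude only that $T_{c(r-1)}(\L)$ is $m$-representation-finite. The identity with $a=r$ on the nose is \cite[Theorem~1.3]{hi11a} (equivalently, $\nu\simeq\nu_d^{1-r}\phi_*$), which is precisely the reference the paper uses; so the obstacle you flagged is real, and it is closed by the correct citation rather than by Theorem~1.1(a). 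Finally, your ``self-contained alternative'' via Theorem~\ref{basic corollary} with $T=\L$ and $\phi=\widehat{\nu}^{r-1}$ is, modulo presentation (you derive equivariance from the object-level relation $\Omega^{m+1}(\L)\simeq\widehat{\nu}_*^{-r}(\L)$, which indeed suffices for equivariance, instead of the functorial identity), essentially the paper's own proof; note that for the basicness of $M$ via Theorem~\ref{basic corollary}(b) the functorial identity \eqref{homonu} is what the paper uses to see that $\ind\L$ meets each orbit exactly once, a point your sketch leaves to the fracCY route.
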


In particular, the trivial extension algebra of a $2$-homogeneous
$d$-re\-pre\-sen\-ta\-tion-fin\-ite algebra is $(d+1)$-re\-pre\-sen\-ta\-tion-fin\-ite.

The next application is to higher preprojective algebras, which were introduced in
\cite{io11,io13,keller11}.
For a finite-dimensional $k$-algebra $\L$ with $\gldim\L\le d$, the \emph{$(d+1)$-preprojective algebra} is defined as the tensor algebra
\[\Pi=\Pi(\L)=T_\Lambda\Ext^d_\L(D\L,\L)\]
of the $\L$-bimodule $\Ext^d_\L(D\L,\L)$. Thus $\Pi$ has a natural structure of a
$\Z$-graded $k$-algebra: $\Pi=\bigoplus_{\ell\ge0}\Pi_\ell$. 
More generally, for a positive integer $n$, we define the \emph{$n$-fold $(d+1)$-preprojective
algebra} of $\L$ as the matrix algebra 
\[\Pi^{(n)}=\Pi^{(n)}(\L)=(\Pi_{i-j+n\Z})_{1\le i,j\le n},\]
where $\Pi_{j+n\Z}=\bigoplus_{\ell\in\Z}\Pi_{j+n\ell}$. Note that
$\Pi^{(1)}=\Pi$. 
We apply our construction to give the following result, generalizing parts of \cite[Theorem~2.2]{gls06},\cite[Theorem~1]{gls07} by Geiss--Leclerc--Schr\"oer for the classical preprojective algebras
(i.e., $d=1$ and $\ell=1$) and of \cite[Corollaries~3.4 and 4.16]{io13}
due to Iyama--Oppermann for the higher preprojective algebras ($\ell=1$).

\begin{cor} \label{preproj}
Let $\L$ be a $d$-re\-pre\-sen\-ta\-tion-fin\-ite algebra of global dimension $d$ and $n$ a
positive integer. Then
the $n$-fold $(d+1)$-preprojective algebra $\Pi^{(n)}(\L)$ of $\L$ is
$(d+1)$-re\-pre\-sen\-ta\-tion-fin\-ite and self-injective.
\end{cor}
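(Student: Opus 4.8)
The plan is to realize $\Pi^{(n)}(\L)$ as an orbit algebra of an orbital $d$-cluster-tilting subcategory and then transfer a $(d+1)$-cluster-tilting structure across the resulting Galois covering by means of Corollary~\ref{bijection}, with Theorem~\ref{basic corollary} as a template. Since $\gldim\L=d$, the algebra $\L$ is $\nu_{d+1}$-finite, and by Proposition~\ref{U_n} (applied with parameter $d+1$ and tilting complex $T=\L$, so that $\End_{\dml}(\L)=\L$ has global dimension $d\le d+1$) the subcategory $\U_{d+1}(\L)=\add\{\nu_{d+1}^i(\L)\mid i\in\Z\}$ is a locally bounded $(d+1)$-cluster-tilting subcategory of $\dml\simeq\stmod\widehat\L$.

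First I would identify the algebra. Writing $\nu\L\simeq D\L$ and $\nu_d=\nu\circ[-d]$, one computes $\Ext^d_\L(D\L,\L)\simeq\Hom_{\dml}(\nu_d\L,\L)\simeq\Hom_{\dml}(\L,\nu_d^{-1}\L)$, and more generally the graded piece satisfies $\Pi_\ell\simeq\Hom_{\dml}(\L,\nu_d^{-\ell}\L)$ via the composition-versus-tensor identification. Consequently the $\Z$-graded covering of $\Pi=\Pi(\L)$ is equivalent to the orbital $d$-cluster-tilting subcategory $\U_d(\L)=\add\{\nu_d^i(\L)\}\subset\dml$, with the grading shift corresponding to the action of $\nu_d$; from the matrix description one then reads off an isomorphism $\Pi^{(n)}(\L)\cong\U_d(\L)/\nu_d^n$. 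I emphasise that this is genuinely the orbit of the $d$-cluster-tilting subcategory, not of all of $\widehat\L$ (already for $\L=kA_2$ the preprojective algebra is not a trivial extension of $\L$, so it is not of the form $\widehat\L/\phi$).

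Next, since $\U:=\U_d(\L)$ is a locally bounded $d$-cluster-tilting subcategory of the triangulated category $\dml$, which carries the Serre functor $\nu$, higher Auslander--Reiten theory provides a canonical $(d+1)$-cluster-tilting subcategory of $\mod\U$, given by the essential image of the restricted Yoneda functor $\dml\to\mod\U$, $X\mapsto\Hom_{\dml}(-,X)|_{\U}$. Because $\nu$, and hence $\nu_d$, acts compatibly on $\dml$ and stabilises $\U$, this $(d+1)$-cluster-tilting subcategory is $\nu_d^n$-equivariant, and it is locally bounded since $\U$ is. I would then check that $\nu_d^n$ acts freely on $\ind\U$ (equivalently that $\Pi^{(n)}(\L)$ is finite-dimensional, in the spirit of Lemma~\ref{admissibility}), which holds because the objects $\nu_d^i(\L)$ are pairwise non-isomorphic. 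Corollary~\ref{bijection} then yields that local $(d+1)$-representation-finiteness passes to the orbit, so $\Pi^{(n)}(\L)\cong\U/\nu_d^n$ is $(d+1)$-representation-finite. Self-injectivity follows from the twisted fractionally Calabi--Yau property of $\L$ (valid by \cite{hi11a}, as $\L$ is $d$-representation-finite of global dimension $d$): the relation $\nu^a\simeq[b]\circ\psi_*$ makes the Serre functor periodic relative to $\nu_d$, so it descends to an autoequivalence of the orbit and the orbit algebra is self-injective (cf.\ \cite{io13}).

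The main obstacle is the interface between the two descriptions of the cluster-tilting data. Translating the tensor/matrix definition of $\Pi^{(n)}(\L)$ into the clean orbit form $\U_d(\L)/\nu_d^n$ relies on the graded-piece computation above, but the genuinely new input is the passage from the $d$-cluster-tilting subcategory $\U$ of $\dml$ to a $(d+1)$-cluster-tilting subcategory of $\mod\U$, together with its $\nu_d^n$-equivariance and local boundedness; it is exactly here that the representation-finiteness parameter jumps from $d$ to $d+1$, and verifying that the hypotheses of Corollary~\ref{bijection} (invariance, freeness of the action, local boundedness) are simultaneously met is the crux of the argument.
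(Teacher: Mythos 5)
Your overall frame --- identify $\proj\Pi^{(n)}(\L)$ with the orbit category $\U/\nu_d^n$ of $\U=\U_d(\L)$ (your graded-piece computation is exactly Lemma~\ref{preproj-orbit}) and then push a $(d+1)$-cluster-tilting structure on $\mod\U$ across the covering $\U\to\U/\nu_d^n$, checking admissibility and equivariance --- is indeed the paper's strategy. But the step you yourself single out as the crux is where the argument genuinely fails: there is no ``canonical $(d+1)$-cluster-tilting subcategory of $\mod\U$ given by the essential image of the restricted Yoneda functor''. For $d\ge2$ that functor is \emph{dense}: given $M\in\mod\U$ with projective presentation $\Hom_{\U}(-,U_1)\xrightarrow{(-,f)}\Hom_{\U}(-,U_0)\to M\to0$, complete $f$ to a triangle $U_1\to U_0\to X\to U_1[1]$ in $\dml$; since $\Hom_{\dml}(-,U_1[1])|_{\U}=0$ by $d$-rigidity of $\U$, one gets $M\simeq\Hom_{\dml}(-,X)|_{\U}$. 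So the essential image is all of $\mod\U$, which is not even $(d+1)$-rigid, let alone $(d+1)$-cluster-tilting. (For $d=1$, where $\U=\dml$, the image consists of the representables, which by Serre duality are exactly the projective-injective objects of $\mod\U$ --- again not $2$-cluster-tilting.) Your unused opening observation that $\U_{d+1}(\L)$ is $(d+1)$-cluster-tilting in $\dml$ does not repair this: it lives in the wrong category, and you offer no mechanism transporting it to $\mod\U$.

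The existence of a locally bounded $(\nu_d^n)_*$-equivariant $(d+1)$-cluster-tilting subcategory of $\mod\U$ is precisely the nontrivial content here, and the paper obtains it from two substantial external inputs rather than a formal Yoneda argument. Setting $\Gamma=\stend_{\L}(\Pi)$, one has $\gldim\Gamma\le d+1$ and $\Gamma$ is $\nu_{d+1}$-finite by \cite{iyama11} ($\L$ is $d$-complete, hence $\Gamma$ is $(d+1)$-complete), and $\U_d(\L)\simeq\widehat{\Gamma}$ by \cite[Theorem~4.7]{io13}; consequently $\stmod\U\simeq\stmod\widehat{\Gamma}\simeq\dm{\Gamma}$, and Proposition~\ref{U_n} applied to $\Gamma$ produces the orbital $(d+1)$-cluster-tilting subcategory $\U_{d+1}(\Gamma)$. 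Its equivariance under $(\nu_{\L,d})_*^n$ --- the correct mechanism behind the parameter jump from $d$ to $d+1$ --- is Proposition~\ref{serre and serre}(c), which identifies $(\nu_{\L,d})_*$ with $\nu_{\Gamma,d+1}$ on $\stmod\U\simeq\dm{\Gamma}$; Theorem~\ref{basic corollary} then applies to $\Gamma$ with the automorphism of $\widehat{\Gamma}$ induced by $\nu_{\L,d}^n$ (admissible because $\Pi^{(n)}$ is finite-dimensional by $\nu_d$-finiteness of $\L$, cf.\ Lemma~\ref{admissibility}). Note finally that self-injectivity needs no Calabi--Yau input: it is automatic from the orbit construction, since $\U$ is a self-injective category by Proposition~\ref{serre and serre}(a) and orbit categories of self-injective categories are self-injective.
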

The corollaries~\ref{trivext}, \ref{fracCY}, \ref{trivrf} and~\ref{preproj} will be proved
in Section~\ref{pfother}. 
An example illustrating Corollary~\ref{preproj} is given in Section~\ref{sec:toypreproj}.

\subsection{Galois coverings and $d$-cluster-tilting} \label{galois coverings}
Our main result, Theorem~\ref{basic construction}, is largely a consequence of a more
general result, Theorem~\ref{stronger theorem} below. It is based on the classical theory
of Galois coverings of $k$-linear categories, initiated by Gabriel
\cite{gabriel2,gabriel-roiter}.

Let $\C$ be a skeletally small locally bounded $k$-linear Krull-Schmidt category and
$G$ an admissible group of automorphisms of $\C$ (see Definition \ref{define admissible}).
Then the orbit category $\C/G$ (see Section \ref{section: notation}) is again a
skeletally small locally bounded $k$-linear Krull-Schmidt category. 
The natural functor
\[ F:\C\to\C/G  \]
induces an exact functor
\[F^*:\Mod(\C/G)\to\Mod\C\]
called the \emph{pull-up}, given by $F^*(M)=M\circ F$.
This functor has a left adjoint
\[F_*:\Mod\C\to\Mod(\C/G)\]
called the \emph{push-down} \cite{gabriel-roiter}, which is also exact. 
If $\Hom_{\C}(-,y)\xrightarrow{(-,f)}\Hom_{\C}(-,x)\to M\to0$ is a projective presentation of
$M\in\mod\C$, then $F_*(M)$ is given as the cokernel of the induced map
$\Hom_{\C/G}(-,y)\xrightarrow{(-,F(f))}\Hom_{\C/G}(-,x)$. In particular, $F_*$ induces a
functor $F_*:\mod\C\to\mod(\C/G)$ between the categories of finitely presented modules of
$\C$ and $\C/G$, respectively.
For a more detailed presentation, see \cite[Section~6]{bl14}.

\begin{rmk}
  In the above, the purpose assuming that $\C$ is skeletally small is to ensure that the collection of morphisms between two objects $X,Y\in\Mod\C$ forms a set.
  As, in the present work, we only concern ourselves with finitely presented modules, this set-theoretic difficulty may be circumvented in the following way:
  For an infinite cardinal $\kappa$, let $\Mod^\kappa\C$ be the category of $\C$-modules $Y$ admitting an epimorphism
  \[ \bigoplus_{x\in\mathcal{X}} \Hom_\C(-,x)\to Y \,, \]
  where $\mathcal{X}$ is a collection of objects in $\C$ with $|\mathcal{X}|\le \kappa$.
  If $G$ is a group acting on $\C$, and $\kappa\ge |G|$, then $F^*(Y)\in\Mod^{\kappa}\C$ for all  $Y\in\Mod^{\kappa}(\C/G)$, and $F_*(X)\in\Mod^{\kappa}(\C/G)$ when $X\in\Mod^{\kappa}\C$.
  Hence, the pull-up and push-down functors may be defined on the categories
  $\Mod^{\kappa}(\C/G)$ and $\Mod^{\kappa}\C$ instead of the full module categories $\Mod\C$ and $\Mod(\C/G)$:
  \[F^*:\Mod^{\kappa}(\C/G)\to \Mod^{\kappa}\C \,, \qquad
  F_*:\Mod^{\kappa}\C \to \Mod^{\kappa}(\C/G) \,.\]
  
  For this reason, in what follows, we will not assume the category $\C$ to be skeletally small. 
\end{rmk}

For simplicity, if $\U\subset\mod\C$ is a full subcategory closed under isomorphisms,
then $F_*(\U)\subset\mod(\C/G)$ denotes the isomorphism closure of the image of $F_*$,
that is, the smallest full subcategory of $\mod(\C/G)$ that is closed under isomorphism
and contains all objects of the form $F_*(U)$ for $U\in\U$.
The preimage $F_*\inv(\V)$ of a full subcategory $\V\subset\mod(\C/G)$ is the full
subcategory of $\mod\C$ consisting of all objects $U\in\mod\C$ such that
$F_*(U)\in\V$.
  
\begin{thm}\label{stronger theorem}
Let $\C$ be a locally bounded $k$-linear Krull-Schmidt category and $G$ a
group acting admissibly on $\C$.
\begin{enumerate} 
\item Assume that $G$ acts admissibly on $\mod\C$, and let $\U$ be a
$G$-equivariant full subcategory of $\mod\C$ such that $F_*(\U)$ is
functorially finite in $\mod(\C/G)$. Then $\U$ is a $d$-cluster-tilting
subcategory of $\mod\C$ if and only if $F_*(\U)$ is a $d$-cluster-tilting
subcategory of $\mod(\C/G)$.
\item Assume that the field $k$ is algebraically closed, and that the group $G$ is
  free abelian of finite rank. Then, for any $d$-cluster-tilting
  subcategory $\V$ of $\mod(\C/G)$, the full subcategory $F_*^{-1}(\V)$ of $\mod\C$ 
  is a $G$-equi\-va\-ri\-ant $d$-cluster-tilting subcategory.
\end{enumerate}
\end{thm}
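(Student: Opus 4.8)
The plan is to treat the two parts rather differently, since part (a) is a transfer of a characterization through an adjunction, while part (b) requires building an equivariant category from scratch and is where the real work lies.

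For part (a), I would exploit the two defining vanishing conditions for a $d$-cluster-tilting subcategory together with the adjunction $(F_*,F^*)$ and the $G$-action. The key computational tool should be a covering-type formula relating $\Ext$-groups upstairs and downstairs: for $U,U'\in\mod\C$, one expects
\[
\Ext^i_{\mod(\C/G)}\bigl(F_*U,F_*U'\bigr)\simeq\bigoplus_{g\in G}\Ext^i_{\mod\C}\bigl(U,g_*U'\bigr),
\]
valid because $F$ is a Galois covering with group $G$ and both $F_*$ and $F^*$ are exact (this kind of formula is the module-category analogue of the hom-space formula in Gabriel's covering theory). Granting this, the vanishing $\Ext^i_{\mod\C}(\U,X)=0$ for $1\le i\le d-1$ transfers termwise to $F_*(\U)$ because $\U$ is $G$-equivariant, so each summand $g_*U'$ again lies in $\U$. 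One then checks the reverse inclusions in the definition of $d$-cluster-tilting: if $\Ext^i_{\mod(\C/G)}(F_*\U,Y)=0$ for $1\le i\le d-1$, then applying $F^*$ and using that $F^*Y$ decomposes along the $G$-orbit forces $F^*Y\in\U$, whence $Y\in F_*(\U)$. Functorial finiteness of $F_*(\U)$ is assumed, so this completes part (a); the only genuine input beyond the adjunction bookkeeping is the covering $\Ext$-formula and the fact that $G$ acts admissibly on $\mod\C$, which guarantees the direct sums above are finite and that $\C/G$ is again locally bounded.

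For part (b), the situation reverses: we are handed $\V\subset\mod(\C/G)$ and must show $F_*^{-1}(\V)$ is $d$-cluster-tilting upstairs. First I would record that $F_*^{-1}(\V)$ is automatically $G$-equivariant, since $F_*\circ g_*\simeq F_*$ for every $g\in G$ (the push-down is insensitive to the $G$-action by construction), so $U\in F_*^{-1}(\V)$ if and only if $g_*U\in F_*^{-1}(\V)$. The heart of the matter is verifying the two $\Ext$-vanishing conditions and functorial finiteness. Here the hypotheses that $k$ is algebraically closed and $G$ is free abelian of finite rank are essential: they ensure (via the standard theory, e.g.\ \cite{bg81,gabriel2}) that every indecomposable of $\mod(\C/G)$ is of the form $F_*U$ for some indecomposable $U\in\mod\C$, and that $F_*$ is a covering functor inducing surjections on the relevant $\Hom$ and $\Ext$ groups. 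Using the same covering $\Ext$-formula as in part (a), the vanishing conditions for $\V$ pull back to vanishing conditions for $F_*^{-1}(\V)$, and conversely any $U$ satisfying the upstairs vanishing has $F_*U$ satisfying the downstairs vanishing, forcing $F_*U\in\V$ and hence $U\in F_*^{-1}(\V)$.

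The main obstacle, and the step I would spend most care on, is functorial finiteness of $F_*^{-1}(\V)$ in $\mod\C$. Covering theory transfers homological vanishing cleanly, but approximations do not descend automatically: a right $\V$-approximation of $F_*X$ downstairs must be lifted to a right $F_*^{-1}(\V)$-approximation of $X$ upstairs, and this lifting is exactly where the free-abelian, finite-rank hypothesis on $G$ and the algebraic closedness of $k$ have to be used, together with local boundedness to keep the relevant orbit sums finite. Concretely, I would take a minimal right $\V$-approximation $V_0\to F_*X$, lift $V_0$ through $F_*$ to an object of $F_*^{-1}(\V)$, and then assemble a morphism to $X$ whose cone lies in the appropriate perpendicular category; controlling this cone, and showing the assembled map is genuinely an approximation rather than merely a morphism, is the technical crux. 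Once functorial finiteness is secured, the equivariance and the $\Ext$-vanishing already established combine to give that $F_*^{-1}(\V)$ is a $G$-equivariant $d$-cluster-tilting subcategory, completing part (b).
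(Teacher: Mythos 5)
Your proposal for part (a) is right about the covering formula (it is Lemma~\ref{ext}\eqref{extsum} in the paper, proved via $F^*F_*\simeq\bigoplus_{g\in G}g_*$ and pushing down projective resolutions), and that formula does give $d$-rigidity of $F_*(\U)$. But your reverse-inclusion step fails. If $G$ is infinite, $F^*Y$ has infinite support -- already $F^*F_*X\simeq\bigoplus_{g\in G}g_*X$ -- so $F^*Y$ is not an object of $\mod\C$ and the assertion ``$F^*Y\in\U$'' does not typecheck; worse, concluding $Y\in F_*(\U)$ from any such decomposition amounts to proving that $Y$ is \emph{gradable} (in the essential image of $F_*$), which is false for arbitrary modules over orbit categories and is unavailable under the hypotheses of (a), where $k$ is an arbitrary field and $G$ an arbitrary group acting admissibly. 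The paper circumvents this obstruction entirely: it invokes Proposition~\ref{exseq}, which characterizes $d$-cluster-tilting subcategories of a dualizing $k$-variety by the existence, for each indecomposable $X$ \emph{of the subcategory itself}, of an exact sequence $0\to M_d\to\cdots\to M_0\xrightarrow{f_0}X$ with $f_0$ right almost split. Since this criterion quantifies only over objects of $F_*(\U)$, which are pushdowns by construction, no gradability of arbitrary modules is needed: one pushes down such sequences from $\U$ (exactness of $F_*$) and verifies that $F_*(f_0)$ is right almost split using Lemma~\ref{indlemma}\eqref{retraction} (if one component $a_h$ of an orbit-category morphism is a retraction, so is the morphism). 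Any patch of your route via gradability would import the hypotheses of part (b) into part (a) and strictly weaken the theorem.

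In part (b) you have the gap in the opposite place. Your claim that algebraic closedness of $k$ and $G$ free abelian of finite rank ensure, ``via the standard theory,'' that every indecomposable of $\modCG$ lifts through $F_*$ is incorrect: non-gradable modules exist over orbit categories in general, and classical covering theory yields lifting only in special situations such as local representation-finiteness. What the hypotheses actually buy -- and what the paper uses -- is the Amiot--Oppermann result (Lemma~\ref{amiot-oppermann}): a finite-dimensional module $M$ with $\Ext^1(M,M)=0$ over a $\Z$-graded algebra over an algebraically closed field is gradable; induction on the rank of $G$ gives Proposition~\ref{gradable}, which applies to $\V$ because $\V$ is $2$-rigid once $d\ge2$ (the case $d=1$ must be split off, since then $\V=\modCG$ contains non-rigid modules, but is trivial as $F_*^{-1}(\V)=\mod\C$). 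Conversely, the step you flag as the technical crux -- lifting minimal approximations and controlling cones to get functorial finiteness of $F_*^{-1}(\V)$, supposedly where algebraic closedness enters -- is a non-issue: the paper's Lemma~\ref{functfin} lifts functorial finiteness for an arbitrary group action by a purely component-wise computation (write a left $F_*(\U)$-approximation as $a=(a_g)_{g\in G}$, retain the finitely many nonzero components, and check the approximation property from the orbit-category composition rule), with no minimality, no cones, and no hypotheses on $k$ or $G$. With gradability of $\V$ and Lemma~\ref{functfin} in place, the $\Ext$-transfer you describe does finish part (b) as in the paper; but as written, your argument both asserts a false lifting statement where the real content lies and manufactures difficulty where there is none.
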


From Theorem~\ref{stronger theorem} one can deduce the following corollary,
which is an analogue of classical results due to Gabriel 
\cite[Lemma 3.3, Theorem~3.6]{gabriel2}.

\begin{cor}\label{bijection}
Let $\C$ be a  locally bounded $k$-linear Krull-Schmidt category and $G$ a
free abelian group of finite rank, acting admissibly on $\C$.
\begin{enumerate}
\item 
The push-down functor $F_*:\mod\C\to\mod(\C/G)$ induces an injective map from the class of locally bounded $G$-equivariant $d$-cluster-tilting subcategories of $\mod\C$ to the class of locally bounded $d$-cluster-tilting subcategories of $\mod(\C/G)$.
\item The above map is a bijection if $k$ is algebraically closed.
\end{enumerate}
\end{cor}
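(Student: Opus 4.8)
The plan is to deduce the corollary from the two parts of Theorem~\ref{stronger theorem}, using in addition the classical covering-theoretic behaviour of the push-down functor and the maximality of $d$-cluster-tilting subcategories. Throughout I use two facts from covering theory, valid because $G$ is free abelian, hence torsion-free, and acts freely on objects: the push-down $F_*$ preserves indecomposability, and for indecomposables $F_*(U)\cong F_*(U')$ holds if and only if $U'\cong g\cdot U$ for some $g\in G$. These give a bijection between $\ind F_*(\U)$ and the $G$-orbits in $\ind\U$ for any $G$-equivariant $\U$. I also record that, since $G$ acts freely on the objects of the locally bounded category $\C$ and every object of $\mod\C$ has finite support, no nontrivial $g\in G$ can fix the support of a nonzero module; hence $G$ acts freely on $\ind(\mod\C)$, and together with local boundedness of $\C$ this shows the induced $G$-action on $\mod\C$ is admissible, as is needed to invoke Theorem~\ref{stronger theorem}(a).

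For part (a) I would show the assignment $\U\mapsto F_*(\U)$ is well defined. Given a locally bounded $G$-equivariant $d$-cluster-tilting subcategory $\U$, local boundedness of $F_*(\U)$ is read off from the fibre formula $F_*(U)(Fx)=\bigoplus_{g\in G}U(g\cdot x)$: fixing an object $Fx$ of $\C/G$, an indecomposable $F_*(U)$ meets $Fx$ exactly when some $g^{-1}\cdot U$ meets $x$, and by local boundedness of $\U$ at $x$ (Lemma~\ref{lbddlma}\eqref{lbddcriterion}) these $U$ fall into finitely many $G$-orbits, that is, into finitely many isomorphism classes of objects of $F_*(\U)$. Local boundedness in turn yields functorial finiteness of $F_*(\U)$ in $\mod(\C/G)$, so Theorem~\ref{stronger theorem}(a) applies and shows $F_*(\U)$ is $d$-cluster-tilting. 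Thus the map lands in the set of locally bounded $d$-cluster-tilting subcategories of $\mod(\C/G)$.

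For part (b), assuming $k$ algebraically closed, I would exhibit $\V\mapsto F_*^{-1}(\V)$ as a two-sided inverse. By Theorem~\ref{stronger theorem}(b) the preimage $F_*^{-1}(\V)$ is a $G$-equivariant $d$-cluster-tilting subcategory of $\mod\C$, and the same fibre bookkeeping as above (now counting the indecomposables $U$ with $F_*(U)\in\V$ meeting a given $x$) shows it is locally bounded whenever $\V$ is. To see the assignments are mutually inverse I would avoid an explicit density argument in favour of the maximality of cluster-tilting subcategories: if $\U_1\subseteq\U_2$ are both $d$-cluster-tilting in an abelian category, then any $X\in\U_2$ satisfies $\Ext^i(\U_1,X)=0$ for $1\le i\le d-1$, whence $X\in\U_1$, so $\U_1=\U_2$. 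Now for a locally bounded $G$-equivariant $d$-cluster-tilting $\U$ one has $\U\subseteq F_*^{-1}(F_*(\U))$ by construction, while the right-hand side is $d$-cluster-tilting by Theorem~\ref{stronger theorem}(b) applied to the $d$-cluster-tilting subcategory $F_*(\U)$ from part (a); hence they coincide. Symmetrically, $F_*(F_*^{-1}(\V))\subseteq\V$, the left-hand side is $d$-cluster-tilting by part (a) applied to the locally bounded subcategory $F_*^{-1}(\V)$, and maximality forces equality. The two identities $F_*^{-1}(F_*(\U))=\U$ and $F_*(F_*^{-1}(\V))=\V$ exhibit the two assignments as mutually inverse, proving the bijection.

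The step I expect to require the most care is transporting local boundedness across the covering in both directions: everything hinges on the fibre formula together with the bijection between indecomposables of $F_*(\U)$ and $G$-orbits of $\ind\U$, and on the observation that finitely generated modules have finite support, so that only finitely many group elements contribute at each object. The genuinely nontrivial external inputs are Theorem~\ref{stronger theorem}(b), where the hypotheses that $k$ be algebraically closed and $G$ be free abelian are essential (mirroring Gabriel's classical covering theory), and the maximality principle for $d$-cluster-tilting subcategories, which is what lets me conclude the two round-trip identities from mere inclusions rather than from a direct density computation.
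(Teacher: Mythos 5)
Your proposal is correct, and its skeleton is the same as the paper's: both parts reduce to Theorem~\ref{stronger theorem}, with the residual work being the transport of local boundedness across the covering. The differences lie in the bookkeeping and in the endgame. For local boundedness the paper does not count fibres: it identifies $F_*(\U)$ with the orbit category $\U/G$ via Lemma~\ref{ext}\eqref{pushdown} and applies Lemma~\ref{lbddlma}\eqref{lbddorbit} (an orbit category under an admissible action is locally bounded iff the original category is), using \eqref{lbddcriterion} only to pass to functorial finiteness; your fibre formula $F_*(U)(Fx)=\bigoplus_{g\in G}U(gx)$ proves the same statements directly and is equivalent, though in part (b) your count does need exactly the finite-support point you flag, namely that only finitely many $g$ satisfy $g^{-1}x\in\Supp U$. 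The more substantive difference is at the end: the paper's proof of the corollary stops after showing that $F_*^{-1}(\V)$ is a locally bounded $G$-equivariant $d$-cluster-tilting subcategory with $F_*(F_*^{-1}(\V))=\V$ (the latter coming from Proposition~\ref{gradable} inside the proof of Theorem~\ref{stronger theorem}(b)), leaving the other round trip $F_*^{-1}(F_*(\U))=\U$, and hence injectivity, implicit. Your maximality principle for nested $d$-cluster-tilting subcategories — $\U_1\subseteq\U_2$ both $d$-cluster-tilting forces $\U_1=\U_2$, since $X\in\U_2$ gives $\Ext^i(\U_1,X)=0$ for $1\le i\le d-1$ and the defining property of $\U_1$ then yields $X\in\U_1$ — is correct and makes both round trips explicit; it buys a self-contained verification of bijectivity without any further density argument, and arguably fills a small expository gap in the paper's proof.

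One minor caution: your one-line justification that $G$ acts admissibly on $\mod\C$ (``no nontrivial $g$ can fix the support'') needs a power argument. A nontrivial $g$ with $g_*X\simeq X$ merely stabilizes the finite set $\Supp X$ setwise; a suitable power of $g$ then fixes a point of $\Supp X$, which contradicts freeness of the action on $\ind\C$ only because $G$ is torsion-free. This is precisely Lemma~\ref{indlemma}\eqref{torsion2} and \eqref{tfadmissible} in the paper, so nothing is missing in substance, but as written the step is slightly too quick.
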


Corollary~\ref{bijection} implies that $\C$ is locally $d$-re\-pre\-sen\-ta\-tion-fin\-ite
whenever $\C/G$ is locally $d$-re\-pre\-sen\-ta\-tion-fin\-ite. However, in contrast to
the classical case, the converse is not true in general, because $d$-cluster-tilting
subcategories of $\mod\C$ are usually not $G$-equivariant.
For example, let $\C=\widehat{k}$ be the repetitive category of the
field $k$. 
Then $\widehat{k}\simeq k\mathbb{A}_\infty^\infty/I^2$, where
$\mathbb{A}_{\infty}^{\infty}$ is the linearly oriented quiver of type 
$A^{\infty}_{\infty}$, and $I\subset k\mathbb{A}_\infty^\infty$ is the ideal generated by
all arrows in $\mathbb{A}_\infty^\infty$. The group $G=\Z$ acts on
$\widehat{k}$ by translation in the quiver $\mathbb{A}_\infty^\infty$, and
$\widehat{k}/\Z\simeq T(k)\simeq k[X]/(X^2)$ is the algebra of dual numbers over $k$. 
But while $\widehat{k}$ is locally $d$-representation-finite for all $d\ge1$ by
Proposition~\ref{U_n}, the algebra $\widehat{k}/\Z$ is (locally)
$d$-representation-finite only for $d=1$ (see Theorem~\ref{nakayamaalg}).

\subsection{Notation}\label{section: notation}

In this section, we clarify our notational conventions, and recall some
well-known concepts and results.

Let $\C$ be a Hom-finite $k$-linear category.
A $k$-linear autoequivalence $\nu=\nu_{\C}:\C\to\C$ is a \emph{Nakayama functor} if for
all $x,y\in\C$ there is a functorial isomorphism 
\[\Hom_\C(x,y)\simeq D\Hom_{\C}(y,\nu x)\,.\]
A typical example is the Nakayama functor \eqref{derivednu} of $\dml$. 
Any two Nakayama functors of $\C$ are isomorphic. 
A category $\C$ having a Nakayama functor is said to be \emph{self-injective}.
For example, when $\C$ is the category $\proj A$ of finitely generated projective
modules of a $k$-algebra $A$, then $\C$ is self-injective if and only
if $A$ is a self-injective algebra. In this case, the Nakayama functor of $\C$ is the
automorphism of $\C$ induced by the classical Nakayama automorphism of the algebra $A$.

In case $\C$ is a triangulated category, the Nakayama functor of $\C$ is often called the
\emph{Serre functor} and denoted by $S = {}_{\C}S$.
In this case, and given an integer $d$, we write
\[S_d={}_{\C}S_d={}_{\C}S\circ[-d].\]
A fundamental property of the Nakayama functor is that it commutes with any
autoequivalence $\phi$ of $\C$, up to isomorphism of functors: 
\begin{equation}\label{commutes}
\phi\nu\simeq\nu\phi \,.
\end{equation}
This follows from the isomorphisms
\begin{multline*}
  \Hom_\C(y,\nu\phi x)\simeq D\Hom_\C(\phi x,y)\simeq D\Hom_\C(x,\phi\inv y)
  \simeq\Hom_\C(\phi\inv y,\nu x)\simeq\Hom_\C(y,\phi\nu x)
\end{multline*}
and Yoneda's lemma.

Let $\L$ be a finite-dimensional $k$-algebra.
The \emph{trivial extension} algebra $T(\Lambda)$ of $\L$ is defined as
$\L\oplus D\L$, with multiplication given by 
\[(a,x)\cdot(b,y) = (ab,ay+xb)\]
for $a,b\in\L$ and $x,y\in D\L$. 
Setting $T(\L)_0=\L$ and $T(\L)_1=D\L$ defines a $\Z$-grading on $T(\L)$. 
The \emph{repetitive category} $\widehat\L$ of $\L$ is defined as the category
$\proj^{\Z}T(\L)$ of finitely generated $\Z$-graded projective $T(\L)$-modules.
Observe that $\widehat\L$ is self-injective, the Nakayama automorphism being given
by degree shift by $1$:
\[\nu_{\widehat{\L}}=(1).\]
From here on, we will write $\widehat\nu$ to denote this functor. 

We now recall some basic terminology for functor categories \cite{a66}.
Let $\C$ be a  $k$-linear Krull-Schmidt category.
A \emph{$\C$-module} is a contravariant additive functor from $\C$ to the category
$\mathcal{A}b$ of abelian groups. We denote by $\Mod\C$ the category of all $\C$-modules,
and by $\mod\C$ the full subcategory of $\Mod\C$ consisting of finitely presented
modules. A module $M:\C\to\mathcal{A}b$ is \emph{finitely presented} if there exists an
exact sequence
\[\Hom_\C(-,y)\to\Hom_{\C}(-,x)\to M\to0\]
in $\Mod\C$.
A full subcategory $\U$ of $\mod\C$ is called a \emph{generator-cogenerator}
if the $\C$-modules $\Hom_{\C}(-,x)$ and $D\Hom_{\C}(x,-)$ belong to $\U$ for all
$x\in\C$.
Moreover, $\proj\C$ and $\inj\C$ denote the full subcategories of $\mod\C$ consisting
  of projective and injective objects in $\mod\C$, respectively.
We denote by $\ind\C$ a chosen set of representatives of the isomorphism classes of
indecomposable objects in $\C$, and by $\Supp M$ the \emph{support} of a $\C$-module
  $M$, that is,
\[\Supp M=\{x\in\ind\C\mid M(x)\neq0\} \subset \ind\C .\]
Modules of a finite-dimensional $k$-algebra $A$ are identified with modules of the category $\proj A$ of finitely generated projective $A$-modules.

The category $\C$ is said to be \emph{locally bounded} if for any $x\in\ind\C$, we have
\[\sum_{y\in\ind\C}(\dim_k\Hom_{\C}(x,y)+\dim_k\Hom_{\C}(y,x))<\infty.\]
Note that when $\C$ is locally bounded, the objects in $\mod\C$ are precisely the
finite-length modules of $\C$. Therefore we have a duality
\begin{equation} \label{locbdd-dualising}
D = \Hom_k(-,k):\mod\C\leftrightarrow\mod(\C^{\rm op})
\end{equation}
satisfying $D\circ D \simeq \I_{\mod\C}$.

An example of a locally bounded category with infinitely many indecomposable objects
is the repetitive category $\widehat{A}$ of a finite-dimensional $k$-algebra $A$.

Let $G$ be a group. A \emph{$G$-action} on the category $\C$ is an assignment 
$g\mapsto F_g$ of an automorphism $F_g:\C\to\C$ to each element $g\in G$, such that
$F_g\circ F_h=F_{gh}$ for all $g,h\in G$.
The action is said to be \emph{$k$-linear} if each $F_g$ is $k$-linear.

\begin{dfn}\label{define admissible}
  Let $\C$ be a  $k$-linear Krull-Schmidt category, and $G$ a group.
  A $G$-action on $\C$ is \emph{admissible} if $gx\not\simeq x$ holds for every
  $x\in\ind\C$ and $g\in G\setminus\{1\}$.
\end{dfn}

Note that any autoequivalence $\phi$ of $\C$ gives rise to an automorphism of the skeleton
$\C'$ of $\C$ and therefore a $\Z$-action on $\C'$. 
In particular, an autoequivalence $\phi$ of $\widehat\L$ is admissible (as defined in the sentence following Theorem~\ref{riedtmann}) if and only if
the induced action of the additive group $\Z$ on a skeleton of $\C$, given by
$m\mapsto\phi^m$, is admissible.

Let $\phi$ be an autoequivalence of $\C$, and $\phi\inv$ a chosen quasi-inverse of $\phi$.
In case $\phi$ is an automorphism, we always assume $\phi\inv$ to be the inverse of
$\phi$.
We denote by
\begin{equation} \label{inducedphi}
\phi_*:\mod\C\to\mod\C\ \mbox{ and }\ \phi^*:\mod\C\to\mod\C 
\end{equation}
the induced automorphisms of the module category defined by 
$\phi_*(M) = M\circ\phi\inv$ and $\phi^*(M)=M\circ\phi$ respectively.
Hence, any action of a group $G$ on $\C$ induces a $G$-action and a $G^{\rm op}$-action on
$\modC$, defined by $g\mapsto g_*$ and $g\mapsto g^*$ respectively.
Clearly, these actions induce actions on the stable module category and the derived
category too.
A subcategory $\U$ of $\modC$ or $\stmod\C$ is said to be \emph{$G$-equivariant} if
$g_*\U=\U$ for all $g\in G$.

The following is an immediate consequence of Auslander--Reiten duality (see, e.g., \cite[Theorem~IV:2.13]{ass06}) for $k$-linear categories. 

\begin{prop}\label{AR duality}
Let $\C$ be a locally bounded self-injective $k$-linear category with Nakayama functor
$\nu$.
Then $\stmod\C$ has a Serre functor $\nu_*\circ\Omega$.
\end{prop}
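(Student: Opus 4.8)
The plan is to deduce the statement directly from Auslander--Reiten duality once the triangulated structure on $\stmod\C$ has been set up. First I would record that $\mod\C$ is a Frobenius category. The Nakayama isomorphism $\Hom_\C(x,y)\simeq D\Hom_\C(y,\nu x)$ shows that each representable (hence projective) $\C$-module $\Hom_\C(-,x)$ is isomorphic to the injective module $D\Hom_\C(x,-)$ precomposed with the autoequivalence $\nu$, and symmetrically that every injective module is projective. Since $\nu$ is an autoequivalence of $\C$, the induced functor $\nu_*$ is an autoequivalence of $\mod\C$ preserving both $\proj\C$ and $\inj\C$; thus projectives and injectives in $\mod\C$ coincide, so $\stmod\C$ is triangulated with suspension $[1]=\Omega^{-1}$, equivalently $[-1]=\Omega$.

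Next I would invoke Auslander--Reiten duality for the locally bounded category $\C$ (a dualizing $k$-variety in the sense of \cite{ar96}), in the functorial form $\Ext^1_\C(X,Y)\simeq D\sthom_\C(Y,\tau X)$, where $\tau$ denotes the Auslander--Reiten translation. In the self-injective setting this translation is expressible through the Nakayama functor and the syzygy as $\tau\simeq\nu_*\Omega^2$; I would verify this by unwinding $\tau=D\tr$ and using that $\nu_*$ is exact on $\mod\C$, which holds because $\C$ is self-injective. Combining with the stable-category identity $\Ext^1_\C(X,Y)\simeq\sthom_\C(\Omega X,Y)$ then yields the functorial isomorphism $\sthom_\C(\Omega X,Y)\simeq D\sthom_\C(Y,\nu_*\Omega^2 X)$.

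Finally, substituting $X\mapsto\Omega^{-1}X$ and using $\nu_*\Omega^2\Omega^{-1}\simeq\nu_*\Omega$ gives
\[\sthom_\C(X,Y)\simeq D\sthom_\C\bigl(Y,(\nu_*\Omega)(X)\bigr)\]
functorially in $X$ and $Y$. Since $\nu_*$ is an autoequivalence of $\stmod\C$, and $\Omega=[-1]$ is an autoequivalence by construction of the triangulated structure, their composite $\nu_*\Omega$ is an autoequivalence, so the displayed isomorphism is exactly the defining property of a Serre functor. I expect the main obstacle to be the careful application of Auslander--Reiten duality in the locally bounded (rather than finite) setting together with the identification $\tau\simeq\nu_*\Omega^2$ --- in particular, maintaining functoriality throughout and keeping track of the single syzygy shift that distinguishes the Serre functor $\nu_*\Omega$ from the Auslander--Reiten translation $\tau$.
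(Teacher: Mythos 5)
Your proof is correct and follows exactly the route the paper intends: the paper states this proposition as ``an immediate consequence of Auslander--Reiten duality \cite{ar96}'', and your argument merely makes explicit the standard identifications $\tau\simeq\nu_*\Omega^2$ and $\Ext^1_\C(X,Y)\simeq\sthom_\C(\Omega X,Y)$, together with the observation that self-injectivity makes $\mod\C$ Frobenius (so Hom modulo injectives and modulo projectives coincide and $\stmod\C$ is triangulated with $\Omega=[-1]$).
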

Since $\dml\simeq\stmod\widehat{\L}$, uniqueness of the Serre functor gives us the following
diagram, which is commutative up to isomorphism of functors:
\begin{equation}\label{serrecomm0}
\xymatrix@R1.5em@C=3em{
\dml\ar[r]^{\sim\;}\ar[d]^{\nu}&\stmod\widehat{\L}\ar[d]^{\widehat\nu_*\Omega}\\
\dml\ar[r]^{\sim\;}&\stmod\widehat{\L}.
}
\end{equation}

Given an additive category $\C$ and a $G$-action on $\C$,
the \emph{orbit category} $\C/G$ is defined as follows:
the objects of $\C/G$ are all object in $\C$, and morphism sets are defined by
\[\Hom_{\C/G}(x,y)=\bigoplus_{g\in G}\Hom_{\C}(x,gy)\]
for any $x,y\in\C$.
The composition of $a=(a_g)_{g\in G}\in\Hom_{\C/G}(x,y)$ with 
$b=(b_g)_{g\in G}\in\Hom_{\C/G}(y,z)$, where
$a_g\in\Hom_{\C}(x,gy)$ and $b_g\in\Hom_{\C}(y,gz)$,
is given by
\[ba=\left(\sum_{h\in G}h(b_{h\inv g})a_h\right)_{g\in G}
\in\bigoplus_{g\in G}\Hom_{\C}(x,gz)=\Hom_{\C/G}(x,z).\]
We remark that if $\C$ is self-injective, then the orbit category $\C/G$ is again
self-injective.
Notice also that $\C/G$ is not necessarily idempotent complete, although the covering functor $F:\C\to\C/G$ induces a bijection between
$(\ind\C)/G$ and $\ind(\C/G)$.

Let $\U$ be a full subcategory of an additive category $\C$.
The \emph{additive closure} $\add\U$ of $\U$ is the smallest full subcategory of $\C$
which contains $\U$ and is closed under direct sums and direct summands.

The subcategory $\U$ is \emph{contravariantly finite} in $\C$ if every $x\in\C$ has a 
\emph{right $\U$-approximation}, that is, a morphism $f:y\to x$ with $y\in\U$ such that
$\Hom_{\C}(u,f):\Hom_{\C}(u,y)\to\Hom_{\C}(u,x)$ is surjective for all $u\in\U$.
Dually, the concept of a \emph{covariantly finite} subcategory is defined. 
A subcategory that is both contravariantly and covariantly finite in $\C$ is said to be
\emph{functorially finite} in $\C$.

\section{Proofs of our results} \label{proofs}

The proof of Theorem~\ref{stronger theorem} is the main objective of this
section. Once in place, the remaining results in Section~\ref{ourresults} follow
relatively straightforwardly.

\subsection{Lemmata} \label{lemmata}

In this subsection, we collect some fundamental facts about orbit categories that will be
used in the proofs of the theorems in Section~\ref{ourresults}.
While some of these results have already appeared in the literature in some form (e.g.,
in \cite{asashiba97,bl14,gabriel2}), we include the results as well as their proofs here,
for the convenience of the reader.

\begin{lma}\label{lbddlma}
  Let $\C$ be a $k$-linear Krull--Schmidt category.
  \begin{enumerate}
  \item
    Let $G$ be a group acting admissibly on $\C$. Then the orbit category $\C/G$ is
    locally bounded if and only if $\C$ is locally bounded.
    \label{lbddorbit}
  \item
    Let $\mathcal {C}$ be a locally bounded $k$-linear category, and $\mathcal {U}$ a full subcategory of $\mod\mathcal {C}$ which is a generator. Then the following conditions are equivalent.
    \begin{enumerate}
    \item[(i)] $\mathcal {U}$ is locally bounded.
    \item[(ii)] $\mathcal {U}$ is locally bounded and functorially finite in $\mod\mathcal {C}$.
    \item[(iii)] the set $\{U\in\ind\mathcal {U}\mid U(x)\neq0\}$ is finite for all $x\in\ind\mathcal {C}$.
    \item[(iv)]
      The full subcategory $\underline{\mathcal {U}}$ of $\stmod\mathcal {C}$ is locally bounded and contravariantly finite.
    \end{enumerate}
    \label{lbddcriterion}
  \end{enumerate}
\end{lma}

\begin{proof}
\eqref{lbddorbit}
Recall first that $x,y\in\ind\C$ are isomorphic in $\C/G$ if and only if $y\simeq gx$ for some
$g\in G$. Since $G$ acts freely on $\ind\C$, it follows that, for any $x\in\ind\C$,
$$\sum_{y\in\ind(\C/G)}\dim\Hom_{\C/G}(x,y) = 
\sum_{y\in\ind(\C/G)}\dim\left(\bigoplus_{g\in G}\Hom_{\C}(x,gy)\right) =
\sum_{y\in\ind\C}\dim\Hom_{\C}(x,y) $$
and similarly that   
$$\sum_{y\in\ind(\C/G)}\dim\Hom_{\C/G}(y,x) =   \sum_{y\in\ind\C}\dim\Hom_{\C}(y,x).$$ 
Hence $\C/G$ is locally bounded if and only if $\C$ is locally bounded. 

\eqref{lbddcriterion}
The implications (ii)$\Rightarrow$(i) and (ii)$\Rightarrow$(iv) are clear.

(i)$\Rightarrow$(iii):
Take $x\in \ind \mathcal {C}$ and let $P=\Hom_{\mathcal {C}}(-,x)\in \mathop {\mathrm{mod}}\nolimits  \mathcal {C}$.
Then $\{U\in \ind \mathcal {U}\mid U(x)\ne 0\}=\{U\in \ind \mathcal {U}\mid \Hom _{\mathcal {C}}(P,U)\ne 0\}$ and, since
$P\in \mathcal {U}$, locally boundedness of $\mathcal {U}$ implies that this set is finite.

(iii)$\Rightarrow$(ii):
Let $X\in \mathop {\mathrm{mod}}\nolimits  \mathcal {C}$. The projective cover $P\to X$ of $X$ induces a monomorphism
$\Hom _\mathcal {C}(X,Y)\to \Hom _\mathcal {C}(P,Y)$ for any $Y\in \mathop {\mathrm{mod}}\nolimits  \mathcal {C}$. Now, since the set
$\{U\in \ind \mathcal {U}\mid \Hom _{\mathcal {C}}(P,U)\ne 0\}$ is finite, so is
$\{U\in \ind \mathcal {U}\mid \Hom _{\mathcal {C}}(X,U)\ne 0\}$.
Similarly, one proves that the set
$\{U\in \ind \mathcal {U}\mid \Hom _{\mathcal {C}}(U,X)\ne 0\}$
is finite.

(iv)$\Rightarrow$(iii):
Consider the following auxiliary condition:
\begin{enumerate}
\item[(iii)$'$] The set $\{U\in\ind\mathcal {U}\mid (\mathop{\rm top}\nolimits U)(x)\neq0\}$ is finite for all $x\in\ind\mathcal {C}$.
\end{enumerate}
First, we prove (iv)$\Rightarrow$(iii)$'$.
Clearly, $\mathcal {U}$ is contravariantly finite in $\mathop{\mathrm{mod}}\nolimits \mathcal {C}$.
For $x\in\ind\mathcal {C}$, let $S_x$ be the simple $\mathcal {C}$-module supported at $x$, and $a: U_x\to S_x$ its right $\mathcal {U}$-approximation.
Since $\underline{\mathcal {U}}$ is locally bounded, it suffices to prove that
\[\{U\in\ind\mathcal {U}\mid(\mathop{\rm top}\nolimits U)(x)\neq0\} \subset \{U\in\ind\mathcal {U}\mid\underline{\Hom}_{\mathcal {C}}(U,U_x)\neq0\} \cup \{\mathcal {C}(-,x)\}.\]
Clearly $\mathcal {C}(-,x)$ is the unique indecomposable projective $\mathcal {C}$-module $U$ satisfying $(\mathop{\rm top}\nolimits U)(x)\neq0$. Let $U$ be an indecomposable non-projective $\mathcal {C}$-module satisfying $(\mathop{\rm top}\nolimits U)(x)\neq0$.
Then there exists a surjective morphism $f:U\to S_x$, and a morphism $g:U\to U_x$ satisfying $f=ag$.
Since $U$ is non-projective, $f$ is non-zero in $\stmod\mathcal {C}$, and so is $g$. Thus $\underline{\Hom}_{\mathcal {C}}(U,U_x)\neq0$, as desired.

Next, we prove (iii)$'\Rightarrow$(iii). Fix $x\in\ind\mathcal {C}$. Since $\mathcal {C}$ is locally bounded, it suffices to prove
\[\{U\in\ind\mathcal {U}\mid U(x)\neq0\}\subset\bigcup_{{y\in\ind\mathcal {C}},\ {\Hom_{\mathcal {C}}(x,y)\neq0}}\{U\in\ind\mathcal {U}\mid (\mathop{\rm top}\nolimits U)(y)\neq0\}.\]
If $U\in\ind U$ satisfies $U(x)\neq0$, then the projective cover $f:P\to U$ satisfies $P(x)\neq0$. Thus $P$ has an indecomposable direct summand $\Hom_{\mathcal {C}}(-,y)$ satisfying $\Hom_{\mathcal {C}}(x,y)\neq0$. Then $(\mathop{\rm top}\nolimits U)(y) = (\mathop{\rm top}\nolimits P)(y)\neq0$.
\end{proof}

We remark that the assumption that $\mathcal {U}$ is a generator is only used in the proof of the implication (i)$\Rightarrow$(iii) above.
Indeed, the conditions (ii)--(iv) in Lemma~\ref{lbddlma}(b) are equivalent even without this assumption on $\mathcal {U}$.

\begin{lma} \label{ab}
Let $\U$ be a $k$-linear Krull-Schmidt category, and $\psi$ and $\phi$ autoequivalences of
$\U$ such that $\psi\phi(x)\simeq\phi\psi(x)$ for all $x\in\ind\U$.
If $\phi$ is admissible, and the number of $\psi$-orbits of $\ind\U$ is finite, then
the number of $\phi$-orbits of $\ind\U$ is finite.
\end{lma}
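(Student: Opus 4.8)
The plan is to discard the $k$-linear structure entirely and argue on the set $\ind\U$ of isomorphism classes of indecomposables. Since $\phi$ and $\psi$ are autoequivalences, they send indecomposables to indecomposables and hence induce bijections $\overline\phi,\overline\psi$ of $\ind\U$. The hypothesis $\psi\phi(x)\simeq\phi\psi(x)$ says exactly that these two bijections \emph{commute} as permutations of $\ind\U$, while admissibility of $\phi$ says that $\overline\phi$ acts \emph{freely}, so every $\overline\phi$-orbit is infinite and $\overline\phi^m(x)=x$ forces $m=0$. First I would record that commutativity propagates to all powers, so that $A=\langle\overline\phi,\overline\psi\rangle$ is an abelian group of permutations of $\ind\U$ with $A=\langle\overline\phi\rangle\langle\overline\psi\rangle$. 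The lemma then becomes the purely combinatorial assertion that, for such an $A$, finiteness of the number of $\overline\psi$-orbits forces finiteness of the number of $\overline\phi$-orbits.

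Next I would use that $A$ acts on $\ind\U$ and that each $A$-orbit is a union of $\overline\psi$-orbits; since there are only finitely many $\overline\psi$-orbits, there are only finitely many $A$-orbits, and it suffices to bound the number of $\overline\phi$-orbits inside a single $A$-orbit $\mathcal O$. Fix such an $\mathcal O$ and let $c$ be the (finite) number of $\overline\psi$-orbits it contains. Because $A$ is abelian and transitive on $\mathcal O$ with $\langle\overline\phi\rangle$ preserving $\overline\phi$-orbits, the group $\langle\overline\psi\rangle$ acts transitively on the set of $\overline\phi$-orbits inside $\mathcal O$; hence it is enough to show that some nonzero power of $\overline\psi$ fixes a given $\overline\phi$-orbit. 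For this I would take any $x\in\mathcal O$ and inspect the $c+1$ elements $x,\overline\phi(x),\dots,\overline\phi^{\,c}(x)$ of $\mathcal O$: by the pigeonhole principle two of them, say $\overline\phi^{\,a}(x)$ and $\overline\phi^{\,b}(x)$ with $a<b$, lie in the same $\overline\psi$-orbit, giving $\overline\phi^{\,b}(x)=\overline\psi^{\,n}(\overline\phi^{\,a}(x))$ and therefore, by commutativity, $\overline\phi^{\,b-a}(x)=\overline\psi^{\,n}(x)$ with $b-a>0$.

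The one point requiring care — and the place where admissibility is indispensable — is to guarantee $n\neq0$: were $n$ zero, the relation would read $\overline\phi^{\,b-a}(x)=x$ with $b-a>0$, contradicting freeness of $\overline\phi$. With $n\neq0$ the element $\overline\psi^{\,n}=\overline\phi^{\,b-a}$ fixes the $\overline\phi$-orbit of $x$, so the $\langle\overline\psi\rangle$-orbit of that $\overline\phi$-orbit has size at most $|n|$ and is finite; by transitivity this bounds the total number of $\overline\phi$-orbits in $\mathcal O$. Summing these finitely many finite contributions over the finitely many $A$-orbits yields finitely many $\overline\phi$-orbits of $\ind\U$, as required. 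I expect the only genuine friction to be the bookkeeping around isomorphism classes versus objects — making sure ``orbit'' consistently refers to orbits of the induced permutations on $\ind\U$ — the mathematical heart being the short pigeonhole-plus-freeness step above. An equivalent route would replace that step by the observation that in the abelian quotient $A/A_x$ the hypothesis $[A:\langle\overline\psi\rangle A_x]<\infty$ forces $\overline\phi^{\,m}\in\langle\overline\psi\rangle A_x$ for some $m\neq0$, which by freeness cannot lie in $A_x$ and hence produces a nonzero power of $\overline\psi$ inside $\langle\overline\phi\rangle A_x$, i.e.\ $[A:\langle\overline\phi\rangle A_x]<\infty$.
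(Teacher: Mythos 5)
Your proof is correct, and it turns on the same engine as the paper's: finiteness of the $\psi$-orbits plus commutativity yields a relation $\phi^{m}(x)\simeq\psi^{n}(x)$ with $m>0$, and admissibility of $\phi$ is used exactly once, to force $n\neq0$. The packaging, however, is genuinely different. The paper lets $\phi$ induce a permutation $\sigma$ of the finite set of $\psi$-orbits, takes one \emph{uniform} exponent $a$ with $\sigma^{a}=\mathbb{I}$, so that $\phi^{a}(x)\simeq\psi^{b_x}(x)$ with $b_x\neq0$ for all $x$ simultaneously, and then simply exhibits the finite transversal $\{\psi^{i}(x)\mid x\in S,\ 0\le i<|b_x|\}$ meeting every $\phi$-orbit. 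You instead decompose $\ind\U$ into orbits of the abelian permutation group $A=\langle\overline\phi,\overline\psi\rangle$, obtain the key relation inside a single $A$-orbit by pigeonhole among $x,\overline\phi(x),\dots,\overline\phi^{c}(x)$, and finish with the observation that $\langle\overline\psi\rangle$ acts transitively on the set of $\overline\phi$-orbits within an $A$-orbit, so that a nonzero power of $\overline\psi$ stabilizing one such orbit bounds their number by $|n|$. Your route buys a cleanly isolated combinatorial statement (about two commuting permutations, one acting freely) together with explicit per-orbit bounds, at the cost of more group-theoretic scaffolding; the paper's version buys brevity and a single explicit global set of representatives. One phrase needs repair: ``the element $\overline\psi^{\,n}=\overline\phi^{\,b-a}$'' asserts an equality of permutations, whereas your pigeonhole gives it only at the point $x$; what you actually use --- that $\overline\psi^{\,n}$ sends $x$ into its own $\overline\phi$-orbit and hence, since $\overline\psi^{\,n}$ permutes $\overline\phi$-orbits by commutativity, fixes that orbit setwise --- does follow from the pointwise equality, so this is a cosmetic slip rather than a gap.
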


\begin{proof}
Since $\phi$ is an autoequivalence, and $\psi\phi(x)\simeq\phi\psi(x)$ for all
$x\in\ind\U$, the action of $\phi$ induces a permutation $\s$ of the $\psi$-orbits of
$\ind\U$:
$\s(o_\psi(x))=o_\psi(\phi(x))$, where $o_\psi(x)$ is the $\psi$-orbit of $x$.
Since the number of $\psi$-orbits is finite, there exists a positive integer $a$ such that $\s^a=\I$; that is, such that for
every $x\in\ind\U$,  
there exists a number $b_x\in\Z$ such that $\phi^a(x)\simeq\psi^{b_x}(x)$.
The number $b_x$ must be non-zero for every $x$, otherwise $\phi^a(x)\simeq x$,
contradicting admissibility. 
Let $S$ be a set of representatives of the $\psi$-orbits of $\ind\U$, so that
$\ind\U=\{\psi^i(x)\mid x\in S,\ i\in\Z\}$.
Since $\phi^a(\psi^i(x))\simeq\psi^{b_x+i}(x)$ holds for all $x\in\ind\C$ and $i\in\Z$, 
it follows that the finite set
$\{\psi^i(x) \mid x\in S,\ 0\le i<|b_x|\}$ meets all $\phi$-orbits in $\ind\U$.
\end{proof}

Under a few additional assumptions, one can show that there exist non-zero $a,b\in\Z$ such
that 
$\phi^a(x)\simeq\psi^b(x)$ for all $x\in\ind\U$. We record it here though we do not use in
this paper.

\begin{rmk} \label{connectedab}
Suppose, in addition to the assumptions in Lemma~\ref{ab}, that $\psi$ is admissible, that
the orbit category $\U/\psi$ is indecomposable and that for every 
$x\in\ind\U$ there exist only finitely many $y\in\ind\U$ such that $\Hom_\U(x,y)\ne0$. Then
there exist non-zero $a,b\in\Z$ such that $\phi^a(x)\simeq\psi^b(x)$ for all $x\in\ind\U$.
\end{rmk}

\begin{proof}
We use the same notation as in the proof of Lemma~\ref{ab}. It suffices to show that
$b_x$ is the same for all $x\in\ind\U$.
Assume that $x,y\in\ind\U$ and $\Hom_\U(x,y)\ne0$. 
Then, for all $i\in\Z$, 
\begin{equation*}
  0\ne \Hom_{\U}(x,y)\simeq \Hom_{\U}(\phi^{ai}x,\phi^{ai}y) 
  \simeq\Hom_{\U}(\psi^{b_xi}(x),\psi^{b_yi}(y))\simeq \Hom_{\U}(x,\psi^{(b_y-b_x)i}(y))
\end{equation*}
holds. Admissibility of $\psi$ together with the finiteness assumption implies
$b_x=b_y$. Since $\U/\psi$ is indecomposable, we have the desired assertion.
\end{proof}

\begin{lma}\label{admissibility}
Let $\L$ be a finite-dimensional $k$-algebra. 
For any automorphism $\phi$ of $\widehat\L$, the following conditions are equivalent.
\begin{enumerate} \renewcommand{\labelenumi}{(\alph{enumi})}
\item the automorphism $\phi$ of $\widehat\L$ is admissible;
\item the category $\widehat\L/\phi$ is Hom-finite;
\item the set $\ind(\widehat\L/\phi)$ is finite.
\end{enumerate}
\end{lma}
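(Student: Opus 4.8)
The plan is to prove the chain of implications $(a)\Rightarrow(b)\Rightarrow(c)\Rightarrow(a)$, exploiting the fact that $\widehat\L$ is locally bounded together with the structure of the Nakayama functor $\widehat\nu=(1)$ as degree shift.

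First, for $(a)\Rightarrow(b)$: recall that the morphism spaces in the orbit category are $\Hom_{\widehat\L/\phi}(x,y)=\bigoplus_{g\in\Z}\Hom_{\widehat\L}(x,\phi^g y)$, where we identify the cyclic group generated by $\phi$ with a quotient of $\Z$ (or with $\Z$ itself if $\phi$ has infinite order on objects). The key point is that $\widehat\L$ is locally bounded, so for fixed $x,y\in\ind\widehat\L$ only finitely many of the spaces $\Hom_{\widehat\L}(x,\phi^g y)$ are non-zero, \emph{provided} the objects $\phi^g y$ do not accumulate near $x$. Here I would use admissibility: if $\phi$ is admissible, then the $\phi$-orbit of any indecomposable is infinite and spreads out in the repetitive category, so that $\Hom_{\widehat\L}(x,\phi^g y)\ne0$ only for finitely many $g$. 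I expect the cleanest argument to invoke the degree shift $\widehat\nu=(1)$: since $\phi$ is an automorphism of $\widehat\L$ it commutes with $\widehat\nu$ by \eqref{commutes}, and one can use a ``degree function'' on $\ind\widehat\L$ coming from the $\Z$-grading of $T(\L)$ to control which translates $\phi^g y$ can admit morphisms from $x$.

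For $(b)\Rightarrow(c)$: the covering functor $F\colon\widehat\L\to\widehat\L/\phi$ induces a bijection between $(\ind\widehat\L)/\langle\phi\rangle$ and $\ind(\widehat\L/\phi)$, as noted in Section~\ref{section: notation}. If $\widehat\L/\phi$ is Hom-finite, then in particular $\End_{\widehat\L/\phi}(F(x))$ is finite-dimensional for each $x$, and since $\widehat\L/\phi$ is a Krull--Schmidt category the number of indecomposables is governed by the finitely many simple tops appearing. More directly, I would argue that $\widehat\L$ has only finitely many $\langle\phi\rangle$-orbits of indecomposables: the repetitive category $\widehat\L$ has indecomposables indexed essentially by $\ind\L$ together with a $\Z$-worth of degree shifts via $\widehat\nu$, and admissibility forces $\phi$ to move through these shifts, collapsing to finitely many orbits. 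The implication $(c)\Rightarrow(a)$ should be nearly immediate: if $\phi$ were not admissible, some $\phi^i$ with $i>0$ would fix an indecomposable $X$, producing an infinite $\langle\phi\rangle$-orbit structure incompatible with finiteness of $\ind(\widehat\L/\phi)$ — or rather, non-admissibility directly obstructs $\widehat\L/\phi$ being well-behaved, and one checks $\ind(\widehat\L/\phi)$ cannot be finite unless the action is free.

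The main obstacle I anticipate is the implication $(a)\Rightarrow(b)$, specifically verifying rigorously that admissibility guarantees $\Hom_{\widehat\L}(x,\phi^g y)=0$ for all but finitely many $g$. Local boundedness of $\widehat\L$ alone does not say this, since it bounds $\sum_y\dim\Hom(x,y)$ over \emph{isomorphism classes} $y\in\ind\widehat\L$, whereas the orbit sum runs over the possibly-repeating translates $\phi^g y$. The resolution is precisely that admissibility makes these translates pairwise non-isomorphic, so the orbit sum is a subsum of the locally-bounded total sum and hence finite. I would make this precise by showing $\{g\in\Z\mid \Hom_{\widehat\L}(x,\phi^g y)\ne0\}$ injects into $\{z\in\ind\widehat\L\mid\Hom_{\widehat\L}(x,z)\ne0\}$ via $g\mapsto\phi^g y$, which is injective exactly because $\phi$ acts freely on $\ind\widehat\L$ by admissibility; local boundedness then finishes it.
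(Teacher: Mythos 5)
Your implication (a)$\Rightarrow$(b) is correct and coincides with the paper's argument (it is the content of Lemma~\ref{lbddlma}\eqref{lbddorbit}): admissibility makes $g\mapsto\phi^g y$ injective on isomorphism classes of indecomposables, so the orbit Hom-sum is a subsum of the total sum bounded by local boundedness. The genuine gaps are in the other two legs of your cycle. For (b)$\Rightarrow$(c) you invoke admissibility, which in your chain is not yet available from (b) alone; the repair (if $\phi^{\ell}x\simeq x$ for some $\ell>0$, then $\End_{\widehat\L/\phi}(x)=\bigoplus_{i\in\Z}\Hom_{\widehat\L}(x,\phi^i x)$ is infinite-dimensional, so (b) implies (a)) is easy but absent from your proposal. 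More seriously, even granting admissibility, the assertion that it ``forces $\phi$ to move through these shifts, collapsing to finitely many orbits'' is not an argument: $\phi$ commutes with $\widehat\nu$ only up to isomorphism \eqref{commutes}, and a priori $\phi$ could permute the finitely many $\widehat\nu$-orbits of $\ind\widehat\L$ (indexed by the indecomposable projective $T(\L)$-modules) without any power acting as a nonzero degree shift on a given object. The paper closes exactly this gap with Lemma~\ref{ab}: $\phi$ induces a permutation of the finitely many $\widehat\nu$-orbits, so some power satisfies $\phi^a(x)\simeq\widehat\nu^{\,b_x}(x)$ for every indecomposable $x$; admissibility forces $b_x\neq0$, and then a finite set of $\widehat\nu$-translates of orbit representatives meets every $\phi$-orbit. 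Nothing in your sketch substitutes for this step.

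Your (c)$\Rightarrow$(a) is moreover inverted: if $\phi^i X\simeq X$ for some $i>0$, the $\langle\phi\rangle$-orbit of $X$ is \emph{finite}, not infinite, and a single finite orbit does not by itself contradict finiteness of $\ind(\widehat\L/\phi)=(\ind\widehat\L)/\phi$. The actual contradiction needs a propagation argument, which your proposal lacks: reduce to $\L$ ring-indecomposable, observe that a nonzero $f\colon x\to y$ between indecomposables yields nonzero $\phi^i(f)$ for all $i$, so by local boundedness the $\phi$-orbit of $x$ is finite if and only if that of $y$ is; indecomposability of $\widehat\L$ then shows all orbits are finite or all are infinite. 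Since $\ind\widehat\L$ is infinite, having only finitely many orbits forces every orbit to be infinite, i.e., $\phi$ acts freely, which is admissibility. Without this, your closing remark that ``one checks $\ind(\widehat\L/\phi)$ cannot be finite unless the action is free'' is precisely the statement to be proved, not a proof.
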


\begin{proof}
(a)$\Rightarrow$(b): 
Immediate from Lemma~\ref{lbddlma}\eqref{lbddorbit}.

(b)$\Rightarrow$(a):
Let $x\in\ind\widehat\L$. If $\phi^{\ell} x\simeq x$ for some $\ell$, then
$\End_{\widehat\L/\phi}(x)=\bigoplus_{i\in\Z}\Hom_{\widehat\L}(x,\phi^ix)$
is infinite dimensional, a contradiction.

(a)$\Rightarrow$(c):
The $\widehat\nu$-orbits of $\ind\widehat\L$ are indexed by the indecomposable projective 
$T(\L)$-modules, hence the number of orbits is finite.
Moreover, we have $\widehat\nu \phi \simeq \phi \widehat\nu$ by \eqref{commutes} and since
$\phi$ is admissible, Lemma~\ref{ab} implies that the number of $\phi$-orbits of
$\ind\widehat\L$ is finite, \ie, $\ind(\widehat\L/\phi)$ is finite.

(c)$\Rightarrow$(a):
We may assume that $\L$ is indecomposable as a ring. Then the category $\widehat\L$ is
indecomposable. 
Now if $f:x\to y$ is a non-zero morphism between two indecomposable objects in
$\widehat\L$, then $\phi^i(f)\in\Hom_\C(\phi^ix,\phi^iy)$ is non-zero for all
$i\in\Z$. Since $\widehat\L$ is locally bounded, it follows that the $\phi$-orbit 
of $y$ is finite if and only if so is the $\phi$-orbit of $x$. Since $\widehat\L$ is
indecomposable, either all $\phi$-orbits are finite, or all $\phi$-orbits are infinite. The
assumption that $\ind(\widehat\L/\phi)=(\ind\widehat\L)/\phi$ is finite implies that the
orbits must be infinite, from which follows that $\phi$ is admissible.
\end{proof}

A full subcategory $\U$ of $\modC$ is called \emph{$d$-rigid} if $\Ext_{\C}^i(X,Y)=0$ for
all $i\in\{1,\ldots,d-1\}$ and $X,Y\in\U$. By definition, any $d$-cluster-tilting
subcategory of $\modC$ is $d$-rigid.
Denote by $(\mod\C)/G$ the orbit category of the $G$-action on $\mod\C$ given by 
$g\mapsto g_*$.

A functor $F:\C\to\mathcal{D}$ between additive categories $\C$ and $\mathcal{D}$ is
said to be an \emph{equivalence up to summands} if it is full and faithful, and
satisfies $\add F(\C)=\mathcal{D}$. 
The natural functor $\C\to\proj\C,\:x\mapsto\Hom_{\C}(-,x)$ is an equivalence up to
summands for any additive category $\C$. It is an equivalence of categories if and only if
$\C$ is idempotent complete.

\begin{lma} \label{ext}
 Let $\C$ be a locally bounded $k$-linear Krull--Schmidt category, and $G$ a group acting on $\C$. 
\begin{enumerate}
\item \label{projeq} 
  The push-down functor $F_*$ induces functors
  $(\proj\C)/G\to\proj(\C/G)$ and $(\inj\C)/G\to\inj(\C/G)$, which are equivalences up to summands.
\item \label{extsum}
  For all $i\ge0$ and $X,Y\in\modC$, there is a functorial isomorphism
  $$\Ext^i_{\mathcal{C}/G}(F_*(X),F_*(Y)) \simeq \bigoplus_{g\in G}\Ext^i_{\mathcal{C}}(X,g_*Y).$$
\item \label{pushdown} 
  The functor $F_*:\mod\C\to\mod(\C/G)$ induces a fully faithful
  functor $F_*:(\mod\C)/G\to\mod(\C/G)$.
\item \label{rigid} 
  Let $\U\subset\modC$ be a $G$-equivariant full subcategory.
  Then $\U$ is $d$-rigid if and only if $F_*(\U)\subset\modCG$ is $d$-rigid. 
\end{enumerate}
\end{lma}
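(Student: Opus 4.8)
The plan is to prove the four assertions of Lemma~\ref{ext} in the order given, since each builds on the previous one. The central tool throughout will be the adjunction between the pull-up $F^*$ and the push-down $F_*$, together with the fact that $F$ is a covering functor, so that $F_*\Hom_\C(-,x)\simeq\Hom_{\C/G}(-,F(x))$ holds for representable functors. First I would establish \eqref{projeq}: the push-down of a projective $\C$-module $\Hom_\C(-,x)$ is the projective $(\C/G)$-module $\Hom_{\C/G}(-,F(x))$, and since $F$ induces a bijection $(\ind\C)/G\to\ind(\C/G)$, the induced functor on orbit categories is essentially surjective onto the indecomposable projectives of $\C/G$. Full faithfulness is exactly the statement that $\Hom_{(\proj\C)/G}(x,y)=\bigoplus_{g\in G}\Hom_\C(x,gy)\simeq\Hom_{\C/G}(F(x),F(y))$, which is the defining formula for the orbit category evaluated on representables. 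The injective case is dual, using the duality~\eqref{locbdd-dualising}.

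For \eqref{extsum}, the plan is to take a projective resolution $P_\bullet\to X$ in $\mod\C$ and apply $F_*$; exactness of $F_*$ guarantees $F_*(P_\bullet)\to F_*(X)$ is a projective resolution in $\mod(\C/G)$ by \eqref{projeq}. Then
\[
\Ext^i_{\C/G}(F_*X,F_*Y)=H^i\Hom_{\C/G}(F_*P_\bullet,F_*Y),
\]
and the key computation is the isomorphism $\Hom_{\C/G}(F_*P,F_*Y)\simeq\bigoplus_{g\in G}\Hom_\C(P,g_*Y)$ for $P$ projective. This reduces, by additivity, to $P$ indecomposable projective, say $P=\Hom_\C(-,x)$, where both sides become $\bigoplus_{g\in G}(g_*Y)(x)$ via Yoneda together with the adjunction $\Hom_{\C/G}(F_*P,-)\simeq\Hom_\C(P,F^*(-))$ and the formula $F^*F_*Y\simeq\bigoplus_{g\in G}g_*Y$ (valid because $G$ acts admissibly, so the orbit sums are the fibres of the covering). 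Passing to cohomology and using that each $g_*$ is exact commutes the direct sum past $H^i$, giving the claim.

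Part \eqref{pushdown} is then nearly formal: the case $i=0$ of \eqref{extsum} reads $\Hom_{\C/G}(F_*X,F_*Y)\simeq\bigoplus_{g\in G}\Hom_\C(X,g_*Y)$, and the right-hand side is precisely $\Hom_{(\mod\C)/G}(X,Y)$ by the definition of the orbit category applied to the $G$-action $g\mapsto g_*$ on $\mod\C$. Hence $F_*$ is fully faithful on $(\mod\C)/G$. Finally, for \eqref{rigid}, I would specialize \eqref{extsum} to $X,Y\in\U$ and $1\le i\le d-1$. Since $\U$ is $G$-equivariant, $g_*Y\in\U$ for all $g$, so if $\U$ is $d$-rigid every summand $\Ext^i_\C(X,g_*Y)$ vanishes, forcing $\Ext^i_{\C/G}(F_*X,F_*Y)=0$; conversely, vanishing of the total sum forces vanishing of the $g=1$ summand $\Ext^i_\C(X,Y)$, and every pair in $\U$ arises this way. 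I expect the main obstacle to be \eqref{extsum}: one must verify carefully that $F_*$ sends a projective resolution to a projective resolution (exactness of $F_*$ plus \eqref{projeq}) and that the Hom-formula genuinely respects the differentials so that the direct sum may be pulled outside the cohomology; the functoriality claimed in the statement requires checking naturality in both variables, which is where the bookkeeping with the $G$-action is most delicate.
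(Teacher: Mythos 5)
Your proposal matches the paper's proof essentially step for step: part \eqref{projeq} via the identification of projectives with representables and the equivalence $\proj\C\simeq\C$ (dually for injectives, using the duality \eqref{locbdd-dualising}), part \eqref{extsum} via the adjunction $(F_*,F^*)$ together with $F^*F_*\simeq\bigoplus_{g\in G}g_*$ and the fact that $F_*$, being exact, carries projective resolutions to projective resolutions by \eqref{projeq}, and parts \eqref{pushdown} and \eqref{rigid} as direct specializations of \eqref{extsum}, exactly as the paper does when it remarks that it suffices to prove (b). The one quibble is your parenthetical claim that $F^*F_*\simeq\bigoplus_{g\in G}g_*$ is ``valid because $G$ acts admissibly'': the lemma assumes no admissibility, and the formula holds for an arbitrary $G$-action --- on representables it is immediate from the definition of $\Hom_{\C/G}$, and it extends to all modules by exactness, which is the form in which the paper quotes it from Bautista--Liu.
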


\begin{proof}
In view of the equivalence of categories $\proj\C\simeq\C$, and the compatibility of this
equivalence with the $G$-actions on $\proj\C$ and $\C$ respectively, we have that
$$(\proj\C)/G\simeq \C/G\to \proj(\C/G)$$
is an equivalence up to summands.
The equivalence up to summands $(\inj\C)/G\to \inj(\C/G)$ follows similarly from the
equivalence $\inj\C\simeq\C$.
This proves the statement \eqref{projeq}.

For the remaining statements, it suffices to prove \eqref{extsum}.
It is well-known \cite[Section~6]{bl14} that there is an isomorphism
$F^*F_*\simeq\bigoplus_{g\in G}g_*$ of functors $\Mod\C\to\Mod\C$. Thus for any
$X,Y\in\mod\C$, we have functorial isomorphisms 
\[\Hom_{\C/G}(F_*(X),F_*(Y))\simeq\Hom_{\C}(X,F^*F_*(Y))\simeq
\bigoplus_{g\in G}\Hom_{\C}(X,g_*Y) \,,\]
and so the assertion holds for $i=0$. Now let 
$\cdots\xrightarrow{f_3} P_2 \xrightarrow{f_2} P_1\xrightarrow{f_1} P_0 \to 0$
be a projective resolution of $X$ in $\modC$.
By \eqref{projeq} and the exactness of the functor $F_*$,
\[\cdots\xrightarrow{F_*(f_3)} F_*(P_2) \xrightarrow{F_*(f_2)} F_*(P_1)\xrightarrow{F_*(f_1)} F_*(P_0) \to 0\]
is a projective resolution of $F_*(X)$ in $\modCG$. Thus we have
\begin{align*}
  \Ext^i_{\C/G}(F_*(X),F_*(Y)) &= 
\frac{\Ker\Hom_{\mathcal{C}/G}(F_*(f_{i+1}),F_*(Y))}{\Im\Hom_{\C/G}(F_*(f_i),F_*(Y))} =
\frac{\Ker\left(\bigoplus_{g\in G}\Hom_{\mathcal{C}}(f_{i+1},g_*Y)\right)}{\Im\left(\bigoplus_{g\in G}\Hom_{\mathcal{C}}(f_i,g_*Y)\right)}
\\
&= \bigoplus_{g\in G}\frac{\Ker\Hom_{\C}(f_{i+1},g_*Y)}
         {\Im\Hom_{\mathcal{C}}(f_i,g_*Y)} = \bigoplus_{g\in G}\Ext^i_{\mathcal{C}}(X,g_*Y).\qedhere
\end{align*}
\end{proof}

We denote by $\rad_\C$ the \emph{Jacobson radical} of the category $\C$ (see, e.g.,
  \cite[A.3]{ass06}).
In the following lemma, we collect some basic properties of orbit categories; 
cf.\ \cite[Section~2]{asashiba97}.

\begin{lma} \label{indlemma}
  Let $\C$ be a locally bounded $k$-linear Krull-Schmidt category,
  and $G$ a group acting admissibly on $\C$.
  \begin{enumerate}
  \item We have $\rad_{\C/G} = (\rad_\C)/G$, that is, 
    $\rad_{\C/G}(x,y)= \bigoplus_{g\in G}\rad_{\C}(x,gy)$ for all $x,y\in\C$. 
    In particular, $\C/G$ is Krull--Schmidt, and  $\End_{\C/G}(x)/\rad_{\C/G}(x,x)=\End_{\C}(x)/\rad_{\C}(x,x)$ for all
  $x\in\ind\C$.  
  \label{radical}
  \item 
    Let $x\in\C$, $y\in\ind\C$ and $a=(a_g)_{g\in G}\in\Hom_{\C/G}(x,y)$. 
    If $a_h\in\Hom_\C(x,hy)$ is a retraction for some $h\in G$, then $a$ is a
    retraction.
    \label{retraction}
  \item 
    Let $X\in\modC$ be indecomposable. If $g_*X\not\simeq X$ for all 
    $g\in G\setminus\{1\}$, then $F_*(X)\in\modCG$ is indecomposable.
    \label{pdind}
  \item 
    For any $X\in\mod\C$, the subgroup $G_X=\{g\in G \mid g_*X\simeq X\}$ of $G$ is finite. In particular, if $G$ is torsion free, then $G$ acts admissibly on $\mod\C$.
    \label{torsion2}
  \item 
    If $G$ acts admissibly on $\mod\C$, then the push-down functor
    $F_*:\mod\C\to\mod(\C/G)$ preserves indecomposability. 
    \label{tfadmissible}
  \end{enumerate}
\end{lma}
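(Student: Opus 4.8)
The plan is to prove the five parts in the order (a), (b), (c), (d), (e): part (b) will rest on (a), while (e) is a formal consequence of (c) and (d). For (a), I would first reduce to indecomposable $x,y$ by additivity of the radical over direct-sum decompositions, and recall that, by Lemma~\ref{lbddlma}(a), $\C/G$ is locally bounded, so for $x\in\ind\C$ the ring $\End_{\C/G}(x)$ is a finite-dimensional local $k$-algebra with maximal ideal $\rad_{\C/G}(x,x)$ (locality because $F(x)$ is indecomposable, via the bijection $(\ind\C)/G\to\ind(\C/G)$). When $x,y$ lie in distinct $G$-orbits the claim is immediate, since then $gy\not\simeq x$ for all $g$, so $\rad_\C(x,gy)=\Hom_\C(x,gy)$ and both sides equal $\Hom_{\C/G}(x,y)$. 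For $y\simeq x$ (and, after transport along an isomorphism, for $y$ in the orbit of $x$) I would consider the $k$-linear map $\End_{\C/G}(x)=\bigoplus_{g}\Hom_\C(x,gx)\to\End_\C(x)/\rad_\C(x,x)$ sending $a=(a_g)_g$ to the residue class of $a_1$. Admissibility gives $gx\not\simeq x$ for $g\neq1$, so $\Hom_\C(x,gx)=\rad_\C(x,gx)$, and since an endomorphism of $x$ factoring through the non-isomorphic indecomposable $gx$ is radical, the composition formula shows this map is a surjective ring homomorphism onto the division ring $\End_\C(x)/\rad_\C(x,x)$ with kernel $\bigoplus_g\rad_\C(x,gx)$. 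Locality forces this kernel to be $\rad_{\C/G}(x,x)$, which proves the identity and, on passing to quotients, the ``in particular''.

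For (b) I would use exactly this local structure. Given $a=(a_g)$ with $a_h\colon x\to hy$ a retraction, pick $s'\colon hy\to x$ with $a_hs'=1_{hy}$ and let $s\in\Hom_{\C/G}(y,x)$ be the element whose $h^{-1}$-component is $h^{-1}(s')\in\Hom_\C(y,h^{-1}x)$ and whose other components vanish. The composition formula then yields $(as)_1=h^{-1}(a_hs')=1_y$, while $(as)_g\in\Hom_\C(y,gy)=\rad_\C(y,gy)$ for $g\neq1$. Thus $as$ has invertible image in $\End_\C(y)/\rad_\C(y,y)$, so by part (a) it lies outside $\rad_{\C/G}(y,y)$ and hence is an automorphism of $y$; therefore $a$ is a retraction.

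Part (c) is the heart of the lemma. Combining Lemma~\ref{ext}(b) (for $i=0$) with full faithfulness of $F_*\colon(\mod\C)/G\to\mod(\C/G)$ (Lemma~\ref{ext}(c)), I would identify $\End_{\mod(\C/G)}(F_*X)\simeq\bigoplus_{g}\Hom_\C(X,g_*X)$; this is a finite-dimensional $k$-algebra, since $\C/G$ is locally bounded and $F_*X$ has finite length. The diagonal summand $\End_\C(X)$ is local, and the hypothesis $g_*X\not\simeq X$ for $g\neq1$ makes each off-diagonal summand $\Hom_\C(X,g_*X)$ consist of non-isomorphisms between distinct indecomposables. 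The crux is to prove that this algebra is local; I would do this by showing that the ideal $\rad\End_\C(X)\oplus\bigoplus_{g\neq1}\Hom_\C(X,g_*X)$ is nilpotent, using that $\rad\End_\C(X)$ is nilpotent and that every composite that leaves and returns to the diagonal contributes a radical factor, so that the quotient is the division ring $\End_\C(X)/\rad\End_\C(X)$. For a finite-dimensional algebra this gives locality, whence $F_*X$ is indecomposable. Establishing nilpotency of the off-diagonal ideal is the step I expect to demand the most care; alternatively one can run the classical covering argument with $F^*F_*\simeq\bigoplus_g g_*$, observing that $F^*$ of any direct summand of $F_*X$ is $G$-stable whereas the objects $g_*X$ form a single free $G$-orbit of indecomposables, so $F_*X$ cannot split.

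For (d) I may assume $X\neq0$ (otherwise $G_X=G$). Then $\Supp X\subset\ind\C$ is finite and nonempty by local boundedness, and $g_*X\simeq X$ forces $g(\Supp X)=\Supp(g_*X)=\Supp X$; hence $G_X$ is contained in the set-stabilizer $H=\{g\mid g\,\Supp X=\Supp X\}$. As the $G$-action on $\ind\C$ is free (admissibility), fixing $x_0\in\Supp X$ makes $g\mapsto gx_0$ an injection $H\to\Supp X$, so $|G_X|\le|\Supp X|<\infty$. Finally, (e) is formal: admissibility of the $G$-action on $\mod\C$ says precisely that every indecomposable $X$ satisfies $g_*X\not\simeq X$ for all $g\neq1$, so part (c) makes each $F_*X$ indecomposable, i.e.\ $F_*$ preserves indecomposability. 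If $G$ is torsion-free and some indecomposable $X$ had $g_*X\simeq X$ with $g\neq1$, then $g$ would lie in the finite group $G_X$ of (d) and so have finite order, contradicting torsion-freeness; hence the action on $\mod\C$ is admissible and the first assertion applies.
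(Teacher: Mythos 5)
Your parts (b), (d) and (e) coincide with the paper's proofs, and your overall architecture --- (b) resting on (a), (e) formal from (c) and (d) --- is exactly the paper's. The one genuine soft spot is in (a), and it reappears in (c). In (a) you justify locality of $\End_{\C/G}(x)$ ``because $F(x)$ is indecomposable, via the bijection $(\ind\C)/G\to\ind(\C/G)$'', and then use that locality to identify the kernel $\bigoplus_{g\in G}\rad_\C(x,gx)$ of your surjection with $\rad_{\C/G}(x,x)$. But indecomposability of $F(x)$ yields a local endomorphism ring only once one knows $\C/G$ is Krull--Schmidt, and within the paper's self-contained development that fact (asserted in Section~\ref{galois coverings}) is precisely what part \eqref{radical} establishes: the identity $\End_{\C/G}(x)/\rad_{\C/G}(x,x)=\End_{\C}(x)/\rad_{\C}(x,x)$ \emph{is} the locality statement. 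As written, your argument for (a) is therefore circular relative to the paper's logic. The repair is the observation the paper leads with: since $\C$ is locally bounded, $\rad_\C^n(x,-)=0$ for $n\gg0$ (every composable chain of radical morphisms out of $x$ passes through the finitely many indecomposables receiving nonzero maps from $x$), so your kernel is a nilpotent ideal with division-ring quotient; this forces it to equal $\rad_{\C/G}(x,x)$ and delivers locality as output rather than input. The paper then gets the full identity $\rad_{\C/G}=(\rad_\C)/G$ by noting that $(\C/G)/I$ with $I=(\rad_\C)/G$ equals $\overline{\C}/G$ for $\overline{\C}=\C/\rad_\C$, which is semisimple; your reduction to indecomposables together with the distinct-orbit case accomplishes the same.

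The same missing fact is what your (c) needs at exactly the step you flag yourself: nilpotency of $J=\rad\End_\C(X)\oplus\bigoplus_{g\neq1}\Hom_\C(X,g_*X)$. Every element of $J$ is a radical morphism of $\mod\C$ between objects of $\C'=\add\{g_*X\mid g\in G\}$, and $\C'$ is locally bounded (by freeness of the $G$-action on $\ind\C$, only finitely many $g$ satisfy $g(\Supp X)\cap\Supp X\neq\emptyset$), so $\rad^N(X,-)$ vanishes on $\C'$ for $N\gg0$ and hence $J^N=0$; ``leave-and-return contributes a radical factor'' alone does not suffice, since words that never return to the diagonal must also be killed. The paper sidesteps this entirely by the slicker move of applying part \eqref{radical} to $\C'$ itself, on which $G$ acts admissibly by the hypothesis of (c), and reading off $\End_{\C/G}(F_*(X))/\rad\simeq\End_\C(X)/\rad$ through the fully faithful functor of Lemma~\ref{ext}\eqref{pushdown}; adopting that makes (c) a one-line corollary of (a). Your fallback covering argument with $F^*F_*\simeq\bigoplus_{g\in G}g_*$ (a nonzero $G$-stable direct summand of $\bigoplus_g g_*X$ must absorb the whole free orbit, contradicting multiplicity one) is also sound, via Azumaya's theorem for decompositions into modules with local endomorphism rings, and is in fact closer to Gabriel's original route than to the paper's.
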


\begin{proof}
(\ref{radical})
Set $I=(\rad_{\C})/G$.
First, since $\C$ is locally bounded, any $x\in\C$ satisfies $\rad_{\C}^n(x,-)=0$ for some
$n\gg0$. Thus the ideal $I(x,x)=\bigoplus_{g\in G}\rad_{\C}(x,gx)$ of $\End_{\C/G}(x)$ is
nilpotent, and hence $I$ is contained in $\rad_{\C/G}$.
It remains to show that the factor category $(\C/G)/I$ is semisimple.
Setting $\overline{\C}=\C/\rad_{\C}$, it is clear that $(\C/G)/I=\overline{\C}/G$.
Since the natural functor $\C\to\overline{\C}$ gives a bijection
$\ind\C\simeq\ind\overline{\C}$, it follows that $G$ acts on $\overline{\C}$ admissibly.
From the semisimplicity of $\overline{\C}$ it follows that 
$\End_{\overline{\C}/G}(x)\simeq\End_{\overline{\C}}(x)$ and $\Hom_{\overline{\C}/G}(x,y)=0$
hold for all non-isomorphic $x,y\in\ind\overline{\C}$.
Thus, $\overline{\C}/G$ is semisimple.

(\ref{retraction})
Assume that $a_h\in\Hom_{\C}(x,hy)$ is a retraction. Let $s:hy\to x$ be such that
$a_hs=\mathbb{I}_{hy}\in\End_\C(hy)$, and define $b=(b_g)_{g\in G}\in\Hom_{\C/G}(y,x)$ by
$b_{h\inv}=h\inv(s)\in\Hom_{\C}(y,h\inv x)$ and $b_g=0$ for $g\ne h\inv$.
Hence $ab\in\End_{\C/G}(y)$ satisfies $(ab)_1=\I_y$, while for $g\ne1$ we have
$y\not\simeq gy$, implying $(ab)_g\in\Hom_\C(y,gy)=\rad_\C(y,gy)$.
Consequently, $ab-\I_y\in \rad_{\C/G}(y,y)$ by \eqref{radical}, so $ab\in\End_{\C/G}(y)$ is invertible. Hence
$a$ is a retraction.

(\ref{pdind})
First, observe that from the assumptions follow that
$G$ acts admissibly on $\C'=\add\{g_*X\mid g\in G\}$. Using
Lemma~\ref{ext}\eqref{pushdown} and applying the second identity in \eqref{radical} to
$\C=\C'$, $x=X$, we get 
\[  \frac{\End_{\C/G}(F_*(X))}{\rad_{\modCG}(F_*(X),F_*(X))} = 
  \frac{\End_{(\mod\C)/G}(X)}{\rad_{(\modC)/G}(X,X)} =
   \frac{\End_\C(X)}{\rad_{\modC}(X,X)}\,,\]
which is a division algebra, since $\End_{\C}(X)$ is local.
Hence, $\End_{\C/G}(F_*(X))$ is local, so $F_*(X)$ is indecomposable.

(\ref{torsion2})
Since the group $G_X$ acts admissibly on the support $\Supp X$ of $X$, which is a
finite subset of $\ind\C$, it follows that $|G_X|\le|\Supp X|$.

(\ref{tfadmissible})
The first assertion follows from \eqref{pdind}, and the second one follows from
\eqref{torsion2}.
\end{proof}

\begin{lma} \label{functfin} 
Let $\C$ be a locally bounded $k$-linear Krull-Schmidt category, and $G$ a
group acting on $\C$.
If $\U\subset\modC$ is a full subcategory such that $F_*(\U)$ is functorially finite in
$\modCG$, then $\U$ is functorially finite in $\modC$.
\end{lma}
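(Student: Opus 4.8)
The plan is to prove contravariant finiteness of $\U$ in $\modC$ and then derive covariant finiteness by duality. For the latter, recall the duality $D\colon\modC\leftrightarrow\mod(\C^{\rm op})$ from \eqref{locbdd-dualising}, which interchanges right and left approximations. Since it is compatible with the covering functor, one has $D\circ F_*\simeq F_*\circ D$ (with $F_*$ on the right the push-down for the induced $G$-action on $\C^{\rm op}$, using $\C^{\rm op}/G\simeq(\C/G)^{\rm op}$), so that $F_*(\U)$ being functorially finite forces $F_*(D\U)$ to be functorially finite in $\mod(\C^{\rm op}/G)$; contravariant finiteness of $D\U$ in $\mod(\C^{\rm op})$ then yields covariant finiteness of $\U$ in $\modC$. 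Thus it suffices to produce, for each $X\in\modC$, a right $\U$-approximation.

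Given $X\in\modC$, functorial finiteness of $F_*(\U)$ supplies a right $F_*(\U)$-approximation $\pi\colon V\to F_*(X)$ with $V=\bigoplus_i F_*(U_i)$ and $U_i\in\U$ (finitely many). I would transfer $\pi$ upstairs using the adjunction $(F_*,F^*)$ together with the isomorphism $F^*F_*\simeq\bigoplus_{g\in G}g_*$ recalled in the proof of Lemma~\ref{ext}. Concretely, set $\mu=p\circ F^*(\pi)\colon F^*(V)\to X$, where $p\colon F^*F_*(X)=\bigoplus_{g\in G}g_*X\to X$ is the projection onto the identity summand $1_*X=X$. A priori $F^*(V)=\bigoplus_i\bigoplus_{g\in G}g_*U_i$ is only an object of $\Mod\C$; but since $\C$ is locally bounded and $\Supp X$ is finite, only finitely many summands $g_*U_i$ admit a non-zero morphism to $X$. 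Hence $\mu$ factors through the projection onto a finite summand $V_0=\bigoplus_{(i,g)\in S}g_*U_i\in\modC$, producing $\mu_0\colon V_0\to X$.

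It then remains to verify the approximation property. For $\alpha\colon U\to X$ with $U\in\U$, naturality of the unit gives $F^*(F_*\alpha)\circ\eta_U=\eta_X\circ\alpha$, where $\eta_X$ is the inclusion of the identity summand; since $F_*U\in F_*(\U)$ and $\pi$ is an approximation, there is $\beta\colon F_*U\to V$ with $\pi\beta=F_*(\alpha)$, and applying $F^*$, precomposing with $\eta_U$ and postcomposing with $p$ gives $\mu\circ(F^*(\beta)\circ\eta_U)=p\circ\eta_X\circ\alpha=\alpha$, so $\alpha$ factors through $\mu_0$. Running the same computation on the objects $g_*U$ shows $\mu_0$ is actually a right $\add\{g_*U\mid U\in\U,\ g\in G\}$-approximation. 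This last point is exactly where I expect the main obstacle: the construction naturally yields approximations by the $G$-closure $\add\{g_*U\}$ rather than by $\U$, so one must know its domain lies in $\add\U$. In the situations where this lemma is applied this is automatic, since the relevant $\U$ is $G$-equivariant — indeed $F_*g_*\simeq F_*$ forces any preimage $F_*^{-1}(\V)$ to be $G$-equivariant — whence $\add\{g_*U\}=\add\U$. The other delicate step is the truncation of the infinite sum $\bigoplus_{g}g_*U_i$ to $V_0$, for which local boundedness of $\C$ is essential.
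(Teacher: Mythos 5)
Your proposal is essentially the paper's own argument in mirror-dual form. The paper proves covariant finiteness directly: it writes a left $F_*(\U)$-approximation $a\colon F_*(X)\to F_*(W)$ as a family $(a_g)_{g\in G}$ with $a_g\in\Hom_{\C}(X,g_*W)$, using the fully faithful functor $(\modC)/G\to\modCG$ of Lemma~\ref{ext}\eqref{pushdown}, truncates to $b=(a_g)_{g\in I}\colon X\to\bigoplus_{g\in I}g_*W$ with $I=\{g\mid a_g\ne0\}$, and checks the factorization property using the composition formula in the orbit category. Your unit-of-adjunction computation with $F^*F_*\simeq\bigoplus_{g\in G}g_*$ is the same calculation written covariantly, and your duality reduction (via $\C^{\rm op}/G\simeq(\C/G)^{\rm op}$ and $D\circ F_*\simeq F_*\circ D$) is exactly what the paper leaves implicit when it says it "only" proves covariant finiteness. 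Notably, the "main obstacle" you flag is real, and the paper's proof passes over it silently: the paper's $b$ has target $\bigoplus_{g\in I}g_*W$, which lies in $\add\{g_*Y\mid Y\in\U,\ g\in G\}$ but need not lie in $\add\U$, yet it is asserted to be a left $\U$-approximation. As you observe, this is harmless in the lemma's only application (Theorem~\ref{stronger theorem}(b)), where $\U=F_*\inv(\V)$ is $G$-equivariant; strictly, the lemma as stated should either carry that hypothesis or conclude functorial finiteness of the $G$-closure of $\U$. Your handling of this point is, if anything, more careful than the paper's.

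One step of your write-up is weaker than the paper's, though easily repaired: the truncation. You argue that only finitely many summands $g_*U_i$ admit a nonzero map to $X$ from local boundedness of $\C$ and finiteness of $\Supp X$, but $\Supp(g_*U_i)=g(\Supp U_i)$, so this count also needs the $G$-action to have finite stabilizers on indecomposables — for the trivial action of an infinite group, infinitely many (mutually isomorphic) summands map nontrivially to $X$ — and the lemma assumes only "a group acting on $\C$". The hypothesis-free justification is the one built into the paper's route: by Lemma~\ref{ext}\eqref{extsum} (for $i=0$), $\Hom_{\C/G}(F_*(W),F_*(X))\simeq\bigoplus_{g\in G}\Hom_{\C}(W,g_*X)$ is a \emph{direct sum}, so the family of components of your approximation is finitely supported by definition, and $\mu$ automatically factors through a finite subsum; local boundedness is not actually needed here. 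With that substitution, and with the equivariance point handled as in your final paragraph, your proof is complete and matches the paper's.
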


\begin{proof}
We only prove that $\U\subset\modC$ is covariantly finite. Observe that, by
Lemma~\ref{ext}\eqref{pushdown}, the image in $\modCG$ of the push-down functor is
equivalent to the orbit category $(\modC)/G$. Set $\V=F_*(\U)\simeq\U/G$. 

Fix $X\in\modC$. By our assumptions, there exists a left $\V$-approximation $a:F_*(X)\to V$
in $\mod(\C/G)$, and $V=F_*(W)$ for some $W\in\U$. 
Viewing $a$ as a morphism in $(\modC)/G$, we write
$a=(a_g)_{g\in G}$, where $a_g\in\Hom_{\C}(X,g_*W)$. 
Then the set $I=\{g\in G\mid a_g\neq0\}$ is finite.  
Set $U=\bigoplus_{g\in I}g_*W\in\modC$ and $b=(a_g)_{g\in I}\in\Hom_{\C}(X,U)$. We show
that $b$ is a left $\U$-approximation in $\modC$. 

Take any $c\in\Hom_{\C}(X,Y)$ with $Y\in\U$. For $F_*(c)\in\Hom_{\C/G}(F_*(X),F_*(Y))$,
there exists $d\in\Hom_{\C/G}(V,F_*(Y))$ such that $F_*(c)=da$, since $a$ is
a left $\V$-approximation in $\modCG$. Write $d=(d_g)_{g\in G}$, where
$d_g\in\Hom_{\C}(W,g_*Y)$. Then we have 
\[c=\sum_{h\in G}h_*(d_{h\inv})a_h = \sum_{h\in I}h_*(d_{h\inv})a_h \,,\]
which shows that $e=(h_*(d_{h\inv}))_{h\in I}\in\Hom_{\C}(U,Y)$ satisfies $c=eb$. Thus the assertion follows.
\end{proof}

\begin{proof}[Proof of Remark~\ref{stable_lbdd}.]
  Let $\mathcal{U}'\subset \mathop{\mathrm{mod}}\nolimits\widehat\Lambda$ be the preimage of $\mathcal{U}$ under the natural functor $\mathop{\mathrm{mod}}\nolimits\widehat\Lambda\to\stmod\widehat\Lambda$.
  From Lemma~\ref{ext}(c) follows that $\mathcal{U}'/\phi_*\simeq F_*(\mathcal{U}')$.
  As the $\phi$-action on $\mathop{\mathrm{mod}}\nolimits \widehat{\Lambda}$ is admissible by Lemma~\ref{indlemma}(d), we have 
\begin{align*}
  \mathcal{U} \mbox{ is locally bounded} \:&\Leftrightarrow\:
  \mathcal{U}' \mbox{ is locally bounded}
  &&\mbox{by Lemma~\ref{lbddlma}(b)}, \\
  \:&\Leftrightarrow\: 
  \mathcal{U}'/\phi_*\simeq F_*(\mathcal{U}') \mbox{ is locally bounded}
  &&\mbox{by Lemma~\ref{lbddlma}(a)}, \\
  \:&\Leftrightarrow\: 
  \ind(\mathcal{U}'/\phi_*) \mbox{ is finite}
  &&\mbox{by Lemma~\ref{lbddlma}(b)}, \\
  \:&\Leftrightarrow\: 
\ind(\mathcal{U}/\phi_*) \mbox{ is finite}
\end{align*}
where we used the fact that, by Lemma~\ref{admissibility}, $\ind(\widehat{\Lambda}/\phi)$ is finite.
\end{proof}

\subsection{Proof of Theorem~\ref{stronger theorem} and Corollary~\ref{bijection}}

First we prove the `if' part of Theorem~\ref{stronger theorem}(a).

\begin{proof}[Proof of `if' part of Theorem~\ref{stronger theorem}(a)]
It follows from Lemma~\ref{ext}\eqref{rigid} and Lemma~\ref{functfin} that $\U$ is $d$-rigid and functorially finite in $\mod(\C)$. 

Let $X\in\modC$.  
By Lemma~\ref{ext}\eqref{extsum} we have, for all $U\in\U$,
$$\Ext^i_{\C/G}(F_*(X),F_*(U))\simeq\bigoplus_{g\in G}\Ext^i_{\C}(X,g_*U)\,.$$
Since $g_*U\in\U$ for all $g\in G$, it follows that $\Ext^i_{\C}(X,U)=0$ for all $U\in\U$ and $i\in\{1,\ldots,d-1\}$ if and only if
$\Ext^i_{\C/G}(F_*(X),F_*(U))=0$ for all $U\in\U$ and $i\in\{1,\ldots,d-1\}$. As $F_*(\U)$ is a $d$-cluster-tilting subcategory of $\modCG$, this is true precisely when 
$F_*(X)\in F_*(\U)$, that is, by Lemma~\ref{ext}\eqref{pushdown}, when $X\in\U$.

By a similar argument, one can show that $\Ext^i_{\C}(U,X)=0$ for all $U\in\U$ and $i\in\{1,\ldots,d-1\}$ if and only if $X$ belongs to $\U$.
\end{proof}

For the proof of the rest of Theorem~\ref{stronger theorem} we shall need a result,
Proposition~\ref{exseq} below, which characterizes $d$-cluster-tilting subcategories of
$\mod\C$ in terms of the existence of so-called $d$-almost split
sequences.
This is a generalization of \cite[Proposition~2.4]{io11}, which deals with the case
of a finite-dimensional $k$-algebra.

Recall that a Hom-finite $k$-linear Krull-Schmidt category $\C$ is called a \emph{dualizing $k$-variety} \cite{ARstable}
if the functors $D:\Mod\C\to \Mod(\C^{\rm op})$ and $D:\Mod(\C^{\rm op})\to \Mod\C$ induce
dualities 
\[D:\mod\C\to\mod(\C^{\rm op})\ \mbox{ and }\ D:\mod(\C^{\rm op})\to \mod\C\]
respectively. In this case, $\mod\C$ is an abelian subcategory of $\Mod\C$ which is closed
under kernels, cokernels and extensions, and has enough projective objects and injective
objects. Moreover, all simple objects in $\Mod\C$ and $\Mod(\C^{\rm op})$ are finitely
presented. 
This implies that for every indecomposable object $x\in\C$ there exists a right
  almost split morphism $f:y\to x$, and a left almost split morphism $g:x\to z$.
A morphism $f\in\Hom_{\C}(y,x)$ is said to be 
\emph{right almost split} if it is not a retraction, and any non-retraction
$g\in\Hom_{\C}(z,x)$ factors through $f$. Left almost split morphisms are defined dually.

For example, by \eqref{locbdd-dualising} in Section~\ref{section: notation}, 
any locally bounded $k$-linear category is a dualizing $k$-variety. 
If $\C$ is a dualizing $k$-variety then so is $\mod\C$ \cite[Proposition 2.6]{ARstable}.
A $k$-linear triangulated category is a dualizing $k$-variety if and only if it has a
Serre functor \cite[Proposition 2.11]{iy08}.
Furthermore, any functorially finite subcategory of a dualizing $k$-variety is again a
dualizing $k$-variety \cite[Proposition 1.2]{iyama07b}.

\begin{prop} \label{globaldim}
Let $\C$ be a dualizing $k$-variety, and $\U$ a functorially finite subcategory of $\mod\C$. 
\begin{enumerate} \renewcommand{\labelenumi}{(\alph{enumi})}
\item The category $\U$ is a dualizing $k$-variety; in particular, $\mod\U$ is abelian.
\item The following identities hold:
\begin{eqnarray*}
\gldim(\mod\U)&=& \sup\{\injdim S\mid S\mbox{ is a simple $\U$-module}\}\\
&=& \sup\{\projdim S\mid S\mbox{ is a simple $\U$-module}\}.
\end{eqnarray*}
\end{enumerate}
\end{prop}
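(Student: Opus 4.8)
\emph{Part (a)} is immediate from the two facts recalled just before the statement. Since $\C$ is a dualizing $k$-variety, so is $\mod\C$ by \cite[Proposition~2.6]{ARstable}; and since $\U$ is a functorially finite subcategory of the dualizing $k$-variety $\mod\C$, it is itself a dualizing $k$-variety by \cite[Proposition~1.2]{iyama07b}. In particular $\mod\U$ is abelian. Applying \cite[Proposition~2.6]{ARstable} once more shows that $\mod\U$ is again a dualizing $k$-variety, so it has enough projectives and injectives, admits projective covers and injective envelopes, and is Krull--Schmidt; this is the setting in which I shall compute the global dimension.

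For \emph{Part (b)}, write $g=\gldim(\mod\U)$, and recall that the indecomposable projectives, indecomposable injectives and simple objects of $\mod\U$ are $\Hom_\U(-,U)$, $D\Hom_\U(U,-)$ and $S_U=\Hom_\U(-,U)/\rad_\U(-,U)$ respectively, indexed by $U\in\ind\U$, the indecomposable injective with socle $S_U$ being $D\Hom_\U(U,-)$. The plan is to reduce everything to the single detection lemma
\[\injdim Y=\sup\{i\ge0\mid\Ext^i_{\mod\U}(S,Y)\ne0\ \text{for some simple }S\}\qquad(Y\in\mod\U).\]
Granting this, interchanging the two suprema gives
\[g=\sup_Y\injdim Y=\sup_S\,\sup_Y\{i\mid\Ext^i_{\mod\U}(S,Y)\ne0\}=\sup_S\projdim S,\]
since $\sup_Y\{i\mid\Ext^i_{\mod\U}(S,Y)\ne0\}=\projdim S$; this is the second claimed equality. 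The dual statement $\projdim X=\sup\{i\mid\Ext^i_{\mod\U}(X,S)\ne0\}$ follows by applying the detection lemma in the dualizing $k$-variety $\mod(\U^{\rm op})$ and transporting it along the duality $D\colon\mod\U\to\mod(\U^{\rm op})$, which exchanges projectives with injectives and carries simples to simples; the same interchange of suprema then yields $g=\sup_X\projdim X=\sup_S\injdim S$, the first equality.

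It remains to prove the detection lemma, which is where the real work lies. Since $\mod\U$ has injective envelopes, $Y$ admits a minimal injective coresolution $0\to Y\to I^0\xrightarrow{d^0}I^1\xrightarrow{d^1}\cdots$, and each $I^i$ is a finite direct sum of indecomposable injectives $D\Hom_\U(U,-)$, because $\mod\U$ is Krull--Schmidt and Hom-finite, so that the cosyzygies are finitely presented and their injective envelopes decompose into finitely many indecomposable injectives. The crucial observation is that minimality forces each induced map $\Hom_{\mod\U}(S,I^i)\to\Hom_{\mod\U}(S,I^{i+1})$ to vanish: a morphism out of a simple $S$ has image in the socle of $I^i$, which is contained in the essential subobject $\ker(d^i)$ (the image of $Y$ when $i=0$ and $\im(d^{i-1})$ otherwise) and is therefore killed by $d^i$. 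Consequently $\Ext^i_{\mod\U}(S,Y)\simeq\Hom_{\mod\U}(S,I^i)$, and the latter is nonzero exactly when the indecomposable injective with socle $S$ occurs as a summand of $I^i$. Hence $I^i\ne0$ if and only if $\Ext^i_{\mod\U}(S,Y)\ne0$ for some simple $S$, and taking the largest such $i$ proves the lemma. I expect this minimality argument to be the main obstacle: it must be handled without recourse to composition length, since objects of $\mod\U$ need not have finite length, so the socle and essentiality bookkeeping---together with the identification of indecomposable injectives with the injective envelopes of simples---has to be carried out directly in the generality of dualizing $k$-varieties.
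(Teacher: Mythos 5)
Your proposal is correct and takes essentially the same approach as the paper: part (a) is the same pair of citations, and your ``detection lemma'' is exactly the fact the paper invokes in one line, namely that minimal projective (resp.\ injective) resolutions let the simple $\U$-modules detect $\projdim$ and $\injdim$. The only difference is presentational --- you supply the socle/essential-subobject details the paper leaves implicit, and you transport the projective-side statement through the duality $D:\mod\U\to\mod(\U^{\rm op})$ instead of arguing dually.
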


\begin{proof}
(a) The assertion follows from the remarks in the paragraph preceding the proposition.

(b) Since any object $M\in\mod\U$ has a minimal projective (respectively, injective) resolution in $\mod\U$,
we have that $\projdim M\le\ell$ (respectively, $\injdim M\le\ell$) if and only if
$\Ext^{\ell+1}_{\U}(M,S)=0$ (respectively, $\Ext^{\ell+1}_{\U}(M,S)=0$) 
for any simple $\U$-module $S$.
Thus $\gldim(\mod\U)$ is the supremum of $\injdim S$ (respectively, $\projdim S$) for simple $\U$-modules $S$.
\end{proof}

The following result is a category version of \cite[Proposition~2.4]{io11} (see also \cite[Theorem 5.1(3)]{iyama07b}).

\begin{prop}\label{exseq}
Let $\C$ be a dualizing $k$-variety, and $\U$ an additively closed functorially
finite $d$-rigid subcategory of $\mod\C$ that is a generator-cogenerator. 
The following conditions are equivalent.
\begin{enumerate} \renewcommand{\labelenumi}{(\alph{enumi})}
\item $\U$ is a $d$-cluster-tilting subcategory of $\mod\C$.
\item For any $X\in\modC$, there exists an exact sequence
$$0\to M_d\to\cdots\to M_2\to M_1\to X\to0$$
with $M_i\in\U$.
\item For any indecomposable object $X\in\U$, there exists an exact sequence
$$0\to M_d\to\cdots\to M_1\to M_0\stackrel{f}{\to} X$$
with $M_i\in\U$ and $f$ right almost split in $\U$. 
\item $\gldim(\mod\U)\le d+1$.
\end{enumerate}
\end{prop}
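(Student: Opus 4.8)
The plan is to prove the four conditions equivalent by establishing the cycle of implications (a)$\Rightarrow$(b)$\Rightarrow$(c)$\Rightarrow$(d)$\Rightarrow$(a), mirroring the strategy of \cite[Proposition~2.4]{io11} but working throughout in the functor category $\mod\C$ over a dualizing $k$-variety rather than over a finite-dimensional algebra. The crucial enabling facts are already in place: by Proposition~\ref{globaldim}, $\U$ is itself a dualizing $k$-variety with $\mod\U$ abelian, and $\gldim(\mod\U)$ is computed as the supremum of injective (equivalently projective) dimensions of simple $\U$-modules. This last identity is what connects the homological condition (d) to the existence of the resolutions appearing in (b) and (c).

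First I would treat (a)$\Rightarrow$(b). Given $X\in\mod\C$, I use that $\U$ is contravariantly finite to build, step by step, a sequence $0\to M_d\to\cdots\to M_1\to X\to0$ by taking successive right $\U$-approximations and passing to syzygies: set $\Omega_0=X$, choose a right $\U$-approximation $M_1\to\Omega_0$, let $\Omega_1$ be its kernel, and iterate. The $d$-cluster-tilting property forces the kernels to stabilize into $\U$ after $d$ steps, because $\Ext^i_\C(\U,\Omega_j)$ vanishes for $i$ in the relevant range and the right-orthogonality characterization of $\U$ kicks in. The implication (b)$\Rightarrow$(c) is the localization of an arbitrary resolution to the minimal/almost-split version: for an indecomposable $X\in\U$, I take a right almost split map $f\colon M_0\to X$ in $\U$ (which exists because $\U$ is a dualizing $k$-variety, hence has almost split morphisms), apply (b) to the kernel or image of $f$, and splice to obtain the desired length-$d$ sequence ending in $f$.

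The heart of the matter, and the step I expect to be the main obstacle, is the equivalence between the existence of these $d$-step sequences and the global dimension bound $\gldim(\mod\U)\le d+1$, i.e. (c)$\Leftrightarrow$(d) together with (d)$\Rightarrow$(a). The key translation is that the sequences in (c), when evaluated appropriately, become projective resolutions of the \emph{simple} $\U$-modules $S_X$ (associated to indecomposables $X\in\U$ via the radical of $\End_\U(X)$): a sequence $0\to M_d\to\cdots\to M_0\xrightarrow{f}X$ with $f$ right almost split yields, after applying the Yoneda functor $\Hom_\U(-,?)$ and taking cokernels, a projective resolution $0\to P_d\to\cdots\to P_0\to \Hom_\U(-,X)\to S_X\to 0$ of length $d+1$ for $S_X$ in $\mod\U$. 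Hence (c) says exactly that every simple $\U$-module has projective dimension at most $d+1$, which by Proposition~\ref{globaldim}(b) is equivalent to (d). I expect the delicate points here to be verifying that right almost split maps translate precisely into the map onto the top $S_X$, that minimality of the resolution is preserved, and that the length bound is sharp in both directions.

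Finally, for (d)$\Rightarrow$(a) I would run the argument in reverse together with a rigidity check: $\U$ is already assumed $d$-rigid, functorially finite, and a generator-cogenerator, so to conclude it is $d$-cluster-tilting I must verify the two orthogonality equalities in Definition~\ref{CTdef}(i). Using $\gldim(\mod\U)\le d+1$ and the identification of simples' resolutions, I can produce for each $X\in\mod\C$ a $\U$-resolution as in (b), and then a standard dimension-shifting argument shows that any $X$ with $\Ext^i_\C(\U,X)=0$ for $1\le i\le d-1$ must lie in $\U$, and dually. The self-duality afforded by $\C$ being a dualizing $k$-variety lets me obtain the second orthogonality condition from the first by passing to $\C^{\mathrm{op}}$, so I would set up the argument symmetrically to avoid repeating essentially the same computation twice.
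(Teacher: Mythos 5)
Your route matches the paper's proof almost step for step: (b)$\Rightarrow$(c) by splicing a right almost split map in $\U$ with a resolution of its kernel, and (c)$\Rightarrow$(d) by applying Yoneda to convert the sequence of (c) into a length-$(d+1)$ projective resolution of the simple $\U$-module and invoking Proposition~\ref{globaldim}(b), are exactly the paper's arguments. The only structural difference is that the paper outsources (a)$\Leftrightarrow$(b) to \cite[Proposition~2.2.2]{iyama07}, while you reprove (a)$\Rightarrow$(b) by iterated right $\U$-approximations (correct in outline, provided you note these approximations are epimorphisms because $\U$ contains the projectives) and sketch the return to (a) by dimension shifting plus duality. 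That closing step, however, contains a genuine gap.

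To run your dimension-shifting/splitting argument you need, for an \emph{arbitrary} $X\in\modC$, an exact sequence $0\to M_d\to\cdots\to M_1\to X\to0$ with exactly $d$ terms in $\U$; but ``the identification of simples' resolutions'' only yields, from (d), resolutions of the simple $\U$-modules, i.e.\ condition (c), which concerns indecomposables of $\U$ and says nothing about arbitrary $X$. The naive estimate from (d) gives $\projdim\bigl(\Hom_\C(-,X)|_\U\bigr)\le d+1$, hence a Yoneda sequence with $d+2$ terms from $\U$ — too long: your chain $\Ext^1_\C(X,K_1)\simeq\Ext^2_\C(X,K_2)\simeq\cdots$ terminates in a vanishing group only if the last syzygy already lies in $\U$, which forces the length-$d$ sequence. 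The missing idea — the paper's proof of (d)$\Rightarrow$(b) — is to take an injective copresentation $0\to X\to I^0\to I^1$ in $\modC$: since $\U$ is a cogenerator, $I^0,I^1\in\U$, so restriction to $\U$ yields an exact sequence $0\to \Hom_\C(-,X)|_\U\to\Hom_\U(-,I^0)\to\Hom_\U(-,I^1)$ exhibiting $\Hom_\C(-,X)|_\U$ as a second cosyzygy between projective $\U$-modules; combined with $\gldim(\mod\U)\le d+1$ this drops its projective dimension to $d-1$, and Yoneda returns precisely the sequence of (b), with exactness back in $\modC$ checked by evaluating at the projectives $\Hom_\C(-,x)\in\U$ — the point where the generator half of the generator-cogenerator hypothesis enters. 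A minor further remark: in your last paragraph the orthogonality pairing is swapped — the resolutions of (b) split $0\to K_1\to M_1\to X\to0$ using $\Ext^i_\C(X,\U)=0$, whereas $\Ext^i_\C(\U,X)=0$ pairs with the dual coresolutions obtained via $D$; since you invoke the duality symmetrically anyway, this is cosmetic once the copresentation step above is supplied.
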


\begin{proof}
(a)$\Leftrightarrow$(b) This is shown in
\cite[Proposition~2.2.2(1)$\Leftrightarrow$(2-0)]{iyama07}.

(b)$\Rightarrow$(c) Let $X\in\U$ be an indecomposable object. Since $\U$ is a dualizing
$k$-variety, there exists a right almost split morphism $f_0:M_0\to X$ in $\U$. Let
$g:K\to M_0$ be kernel of $f_0$ in $\mod\C$.  
By our assumption (b), there exists an exact sequence
$0\to M_d\xrightarrow{f_d}\cdots\xrightarrow{f_3}M_2\xrightarrow{f_2}M_1\xrightarrow{h}K\to0$
with $M_i\in\U$. 
Let $f_1=gh:M_1\to M_0$, then the sequence 
\[0\to M_d\xrightarrow{f_d}\cdots\xrightarrow{f_3}M_2\xrightarrow{f_2}M_1\xrightarrow{f_1}M_0\xrightarrow{f_0}X\]
has the desired properties.

(c)$\Rightarrow$(d) 
Any simple $\U$-module $S$ can be written as $S=\Hom_{\U}(-,X)/\rad_{\U}(-,X)$ for some indecomposable object $X\in\U$.
Applying the Hom-functor to the sequence given in the condition (c),
and using that $\mathcal{U}$ is $d$-rigid,
we get an exact sequence
\[0\to\Hom_{\U}(-,M_d)\to\cdots\to\Hom_{\U}(-,M_0)\to\Hom_{\U}(-,X)\to S\to 0.\]
So $S$ has projective dimension at most $d+1$, whence Proposition~\ref{globaldim}(b)
shows that $\gldim(\mod\U)\le d+1$.

(d)$\Rightarrow$(b) Fix $X\in\modC$. Since $\C$ is a dualizing $k$-variety, we can take an injective copresentation
$0\to X\to I^0\to I^1$ in $\mod\C$.
This gives rise to an exact sequence
\[0\to\Hom_{\C}(-,X)|_{\U}\to\Hom_{\U}(-,I^0)\to\Hom_{\U}(-,I^1)\]
in $\mod\U$.
Since $\U$ is a generator-cogenerator, $I^i$ belongs to $\U$ for $i=0,1$. Therefore the $\U$-modules $\Hom_{\U}(-,I^i)$ are projective.
Since $\gldim(\mod\U)\le d+1$, the projective dimension of $\Hom_{\C}(-,X)|_{\U}$ is at most $d-1$.
Taking a projective resolution
$$0\to\Hom_{\U}(-,M_d)\to\cdots\to\Hom_{\U}(-,M_1)\to\Hom_{\C}(-,X)|_{\U}\to0,$$
of $\Hom_{\C}(-,X)|_{\U}$, Yoneda's Lemma gives us the desired sequence.
\end{proof}

Now we are ready to prove Theorem~\ref{stronger theorem}(a).

\begin{proof}[Proof of the `only if' part of Theorem~\ref{stronger theorem}(a)]
Let $\V=F_*(\U)$. It follows from Lemma~\ref{ext}\eqref{projeq} and \eqref{rigid} that
$\V$ is a $d$-rigid generator-cogenerator and, by assumption, $\V$ is functorially finite
in $\mod(\C/G)$. 

We shall show that $\V$ satisfies the condition (c) of Proposition~\ref{exseq}. 
Since $G$ acts admissibly on $\mod\C$,
Lemma~\ref{indlemma}\eqref{tfadmissible} gives that the push-down functor $F_*$ preserves indecomposability, implying
that $\V=\add\V$.  
Moreover, any indecomposable object in $\V$ is isomorphic to $F_*(U)$ for some
indecomposable object $U\in\U$. 
Let $f_0:U_0\to U$ be a right almost split map in $\mathcal{U}$, and
$g:K\to U_0$ its kernel in $\mod\C$.
Applying Proposition~\ref{exseq}(b) to $\U$, we get an exact sequence
$$0\to U_d\xrightarrow{f_d}\cdots\xrightarrow{f_2}U_1\xrightarrow{h}K\to 0 $$
in $\mod\C$ with $U_i\in\U$.
Setting $f_1=gh$, we obtain an exact sequence
$$0\to U_d\xrightarrow{f_d}\cdots\xrightarrow{f_2}U_1\xrightarrow{f_1}
U_0\xrightarrow{f_0}U\,.$$
Applying $F_*$ gives an exact sequence 
\begin{equation} \label{stangle}
0\to F_*(U_d)\xrightarrow{F_*(f_d)}\cdots\xrightarrow{F_*(f_2)}F_*(U_1)\xrightarrow{F_*(f_1)}
F_*(U_0)\xrightarrow{F_*(f_0)}F_*(U) 
\end{equation}
in $\mod(\C/G)$, with $F_*(U_i)\in\V$ for all $i$.

It now suffices to show that the morphism $F_*(f_0):F_*(U_0)\to F_*(U) $ is right almost split in $\V$.
Let $a:F_*(X)\to F_*(U)$ be a morphism in $\V$, that is not a retraction.
By Lemma~\ref{ext}\eqref{pushdown}, we may view $a$ as a morphism in the orbit category $(\mod\C)/G$,
whence $a=(a_g)_{g\in G}$, with $a_g\in\Hom_{\C}(X,g_*U)$.
From Lemma~\ref{indlemma}\eqref{retraction} follows that $a_g$ is not a retraction for any $g\in G$. Since the map
$g_*(f_0):g_*U_0\to g_*U$ is right almost split in $\U$, there exist $b_g:X\to g_*U_0$
such that $a_g=g_*(f_0)b_g$ for all $g\in G$.
Now $a=F_*(f_0)b$ holds, where $b=(b_g)_{g\in G}\in\Hom_{\C/G}(F_*(X),F_*(U_0))$. 
Hence $F_*(f_0)$ is a right almost split map in $\V$. 
This concludes the proof of Theorem~\ref{stronger theorem}(a).
\end{proof}

We turn now to the proof of Theorem~\ref{stronger theorem}(b). For this, the following
result by Amiot and Oppermann will be needed.

\begin{lma}\cite[Corollary 4.5]{ao}\label{amiot-oppermann}
Let $A$ be a finitely generated $\Z$-graded algebra over an algebraically closed field
$k$, and $M$ a finite-dimensional $A$-module such that $\Ext^1_A(M,M)=0$.
Then there exists a $\Z$-graded $A$-module $N$ which is isomorphic to $M$ as an ungraded
$A$-module.
\end{lma}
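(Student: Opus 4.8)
The plan is to prove this by the classical geometry of module varieties together with the torus action induced by the grading. Fix $\dim_k M = m$ and choose homogeneous algebra generators $a_1,\dots,a_n$ of $A$ with $\deg a_j = e_j$. A module structure on $k^m$ is then a point $x = (\rho(a_1),\dots,\rho(a_n))$ in the affine variety $R \subseteq \End_k(k^m)^n$ cut out by the (homogeneous) relations of $A$, with $G = \gl_m(k)$ acting by simultaneous conjugation, so that $G$-orbits are isomorphism classes. Because the grading yields, for each $\lambda \in k^*$, an automorphism $\sigma_\lambda$ of $A$ acting on $A_i$ by $\lambda^i$, the torus $T = k^*$ acts on $R$ by $\lambda\cdot(X_1,\dots,X_n) = (\lambda^{e_1}X_1,\dots,\lambda^{e_n}X_n)$; this commutes with $G$, the relations being homogeneous, and $\lambda\cdot x$ is exactly the point representing the twisted module ${}^{\sigma_\lambda}\!M$ (restriction of scalars along $\sigma_\lambda$). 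With this set-up, $M$ is gradable precisely when the $T$-action near $x$ is realised by a cocharacter $\psi\colon T \to G$ with $\psi(\lambda)\rho(a_j)\psi(\lambda)^{-1} = \lambda^{e_j}\rho(a_j)$, whose weight-space decomposition of $k^m$ is the sought grading.

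First I would use rigidity to trap the torus orbit inside a single $G$-orbit. By standard deformation theory (Voigt's lemma), $T_{x}R / T_x(G\cdot x) \cong \Ext^1_A(M,M)$, so the hypothesis $\Ext^1_A(M,M)=0$ forces $\mathcal{O} = G\cdot x$ to be open in $R$. Since restriction along $\sigma_\lambda$ is an autoequivalence of $\mod A$, it preserves self-extensions, so every twist ${}^{\sigma_\lambda}\!M$ is again rigid and its orbit $G\cdot(\lambda\cdot x)$ is likewise open. Now $R$ has finitely many irreducible components, and the connected groups $G$ and $T$ each fix every component setwise; hence the component $C$ containing $x$ also contains $\lambda\cdot x$, and both $\mathcal{O}$ and $G\cdot(\lambda\cdot x)$ are dense open subsets of the irreducible variety $C$. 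Two $G$-orbits each dense in $C$ coincide, so $\lambda\cdot x \in \mathcal{O}$ for every $\lambda$; that is, ${}^{\sigma_\lambda}\!M \cong M$ for all $\lambda \in k^*$.

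It remains to upgrade this family of abstract isomorphisms to a genuine grading, and this descent is the main obstacle. I would form the stabiliser $P = \{(\lambda,g)\in T\times G \mid g\cdot x = \lambda\cdot x\}$, a closed subgroup of $T\times G$. The previous step says the projection $P \to T$ is surjective, and its kernel is $\Aut_A(M) = \End_A(M)^\times$. Since $k$ is algebraically closed, $\End_A(M)$ is a finite-dimensional $k$-algebra whose unit group is connected (being $(\End_A(M)/\rad\End_A(M))^\times$ extended by the unipotent group $1+\rad\End_A(M)$), so we obtain a short exact sequence $1 \to \Aut_A(M) \to P \to T \to 1$ of linear algebraic groups with connected kernel and torus quotient. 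Both kernel and quotient being connected, $P$ is connected, so a maximal torus of $P$ surjects onto $T$; a surjection of diagonalisable groups splits, and composing a splitting $T \to P$ with the projection to $G$ yields the desired cocharacter $\psi\colon T \to G$. Its weight decomposition $k^m = \bigoplus_i M_i$ then satisfies $\rho(a_j)M_i \subseteq M_{i+e_j}$, so $N := \bigoplus_i M_i$ is a $\Z$-graded $A$-module isomorphic to $M$ as an ungraded module, as required.

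The hardest part is this last step: extracting a one-parameter subgroup of $G$ from the pointwise isomorphisms, where I rely on the connectedness of $\Aut_A(M)$ and the splitting of extensions of a torus. Both this and the orbit-density argument use that $k$ is algebraically closed in an essential way. I would also double-check Voigt's lemma in the generality of a finitely generated, possibly infinite-dimensional, graded algebra acting on a finite-dimensional module, but the tangent-space computation is local at $M$ and should go through unchanged.
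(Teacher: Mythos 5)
Your statement is one the paper does not prove at all: it is imported verbatim as \cite[Corollary~4.5]{ao}, so there is no internal proof to compare against, and your argument has to be judged on its own terms. Its first two stages are sound. The set-up of the representation variety $R$ with the commuting actions of $G=\gl_m(k)$ and $T=k^*$ is correct (note that even if $A$ needs infinitely many homogeneous relations, noetherianity of the coordinate ring lets finitely many of them cut out $R$, and Voigt's identification of the scheme-theoretic normal space to the orbit with $\Ext^1_A(M,M)$ is characteristic-free, so ``rigid implies open orbit'' is fine over any algebraically closed field). The component argument then correctly yields ${}^{\sigma_\lambda}M\simeq M$ for all $\lambda\in k^*$, and the reduction of gradability to producing a cocharacter into the stabiliser group $P$ is also correct, as is the connectedness of $P$ (connected kernel $\Aut_A(M)$, connected quotient $T$) and the fact that a maximal torus $S\le P$ surjects onto $T$.

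The genuine gap is the assertion that ``a surjection of diagonalisable groups splits.'' This is false: $\lambda\mapsto\lambda^2\colon \mathbb{G}_m\to\mathbb{G}_m$ is a surjection of tori with no section. A surjection $f|_S\colon S\to T=\mathbb{G}_m$ corresponds to a nonzero character $\chi\in X(S)$, and a splitting exists if and only if $\chi$ is \emph{primitive}, equivalently if and only if $\ker(f|_S)=S\cap\Aut_A(M)$ is connected. Connectedness of $\Aut_A(M)$ itself does not give this: the intersection of a maximal torus with a connected normal subgroup can a priori be disconnected, and you establish nothing about it. So as written, the final and, as you say yourself, hardest step does not go through.

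Fortunately the gap is repairable without new ideas, because you do not need an honest splitting. Since $\chi\neq0$, choose any cocharacter $\psi\colon\mathbb{G}_m\to S$ with $\langle\chi,\psi\rangle=n>0$; then
\begin{equation*}
\psi(\lambda)\,\rho(a_j)\,\psi(\lambda)^{-1}=\lambda^{\,n e_j}\rho(a_j)\qquad\text{for all }j .
\end{equation*}
Decompose $k^m=\bigoplus_i M_i$ into $\psi$-weight spaces; each $\rho(a_j)$ shifts weights by $n e_j$, a multiple of $n$, so for each residue $r$ modulo $n$ the subspace $M^{(r)}=\bigoplus_{i\equiv r\,(n)}M_i$ is an $A$-submodule, and setting $\bigl(M^{(r)}\bigr)_i:=M_{ni+r}$ makes $M^{(r)}$ a $\Z$-graded $A$-module. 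Hence $M=\bigoplus_r M^{(r)}$ is a direct sum of gradable modules and is itself gradable. (A posteriori the extension $1\to\Aut_A(M)\to P\to T\to1$ does split, since a grading of $M$ furnishes a section, but that cannot be assumed in the proof.) With this one-paragraph patch your geometric argument becomes a complete proof of the lemma.
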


From Lemma~\ref{amiot-oppermann} we deduce the following proposition, which plays an
important role in the proof of Theorem~\ref{stronger theorem}(b).

\begin{prop} \label{gradable}
Let $\C$ be a locally bounded $k$-linear Krull-Schmidt category,
where $k$ is algebraically closed, and $G$ a finitely generated free abelian group, acting
admissibly on $\C$. 
Then every  module $M\in\mod(\C/G)$ satisfying $\Ext^1_{\C/G}(M,M)=0$ is in the
essential image of the push-down functor $F_*:\modC\to\mod(\C/G)$.
\end{prop}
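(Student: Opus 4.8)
The plan is to establish two things: that a module lies in the essential image of $F_*$ precisely when it is \emph{$G$-gradable} (admits a lift to a $G$-graded $(\C/G)$-module, $\C/G$ being $G$-graded with $\Hom_{\C/G}(x,y)_g=\Hom_\C(x,gy)$), and that rigidity forces gradability via Lemma~\ref{amiot-oppermann}. The first is classical covering theory: there is an equivalence between $\mod\C$ and the category of $G$-graded $(\C/G)$-modules under which $F_*$ corresponds to forgetting the grading \cite{gabriel2,bl14}, so $M\in\modCG$ is in the essential image of $F_*$ if and only if $M$ is $G$-gradable. Granting this, it suffices to show that $\Ext^1_{\C/G}(M,M)=0$ implies $M$ is $G$-gradable, which I would prove by induction on the rank $n$ of $G$.

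For the base case $n=1$, so $G=\Z$, the issue is that $\C/G$ is locally bounded (Lemma~\ref{lbddlma}\eqref{lbddorbit}) but in general infinite, whereas Lemma~\ref{amiot-oppermann} applies to honest finitely generated algebras. Since $M$ is finitely presented over a locally bounded category it has finite support, and it admits a projective presentation $P_2\to P_1\to P_0\to M\to0$ in which each $P_i$ is a finite direct sum of representables $\Hom_{\C/G}(-,y)$. I would choose a finite set $\mathcal S\subset\ind(\C/G)$ containing $\Supp M$ together with all objects $y$ occurring in $P_0,P_1,P_2$, and form the finite-dimensional $\Z$-graded algebra $A=\bigoplus_{x,y\in\mathcal S}\Hom_{\C/G}(x,y)$ with idempotent $e=\sum_{x\in\mathcal S}e_x$. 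Setting $M'=eM$, the restricted complex $eP_\bullet$ is the start of a projective resolution of $M'$ over $A$, and by Yoneda's lemma $\Hom_A(eP_i,M')\cong\Hom_{\C/G}(P_i,M)$ compatibly with the differentials; hence $\Ext^1_A(M',M')\cong\Ext^1_{\C/G}(M,M)=0$.

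Now Lemma~\ref{amiot-oppermann} applies to the finitely generated $\Z$-graded algebra $A$ and the rigid module $M'$, producing a $\Z$-grading on $M'$. Because $M$ is supported on $\mathcal S$, every morphism of $\C/G$ not internal to $\mathcal S$ acts as zero on $M$, so this grading extends by zero outside $\mathcal S$ to a $\Z$-grading of $M$ as a $\C/G$-module; by the covering-theoretic equivalence above, $M$ then lies in the essential image of $F_*$. For the inductive step I would write $G\cong H\times\Z$ with $H\cong\Z^{n-1}$ and set $\C'=\C/H$, so that $\C/G=\C'/\Z$ and the covering factors as $\C\xrightarrow{F_1}\C'\xrightarrow{F_2}\C'/\Z$, giving $F_*=(F_2)_*(F_1)_*$. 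Admissibility of $G$ on $\C$ readily implies that the induced $\Z$-action on $\C'$ is admissible, so the case $n=1$ applied to $F_2$ yields $M\cong(F_2)_*(N)$ for some $N\in\mod\C'$. By Lemma~\ref{ext}\eqref{extsum}, $\Ext^1_{\C'}(N,N)$ is a direct summand of $\Ext^1_{\C/G}(M,M)=0$, so $N$ is rigid; the induction hypothesis applied to $F_1$ then gives $N\cong(F_1)_*(L)$ with $L\in\mod\C$, whence $M\cong(F_2)_*(F_1)_*(L)=F_*(L)$.

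The hard part is the base case. Everything hinges on descending from the locally bounded, typically infinite, orbit category to a genuine finite-dimensional $\Z$-graded algebra so that Lemma~\ref{amiot-oppermann} is applicable, \emph{while} preserving the vanishing of $\Ext^1$ under this restriction and being able to lift the resulting grading back to a grading of $M$ over $\C/G$. Both of these are made possible by the finiteness of $\Supp M$, which lets me truncate to a finite $\mathcal S$ without losing homological information and guarantees that the grading transported from $M'$ to $M$ is automatically compatible with the full $\C/G$-action.
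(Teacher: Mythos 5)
Your proposal is correct and follows essentially the same route as the paper's proof: induction on the rank of $G$ to reduce to the case $G=\Z$, where one truncates to a finite-dimensional $\Z$-graded algebra and applies Lemma~\ref{amiot-oppermann}. The only cosmetic differences are that you peel off the factors of $G$ in the opposite order (applying the rank-one case to the outer covering $\C/H\to\C/G$, whereas the paper applies its induction hypothesis to the outer covering and the rank-one case to the inner one $\C\to\C/\phi$), and that you spell out the transfer of $\Ext^1$-vanishing under truncation via the restricted projective presentation, a point the paper handles implicitly by replacing $\C$ with the $G$-stable subcategory generated by $\Supp M$ and invoking the equivalence $\mod(\C/G)\simeq\mod A$.
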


\begin{proof}
(i) First we prove the statement for the case $G=\langle\phi\rangle\simeq\Z$.

Let $M\in\mod(\C/G)$ be a module satisfying $\Ext^1_{\C/G}(M,M)=0$.
Since $\C$ is locally bounded, the support $\Supp M$ of $M$ is finite. 
Replacing $\C$ by the full subcategory 
$\add\{\phi^i x\mid i\in\Z,\ x\in\Supp M\}\subset\C$, we may assume that $(\ind\C)/G$ is a
finite set.
Let $S$ be a complete set of representatives of the $G$-orbits of $\ind\C$, and set 
$A=\End_{\C/G}(y)$ for $y:=\bigoplus_{x\in S}x$.
Then $A$ is a finite-dimensional $\Z$-graded $k$-algebra, with grading defined by
$A_i=\Hom_{\C}(y,\phi^iy)$, $i\in\Z$.
This gives us a commutative diagram of functors
\[\xymatrix{
\mod\C\ar[r]^{\sim}\ar[d]^{F_*}&\mod^{\Z}A\ar[d]^{\rm forget}\\
\mod(\C/G)\ar[r]^{\sim}&\mod A
}\]
in which the horizontal arrows are equivalences of categories.
From Lemma \ref{amiot-oppermann} it now follows that $M\in\mod(\C/G)\simeq\mod A$ is in
the essential image of the push-down functor.

(ii) We now prove the statement for general case, by induction on the rank of the group
$G$. 

Take $\phi\in G$ such that $G'=G/\langle\phi\rangle$ is a free abelian group, and set
$\C'=\C/\phi$. Clearly, it follows that $\C/G = \C'/G'$.
Now $F:\C\to\C/G$ is the composition of the natural functors $F':\C\to \C/\phi=\C'$ and
$F'':\C'\to \C'/G'=\C/G$. Since the action of $G$ on $\C$ is admissible, the category
$\C'$ is a locally bounded $k$-linear Krull-Schmidt category, and the action of $G'$ on
$\C'$ is again admissible. By the induction hypothesis, the module $M$ is in the
essential image of $F''_*:\mod\C'\to\mod(\C/G)$. Take $N\in\mod\C'$ such that 
$M\simeq F''_*N$. Then $\Ext^1_{\C'}(N,N)=0$ by Lemma~\ref{ext}\eqref{extsum}, and by (i), the module $N$ is contained in
the essential image of $F'_*:\mod\C\to\mod\C'$.
The assertion follows.
\end{proof}

We now have the tools needed to prove Theorem~\ref{stronger theorem}(b).
The proof is similar to that of \cite[Proposition~3.1]{ao14}.

\begin{proof}[Proof of Theorem~\ref{stronger theorem}(b)]
Let $\V$ be a $d$-cluster-tilting subcategory of $\modCG$. 
If $d=1$ then $\V=\modCG$ and hence $F_*\inv(\V)=\modC$, which is a
  $1$-cluster-tilting subcategory of itself. 
Assume that $d>1$. Then $\V$ is $2$-rigid and thus, by Proposition~\ref{gradable}, it is
contained in the essential image of the push-down functor $F_*$.
Thus $\U=F_*\inv(\V)$ satisfies $F_*(\U)=\V$. Since $G$ acts admissibly on $\mod\C$ by Lemma~\ref{indlemma}\eqref{torsion2} and $\U$ is a $G$-equivariant full subcategory of $\mod\C$ such that $F_*(\U)$ is a $d$-cluster tilting subcategory of $\mod(\C/G)$, it follows from Theorem~\ref{stronger theorem}(a) that $\U$ is a $d$-cluster tilting subcategory of $\mod\C$.
\end{proof}

We now show how Corollary~\ref{bijection} follows from Theorem~\ref{stronger theorem}. 

\begin{proof}[Proof of Corollary~\ref{bijection}]
(a) 
By Lemma~\ref{ext}\eqref{pushdown}, any full subcategory $\U\subset\mod\C$ satisfies
$\U/G\simeq F_*(\U)\in\mod(\C/G)$. 
So if $\U\subset\mod\C$ is a locally bounded $d$-cluster-tilting subcategory, then 
$F_*(\U)$ is locally bounded by Lemma~\ref{lbddlma}\eqref{lbddorbit}, and thus
functorially finite in $\mod(\C/G)$ by Lemma~\ref{lbddlma}\eqref{lbddcriterion}. 
Theorem~\ref{stronger theorem}(a) now implies that $F_*(\U)$ is a $d$-cluster-tilting
subcategory of $\mod(\C/G)$.

(b) 
Assume that $k$ is algebraically closed.
If $\V$ is a locally bounded $d$-cluster-tilting subcategory of $\mod(\C/G)$,
then $\U=F_*\inv(\V)$ is a $G$-equivariant $d$-cluster-tilting subcategory of $\mod\C$ by
Theorem~\ref{stronger theorem}(b). Since $\V=F_*(\U)\simeq\U/G$,
Lemma~\ref{lbddlma}\eqref{lbddorbit} implies that $\U$ is locally bounded. 
\end{proof}

\subsection{Proofs of other results} \label{pfother}

We start by proving Theorem~\ref{basic construction}.

\begin{proof}[Proof of Theorem~\ref{basic construction}]
\eqref{basic1} First, observe that  $\widehat{\L}/\phi$ is a finite-dimensional
$k$-algebra by Lemma~\ref{admissibility}. 
Hence, for the first assertion, it suffices to show that $\widehat{\L}/\phi$ is
locally $d$-re\-pre\-sen\-ta\-tion-fin\-ite.
If $\U$ is a locally bounded $\phi$-equivariant $d$-cluster-tilting subcategory of
$\dml$ then, by Lemma~\ref{lbddlma}(b)(iv)$\Rightarrow$(i), the preimage $\U'$ of $\U$ under the natural functor
$\mod\widehat\L\to\stmod\widehat\L\simeq\dml$ is a locally bounded $\phi$-equivariant
$d$-cluster-tilting subcategory of $\mod\widehat\L$. Now, Corollary~\ref{bijection}(a)
implies that $F_*(\U')\subset\mod(\widehat{\L}/\phi)$ is a locally bounded
$d$-cluster-tilting subcategory, so $\widehat{\L}/\phi$ is locally $d$-re\-pre\-sen\-ta\-tion-fin\-ite. 

The second assertion in Theorem~\ref{basic construction}\eqref{basic1} is immediate from
Corollary~\ref{bijection} and the equivalence $\dml\simeq\stmod\widehat{\L}$ 
\eqref{happel equivalence}. 

\eqref{basic2}
Any additive generator of $F_*(\U)\subset\stmod(\widehat{\L}/\phi)$
is a $d$-cluster-tilting module of $\widehat{\L}/\phi$, and elements of $\ind F_*(\U)$
correspond bijectively to orbits of the $\phi_*$-action on
$\ind\U$. 
\end{proof}

We now turn to the proofs of the corollaries~\ref{trivext}, \ref{fracCY}, \ref{trivrf} and
\ref{preproj}. 
In relation to Corollary~\ref{trivext}, observe that 
$F\circ\widehat{\nu}\simeq \nu_{T_n(\L)}\circ F:\widehat{\L}\to \widehat{\L}/\widehat{\nu}^{n} = T_{n}(\L)$, where
\[F:\widehat{\L}\to \widehat{\L}/\widehat{\nu}^{n}\]
is the covering functor. Hence  
$F_*\circ\widehat{\nu}_*\simeq (\nu_{T_n(\L)})_*\circ F_*:\mod\widehat{\L}\to\mod T_{n}(\L)$. 
Again, uniqueness of the Serre functor gives us the following diagram, which is commutative up to isomorphisms of functors:
\begin{equation}\label{serrecomm}
\xymatrix@R1.5em@C=3em{
\dml\ar[r]^{\sim\;}\ar[d]^{\nu}&\stmod\widehat{\L}\ar[d]^{\widehat\nu_*[-1]}\ar[r]^{F_*\quad}&\stmod T_n(\L)\ar[d]^{(\nu_{T_n(\L)})_*[-1]}\\
\dml\ar[r]^{\sim\;}&\stmod\widehat{\L}\ar[r]_{F_*\quad}&\stmod T_n(\L).
}
\end{equation}

As we shall see, $d$-re\-pre\-sen\-ta\-tion-fin\-ite\-ness of $T_{d\ell}(\L)$ is a consequence of
the functorial isomorphisms in \eqref{serrecomm}. However, to show that the
$d$-cluster-tilting module $U$ in Corollary~\ref{trivext} is basic, we need the following
technical observation.

\begin{lma}\label{cross}
Let $I$ be a set, and $I_+$ a subset of $I$.
\begin{enumerate}
\item Assume that $f$ is a permutation of $I$, satisfying $f(I_+)\subset I_+$,
  $I=\bigcup_{i\in\Z}f^i(I_+)$ and $\emptyset=\bigcap_{i\in\Z}f^i(I_+)$.
Then for any $i\in\Z$, the set $I(f,i):=f^i(I_+)\setminus f^{i+1}(I_+)$ is a cross-section
for the $f$-orbits of $I$. \label{cross1}
\item Assume that $f$ and $g$ both satisfy the conditions in \eqref{cross1}, and that
  $fg=gf$.
Then for any $a,b>0$, the set 
$\left(\coprod_{0\le i<a}I(f,i)\right)\sqcup\left(\coprod_{-b\le i<0}I(g,i)\right)$ 
is a cross-section for the $f^ag^b$-orbits of $I$. 
\label{cross2}
\end{enumerate}
\end{lma}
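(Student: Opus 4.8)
The plan is to treat the two parts in sequence, using part~\eqref{cross1} as the engine for part~\eqref{cross2}. For \eqref{cross1}, the hypotheses say that $f$ is a permutation with $f(I_+)\subset I_+$, that the sets $f^i(I_+)$ exhaust $I$ as $i$ ranges over $\Z$, and that their intersection is empty. The inclusion $f(I_+)\subset I_+$ means the sets $f^i(I_+)$ form a \emph{decreasing} chain as $i$ increases:
\[
\cdots\supset f^{-1}(I_+)\supset I_+\supset f(I_+)\supset f^2(I_+)\supset\cdots,
\]
so $I(f,i)=f^i(I_+)\setminus f^{i+1}(I_+)$ are exactly the successive ``shells'' of this filtration. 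I would first verify that each $f$-orbit meets $I(f,i)$ for exactly one $i$: given any $x\in I$, the exhaustion and emptiness conditions guarantee that there is a largest index $j$ with $x\in f^{j}(I_+)$ (largest because the intersection is empty; existing because the union is everything), and then $x\in I(f,j)$. Along the orbit $\{f^n x\}$, applying $f$ shifts the filtration index, so exactly one orbit element lands in each shell; in particular exactly one lies in $I(f,i)$ for the fixed $i$. That is the cross-section property.

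\textbf{Part \eqref{cross2}.} Here $f,g$ commute and each satisfies the hypotheses of \eqref{cross1}. I want to show that
\[
C=\Bigl(\coprod_{0\le i<a}I(f,i)\Bigr)\sqcup\Bigl(\coprod_{-b\le i<0}I(g,i)\Bigr)
\]
is a cross-section for the $f^ag^b$-orbits. The natural strategy is a counting/bijection argument rather than a direct orbit chase. By part \eqref{cross1}, $I(f,0)=I_+\setminus f^a(I_+)\cdot(\text{telescoped})$—more precisely, $\coprod_{0\le i<a}I(f,i)=I_+\setminus f^{a}(I_+)$, since the shells telescope. Likewise $\coprod_{-b\le i<0}I(g,i)=g^{-b}(I_+)\setminus I_+$. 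Thus $C$ can be rewritten cleanly as the ``difference of two translates,''
\[
C=\bigl(g^{-b}(I_+)\setminus I_+\bigr)\;\sqcup\;\bigl(I_+\setminus f^{a}(I_+)\bigr),
\]
and I would check these two pieces are genuinely disjoint (the first lies outside $I_+$ where the inclusion $f^a(I_+)\subset I_+$ forces the second to lie inside). The key point is then that $h:=f^ag^b$ commutes with both $f^a$ and $g^b$, and one shows $C$ is a fundamental domain for $\langle h\rangle$ by exhibiting, for each $x\in I$, a unique power $h^n x\in C$. I would produce this by using the two filtration indices simultaneously: associate to $x$ the pair $(\text{$f$-level}, \text{$g$-level})$ and track how applying $h=f^ag^b$ moves this pair diagonally, so that exactly one translate falls into the prescribed window.

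\textbf{Main obstacle.} The routine parts are the telescoping identities and the disjointness check; the genuinely delicate step is the uniqueness in \eqref{cross2}—showing each $h$-orbit meets $C$ in \emph{exactly} one point, not merely at least one. The difficulty is that $f$ and $g$ move the two filtration levels independently, so I cannot simply read off a single monotone index as in part~\eqref{cross1}; instead I must argue that the combined level function $x\mapsto(\lambda_f(x),\lambda_g(x))$, where $\lambda_f$ is the largest $i$ with $x\in f^i(I_+)$ and similarly for $\lambda_g$, transforms under $h$ by $(\lambda_f,\lambda_g)\mapsto(\lambda_f-a,\lambda_g-b)$, and that membership in $C$ is controlled by a single affine combination of these two levels that is strictly monotone along the $h$-orbit. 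Isolating the right monotone quantity—essentially showing $C$ is cut out by the condition ``$0\le \lambda_f<a$ after correcting by the $g$-shell''—is where the real work lies, and I expect to spend most of the argument verifying that this combined index decreases by exactly the period under $h$, forcing the one-point intersection.
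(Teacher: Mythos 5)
Your part \eqref{cross1} is correct and is essentially the paper's argument, and your telescoping identities $\coprod_{0\le i<a}I(f,i)=I_+\setminus f^a(I_+)$ and $\coprod_{-b\le i<0}I(g,i)=g^{-b}(I_+)\setminus I_+$, together with the disjointness of the two pieces, are also right. The genuine gap is in the mechanism you propose for part \eqref{cross2}: the transformation law you assert --- that $h=f^ag^b$ shifts the level pair $(\lambda_f,\lambda_g)$ by \emph{exactly} $(a,b)$ --- is false (the sign is also off, but that is cosmetic). Applying $f$ does shift $\lambda_f$ by exactly one, but applying $g$ only gives the inequality $\lambda_f(g(x))\ge\lambda_f(x)$, coming from $g(f^i(I_+))=f^i(g(I_+))\subset f^i(I_+)$ via $fg=gf$, and this inequality can be strict and non-uniform. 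Concretely, let $I=\Z\times(\Z/2\Z)$, $g(x,0)=(x,1)$, $g(x,1)=(x+1,0)$, $f=g^2$, $I_+=\Z_{\ge0}\times(\Z/2\Z)$, $a=b=1$, so $h=fg=g^3$. Then $\lambda_f(x,\epsilon)=x$, and $h$ advances $\lambda_f$ by $1$ at $(0,0)$ but by $2$ at $(0,1)$. Worse, along the $h$-orbit of $(-1,1)$ the $\lambda_f$-values jump from $-1$ directly to $1$, skipping the window $[0,a)=\{0\}$ entirely; that orbit meets the proposed cross-section only through the second piece $g^{-b}(I_+)\setminus I_+$, at the point $(-1,1)$. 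So membership in $C$ is not cut out by any single affine combination of the two levels that moves by exactly the period along the orbit; the ``strictly monotone combined index'' you plan to isolate does not exist, and the step you yourself flagged as delicate is precisely where the argument breaks.

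The idea you are missing --- and the paper's actual proof --- makes part \eqref{cross2} a one-liner from part \eqref{cross1}: the permutation $h=f^ag^b$ \emph{itself} satisfies the hypotheses of \eqref{cross1} with the same subset $I_+$ (forward invariance is immediate, while $h^i(I_+)\subset f^{ai}(I_+)$ and $f^{-ai}(I_+)\subset h^{-i}(I_+)$ for $i\ge0$ yield the empty-intersection and exhaustion conditions). Hence $I(h,0)=I_+\setminus f^ag^b(I_+)$ is a cross-section for the $h$-orbits. Since $g^{-b}$ commutes with $h$, it maps each $h$-orbit bijectively onto an $h$-orbit and hits every orbit, so it carries cross-sections to cross-sections; and $g^{-b}\left(I(h,0)\right)=g^{-b}(I_+)\setminus f^a(I_+)=\left(I_+\setminus f^a(I_+)\right)\sqcup\left(g^{-b}(I_+)\setminus I_+\right)$, which by your own telescoping identities is exactly the set in the statement. (Your direct approach can be repaired, but only by case analysis rather than a monotone index: each piece is met at most once since $\lambda_f$, respectively $\lambda_g$, increases by at least $a$, respectively $b$, per application of $h$; both pieces cannot be met on one orbit because $y\in g^{-b}(I_+)$ forces $h(y)=f^a(g^b(y))\in f^a(I_+)$; and existence follows by taking the last orbit point $y\notin I_+$ and checking whether $g^b(y)\in I_+$. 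The translation trick renders all of this unnecessary.)
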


\begin{proof}
\eqref{cross1}
It is clear from the assumptions that for each $x\in I$ there is a unique $m\in\Z$ such
that $f^j(x)\in I_+$ if and only if $j\ge m$. This implies that $I(f,0)$ is a
cross-section for the $f$-orbits of $I$, and therefore the same is true for
$I(f,i)=f^i(I(f,0))$ for any $i\in\Z$.

\eqref{cross2}
As $a,b>0$, we have $f^ag^b(I_+) \subset f^a(I_+)\subset I_+$.
Therefore,
\[
\bigcup_{i\in\Z}(f^ag^b)^i(I_+) = \bigcup_{n\ge0}(f^ag^b)^{-n}(I_+) =
\bigcup_{n\ge0}f^{-an}\left(g^{-bn}(I_+)\right) \supset \bigcup_{n\ge0}f^{-an}(I_+) = I
\]
and hence $I = \bigcup_{i\in\Z} (f^ag^b)^i(I_+)$.
Similarly,
\[
\bigcap_{i\in\Z}(f^ag^b)^i(I_+) = \bigcap_{n\ge0}(f^ag^b)^n(I_+) =
\bigcap_{n\ge0}f^{an}\left(g^{bn}(I_+)\right)\subset
\bigcap_{n\ge0}f^{an}(I_+) = \emptyset
\]
so $\emptyset = \bigcap_{i\in\Z}(f^ag^b)^i(I_+)$.

The above shows that the permutation $f^ag^b$ of $I$ satisfies the conditions in
\eqref{cross1}.
It follows that $I(f^ag^b,0)=I_+\setminus f^ag^b(I_+)$ is a cross-section
for the $f^ag^b$-orbits of $I$, and hence, so is 
\[g^{-b}\left(I(f^ag^b,0)\right)= g^{-b}(I_+)\setminus f^a(I_+)=
(I_+\setminus f^a(I_+))\sqcup(g^{-b}(I_+)\setminus I_+).\qedhere\]
\end{proof}

\begin{proof}[Proof of Corollary~\ref{trivext}]
From the assumptions follows that $\L$ is $\nu_d$-finite and
$\gldim\L\le d$. Thus $\U=\U_d(\L)$ is a $d$-cluster-tilting subcategory of
$\dml\simeq\stmod\widehat\L$ by Proposition~\ref{U_n}. Moreover,
by \cite[Theorem~3.1]{io13}, 
\[\nu(\U)=\U\ \mbox{ and }\ \U[d]=\U\]
hold, because $\L$ is $d$-re\-pre\-sen\-ta\-tion-fin\-ite and $\gldim\L\le d$. By \eqref{serrecomm}, 
\begin{equation}\label{nuofU}
(\widehat\nu^{d\ell})_*\simeq(\nu[1])^{d\ell}=\nu^{d\ell}[d\ell]=\nu^{d\ell}\nu^{\ell}\nu_d^{-\ell}=\nu^{\ell(d+1)}\nu_d^{-\ell}
\end{equation}
and hence, in particular, $(\widehat\nu^{d\ell})_*(\U)=\U$.
Applying Theorem~\ref{basic corollary}(a) to $\phi=\widehat\nu^{d\ell}$, it
follows that $\widehat\L/\widehat\nu^{d\ell}=T_{d\ell}(\L)$ is $d$-re\-pre\-sen\-ta\-tion-fin\-ite.
This proves the first statement. 

For the second statement, we need to calculate a cross-section for the
$(\widehat\nu^{d\ell})_*$-orbits in $\ind\U$.
Let $I=\ind\U$ and $I_+=\ind\{\nu_d^{-i}(\L)\mid i>0\}$.
Then $\nu_d^{-1}:I\to I$ and $\nu: I\to I$ satisfy the assumptions in Lemma
\ref{cross}\eqref{cross1}, so $I(\nu_d^{-1},-1)=\ind(\add\Lambda)$ and
$I(\nu,0)=\ind(\add(M/\L))$ are cross-sections for the orbits of the $\nu_d^{-1}$-action
and the $\nu$-action on $\ind\U$ respectively (cf. \cite[Lemma~4.9]{io13}).
As $(\widehat\nu^{d\ell})_*\simeq\nu^{\ell(d+1)}\nu_d^{-\ell}$ by \eqref{nuofU}, 
Lemma~\ref{cross}\eqref{cross2} implies that
\[
\left(\bigsqcup_{0\le i<\ell(d+1)} \ind\left(\add\left(\nu^i(M/\L)\right)\right) \right) \sqcup
\left(\bigsqcup_{0\le j<\ell}\ind\left(\add\left(\nu_d^j(\L)\right)\right)\right)
\]
is a cross-section for the orbits of the $\widehat{\nu}^{d\ell}$-action on $\ind\U$.
By \eqref{serrecomm}, the image of this cross-section under the equivalence
$\dml\simeq\stmod\widehat{\L}$ is isomorphic to
\[
S = \left(\bigsqcup_{0\le i<\ell(d+1)}
\ind\left(\add\left((\widehat{\nu}_*\Omega)^i(M/\L)\right)\right)\right) 
\sqcup 
\left(\bigsqcup_{0\le j<\ell}
\ind\left(\add\left((\widehat{\nu}_*\Omega^{d+1})^j(\L)\right)\right)\right) 
\]
It follows that
\[T_{d\ell}(\L) \oplus \bigoplus_{V\in S}F_*(V) \simeq
T_{d\ell}(\L)  \oplus\left(\bigoplus_{i=0}^{\ell(d+1)-1}\left((\nu_{T_{d\ell}(\L)})_*\Omega\right)^i(M/\L)\right)
\oplus\left(\bigoplus_{j=0}^{\ell-1}\left((\nu_{T_{d\ell}(\L)})_*\Omega^{d+1}\right)^j(\L)\right)\]
is a basic $d$-cluster-tilting $T_{d\ell}(\L)$-module.
\end{proof}

\begin{proof}[Proof of Corollary~\ref{fracCY}]
The twisted $(b/a)$-Calabi--Yau property means that
$\nu^a\simeq[b]\circ\phi_*:\dml\to\dml$ for some $\phi\in\Aut(\L)$, and hence
$\nu_d^a\simeq [b-da]\circ\phi_*$. By \cite[Proposition~2.1.10(c)]{himo17}, 
$b/a<\gldim\L$ and since $\gldim\L\le d$, we get $b-da<0$. 
Clearly, $\phi_*(\mathcal{D}^{\ge0}(\L))= \mathcal{D}^{\ge0}(\L)$, so it follows that
$\nu_d^a(\mathcal{D}^{\ge0}(\L)) \subset\mathcal{D}^{\ge1}(\L)$.
This implies that the algebra $\L$ is $\nu_d$-finite.

Recall that $g=\gcd(d+1,a+b)$.
Setting $p=(d+1)/g$ and $q=(a+b)/g$, we get the following identities:
\begin{align}
ap-q&= \frac{a(d+1)}{g} - \frac{a+b}{g} = \frac{ad-b}{g}, \label{dac}\\
p(a+b) &= q(d+1). \label{cab}
\end{align}
By \eqref{serrecomm0}, there are functorial isomorphisms
  $\widehat{\nu}_*^a \simeq \nu^a\circ[a]\simeq [b]\circ\phi_*\circ[a]\simeq[a+b]\circ\phi_*$
and hence
\begin{align*}
  \widehat{\nu}_*^{ap} &= \left( [a+b]\circ\phi_* \right)^p \simeq [p(a+b)]\circ \phi_*^p 
\stackrel{\eqref{cab}}{\simeq}
  [q(d+1)]\circ \phi_*^p\\
\intertext{which, in turn, gives}
  \widehat{\nu}^n_*=\widehat{\nu}_*^{\ell(ad-b)/g} &\stackrel{\eqref{dac}}{=} \widehat{\nu}_*^{\ell(ap-q)} \simeq
  \widehat{\nu}_*^{-\ell q}\circ[\ell q(d+1)]\circ\phi_*^{\ell p} \stackrel{\eqref{serrecomm0}}{\simeq}
\nu^{-\ell q}[\ell qd]\circ\phi_*^{\ell p} \simeq
  \nu_d^{-\ell q}\circ\phi_*^{\ell p}.
\end{align*}
The subcategory $\U_d(\L)\subset\dml$ is invariant under $\nu_d$ and $\phi_*$, and hence
under $\widehat{\nu}^n_*$. By Theorem~\ref{basic corollary}, this implies that $T_n(\L)$ is
$d$-re\-pre\-sen\-ta\-tion-fin\-ite.  
Since $\phi_*(\L)=\L$, it also means that 
$\ind\left(\add\{\nu_d^i(\L) \mid 0 \le i < \ell q\}\right)$ is a cross-section for the
$\widehat{\nu}^n_*$-orbits of $\U_d(\L)$. The identity \eqref{twfraccy} follows.
\end{proof}

\begin{proof}[Proof of Corollary~\ref{trivrf}]
Since $\L$ is $r$-homogeneous, $\nu\simeq\nu_d^{1-r}\phi_*$ for some $\phi\in\Aut(\L)$
\cite[Theorem~1.3]{hi11a}.
Hence, $\nu^{r-1}\simeq\nu\inv[(r-1)d]\phi_*$ and thus
\begin{equation}\label{homonu}
  \widehat{\nu}_*^{r-1}\simeq\nu^{r-1}[r-1]\simeq\nu\inv[\,(r-1)d\!+\!(r-1)\,]\phi_*\simeq\nu_{(r-1)(d+1)}\inv\phi_*
\end{equation}
on $\stmod\widehat{\L}\simeq\dml$. 
It follows that $\widehat{\nu}^{r-1}_*(\U_{(r-1)(d+1)}(\L)) = \U_{(r-1)(d+1)}(\L)$, whence
$T_{r-1}(\L)$ is $(r-1)(d+1)$-re\-pre\-sen\-ta\-tion-fin\-ite by Theorem~\ref{basic corollary}(a).
Moreover, \eqref{homonu} implies that
$\ind\L\subset\mod\widehat{\L}$ is a cross-section for the
$\widehat{\nu}_*^{r-1}$-orbits of $\U_{(r-1)(d+1)}(\L)$. Thus $T_{r-1}(\L)\oplus\L$ is basic and $(r-1)(d+1)$-cluster-tilting. 
\end{proof}

To prove Corollary~\ref{preproj}, some additional results are needed.

\begin{prop}\cite[Propositions~3.6,~4.2,~Theorem~4.5]{io13} \label{serre and serre}
Let $\TT$ be a triangulated category with a Serre functor ${}_{\TT}S$.
Let $\U$ be an $\ell$-cluster-tilting subcategory of $\TT$ satisfying $\U[\ell]=\U$. Then
the following statements hold:
\begin{enumerate}\renewcommand{\labelenumi}{(\alph{enumi})}
\item The identity ${}_{\TT}S(\U)=\U$ holds, so $\U$ is self-injective;
\item the category $\stmod\U$ is triangulated, and has a Serre functor ${}_{\stmod\U}S$;
\item there is a natural isomorphism ${}_{\stmod\U}S_{\ell+1}\simeq({}_{\TT}S_\ell)_*$ of
  autoequivalences of $\stmod\U$.
\end{enumerate}
\end{prop}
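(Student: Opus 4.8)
The plan is to establish the three parts in sequence, each feeding the next, so that the whole proposition collapses onto a single identity between autoequivalences of $\stmod\U$.

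For (a) I would show $S(\U)=\U$ using only Serre duality, the $\ell$-rigidity of $\U$ (which holds since every cluster-tilting subcategory is rigid), and the hypothesis $\U[\ell]=\U$. Fix $U\in\U$; to see $SU\in\U$ it suffices, by the first description of an $\ell$-cluster-tilting subcategory, to check $\Ext^j_{\TT}(V,SU)=0$ for all $V\in\U$ and $1\le j\le\ell-1$. Since $S$ commutes with the shift by \eqref{commutes}, Serre duality gives $\Ext^j_{\TT}(V,SU)=\Hom_{\TT}(V,S(U[j]))\simeq D\Hom_{\TT}(U[j],V)=D\Ext^{-j}_{\TT}(U,V)$. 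Now $V[-\ell]\in\U$ because $\U[-\ell]=\U$, and $\Ext^{-j}_{\TT}(U,V)\simeq\Ext^{\ell-j}_{\TT}(U,V[-\ell])$, which vanishes since $1\le\ell-j\le\ell-1$ and $\U$ is $\ell$-rigid. Hence $SU\in\U$, so $S(\U)\subseteq\U$; the reverse inclusion follows from the symmetric computation applied to the quasi-inverse $S^{-1}$ (using $\Hom_{\TT}(S^{-1}A,B)\simeq D\Hom_{\TT}(B,A)$ and the second description of $\U$), giving $S(\U)=\U$.

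Self-injectivity is then immediate: for $X,Y\in\U$ one has $\Hom_{\U}(X,Y)=\Hom_{\TT}(X,Y)\simeq D\Hom_{\TT}(Y,SX)=D\Hom_{\U}(Y,SX)$ with $SX\in\U$, so $S|_{\U}$ is a Nakayama functor for $\U$. For (b) I would note that $\TT$, having a Serre functor, is a dualizing $k$-variety, and that $\U$, being functorially finite in $\TT$, is again a dualizing $k$-variety; thus $\mod\U$ is abelian with enough projectives and injectives. Self-injectivity gives $\proj(\mod\U)=\inj(\mod\U)$, so $\mod\U$ is a Frobenius category, $\stmod\U$ is triangulated with suspension $\Omega^{-1}$, and Auslander--Reiten duality exactly as in Proposition~\ref{AR duality} produces the Serre functor ${}_{\stmod\U}S\simeq(S|_{\U})_*\circ\Omega$.

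For (c) I would simply compute both sides. From (b) and the fact that $[-1]=\Omega$ in $\stmod\U$,
\[{}_{\stmod\U}S_{\ell+1}={}_{\stmod\U}S\circ[-(\ell+1)]\simeq(S|_{\U})_*\Omega\circ\Omega^{\ell+1}=(S|_{\U})_*\Omega^{\ell+2}.\]
On the other hand, since $S(\U)=\U$ and $\U[\ell]=\U$, the functor ${}_{\TT}S_\ell=S\circ[-\ell]$ restricts to an autoequivalence of $\U$, and $({}_{\TT}S_\ell)_*\simeq(S|_{\U})_*\circ([-\ell]|_{\U})_*$. Cancelling $(S|_{\U})_*$, the entire statement (c) reduces to the single natural isomorphism $\Omega^{\ell+2}\simeq([-\ell]|_{\U})_*$ on $\stmod\U$, equivalently $([\ell]|_{\U})_*\simeq[\ell+2]_{\stmod\U}$.

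The hard part will be this last identity, which compares the intrinsic syzygy functor of the Frobenius category $\mod\U$ with the ambient shift of $\TT$. I would approach it through the $\ell$-almost split sequences supplied by Proposition~\ref{exseq}(c): applying the restricted Yoneda functor $X\mapsto\Hom_{\TT}(-,X)|_{\U}$ to these sequences should convert the connecting triangles of $\TT$ (which carry the shift $[1]$) into the initial segments of minimal projective resolutions in $\mod\U$, and iterating should yield a periodicity matching the shift $[\ell]$ of $\TT$ against the syzygy power $\Omega^{-(\ell+2)}$. The real obstacle is not the object-wise bookkeeping but making it functorial and uniform in $X$ -- upgrading the computation to an isomorphism of autoequivalences of $\stmod\U$ -- and it is precisely here that $\U[\ell]=\U$ is used in full strength, guaranteeing that $[\ell]$, and hence the relevant power of the syzygy, genuinely preserves $\U$.
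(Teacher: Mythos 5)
The paper does not actually prove this proposition: it is quoted verbatim from \cite[Theorem~4.5]{io13}, so your attempt can only be measured against that source. Your part (a) is correct and complete: the Serre-duality computation $\Ext^j_{\TT}(V,SU)\simeq D\Ext^{\ell-j}_{\TT}(U,V[-\ell])=0$ is exactly the right use of $\U[\ell]=\U$ and rigidity, and the symmetric argument with $S^{-1}$ gives the reverse inclusion. Part (b) is also essentially right, with one caveat worth flagging: Proposition~\ref{AR duality} in the paper is stated for \emph{locally bounded} categories, and $\U$ need not be locally bounded (already for $\ell=1$ one has $\U=\TT$); what you actually need -- and what your dualizing-variety framing via \cite{iy08,iyama07b} implicitly supplies -- is Auslander--Reiten duality for self-injective dualizing $k$-varieties \cite{ARstable}, yielding ${}_{\stmod\U}S\simeq(S|_{\U})_*\circ\Omega$. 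Your reduction of (c) to the single identity $([\ell]|_{\U})_*\simeq[\ell+2]_{\stmod\U}$ is sound; for $\ell=1$ it is the classical fact that in the Freyd category of a triangulated category the suspension induces $\Omega^{-3}$.

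The genuine gap is that this last identity \emph{is} the content of the theorem, and your proposal does not prove it -- moreover, the tool you point to is the wrong one. Proposition~\ref{exseq}(c) concerns $d$-cluster-tilting subcategories of $\mod\C$ for a dualizing variety $\C$ and produces resolutions inside an abelian category; it does not apply to $\U\subset\TT$ triangulated, and restricted Yoneda applied to $\ell$-almost split sequences does not produce the connecting shift $[\ell]$. The correct mechanism is triangulated: given $M\in\mod\U$ with presentation $\Hom_{\TT}(-,A)|_{\U}\to\Hom_{\TT}(-,B)|_{\U}\to M\to0$, complete $A\to B$ to a triangle with cone $C$, restrict the long exact sequence to $\U$, and use rigidity ($\Ext^j_{\TT}(\U,A)=\Ext^j_{\TT}(\U,B)=0$ for $1\le j\le\ell-1$) together with $A[\ell],B[\ell]\in\U$: for $\ell\ge2$ this identifies $M\simeq\Hom_{\TT}(-,C)|_{\U}$ and $([\ell]|_{\U})_*M\simeq\Hom_{\TT}(-,C[\ell])|_{\U}$, and yields the exact sequence
\begin{equation*}
0\to\Hom_{\TT}(-,C[\ell-1])|_{\U}\to\Hom_{\TT}(-,A[\ell])|_{\U}\to\Hom_{\TT}(-,B[\ell])|_{\U}\to\Hom_{\TT}(-,C[\ell])|_{\U}\to0
\end{equation*}
with projective-injective middle terms, accounting for $\Omega^{-2}$. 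One must then interpolate the remaining $\ell$ cosyzygies between $\Hom_{\TT}(-,C)|_{\U}$ and $\Hom_{\TT}(-,C[\ell-1])|_{\U}$ -- and note that $\Hom_{\TT}(-,C[j])|_{\U}=0$ for $1\le j\le\ell-2$, so your idea of ``iterating'' the same sequence fails outright; instead one needs towers of $\U$-approximation triangles coming from the decomposition $\TT=\U\ast\U[1]\ast\cdots\ast\U[\ell-1]$, together with a naturality check in $M$, which is precisely the step you acknowledge but do not carry out. So the proposal faithfully reconstructs (a), (b) and the formal reduction in (c) along the lines of \cite{io13}, but leaves the heart of the theorem unproved.
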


\begin{lma}\label{preproj-orbit}
Let $\L$ be a finite-dimensional $k$-algebra of global dimension at most $d$.
Taking a skeleton $\U$ of $\U_d(\L)$ and regarding $\nu_d$ as an automorphism of $\U$,
the $n$-fold $(d+1)$-preprojective algebra $\Pi^{(n)}$ of $\L$ satisfies
$\proj\Pi^{(n)}\simeq \U/\nu_d^n$.  
\end{lma}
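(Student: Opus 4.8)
The plan is to realise $\Pi^{(n)}$ as the endomorphism algebra of a canonical additive generator of the orbit category $\U/\nu_d^n$, and then to deduce the equivalence from the general principle that $\proj\End_{\mathcal A}(G)\simeq\add G$ for an additive generator $G$ of an idempotent-complete additive category $\mathcal A$.

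First I would recall the graded description of the higher preprojective algebra. Since $\gldim\L\le d$, the isomorphism $D\L\simeq\nu\L$ gives $\Ext^d_\L(D\L,\L)\simeq\Hom_{\dml}(\nu_d\L,\L)$ and, passing to tensor powers, an isomorphism of graded $k$-algebras
\[\Pi=\bigoplus_{\ell\ge0}\Pi_\ell\;\simeq\;\bigoplus_{\ell\ge0}\Hom_{\dml}(\nu_d^\ell\L,\L),\]
under which the multiplication $\Pi_\ell\otimes\Pi_m\to\Pi_{\ell+m}$ becomes $(f,g)\mapsto f\circ\nu_d^\ell(g)$ (see \cite{io13}). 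Equivalently $\Pi_\ell\simeq\Hom_{\dml}(\L,\nu_d^{-\ell}\L)$, and positivity of the grading translates into the vanishing $\Hom_{\dml}(\L,\nu_d^s\L)=0$ for $s>0$.

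Next I would set $N=\bigoplus_{i=0}^{n-1}\nu_d^i(\L)\in\U$ and consider its image $FN$ under the projection $F\colon\U\to\U/\nu_d^n$. Using the Hom-formula for orbit categories from Section~\ref{section: notation}, the $(i,i')$-component of $\End_{\U/\nu_d^n}(FN)$ is
\[\bigoplus_{t\in\Z}\Hom_\U\!\left(\nu_d^i\L,\nu_d^{\,i'+nt}\L\right)\simeq\bigoplus_{t\in\Z}\Hom_{\dml}\!\left(\L,\nu_d^{\,i'-i+nt}\L\right)\simeq\bigoplus_{t\in\Z}\Pi_{\,i-i'-nt}=\Pi_{\,i-i'+n\Z},\]
which is exactly the $(i,i')$-entry of the matrix algebra $\Pi^{(n)}=(\Pi_{i-j+n\Z})_{i,j}$. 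The remaining point is to check that these block identifications respect multiplication: the composition in $\U/\nu_d^n$ twists its second argument by a power of $\nu_d^n$, and this twist matches precisely the $\nu_d^\ell$-twist appearing in the graded multiplication of $\Pi$ recalled above. This yields an isomorphism of $k$-algebras $\End_{\U/\nu_d^n}(FN)\simeq\Pi^{(n)}$.

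Finally I would conclude. Every indecomposable object of $\U$ has the form $\nu_d^s P$ for an indecomposable summand $P$ of $\L$, and in $\U/\nu_d^n$ it becomes isomorphic to $\nu_d^{s'}P$ with $0\le s'<n$; hence $FN$ is an additive generator and $\add(FN)=\U/\nu_d^n$. Since $\nu_d$ acts admissibly on $\U$ — no indecomposable $X$ satisfies $\nu_d^{ni}X\simeq X$ for $i\ne0$, by the cohomological-degree structure of the objects of $\U_d(\L)$ (Proposition~\ref{U_n}, \cite{iyama11}) — the orbit category $\U/\nu_d^n$ is again locally bounded and Krull--Schmidt (Section~\ref{galois coverings}), hence idempotent complete. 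Consequently $\Hom_{\U/\nu_d^n}(FN,-)$ induces an equivalence $\U/\nu_d^n=\add(FN)\xrightarrow{\ \sim\ }\proj\End_{\U/\nu_d^n}(FN)=\proj\Pi^{(n)}$, as asserted. I expect the principal obstacle to be the bookkeeping in the multiplication step: matching the $\nu_d$-twisted composition of the orbit category with the concatenation product of the tensor algebra $\Pi$, while keeping the left/right-module and index conventions consistent throughout.
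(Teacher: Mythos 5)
Your proposal is correct and takes essentially the same route as the paper, which presents its proof as an adaptation of \cite[Proposition~4.7]{amiot09}: the graded identification $\Pi_\ell\simeq\Hom_{\dml}(\L,\nu_d^{-\ell}\L)$, which you quote from the literature, is exactly what the paper re-derives via tensor powers of $\Theta=\mathbb{R}\!\Hom_\L(D\L,\L)[d]$ and \cite[Lemma~4.8]{amiot09}, after which both arguments match the matrix multiplication of $\Pi^{(n)}$ with composition in the orbit category $\U/\nu_d^n$. Your final step (admissibility of $\nu_d^n$ from the vanishing $\Hom_{\dml}(\nu_d^i\L,\nu_d^j\L)=0$ for $i<j$, hence Krull--Schmidt and idempotent completeness of the orbit category, then projectivization via the additive generator $FN$) merely fills in details that the paper compresses into ``the equivalence follows.''
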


\begin{proof}
This is an easy adaptation of the proof of \cite[Proposition~4.7]{amiot09}. For the
convenience of the reader, we give an outline of the argument here.

The Nakayama functor $\nu:\dml\to\dml$ has a quasi-inverse 
$\nu\inv=-\otimes^{\mathbb{L}}_{\L}\R\!\Hom_\L(D\L,\L)$. Hence, the functor
$\nu_d\inv=-\otimes^{\mathbb{L}}_\L\Theta$, where
$\Theta=\R\!\Hom_\L(D\L,\L)[d]$, is a quasi-inverse of $\nu_d$.
Since $\gldim\L\le d$, it follows that $\Hom_{\dml}(\nu_d^i(\L),\nu_d^j(\L))=0$ whenever
$i<j$. For $i\ge j$, we have
\begin{align*}
  &\Hom_{\dml}(\nu_d^i(\L),\nu_d^j (\L))\simeq \Hom_{\dml}(\L,\nu_d^{-(i-j)}(\L))\simeq 
  \Hom_{\dml}(\L,\Theta^{\otimes^\mathbb{L}_\L(i-j)})\simeq H^0(\Theta^{\otimes^\mathbb{L}_\L(i-j)})\\
  &\simeq H^0(\Theta)^{\otimes_\L(i-j)} \simeq
 H^0(\R\!\Hom_{\dml}(D\L,\L)[d])^{\otimes_\L(i-j)}\simeq
 \Ext_\L^d(D\L,\L)^{\otimes_\L(i-j)}\simeq \Pi_{i-j},
\end{align*}
where the fourth isomorphism comes from \cite[Lemma~4.8]{amiot09}.

It is now easy to verify that multiplication in the matrix algebra
$\Pi^{(n)}=(\Pi_{i-j+n\Z})_{1\le i,j\le n}$ corresponds to composition of morphisms in the
orbit category $\U/\nu_d^n$. The equivalence $\proj\Pi^{(n)}\simeq\U/\nu_d^n$
follows.
\end{proof}

\begin{proof}[Proof of Corollary~\ref{preproj}]
Let $\Gamma=\stend_{\L}(\Pi)$.
By \cite[Theorem~1.14, Proposition~1.12(d)]{iyama11}, the algebra $\Gamma$ is
$\nu_{d+1}$-finite and $\gldim\Gamma\le d+1$ (in the
terminology of \cite{iyama11}, $\L$ is $d$-complete and $\Gamma$ is $(d+1)$-complete). 
By \cite[Theorem~4.7]{io13}, there exists an equivalence of categories
\begin{equation}\label{L and D}
\U_{d}(\L)\simeq\widehat{\Gamma},
\end{equation}
and thus we get triangle equivalences
\[\stmod\U_{d}(\L)\simeq\stmod\widehat{\Gamma}\simeq\dm{\Gamma}.\]
On the other hand, from Lemma~\ref{preproj-orbit} follows that
\[\widehat{\Gamma}/\nu_{\L,d}^n\simeq\U/\nu_{\L,d}^n\simeq\Pi^{(n)}.\]
Applying Theorem~\ref{basic corollary}(a) to $\Gamma$, it is enough to
show that the autoequivalence $(\nu_{\L,d}^n)_*=(\nu_{\L,d})_*^n$ of
$\stmod\widehat{\Gamma}\simeq\dm{\Gamma}$ satisfies 
$(\nu_{\L,d}^n)_*(\U_{d+1}(\Gamma))=\U_{d+1}(\Gamma)$.

Proposition \ref{serre and serre}(c) gives a commutative diagram
\[\xymatrix@C=3em@R=1.5em{
  \dm{\Gamma}\ar[r]\ar[d]^{\nu_{\Gamma,d+1}}&\stmod\U\ar[d]^{(\nu_{\L,d})_*}\\
\dm{\Gamma}\ar[r]&\stmod\U
}\]
and thus we have $\U_{d+1}(\Gamma)=\nu_{\Gamma,d+1}^n(\U_{d+1}(\Gamma))=(\nu_{\L,d})_*^n(\U_{d+1}(\Gamma))$, as desired.
\end{proof}

See Section~\ref{sec:toypreproj} for an example illustrating the proof of
Corollary~\ref{preproj}.

\section{Examples and applications} \label{exandappl}

\subsection{A simple example} \label{toyex}

With the purpose of illuminating the general theory from a somewhat more concrete
viewpoint, we show here how some of the main constructions work out in a particular,
simple example.

For $n\ge3$, let $\L_n=k\mathbb{A}_n/I^{n-1}$, where $\mathbb{A}_n$ is linearly oriented
of Dynkin type $A_n$, and $I\subset k\mathbb{A}_n$ the ideal generated by all
arrows of $k\mathbb{A}_n$.

\[\L_n \;:\;\xymatrix{
1\ar[r]\ar@{.}@/^1.2pc/[rrrr] &2\ar[r]&\cdots\ar[r] &n-1\ar[r] &n
}\]
The algebra $\L_n$ is $2$-re\-pre\-sen\-ta\-tion-fin\-ite of global dimension $2$
\cite[Theorem~3.12]{io13}, and the subcategory $\add(\L_n\oplus D\L_n)$ of $\mod\L_n$ is
the unique $2$-cluster-tilting subcategory. 
Moreover, by Proposition~\ref{U_n}, the subcategory 
\begin{equation} \label{toyU_2first}
\U_2(\L_n)=\add\{\nu_2^i(\L_n)\mid i\in\Z\} = 
\add\{(\L_n\oplus D\L_n)[2\ell]\mid\ell\in\Z\} \subset \dm{\L_n}
\end{equation}
is $2$-cluster-tilting.

Let $\mathbb{D}_n$ be a Dynkin quiver of type $D_n$. 
The Dynkin diagram $D_n$ has an automorphism of order $2$ (unique for $n>4$), which
induces an automorphism $\s$ of $\dm{k\mathbb{D}_n}$. Let $X\in\dm{k\mathbb{D}_n}$ be an
indecomposable object such that $\s(X)\not\simeq X$. 
Then $T=\bigoplus_{i=0}^{n-1}(\nu_1\s)^i(X)$ is a tilting complex, and 
$\End_{\dm{k\mathbb{D}_n}}(T)\simeq \L_n$. Thus the algebra $\L_n$ is derived equivalent to
$k\mathbb{D}_n$, via a triangle equivalence sending $P_j\in\dm{\L_n}$ to 
$(\nu_1\s)^{n-j}(X)\in\dm{k\mathbb{D}_n}$ (here, $P_j$ denotes the projective cover of the
simple $\L_n$-module supported at the vertex $j$).

\subsubsection{Orbit algebra} \label{sec:toyexample}

The automorphism $\s$ of $\dm{\L_n}\simeq\dm{k\mathbb{D}_n}$ satisfies 
(see\ \cite[Theorem~4.1]{my01})
\begin{align}
\label{dmshift}
[1]&\simeq \nu_1^{1-n}\s^n\,, \quad\mbox{and}  \\
  P_j&\simeq\nu_1\s(P_{j-1}) \quad\mbox{for } 1<j\le n
\end{align}
Hence we have
\begin{equation} \label{toynu2}
\nu_2=\nu_1\circ[-1]\simeq \nu_1\nu_1^{n-1}\s^n = (\nu_1\s)^n \,  
\end{equation}
which, together with \eqref{toyU_2first}, implies that
\begin{equation} \label{toyU_2}
\U_2(\L_n)=\add\{(\nu_1\s)^i(P_1)\mid i\in\Z\} \,.
\end{equation}

The repetitive category $\widehat{\L}_n = \proj^{\Z}T(\L_n)$ of $\L_n$ 
is given by the following infinite quiver with relations:
{\scriptsize\[
\xymatrix@C=1.4em@R=1.3em@!=.8cm
{
\ar@{.}[rr]&&\underline{1}\ar[dr]\ar@{.}[rrrrrr]&&&&&&\underline{n}\ar[dr]\ar@{.}[rr]&&\\
\ar[r]&\underline{n\!-\!1}(-1)\ar[ur]\ar[dr]\ar@{.}[rr]&&\underline{2}\ar[r]&\underline{3}\ar[r]&\cdots\ar[r]&\underline{n\!-\!2}\ar[r]&\underline{n\!-\!1}\ar[ur]\ar[dr]\ar@{.}[rr]&&\underline{2}(1)\ar[r]&\ldots\\
\ar@{.}[rr]&&\underline{n}(-1)\ar[ur]\ar@{.}[rrrrrr]&&&&&&\underline{1}(1)\ar[ur]\ar@{.}[rr]&&
}\]}
Here, $\underline{j}$ denotes the projective $\L_n$-module $P_j$ concentrated in
degree $0$, and $(i)$ degree shift by $i$.  
The Nakayama automorphism $\widehat{\nu}\simeq(1)$ of $\widehat{\L}_n$ induces an
automorphism $\widehat{\nu}_*$ of $\stmod\widehat{\L}_n\simeq\dm{\L_n}$, which satisfies
$$\widehat{\nu}_*\simeq \nu\circ[1]=
\nu_1\circ[2]\stackrel{\mbox{\tiny\eqref{dmshift}}}{\simeq} \nu_1^{3-2n}\,.$$
Since $\s^2=\I$ and $(\nu_1\s)(\U_2(\L_n))=\U_2(\L_n)$, we get 
$$\widehat{\nu}_*^{2\ell}(\U_2(\L_n))=
\nu_1^{(6-4n)\ell}(\U_2(\L_n))=(\nu_1\s)^{(6-4n)\ell}(\U_2(\L_n)) = \U_2(\L_n)$$ 
for all $\ell\ge1$. By Theorem~\ref{basic corollary}, this implies that the orbit algebra
$\widehat{\L}_n/\widehat{\nu}^{2\ell}$ is $2$-re\-pre\-sen\-ta\-tion-fin\-ite (see Figure \ref{T2Lm}
for the case $\ell=1$).

\begin{figure}[h]
\[
  \xymatrix@R=.8cm@C=.8cm{
&\bullet\ar[dl]\ar[dr]\ar@{.}[dd]&\bullet\ar[l]&\cdots\ar[l]&\bullet\ar[l]&\bullet\ar[l]\\
\bullet\ar[dr]&&\bullet\ar[dl]\ar@{.}@/_2pc/[rrrr]\ar@{.}@/^2pc/[rrrr]&&\bullet\ar[ur]\ar@{.}@/_2pc/[llll]\ar@{.}@/^2pc/[llll]&&\bullet\ar[ul]\\
&\bullet\ar[r]&\bullet\ar[r]&\cdots\ar[r]&\bullet\ar[r]&\bullet\ar[ul]\ar[ur]\ar@{.}[uu]
}
\]
\caption{The algebra $\widehat{\L}_n/\widehat{\nu}^2$.}\label{T2Lm}
\end{figure}
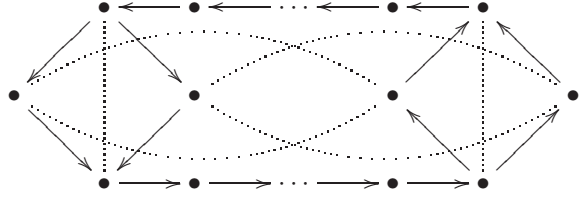

A $2$-cluster-tilting module of $\widehat{\L}_n/\widehat{\nu}^{2\ell}$ can be constructed
as follows: 
The preimage $\U$ of $\U_2(\L_n)$ under the natural functor 
$\mod\widehat{\L}_n\to \stmod\widehat{\L}_n\simeq\dm{\L_n}$ is a $2$-cluster-tilting subcategory of
$\mod\widehat{\L}_n$, which is equivariant under the induced automorphism
$\widehat{\nu}^{2\ell}_*:\mod\widehat{\L}_n\to\mod\widehat{\L}_n$. From
Theorem~\ref{stronger theorem} in Section~\ref{galois coverings}, it follows that the
subcategory $F_*(\U)\subset \mod(\widehat{\L}_n/\widehat{\nu}^{2\ell})$ is
$2$-cluster-tilting. Now any additive generator of $F_*(\U)$ is a $2$-cluster-tilting
$\widehat{\L}_n/\widehat{\nu}^{2\ell}$-module.

The quiver with relations of the algebra $T_2(\L_n)$ is given in Figure~\ref{T2Lm}, and
the module category of $\widehat{\L}_4$ in Figure~\ref{modwL4}.

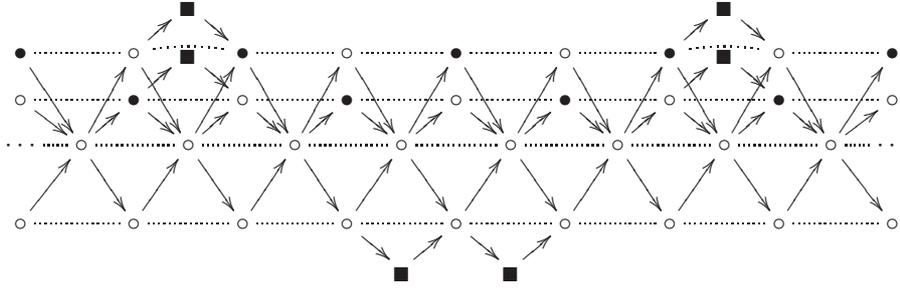
\begin{figure}[h]
 \[
  \xymatrix@R=.23cm@C=.3cm{
    &&&{}_\blacksquare\ar[dr]&&&&&&&&&&{}_\blacksquare\ar[dr]&&&& \\
   \bullet\ar[ddr]\ar@{.}[rr]&&\circ\ar[ddr]\ar[ur]\ar@{.}@/^.2pc/[rr]&{}_\blacksquare\ar[dr]&\bullet\ar[ddr]\ar@{.}[rr]&&\circ\ar[ddr]\ar@{.}[rr]&&\bullet\ar[ddr]\ar@{.}[rr]&&\circ\ar[ddr]\ar@{.}[rr]&&\bullet\ar[ddr]\ar[ur]\ar@{.}@/^.2pc/[rr]&{}_\blacksquare\ar[dr]&\circ\ar[ddr]\ar@{.}[rr]&&\bullet\ar@{.}[rr]\ar[ddr] &&\circ\ar@{.}[rr]\ar[ddr] &&\bullet\\
    \circ\ar[dr]\ar@{.}[rr]&&\bullet\ar[dr]\ar[ur]\ar@{.}[rr]&&\circ\ar[dr]\ar@{.}[rr]&&\bullet\ar[dr]\ar@{.}[rr]&&\circ\ar[dr]\ar@{.}[rr]&&\bullet\ar[dr]\ar@{.}[rr]&&\circ\ar[dr]\ar[ur]\ar@{.}[rr]&&\bullet\ar[dr]\ar@{.}[rr]&&\circ\ar@{.}[rr]\ar[dr] &&\bullet\ar@{.}[rr]\ar[dr] &&\circ\\
    \cdots\ar@{.}[r]&\circ\ar[ddr]\ar@{.}[rr]\ar[uur]\ar[ur]&&\circ\ar[ddr]\ar@{.}[rr]\ar[uur]\ar[ur]&&\circ\ar[ddr]\ar@{.}[rr]\ar[uur]\ar[ur]&&\circ\ar[ddr]\ar@{.}[rr]\ar[uur]\ar[ur]&&\circ\ar[ddr]\ar@{.}[rr]\ar[uur]\ar[ur]&&\circ\ar[ddr]\ar@{.}[rr]\ar[uur]\ar[ur]&&\circ\ar[ddr]\ar@{.}[rr]\ar[uur]\ar[ur]&&\circ\ar[ddr]\ar[uur]\ar[ur]\ar@{.}[rr]&&\circ\ar[uur]\ar[ur]\ar[ddr]\ar@{.}[rr]&&\circ\ar[uur]\ar[ur]\ar[ddr]\ar@{.}[r]&\cdots\\
    \\
    \circ\ar[uur]\ar@{.}[rr]&&\circ\ar[uur]\ar@{.}[rr]&&\circ\ar[uur]\ar@{.}[rr]&&\circ\ar[dr]\ar[uur]\ar@{.}[rr]&&\circ\ar[dr]\ar[uur]\ar@{.}[rr]&&\circ\ar[uur]\ar@{.}[rr]&&\circ\ar[uur]\ar@{.}[rr]&&\circ\ar[uur]\ar@{.}[rr]&&\circ\ar@{.}[rr]\ar[ruu]\ar[dr]&&\circ\ar@{.}[rr]\ar[ruu]\ar[dr]&&\circ\\
&&&&&&&{}_\blacksquare\ar[ur]&&{}_\blacksquare\ar[ur]&&&&&&&&{}_\blacksquare\ar[ur]&&{}_\blacksquare\ar[ur]
  }
  \]
\caption{The module category of $\widehat{\L}_4$, with the $2$-cluster-tilting subcategory
  given by $\L_4$ indicated in black (boxes represent projective-injective modules). The
  module category of $\widehat{\L}_4/\widehat{\nu}_2$ is equivalent to
  the orbit category $(\mod\widehat{\L}_4)/\tau^{10}$, where $\tau=\nu_1$ is the
  Auslander--Reiten translation on $\mod\widehat{\L}_4$. }\label{modwL4}
\end{figure}

\subsubsection{Preprojective algebra} \label{sec:toypreproj}

Again let $\L=\L_n=k\mathbb{A}_n/I^{n-1}$.
Here, we shall construct the $3$-preprojective algebra $\Pi$ of $\L$ and show that it
is $3$-re\-pre\-sen\-ta\-tion-fin\-ite, along the lines of the proof of Corollary~\ref{preproj}.

Setting $\Gamma=\stend_{\L}(\Pi)$, we have 
$$\Gamma=\stend_{\L}(\Pi) \simeq \stend_{\L}(D\L)
\simeq k\mathbb{A}_{n-2} \,.$$ 
From Equation~\eqref{toyU_2}, we know that
$\U_2(\L)=\add\{(\nu_1\s)^i(P_1)\mid i\in\Z\}\subset\dm{\L}$, and it is now
easy to verify that 
$\U_2(\L)\simeq k\mathbb{A}_{\infty}^{\infty}/I^{n-1}\simeq\widehat{\Gamma}$,
where
$\mathbb{A}_{\infty}^{\infty}$ is linearly oriented of type $A^{\infty}_{\infty}$:
\begin{equation} \label{quivertilde}
  \mathbb{A}_{\infty}^{\infty} \;:\;
  \cdots\stackrel{a_{-2}}{\longrightarrow}-1\stackrel{a_{-1}}{\longrightarrow}0\stackrel{a_0}{\longrightarrow}1\stackrel{a_1}{\longrightarrow}2\stackrel{a_2}{\longrightarrow}\cdots
\end{equation}
Let $\psi$ be the automorphism of $\widehat{\Gamma}$ given by shift one step to the left in
$\mathbb{A}_{\infty}^{\infty}$: $\psi(i)=i-1$ and $\psi(a_i)=a_{i-1}$ for all
$i\in\mathbb{Z}$; then $\nu_1\s\simeq\psi$ as automorphisms of
$\U_2(\L)\simeq\widehat{\Gamma}$. 
Since $\Pi\simeq\U_2(\L)/\nu_{\L,2}$ and $\nu_{\L,2}\simeq(\nu_1\s)^n\simeq\psi^n$ by
\eqref{toynu2}, we now get that 
$$\Pi\simeq \mathbb{A}_\infty^\infty/\psi^n\simeq\tilde{\mathbb{A}}_n/I^{n-1},$$
where $\tilde{\mathbb{A}}_n$ is a cyclically oriented quiver of extended Dynkin type
$\widetilde{A}_{n-1}$.
In other words, $\Pi$ is the self-injective Nakayama algebra with $n$ isomorphism classes
of simple modules and Loewy length $n-1$.

The induced automorphism $\psi_*$ of $\stmod\widehat{\Gamma}\simeq\dm{\Gamma}$ is isomorphic to $\nu_{\Gamma,1}$ -- the Auslander--Reiten translation on $\dm{\Gamma}$.
The category $\dm{\Gamma}$ is fractionally $\frac{n-3}{n-1}$-Calabi--Yau, meaning that
$[n-3]\simeq\nu_\Gamma^{n-1}$ as autoequivalences of $\dm{\Gamma}$, and hence
$[-2]\simeq\nu_{\Gamma,1}^{n-1}$.
Consequently,
\begin{equation} \label{nu2nu3}
\nu_{\Gamma,3}\simeq\nu_{\Gamma,1}\circ[-2]\simeq\nu_{\Gamma,1}^n\simeq\psi_*^n \simeq
(\nu_{\L,2})_*
\end{equation}
on $\dm{\Gamma}\simeq\stmod\widehat{\Gamma}\simeq\stmod\U_2(\L)$.
Since $\gldim\Gamma=1\le3$, the subcategory $\U_3(\Gamma)$ of $\dm{\Gamma}$ is
$3$-cluster-tilting, and $(\nu_{\L,2})_*(\U_3(\Gamma))=\nu_{\Gamma,3}(\U_3(\Gamma)) = \U_3(\Gamma)$ by \eqref{nu2nu3}. 
It follows from Theorem~\ref{basic corollary} that $\Pi\simeq \widehat{\Gamma}/\psi^n$ is
$3$-re\-pre\-sen\-ta\-tion-fin\-ite.

Let $\U'\subset\mod\widehat{\Gamma}$ be the preimage of $\U_3(\Gamma)$ under the natural
functor $\mod\widehat{\Gamma}\to\stmod\widehat{\Gamma}\simeq\dm{\Gamma}$. 
Denoting by $F:\widehat{\Gamma}\to\widehat{\Gamma}/\nu_{\Gamma,3}\simeq\Pi$ the covering
functor, the subcategory $F_*(\U')$ of $\mod\Pi$ is $3$-cluster-tilting. 
See Figure~\ref{modPD5} for a picture in the case $n=5$.

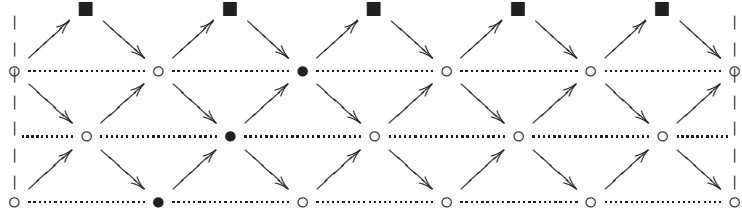
\begin{figure}[h]
  \[
  \xymatrix@R=.5cm@C=.53cm{
    \ar@{--}[ddd]& {}_\blacksquare\ar[dr] && {}_\blacksquare\ar[dr] && {}_\blacksquare\ar[dr] &&
        {}_\blacksquare\ar[dr] && {}_\blacksquare\ar[dr] & \ar@{--}[ddd] \\
    \circ\ar[ur]\ar[dr]\ar@{.}[rr] && \circ\ar[ur]\ar[dr]\ar@{.}[rr] &&
    \bullet\ar[ur]\ar[dr]\ar@{.}[rr] && \circ\ar[ur]\ar[dr]\ar@{.}[rr] && \circ\ar[ur]\ar[dr]\ar@{.}[rr] && \circ \\
    \ar@{.}[r]& \circ\ar[ur]\ar[dr]\ar@{.}[rr] && \bullet\ar[ur]\ar[dr]\ar@{.}[rr] &&
    \circ\ar[ur]\ar[dr]\ar@{.}[rr] && \circ\ar[ur]\ar[dr]\ar@{.}[rr] && \circ\ar[ur]\ar[dr]\ar@{.}[r] & \\
    \circ\ar[ur]\ar@{.}[rr] && \bullet\ar[ur]\ar@{.}[rr] && \circ\ar[ur]\ar@{.}[rr] && \circ\ar[ur]\ar@{.}[rr] && \circ\ar[ur]\ar@{.}[rr] && \circ
  }
  \]
  \caption{The module category of $\Pi(\L_5)$, with a $3$-cluster-tilting subcategory
  indicated in black. The vertical dashed
  lines at the left and right ends of the picture are identified with each
  other.}\label{modPD5}
\end{figure}

\subsection{Trivial extensions of homogeneous $d$-re\-pre\-sen\-ta\-tion-fin\-ite algebras}

In the case of $\L$ being an $r$-homogeneous $d$-re\-pre\-sen\-ta\-tion-fin\-ite algebra
of global dimension $d$, either Corollary~\ref{trivext} or \ref{fracCY} can be employed to
find a basic $d$-cluster-tilting $T_{d\ell}(\L)$-module.
Specializing Corollary~\ref{fracCY} to this situation, we get the following
simplified description.

\begin{prop} \label{homogeneoustrivext}
  Let $\L$ be an $r$-homogeneous $d$-re\-pre\-sen\-ta\-tion-fin\-ite algebra of global dimension
  $d$, and $m$ a positive integer.
  Then 
  \begin{equation} \label{homogeneousCT}
  V = T_{dm}(\L)\oplus
  \bigoplus_{i=0}^{m(dr-d+r)-1}\left((\nu_{T_{dm}(\L)})_*\Omega^{d+1}\right)^i(\L)\\ 
  \end{equation}
  is a basic $d$-cluster-tilting $T_{dm}(\L)$-module.
\end{prop}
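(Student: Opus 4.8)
The plan is to realize Proposition~\ref{homogeneoustrivext} as a special case of Corollary~\ref{fracCY} and then simplify the resulting numerology. First I would identify the twisted fractionally Calabi--Yau datum of $\L$. Since $\L$ is $r$-homogeneous and $d$-representation-finite of global dimension $d$, \cite[Theorem~1.3]{hi11a} (exactly as invoked in the proof of Corollary~\ref{trivrf}) yields an automorphism $\phi$ of $\L$ with $\nu\simeq\nu_d^{1-r}\phi_*$ on $\dml$. Using $\nu_d=\nu\circ[-d]$ together with the fact that $\nu$, the shift, and $\phi_*$ all commute up to isomorphism (by \eqref{commutes} for $\nu$, and because shifts commute with everything), I would rewrite this as
\[\nu^r\simeq[\,d(r-1)\,]\circ\phi_* \,,\]
so that $\L$ is twisted fractionally $\frac{b}{a}$-Calabi--Yau with $a=r$ and $b=d(r-1)$. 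Note that $b>0$: as $\gldim\L=d\ge1$, the algebra $\L$ cannot be self-injective, which forces $r\ge2$.

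Next I would check the hypotheses of Corollary~\ref{fracCY} and evaluate the integers appearing there. The requirement $d\ge\gldim\L$ holds with equality. Setting $g=\gcd(d+1,a+b)$ and noting that $a+b=r+d(r-1)=r(d+1)-d\equiv1\pmod{d+1}$, I obtain $g=1$. The integer $n=\ell(ad-b)/g$ of Corollary~\ref{fracCY} then becomes
\[n=\ell\bigl(rd-d(r-1)\bigr)/1=d\ell \,,\]
matching the subscript of $T_{d\ell}(\L)$, while the top index $\ell(a+b)/g-1$ equals $\ell(dr-d+r)-1$.

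Finally I would substitute these values into the basic $d$-cluster-tilting module $V=T_n(\L)\oplus\bigoplus_{i=0}^{\ell(a+b)/g-1}\bigl((\nu_{T_n(\L)})_*\Omega^{d+1}\bigr)^i(\L)$ furnished by Corollary~\ref{fracCY}, obtaining precisely the module \eqref{homogeneousCT}. The only step carrying genuine content is the reduction $\nu^r\simeq[d(r-1)]\phi_*$ from the homogeneity relation; once that identity is established, the remainder is a routine verification of the arithmetic of $g$, $n$, and the summation range, and I do not anticipate any real obstacle.
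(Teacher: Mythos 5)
Your proposal is correct and follows essentially the same route as the paper: both deduce from \cite[Theorem~1.3]{hi11a} that $\L$ is twisted fractionally $\frac{d(r-1)}{r}$-Calabi--Yau and then specialize Corollary~\ref{fracCY} with $(a,b)=(r,d(r-1))$, so that $ad-b=d$ and $a+b=dr-d+r$ give exactly the stated $n=d\ell$ and summation range. The only (harmless) difference is that you compute $g=\gcd(d+1,a+b)=1$ explicitly via $a+b\equiv1\pmod{d+1}$, whereas the paper sidesteps evaluating $g$ by applying the corollary with $\ell g$ in place of $\ell$; your additional check that $r\ge2$ (hence $b>0$) is a sound point the paper leaves implicit.
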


\begin{proof}
By \cite[Theorem~1.3]{hi11a}, the algebra $\L$ is twisted
$\frac{d(r-1)}{r}$\,-\,Calabi--Yau. 
We apply Corollary~\ref{fracCY} to the case $(a,b,\ell) = (r,d(r-1),m g)$, where
$g=\gcd(d+1,a+b)$. 
Then $n=\ell(ad-b)/g$ is $dm$, and $\ell(a+b)/g-1$ is $m(dr-d+r)-1$. Thus the result
follows.
\end{proof} 
It is easy to check that the $d$-cluster-tilting module $V$ in
Proposition~\ref{homogeneoustrivext} coincides with the module $U$ given in
Corollary~\ref{trivext}.
Indeed, $\add V\subset\add\{S_d^i(\L)\mid i\in\Z\}$ holds as a subcategory of 
$\stmod T_{dm}(\L)$ for $S_d=(\nu_{T_{dm}(\L)})_*\Omega^{d+1}$. Since $\L\in\add U$
and any $d$-cluster-tilting subcategory is closed under $S_d$, it follows that $\add
V\subset \add U$. 
But since $U$ and $V$ are basic $d$-cluster-tilting, we get $U\simeq V$.

The following result by Herschend and Iyama, presented here in a slightly generalized
form, gives a rich source of homogeneous $d$-re\-pre\-sen\-ta\-tion-fin\-ite algebras. 

\begin{prop} \label{homogeneoustp}
  For $i=1,\ldots,n$, let $\L_i$ be an $r$-ho\-mo\-gene\-ous 
  $d_i$-re\-pre\-sen\-ta\-tion-fin\-ite algebra of global dimension $d_i$ such that $\L_i/\rad\L_i$
  is a separable $k$-algebra.
  Then $\bigotimes_{i=1}^n\L_i$ is an $r$-homogeneous
  $d$-re\-pre\-sen\-ta\-tion-fin\-ite algebra of global dimension $d$, where $d=\sum_{i=1}^nd_i$, and
  $\bigoplus_{j=0}^{r-1}\left(\tau_{d_1}^j\L_1\otimes\cdots\otimes\tau_{d_n}^j\L_n\right)$
  is a $d$-cluster-tilting module of $\bigotimes_{i=1}^n\L_i$.
\end{prop}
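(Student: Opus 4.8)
The plan is to build $\bigotimes_{i=1}^n\L_i$ from the external tensor products of the factors on the level of derived categories, and to transport the $d_i$-cluster-tilting data of each factor along it. For objects $X_i\in\dm{\L_i}$ write $X_1\boxtimes\cdots\boxtimes X_n$ for $X_1\otimes_k\cdots\otimes_k X_n$ viewed over $\bigotimes_i\L_i$; over a field this is exact in each variable. Three formal inputs drive everything. (i) Since each $\L_i/\rad\L_i$ is separable, the semisimple quotient of $\bigotimes_i\L_i$ is $\bigotimes_i(\L_i/\rad\L_i)$ and $\gldim\bigotimes_i\L_i=\sum_i\gldim\L_i=\sum_i d_i=d$. (ii) The Künneth formula over $k$ gives a functorial isomorphism $\R\Hom_{\bigotimes_i\L_i}(\boxtimes_iX_i,\boxtimes_iY_i)\simeq\bigotimes_i\R\Hom_{\L_i}(X_i,Y_i)$, with no correction terms. (iii) Combining (ii) with $D(\boxtimes_iX_i)\simeq\boxtimes_iDX_i$ and uniqueness of the Serre functor yields $\nu_{\bigotimes_i\L_i}\simeq\nu_{\L_1}\boxtimes\cdots\boxtimes\nu_{\L_n}$ on $\dm{\bigotimes_i\L_i}$. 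It is convenient to prove (i) and (iii) for two factors and pass to arbitrary $n$ by induction, the hypotheses (separability, $r$-homogeneity) being inherited by $\bigotimes_{i<n}\L_i$.

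Since the shift functor acts diagonally, $(\boxtimes_iX_i)[-d]\simeq\boxtimes_i(X_i[-d_i])$ for $d=\sum_id_i$, and (iii) gives
\[\nu_d\simeq\nu_{d_1}\boxtimes\cdots\boxtimes\nu_{d_n}.\]
Evaluating on the regular module and using the $r$-homogeneity $\nu_{d_i}^{\,r}(\L_i)\simeq\L_i[-d_i]$ of each factor,
\[\nu_d^{\,r}\Big(\bigotimes_i\L_i\Big)\simeq\boxtimes_i\nu_{d_i}^{\,r}(\L_i)\simeq\boxtimes_i\big(\L_i[-d_i]\big)\simeq\Big(\bigotimes_i\L_i\Big)[-d],\]
which is precisely the $r$-homogeneity condition for $\bigotimes_i\L_i$, provided the latter is $d$-representation-finite of global dimension $d$. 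The factorisation in (iii) also shows that $\bigotimes_i\L_i$ is twisted fractionally Calabi--Yau, each factor being so by \cite[Theorem~1.3]{hi11a}; in particular $\bigotimes_i\L_i$ is $\nu_d$-finite.

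Being $\nu_d$-finite of global dimension $d$, the algebra $\bigotimes_i\L_i$ is $d$-representation-finite by the theory of \cite{iyama11} (cf.\ Proposition~\ref{U_n}), with unique basic $d$-cluster-tilting module $\bigoplus_{\ell\ge0}\tau_d^{\,\ell}\big(D(\bigotimes_i\L_i)\big)$. To make this explicit I would show that $\tau_d$ is diagonally intertwined with the $\tau_{d_i}$ on the relevant modules: from $\nu_d\simeq\boxtimes_i\nu_{d_i}$ and the fact that each $\tau_{d_i}^{\,\ell}(D\L_i)$ is concentrated in cohomological degree $0$, one obtains $\tau_d^{\,\ell}\big(\boxtimes_iD\L_i\big)\simeq\boxtimes_i\tau_{d_i}^{\,\ell}(D\L_i)$. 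Here the common period is decisive: $r$-homogeneity gives $\tau_{d_i}^{\,r-1}(D\L_i)\simeq\L_i$ and hence $\tau_{d_i}^{\,r}(D\L_i)\simeq\tau_{d_i}(\L_i)=0$, so the sum truncates simultaneously in every factor. The $d$-cluster-tilting module is therefore the synchronised diagonal $\bigoplus_{\ell=0}^{r-1}\boxtimes_i\tau_{d_i}^{\,\ell}(D\L_i)$, which is the module displayed in the statement. Phrased in $\dm{\bigotimes_i\L_i}$, the orbital subcategory $\U_d(\bigotimes_i\L_i)=\add\{\nu_d^{\,j}(\bigotimes_i\L_i)\mid j\in\Z\}$ consists of the \emph{diagonal} objects $\boxtimes_i\nu_{d_i}^{\,j}(\L_i)$ carrying a common power $j$, rather than the full $n$-fold grid -- and it is exactly the equality of the homogeneity indices that produces this diagonal.

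The main obstacle is the identification in the previous paragraph: the diagonal intertwining $\tau_d^{\,\ell}(\boxtimes_iD\L_i)\simeq\boxtimes_i\tau_{d_i}^{\,\ell}(D\L_i)$ requires passing the functor factorisation (iii) from $\dm{\bigotimes_i\L_i}$ down to the module-level $d$-Auslander--Reiten translation, which hinges on the external tensors remaining in degree $0$ along the orbit; this, together with the fact that it is the synchronised diagonal and not the full product of the factors' cluster-tilting modules that is cluster-tilting, is the genuinely non-formal content and is essentially the argument of Herschend--Iyama \cite{hi11a}. By contrast, maximality of the cluster-tilting module is automatic once $\nu_d$-finiteness is established, courtesy of \cite{iyama11}. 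Over a general field the only additional point is that separability of the $\L_i/\rad\L_i$ supplies the global-dimension identity in (i) and keeps the semisimple quotient a tensor product, replacing the algebraically closed ground field used in \cite{hi11a}.
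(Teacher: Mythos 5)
Your overall architecture is sound and, in its formal parts, close to what actually underlies this result; but there is one genuine gap, and it sits at the load-bearing step. You deduce $d$-representation-finiteness of $\bigotimes_i\L_i$ from ``$\nu_d$-finite of global dimension $d$ \ldots\ by the theory of \cite{iyama11} (cf.\ Proposition~\ref{U_n})'', and you declare maximality of the cluster-tilting module ``automatic once $\nu_d$-finiteness is established''. This implication is false. Proposition~\ref{U_n} (and the results of \cite{iyama11} behind it) give, for a $\nu_d$-finite algebra of global dimension at most $d$, a $d$-cluster-tilting subcategory $\U_d(\L)$ of the \emph{derived} category $\dml$ --- equivalently, local $d$-representation-finiteness of $\widehat\L$ --- but not a $d$-cluster-tilting module in $\mod\L$. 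Indeed, every iterated tilted algebra of Dynkin type with $\gldim=2$ is $\nu_2$-finite, while the $2$-representation-finite algebras of global dimension $2$ form a far smaller class (truncated Jacobian algebras of self-injective quivers with potential, \cite{hi11b}). To get an actual $d$-cluster-tilting module one needs, in addition to $\nu_d$-finiteness, the condition $D\L\in\U_d(\L)$ (Iyama's criterion, \cite[Theorem~1.6]{iyama11}) --- and this is exactly where the common homogeneity index $r$ must enter the \emph{existence} argument, not merely the labeling of the result and the explicit diagonal shape of the module, as in your write-up. Fortunately the repair is already contained in what you proved: your diagonal identity $\nu_d^r\bigl(\bigotimes_i\L_i\bigr)\simeq\bigl(\bigotimes_i\L_i\bigr)[-d]$ yields $D\L\simeq\nu_d(\L)[d]\simeq\nu_d^{1-r}(\L)\in\U_d(\L)$, and then the criterion applies; equivalently, one invokes the converse direction of \cite[Theorem~1.3]{hi11a} (twisted $\frac{d(r-1)}{r}$-Calabi--Yau of global dimension at most $d$ implies $r$-homogeneous $d$-representation-finite), which you had within reach after establishing the twisted Calabi--Yau property of the tensor product.

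For comparison: the paper does not reprove any of this. Its proof is a two-step reduction --- it cites \cite[Corollary~1.5]{hi11a}, whose proof is essentially the route you sketch (common period $r$, twisted fractional Calabi--Yau property of the tensor product, then Theorem~1.3 of that paper), and then observes that the perfect-field hypothesis there is used only to guarantee $\gldim\bigl(\bigotimes_i\L_i\bigr)<\infty$, which the separability of the $\L_i/\rad\L_i$ supplies via $\L/\rad\L\simeq\bigotimes_i(\L_i/\rad\L_i)$ \cite[Proposition~7.7]{cr90}. Your input (i) is precisely this separability fix, so that part of your proposal matches the paper's actual contribution; the rest of your argument is an attempted reproof of the Herschend--Iyama theorem which, as it stands, fails at the step above.
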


\begin{proof}
This result is proved in \cite[Corollary~1.5]{hi11a} under the additional condition that
$k$ is a perfect field, which is stronger than the algebras $\L_i/\rad\L_i$ being
separable.
In the original proof, the assumption that $k$ is perfect is used to ensure that the
algebra $\L=\bigotimes_{i=1}^n\L_i$ has finite global dimension or, equivalently, that the
projective dimension of the $\L$-module $\L/\rad\L$ is finite. Our assumption that
$\L_i/\rad\L_i$ is a separable $k$-algebra is enough for this, since, by 
\cite[Proposition~7.7]{cr90}, $\L/\rad\L\simeq\bigotimes_{i=1}^n(\L_i/\rad\L_i)$ holds, 
and the latter module clearly has finite projective dimension.
\end{proof}

Let $Q_1,\ldots,Q_n$ be quivers of Dynkin type $A_m$ with symmetric
orientation, in the sense that the orientation of the arrows of each quiver $Q_i$ is
invariant under the canonical automorphism of order $2$ of the Dynkin diagram
$A_m$. In particular, this implies that $m$ is odd. The path algebras $\L_i=kQ_i$
are $r$-homogeneous and $1$-re\-pre\-sen\-ta\-tion-fin\-ite for $r=(m+1)/2$. 
By Proposition~\ref{homogeneoustp}, the algebra 
$\L = \L_1\otimes\ldots\otimes\L_n$ is $r$-homogeneous and
$n$-re\-pre\-sen\-ta\-tion-fin\-ite, and $\gldim\L=n$.  
By Proposition~\ref{homogeneoustrivext}, the module $V$ defined by
Equation~\eqref{homogeneousCT} is a basic $n$-cluster-tilting $T_{n\ell}(\L)$-module.
Moreover, noting that 
$$((\nu_{T_{n\ell}(\L)})_*\Omega^{n+1})^i(\L) \simeq F_*\!\left(\nu_n^i(\L)\right)\simeq
F_*\!\left(\nu_1^i(\L_1)\otimes\cdots\otimes\nu_1^i(\L_n)\right)$$
for the covering functor $F:\widehat{\L}\to T_{n\ell}(\L)$, we get an alternative presentation of
the module $V$:
\begin{equation} \label{homogeneousCTtp}
  V  
  \simeq
 T_{n\ell}(\L)\oplus
  \bigoplus_{i=0}^{\ell(rn-n+r)-1}F_*\!\left(\nu_1^i(\L_1)\otimes\cdots\otimes\nu_1^i(\L_n)\right) \,.
\end{equation}

\begin{ex} \label{tpexample}
Consider the quiver $\mathbb{A}_3$ with arrows pointing towards the middle point.
The path algebra $k\mathbb{A}_3$ is a $2$-homogeneous $1$-re\-pre\-sen\-ta\-tion-fin\-ite algebra of
global dimension $1$, so the algebra $\L=(k\mathbb{A}_3)^{\otimes2}$ is $2$-homogeneous and
$2$-re\-pre\-sen\-ta\-tion-fin\-ite of global dimension $2$. 

By Corollary~\ref{trivrf}, the trivial extension algebra $T(\L)$ of
$\L$ is $3$-rep\-re\-sen\-ta\-tion-finite.

In addition, Proposition~\ref{homogeneoustrivext} implies that the $2$-fold trivial
extension algebra $T_2(\L)$ of $\L$ is $2$-re\-pre\-sen\-ta\-tion-fin\-ite, and
\begin{align*}
  V &= T_2(\L) \oplus \bigoplus_{i=0}^3((\nu_{T_2(\L)})_*\Omega^3)^i(\L) 
  \stackrel{\eqref{homogeneousCTtp}}{\simeq} 
  T_2(\L) \oplus   \bigoplus_{i=0}^3F_*\!\left(\nu_1^i(k\mathbb{A}_3)^{\otimes2}\right) \\
  &\simeq T_2(\L) \oplus F_*\!\left(k\mathbb{A}_3^{\otimes2}\right) \oplus
  F_*\!\left(D(k\mathbb{A}_3)^{\otimes2}\right) \oplus 
  F_*\!\left(k\mathbb{A}_3[-1]^{\otimes2}\right) \oplus 
  F_*\!\left(D(k\mathbb{A}_3)[-1]^{\otimes2}\right) \\
  &\simeq T_2(\L)\oplus\L\oplus D\L\oplus\Omega^2\L\oplus\Omega^2(D\L)
\end{align*}
is a basic $2$-cluster-tilting $T_2(\L)$-module. 

\begin{figure}[h]
  \begin{minipage}{3.5cm}
    \[
    \xymatrix{
      \bullet\ar[r]\ar[d]\ar@{.}[dr] &\bullet\ar[d]&\bullet\ar[d]\ar[l] \\
      \bullet\ar[r] & \bullet\ar@{.}[ur]\ar@{.}[dr] & \bullet\ar[l] \\
      \bullet\ar[u]\ar[r]\ar@{.}[ur] & \bullet\ar[u] & \bullet\ar[u]\ar[l]
    }
    \]
\end{minipage}
\begin{minipage}{3.5cm}
    \[
    \xymatrix{
      \bullet\ar[r]\ar[d] &\bullet\ar[d]&\bullet\ar[d]\ar[l] \\
      \bullet\ar[r] & \bullet\ar[ul]\ar[ur]\ar[dl]\ar[dr]\ar@{.}[dr] & \bullet\ar[l] \\
      \bullet\ar[u]\ar[r]\ar@{.}[ur] & \bullet\ar[u] & \bullet\ar[u]\ar[l]
    }
    \]
\end{minipage}
\begin{minipage}{7.5cm}
  \[
  \xymatrix{
    \bullet\ar[r]\ar[d]\forget{\ar@{.}[dr]} &\bullet\ar[d]&\bullet\ar[d]\ar[l] &
    \bullet\ar[r]\ar[d]\forget{\ar@{.}[dr]} &\bullet\ar[d]&\bullet\ar[d]\ar[l] \\
    \bullet\ar[r] & \bullet\forget{\ar@{.}[ur]\ar@{.}[dr]}\ar[urr]\ar[drr]\ar@/^2pc/[urrrr]\ar@/_2pc/[drrrr] & \bullet\ar[l] &
    \bullet\ar[r] & \bullet\forget{\ar@{.}[ur]\ar@{.}[dr]}\ar[ull]\ar[dll]\ar@/_2pc/[ullll]\ar@/^2pc/[dllll] & \bullet\ar[l] \\
    \bullet\ar[u]\ar[r]\forget{\ar@{.}[ur]} & \bullet\ar[u] & \bullet\ar[u]\ar[l] &
    \bullet\ar[u]\ar[r]\forget{\ar@{.}[ur]} & \bullet\ar[u] & \bullet\ar[u]\ar[l]
  }
  \]
\end{minipage}
\caption{The algebra $\L=k\mathbb{A}_3\otimes k\mathbb{A}_3$, and the quivers of
    $T(\L)$ and $T_2(\L)$.}
\label{T(A32)}
\end{figure}
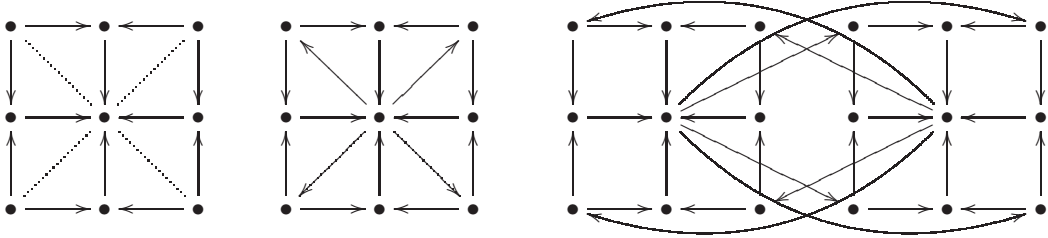
\end{ex}

\subsection{$3$-preprojective algebras}\label{section: higher preprojective}
Corollary~\ref{preproj} provides a rich source of $(d+1)$-repre\-sen\-ta\-tion-finite
self-injective algebras, constructed from $d$-re\-pre\-sen\-ta\-tion-fin\-ite
algebras of global dimension $d$. Many instances of ($1$-fold) higher preprojective
algebras have already been described in the literature, see, for example,
\cite{hi11b,io11,io13,jasso15a}.
In the case $d=2$, Keller \cite{keller11} has given a description of the
$3$-preprojective algebras in terms of quivers with potential (see \cite{DWZ}), which we
 shall recall below. 
This context is well suited for computing explicit examples.
 
Let $k$ be an algebraically closed field, and $\L$ a finite-dimensional $k$-algebra of
global dimension at most $2$. 
We denote by $\widehat{\Pi}=\widehat{\Pi}(\L)$ the complete $3$-preprojective algebra of
$\L$, that is, $\widehat{\Pi}=\prod_{i\ge0}\Pi_i$, where $\Pi=\bigoplus_{i\ge0}\Pi_i$ is the $3$-preprojective algebra of $\Lambda$.
Take a presentation $\L=\widehat{kQ}/\overline{\langle r_1,\ldots,r_\ell\rangle}$ of
$\L$ by a quiver $Q$ with a minimal set of relations $r_1,\ldots,r_\ell$, where
$\widehat{kQ}$ is the complete path algebra of $Q$, and 
$\overline{\<r_1,\ldots,r_\ell\>}$ the closure of the ideal $\<r_1,\ldots,r_\ell\>$
with respect to the $(\rad\widehat{kQ})$-adic topology.
Denote by $s(r_i)$ and $t(r_i)$ the initial and terminal vertices of a relation $r_i$,
respectively. A quiver with potential $(Q_\L,W_\L)$ is defined as follows:
\begin{eqnarray*}
Q_\L=Q\sqcup\{a_i:t(r_i)\to s(r_i)\mid 1\le i\le \ell\},\ \ \ W_\L=\sum_{i=1}^\ell a_ir_i.
\end{eqnarray*}
In this case, there is an isomorphism
\begin{equation}\label{Jacobi is preprojective}
\widehat{{\mathcal P}}(Q_\L,W_\L)\simeq\widehat{\Pi}
\end{equation}
of $k$-algebras \cite[Theorem 6.10]{keller11}, where $\widehat{{\mathcal P}}(Q_\L,W_\L)$
is the complete Jacobi algebra \cite{DWZ}. 
Note that, by Lemma~\ref{preproj-orbit}, the algebra $\L$ is $\nu_2$-finite if and
only if $\Pi$ (or, equivalently, $\widehat{\Pi}$) is finite dimensional. 
In this case, we have $\Pi=\widehat{\Pi}\simeq\widehat{{\mathcal P}}(Q_\L,W_\L)$. 
Therefore $\L$ is $2$-re\-pre\-sen\-ta\-tion-fin\-ite if and only if 
$\widehat{\mathcal{P}}(Q_\L,W_\L)$ is a finite-dimensional self-injective algebra
\cite{hi11b}. 

In \cite{hi11b}, it was shown that a basic $k$-algebra $\L$ is $2$-re\-pre\-sen\-ta\-tion-fin\-ite
of global dimension $2$ if and only if it is isomorphic to 
$\widehat{\mathcal{P}}(Q,W)/\overline{\langle C\rangle}$ for a self-injective quiver with potential
$(Q,W)$ and a cut $C$.
A large number of self-injective quivers with potential and corresponding
$2$-re\-pre\-sen\-ta\-tion-fin\-ite algebras were also given.
In this situation, Corollary~\ref{preproj} tells us that the $m$-fold $3$-preprojective
algebras $\Pi^{(m)}$ of $\L$ are self-injective and $3$-re\-pre\-sen\-ta\-tion-fin\-ite for all
$m\ge1$.
Here, we shall only briefly mention one example.

\begin{ex}
Let $\L=(k\mathbb{A}_3)^{\otimes2}$. The ($1$-fold) $3$-preprojective algebra
$\Pi$ of $\L$ is isomorphic to the Jacobi algebra of the quiver with potential
$(Q_\L,W_\L)$ described in Figure~\ref{QP}.
\begin{figure}[h]
\[ Q_\L \;=\;
\vcentered{\xymatrix{
  \bullet\ar[r]\ar[d] &\bullet\ar[d]&\bullet\ar[d]\ar[l] \\
  \bullet\ar[r] & \bullet\ar[ur]\ar[dr]\ar[ul]\ar[dl] & \bullet\ar[l] \\
  \bullet\ar[u]\ar[r]\ar@{.}[ur] & \bullet\ar[u] & \bullet\ar[u]\ar[l]
}}
\:,
\qquad
W_\L=\sum\smatr{\rm clockwise \\ \rm triangles} - 
\sum\smatr{\rm counterclockwise \\ \rm triangles}
\] 
\caption{Quiver with potential $(Q_\L,W_\L)$ associated to $\L=(k\mathbb{A}_3)^{\otimes2}$.}
\label{QP}
\end{figure}
The Nakayama automorphism of $\Pi$ has order two, and thus this algebra is not
symmetric. In particular, it is not isomorphic to the trivial extension algebra
$T(\L)$ which, by Corollary~\ref{trivrf}, is also $3$-re\-pre\-sen\-ta\-tion-fin\-ite.
\end{ex}

\subsection{A family of examples of wild representation type}

Let $m\ge2$ and $\L_m=k\mathbb{A}_m^{\otimes2}$, where $\mathbb{A}_m$ is a quiver of
Dynkin type $A_m$ of arbitrary orientation. As the hereditary algebra $k\mathbb{A}_m$ is
fractionally $\frac{m-1}{m+1}$-Calabi--Yau, it follows that $\L_m$ is
$\frac{2(m-1)}{m+1}$-Calabi--Yau. Applying Corollary~\ref{fracCY} we get that the algebra
$T_n(\L_m)$, where $\displaystyle n=\ell\frac{(d+1)(m+1)-(3m-1)}{\gcd(d+1,3m-1)}$ with
$\ell\ge1$, is $d$-re\-pre\-sen\-ta\-tion-fin\-ite for any $d\ge2$.
In particular, setting $d=2$ and $d=3m-2$, respectively, gives the following results.
\begin{prop} Let $m\ge2$, $\ell\ge1$ and $\L_m=k\mathbb{A}_3^{\otimes2}$ as above. 
  \begin{enumerate}
  \item The algebra $T_{4\ell}(\L_m)$ is $2$-re\-pre\-sen\-ta\-tion-fin\-ite.
  \item The algebra $T_{m\ell}(\L_m)$ is $(3m-2)$-re\-pre\-sen\-ta\-tion-fin\-ite.
  \end{enumerate}
\end{prop}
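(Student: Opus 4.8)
The plan is to specialize Corollary~\ref{fracCY} to the tensor-square of a Dynkin path algebra and then plug in the two values $d=2$ and $d=3m-2$.

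First I would establish the twisted Calabi--Yau property of $\L_m=k\mathbb{A}_m^{\otimes2}$. The hereditary algebra $k\mathbb{A}_m$ is fractionally $\frac{m-1}{m+1}$-Calabi--Yau, that is, $\nu^{m+1}\simeq[m-1]$ on $\dm{k\mathbb{A}_m}$; this is classical for Dynkin quivers. Taking the tensor square doubles both numerator and denominator of the shift (since the Serre functor of a tensor product is the tensor product of the Serre functors, and the shift adds), so $\nu^{m+1}\simeq[2(m-1)]$ on $\dm{\L_m}$, making $\L_m$ twisted $\frac{2(m-1)}{m+1}$-Calabi--Yau with $(a,b)=(m+1,2(m-1))=(m+1,2m-2)$. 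Note $a+b=3m-1$ and $\gldim\L_m=2$.

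Next I would invoke Corollary~\ref{fracCY} directly. With $d\ge2=\gldim\L_m$, $g=\gcd(d+1,a+b)=\gcd(d+1,3m-1)$, and $n=\ell(ad-b)/g=\ell\bigl((m+1)d-(2m-2)\bigr)/g$. A short rearrangement gives $(m+1)d-(2m-2)=(d+1)(m+1)-(3m-1)$, which matches the exponent displayed just before the proposition, so $T_n(\L_m)$ is $d$-representation-finite for every $\ell\ge1$ and $d\ge2$. The two assertions then follow by substitution: for $d=2$ we compute $g=\gcd(3,3m-1)$, and since $3m-1\equiv -1\equiv 2\pmod 3$ we get $g=1$, hence $n=\ell\bigl(2(m+1)-(2m-2)\bigr)=4\ell$, giving part (a). For $d=3m-2$ we have $d+1=3m-1=a+b$, so $g=3m-1$ and $n=\ell\bigl((3m-1)(m+1)-(3m-1)\bigr)/(3m-1)=\ell\bigl((m+1)-1\bigr)=m\ell$, giving part (b).

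The only real point requiring care is the tensor-product computation of the fractional Calabi--Yau dimension — confirming that $\nu_{\L_m}\simeq\nu_{k\mathbb{A}_m}\otimes\nu_{k\mathbb{A}_m}$ on the derived level and that the shifts add correctly, so that the exponent pair $(a,b)$ is genuinely $(m+1,2m-2)$ rather than something scaled differently. This is where the statement in the excerpt (``$\L_m$ is $\frac{2(m-1)}{m+1}$-Calabi--Yau'') must be justified, and it is essentially the same compatibility used in Proposition~\ref{homogeneoustp}. Once $(a,b)$ is pinned down, everything else is the elementary gcd arithmetic above, so I do not anticipate any genuine obstacle beyond bookkeeping; the main thing to verify is that the two chosen values of $d$ do indeed produce $g=1$ and $g=3m-1$ respectively, which the congruence $3m-1\equiv2\pmod3$ and the identity $d+1=3m-1$ make immediate.
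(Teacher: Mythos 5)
Your proposal is correct and follows essentially the same route as the paper: the paper likewise asserts that $\L_m=k\mathbb{A}_m^{\otimes2}$ is $\frac{2(m-1)}{m+1}$-Calabi--Yau because $k\mathbb{A}_m$ is fractionally $\frac{m-1}{m+1}$-Calabi--Yau, applies Corollary~\ref{fracCY} to get that $T_n(\L_m)$ is $d$-representation-finite for $n=\ell\bigl((d+1)(m+1)-(3m-1)\bigr)/\gcd(d+1,3m-1)$, and then specializes to $d=2$ and $d=3m-2$. Your gcd computations ($g=1$ giving $n=4\ell$, and $g=3m-1$ giving $n=m\ell$) correctly fill in the arithmetic the paper leaves implicit.
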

Observe that the algebra $\L_m$ is of wild representation type whenever $m\ge4$ 
(see \cite[Theorem~2.5]{leszczynski94}), and hence
that the same holds for $T_n(\L_m)$ for all $n\ge1$.
See Figure~\ref{wild example} for an illustration of the case of $\L_4$ with linear
orientation. 
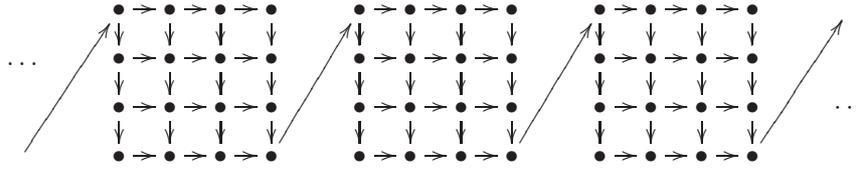
\begin{figure}[h]
    \[
    \xymatrix@R=.8em@C=.8em{
      &&
      \bullet\ar[r]\ar[d] & \bullet\ar[d]\ar[r] & \bullet\ar[d]\ar[r] & \bullet\ar[d] &&
      \bullet\ar[r]\ar[d] & \bullet\ar[d]\ar[r] & \bullet\ar[d]\ar[r] & \bullet\ar[d] &&
      \bullet\ar[r]\ar[d] & \bullet\ar[d]\ar[r] & \bullet\ar[d]\ar[r] & \bullet\ar[d] &&\\ 
      \ldots      &&
      \bullet\ar[r]\ar[d] & \bullet\ar[r]\ar[d] & \bullet\ar[r]\ar[d] & \bullet\ar[d] && 
      \bullet\ar[r]\ar[d] & \bullet\ar[r]\ar[d] & \bullet\ar[r]\ar[d] & \bullet\ar[d] && 
      \bullet\ar[r]\ar[d] & \bullet\ar[r]\ar[d] & \bullet\ar[r]\ar[d] & \bullet\ar[d]  \\
      &&
      \bullet\ar[r]\ar[d] & \bullet\ar[r]\ar[d] & \bullet\ar[r]\ar[d] & \bullet\ar[d] && 
      \bullet\ar[r]\ar[d] & \bullet\ar[r]\ar[d] & \bullet\ar[r]\ar[d] & \bullet\ar[d] && 
      \bullet\ar[r]\ar[d] & \bullet\ar[r]\ar[d] & \bullet\ar[r]\ar[d] & \bullet\ar[d] &&\cdots \\
      \ar[uuurr]      &&
      \bullet\ar[r] & \bullet\ar[r] & \bullet\ar[r] & \bullet\ar[uuurr] &&
      \bullet\ar[r] & \bullet\ar[r] & \bullet\ar[r] & \bullet\ar[uuurr] &&
      \bullet\ar[r] & \bullet\ar[r] & \bullet\ar[r] & \bullet\ar[uuurr]  \\
    }
    \]
  \caption{The quiver of the repetitive category $\widehat{\L}_4$ in the case of linear
    orientation.}
    \label{wild example}
\end{figure}

\section{$d$-re\-pre\-sen\-ta\-tion-fin\-ite self-injective Nakayama algebras} \label{section: nakayama}

In this section, let $\Gamma$ be a self-injective Nakayama algebra over an arbitrary field $k$.
Our aim is to prove the following theorem, which gives a necessary and sufficient
condition for $\Gamma$ to be $d$-rep\-re\-sen\-ta\-tion-fi\-ni\-te.
\begin{thm} \label{nakayamaalg}
Let $\Gamma$ be a ring-indecomposable self-injective Nakayama $k$-algebra with $n$
isomorphism classes of simple modules and Loewy length $\ell\ge2$. Then $\Gamma$ is
$d$-re\-pre\-sen\-ta\-tion-fin\-ite if and only if at least one of the following two
conditions is satisfied:
\begin{enumerate}
\item $(\ell(d-1)+2)\mid 2n$\,;
\item $(\ell(d-1)+2)\mid tn, \quad\mbox{where}\quad t=\gcd(d+1,2(\ell-1))\,.$
\end{enumerate}
\end{thm}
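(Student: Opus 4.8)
The plan is to realize $\L$ as an orbit category to which the covering machinery of Section~\ref{galois coverings} applies, and then to translate $d$-representation-finiteness into a purely combinatorial question inside the derived category of a linearly oriented type-$A$ algebra.

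First I would reduce to the case where $k$ is algebraically closed: for a self-injective Nakayama algebra the indecomposables, together with their $\Hom$- and $\Ext$-spaces, are described combinatorially and are unchanged under field extension, so the existence of a $d$-cluster-tilting module is field-independent. Next, writing $\L\simeq k\mathbb{A}_\infty^\infty/I^\ell$ modulo the shift $\psi^n$ (where $\mathbb{A}_\infty^\infty$ is the doubly infinite linearly oriented quiver and $n$ is the number of simples), and using the identification $k\mathbb{A}_\infty^\infty/I^\ell\simeq\widehat{\Gamma}$ with $\Gamma=k\mathbb{A}_{\ell-1}$ (the computation carried out in Section~\ref{sec:toypreproj}), I obtain $\L\simeq\widehat{\Gamma}/\phi$ for an admissible automorphism $\phi$ of $\widehat{\Gamma}$ whose induced action on $\stmod\widehat{\Gamma}\simeq\dm{\Gamma}$ is $\phi_*\simeq\tau^n$, the $n$-th power of the Auslander--Reiten translation $\tau=\nu\circ[-1]$. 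By Theorem~\ref{basic construction} together with Corollary~\ref{bijection}, $\L$ is then $d$-representation-finite if and only if $\dm{\Gamma}$ admits a locally bounded $\tau^n$-equivariant $d$-cluster-tilting subcategory.

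The second step is to understand the relevant autoequivalences of $\dm{\Gamma}$. Since $\Gamma=k\mathbb{A}_{\ell-1}$ is fractionally $\frac{\ell-2}{\ell}$-Calabi--Yau, one has $\nu^\ell\simeq[\ell-2]$, hence the key relation $\tau^\ell\simeq[-2]$. Writing $h=\ell(d-1)+2$ and $\nu_d=\nu\circ[-d]=\tau\circ[1-d]$, this gives $\nu_d^\ell\simeq[-h]$, so any $\nu_d$-invariant subcategory is invariant under the shift $[h]$ and its shift-period divides $h$. I would then analyse the group $\Phi=\langle\tau,[1]\rangle$ of autoequivalences: the relation $\tau^\ell\simeq[-2]$ shows $\Phi\cong\Z$ when $\ell$ is odd and $\Phi\cong\Z\times(\Z/2\Z)$ when $\ell$ is even, the torsion in the latter case being the involution $\theta=\tau^{\ell/2}\circ[1]$, which acts as the order-two symmetry of the diagram $A_{\ell-1}$. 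Expressing $\nu_d$ inside $\Phi$ (as $\gamma^h$ for a generator $\gamma$ when $\ell$ is odd, and as $\tau^{h/2}\theta^{d+1}$ when $\ell$ is even) is what separates the two divisibility conditions. Condition (a) then arises from the orbital subcategory $\U_d(\Gamma)=\add\{\nu_d^i(\Gamma)\}$ of Proposition~\ref{U_n}: since $\tau^n\simeq\nu_d^n\circ[n(d-1)]$, the subcategory $\U_d(\Gamma)$ is $\tau^n$-equivariant precisely when $[n(d-1)]$ preserves it, and the shift-period computation turns this into $h\mid 2n$. Condition (b), with $t=\gcd(d+1,2(\ell-1))$, I expect to come from a second, ``$\theta$-twisted'' family of $d$-cluster-tilting subcategories, visible for the opposite parity of $\ell$ or $d$: there the flip $\theta$ permutes the projectives $P_1,\dots,P_{\ell-1}$ by $v\mapsto\ell-v$, and counting the $\tau^n$-orbits of the indecomposables in such a subcategory produces the factor $t$. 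Exhibiting these equivariant subcategories gives the ``if'' direction.

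The main obstacle is the converse: showing that when \emph{neither} (a) nor (b) holds there is \emph{no} $\tau^n$-equivariant $d$-cluster-tilting subcategory at all. This requires controlling \emph{all} $d$-cluster-tilting subcategories of $\dm{k\mathbb{A}_{\ell-1}}$, not only the orbital ones, and for this I would work in the explicit combinatorial model of the Auslander--Reiten quiver $\Z A_{\ell-1}$, in which $\tau$ is a translation and the suspension $[1]$ is a glide reflection (consistent with $\tau^\ell\simeq[-2]$). Using $d$-rigidity together with the maximality built into the $d$-cluster-tilting condition, and the $\tau^n$-equivariance, one should be able to show that every such subcategory is, up to shift, either $\U_d(\Gamma)$ or one of its $\theta$-twists, whereupon the equivariance analysis above forces (a) or (b). The case division according to the parities of $\ell$ and $d$, which governs whether $\nu_d$ lands in the torsion-free part of $\Phi$, is the delicate combinatorial heart of the argument.
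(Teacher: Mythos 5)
Your first step is exactly the paper's reduction (Proposition~\ref{nakayamaprop}): realize $\L\simeq\widehat{\Gamma}/\psi^n$ with $\Gamma=k\mathbb{A}_{\ell-1}$ and $\psi_*\simeq\tau$ on $\stmod\widehat{\Gamma}\simeq\dm{\Gamma}$, then invoke Theorem~\ref{basic construction} to translate $d$-representation-finiteness of $\L$ into the existence of a $\tau^n$-equivariant $d$-cluster-tilting subcategory of $\dm{\Gamma}$ (your explicit reduction to algebraically closed $k$ is a reasonable precaution here, since the converse in Theorem~\ref{basic construction}(a) requires it). But the combinatorial heart of your argument contains a genuine gap: the classification claim that every ($\tau^n$-equivariant) $d$-cluster-tilting subcategory of $\dm{k\mathbb{A}_{\ell-1}}$ is, up to shift, $\U_d(\Gamma)$ or a $\theta$-twist of it, is false. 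Since every $d$-cluster-tilting subcategory of $\dm{\Gamma}$ is $\nu_d$-stable, these subcategories biject with basic $d$-cluster-tilting objects of the $d$-cluster category $\C_d(\Gamma)=\dm{\Gamma}/\nu_d$, hence (Proposition~\ref{Ccatbijection}, due to Baur--Marsh) with $(d+1)$-angulations of the regular $N$-gon, $N=\ell(d-1)+2$; their number is a Fuss--Catalan number, far exceeding the at most $2N$ shift/twist images of the ``fan'' angulation corresponding to $\U_d(\Gamma)$. Already for $d=2$, $\ell=4$ the hexagon has $14$ triangulations while the fan orbit under rotation has only $6$; the triangulation with central triangle $\{0,2,4\}$ is $\rho^2$-invariant and is not a shift or $\theta$-twist of any fan. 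Your ``only if'' direction rests entirely on this false dichotomy, so it fails.

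Moreover the same defect infects your ``if'' direction: the subcategories that actually realize the two divisibility conditions are generally \emph{not} in the shift/twist orbit of $\U_d(\Gamma)$. In the paper, condition (b) is produced by Lemma~\ref{centrangulation}: a $\rho^{N/q}$-invariant angulation built around an invariant central $(d+1)$-gon (like the central triangle above), which exists precisely when $q\mid\gcd(\ell-1,d+1)$, the factor $t=\gcd(d+1,2(\ell-1))$ then emerging from the gcd computation of Lemma~\ref{lcm}; condition (a) is produced by Lemma~\ref{disangulation} from an angulation containing a $(d-1)$-diagonal through the centre, which exists iff $\ell$ is even. Neither family is a ``$\theta$-twisted fan''. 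The paper's converse likewise avoids any classification: given an invariant angulation, it looks at the centre of $P_N$, which lies either on a diagonal --- forcing $\frac{N}{2}\mid n(d-1)$, whence $\ell$ even and coprimality of $d-1$ with $\frac{N}{2}$ give (a) --- or in the interior of a unique $(d+1)$-cell, which the rotation must then preserve, and Lemmas~\ref{centrangulation} and~\ref{lcm} give (b). To repair your proposal you would need to replace the classification step by this kind of local invariant (or import the polygon model outright); your group-theoretic analysis of $\langle\tau,[1]\rangle$ and the relation $\nu_d^\ell\simeq[-N]$ are correct but do not by themselves control non-orbital equivariant subcategories.
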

In relation to Theorem~\ref{nakayamaalg}, we remark that the
  $d$-representation-finite Nakayama algebras of global dimension $d$ have recently been
  classified by Vaso \cite{vaso17}.

Our proof builds on the characterization of $d$-cluster-tilting objects in $d$-cluster
categories of type $A$ as $(d+1)$-angulations of regular polygons. Below,
we briefly recapitulate the necessary background, mostly from \cite{bm08}.

Let $H$ be a hereditary algebra. The $d$-\emph{cluster category} $\C_d(H)$ of $H$ (denoted
in \cite{bm08} by $\C^{d-1}_Q$, with $Q$ being the quiver $Q$ of $H$) of $H$ is defined as
the orbit category $\C_d(H)=\dm{H}/\nu_d$. The $d$-cluster category is triangulated
\cite{keller05}. It is easy to see that $d$-cluster-tilting subcategories of $\dm{H}$
correspond bijectively to basic $d$-cluster-tilting objects in $\C_d(H)$ (see
\cite[Proposition~2.2]{bmrrt06} for the case $d=2$).

Let $\ell$ and $d$ be positive integers, and set
\[N=(d-1)\ell+2=(d-1)(\ell-1)+(d+1).\]
Let $P_N$ be a regular $N$-gon with corners indexed by the numbers $0,\ldots,N-1$ in
the clockwise direction.
We denote by $[x,y]=[y,x]$ the edge between corners $x$ and $y$ of $P_N$.
A \emph{$(d-1)$-diagonal} of $P_N$ is a diagonal that dissects $P_N$ into a
$((d-1)\ell'+2)$-gon and a $((d-1)(\ell-\ell')+2)$-gon, for some $1< \ell'<\ell$.
An edge $[x,y]$ between corners $x$ and $y$ of $P_N$ is a $(d-1)$-diagonal if and only if
$$|y-x|>1\quad\mbox{and}\quad |y-x|-1\in(d-1)\Z.$$
A \emph{partial $(d+1)$-angulation} of $P_N$ is a set of non-crossing
  $(d-1)$-diagonals of $P_N$. The maximal (with respect to inclusion) elements of the set
  of partial $(d+1)$-angulations are the \emph{$(d+1)$-angulations} of $P_N$. 
A permutation $\rho$ of the set of partial $(d+1)$-angulations of $P_N$ is defined by
rotation one step in the anti-clockwise direction in $P_N$; i.e., 
$\rho([x,y]) = [x-1,y-1]$ (where the numbers are interpreted modulo $N$).

Let $K$ be an algebraically closed field, and $H$ the path algebra $K\mathbb{A}_{\ell-1}$
over $K$ of the quiver $\mathbb{A}_{\ell-1}$ of type $A_{\ell-1}$ with linear orientation.

\begin{prop}[{\cite[Proposition~5.5]{bm08}, \cite[Proposition 2.14]{murphy10}}] \label{Ccatbijection}
There is a bi\-jec\-tion between the set $\mathcal{X}$ of basic $d$-cluster-tilting objects of
$\C_d(H)$, and the set $\mathcal{Y}$ of $(d+1)$-angulations of $P_N$.
Under this bijection, the permutation of  $\mathcal{X}$ induced by the Auslander--Reiten
translation $\tau=\nu_1$ on $\C_d(H)$ corresponds to the permutation $\rho^{d-1}$ of
$\mathcal{Y}$. 
\end{prop}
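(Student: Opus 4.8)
The plan is to prove the statement at the level of indecomposable objects first, and then upgrade it to cluster-tilting objects and to equivariance for the Auslander--Reiten translation. Since $H=k\mathbb{A}_{\ell-1}$ is hereditary of Dynkin type $A_{\ell-1}$, the category $\dm{H}$ is well understood: its Auslander--Reiten quiver is $\Z\mathbb{A}_{\ell-1}$, and every indecomposable object is a shift $M[i]$ of an indecomposable $H$-module, the latter being the interval modules of $\mathbb{A}_{\ell-1}$. First I would record a bijection between $\ind\dm{H}$ and the $(d-1)$-diagonals of the infinite linear cover of $P_N$, compatible with the rotation action, under which the autoequivalence $\nu_d=\nu\circ[-d]$ corresponds to the deck transformation of the covering. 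Passing to the orbit category $\C_d(H)=\dm{H}/\nu_d$ then turns this into a bijection between $\ind\C_d(H)$ and the $(d-1)$-diagonals of $P_N$ itself; a direct count, using $N-2=(d-1)\ell$, confirms that the two finite sets have the same cardinality.

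With the object-level dictionary in place, the core of the argument is to match homological with combinatorial compatibility. For indecomposables $X,Y\in\C_d(H)$ with associated diagonals $\delta_X,\delta_Y$, I would compute the graded morphism spaces in the orbit category via $\Hom_{\C_d(H)}(X,Y[i])\simeq\bigoplus_{j\in\Z}\Hom_{\dm{H}}(X,\nu_d^{\,j}Y[i])$ and read off, from the mesh structure of $\Z\mathbb{A}_{\ell-1}$, exactly when these groups vanish. The key claim to verify is
\[
\Ext_{\C_d(H)}^i(X,Y)=0 \text{ for all } 1\le i\le d-1 \iff \delta_X \text{ and } \delta_Y \text{ do not cross.}
\]
Granting this, a basic $d$-rigid object of $\C_d(H)$ corresponds precisely to a set of pairwise non-crossing $(d-1)$-diagonals, that is, to a partial $(d+1)$-angulation of $P_N$. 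It then remains to see that such an object is $d$-cluster-tilting if and only if the corresponding dissection is maximal, i.e.\ a genuine $(d+1)$-angulation; here the numerical coincidence that a $d$-cluster-tilting object of $\C_d(H)$ has $\ell-1$ indecomposable summands while a $(d+1)$-angulation of $P_N$ has exactly $\ell-1$ diagonals (both equal to $(N-2)/(d-1)$) makes the maximal-rigid versus cluster-tilting equivalence transparent.

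Finally, to identify the Auslander--Reiten translation with $\rho^{d-1}$, I would use that in the orbit category $\C_d(H)$ the relation $\nu_d\simeq\I$ holds by construction, so that $\tau=\nu_1=\nu\circ[-1]\simeq[d-1]$ as autoequivalences of $\C_d(H)$. Tracking the action of $\tau$ on $\Z\mathbb{A}_{\ell-1}$, a shift by one column, through the bijection of the first paragraph shows that it rotates $P_N$ by $d-1$ vertices, which is exactly the permutation $\rho^{d-1}$ of the set of $(d+1)$-angulations. Since the bijection $\mathcal{X}\leftrightarrow\mathcal{Y}$ is assembled from the object-level correspondence, this immediately yields the asserted equivariance.

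I expect the main obstacle to be the homological computation of the second paragraph: translating the vanishing of the higher groups $\Ext^i_{\C_d(H)}(X,Y)$ for $1\le i\le d-1$ into the non-crossing condition requires a careful analysis of which shifts $\nu_d^{\,j}Y[i]$ can receive a nonzero map from $X$, and this is where the defining constraint $|y-x|-1\in(d-1)\Z$ of a $(d-1)$-diagonal enters decisively. The second delicate point is the equivalence between maximal $d$-rigidity and $d$-cluster-tilting, so that maximal partial angulations, and only these, correspond to cluster-tilting objects; this can be obtained either from the general theory of higher cluster categories of type $A$, or by an inductive argument that cuts $P_N$ along a fixed $(d-1)$-diagonal and reduces to smaller polygons.
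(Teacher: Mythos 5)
This proposition is not proved in the paper at all: it is imported verbatim from \cite[Proposition~5.5]{bm08} and \cite[Proposition~2.14]{murphy10}, so your attempt can only be compared with those sources. Your outline is, in structure, precisely the Baur--Marsh argument: a dictionary between indecomposables of $\dm{H}$ and $(d-1)$-diagonals of the $\Z$-cover of $P_N$ under which $\nu_d$ becomes the deck transformation, descent to $\ind\C_d(H)\leftrightarrow\{(d-1)\mbox{-diagonals of }P_N\}$, the translation of $\Ext^i$-vanishing for $1\le i\le d-1$ into non-crossing, and the identification $\tau=\nu_1\simeq[d-1]$ in $\C_d(H)$ (valid, since $\nu_d\simeq\I$ forces $\nu\simeq[d]$) which tracks through the dictionary to $\rho^{d-1}$. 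Two remarks. First, a small arithmetic slip: $(N-2)/(d-1)=\ell$, whereas both the number of indecomposable summands of a basic $d$-cluster-tilting object of $\C_d(k\mathbb{A}_{\ell-1})$ and the number of diagonals in a $(d+1)$-angulation of $P_N$ equal $\ell-1=(N-d-1)/(d-1)$; the two counts do agree, so the slip is harmless, but the common value is not $(N-2)/(d-1)$. Second, the two points you flag as delicate are indeed where all the substance lies: the computation of $\Hom_{\C_d(H)}(X,Y[i])\simeq\bigoplus_{j}\Hom_{\dm{H}}(X,\nu_d^{\,j}Y[i])$ and its match with crossing is the technical core of \cite{bm08}, and the equivalence of maximal $d$-rigid with $d$-cluster-tilting in $d$-cluster categories is a theorem in its own right (due to Wr{\aa}lsen and Zhou--Zhu) rather than a formal consequence of the count --- although in type $A$ your counting route can be completed, since every maximal partial $(d+1)$-angulation of $P_N$ is a genuine $(d+1)$-angulation and all of these have exactly $\ell-1$ diagonals. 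As a blueprint, then, your proposal is correct and is essentially the proof of the cited literature, with those two steps left to be executed.
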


We are now ready to embark upon the proof of Theorem~\ref{nakayamaalg}.
A first characterization of $d$-re\-pre\-sen\-ta\-tion-fin\-iteness of the algebra $\Gamma$ follows
readily from Proposition~\ref{Ccatbijection} together with our previous results. 

\begin{prop} \label{nakayamaprop}
  The algebra $\Gamma$ is $d$-re\-pre\-sen\-ta\-tion-fin\-ite if and only if there exists a
  $(d+1)$-angulation $\mathscr{T}$ of $P_N$ satisfying $\rho^{n(d-1)}(\mathscr{T})=\mathscr{T}$. 
\end{prop}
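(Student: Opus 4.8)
The plan is to present $\L$ as an orbit algebra of the repetitive category of $H=k\mathbb{A}_{\ell-1}$, and then to transport the equivariance condition through the $d$-cluster category $\C_d(H)$ into the combinatorics of $(d+1)$-angulations. The first and most substantial step is the identification of $\L$ with such an orbit algebra. Concretely, I would show that a ring-indecomposable self-injective Nakayama $k$-algebra with $n$ simple modules and Loewy length $\ell\ge2$ is isomorphic to $\widehat{H}/\phi$, where $H=k\mathbb{A}_{\ell-1}$ is linearly oriented and $\phi$ is an admissible automorphism of $\widehat{H}$ whose induced autoequivalence of $\dm{H}\simeq\stmod\widehat{H}$ satisfies $\phi_*\simeq\nu_1^n$ (here $\nu_1=\tau$ is the Auslander--Reiten translation). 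The underlying facts are classical: the trivial extension $T(H)=\widehat{H}/\widehat{\nu}$ is the self-injective Nakayama algebra with $\ell-1$ simples and Loewy length $\ell$; the repetitive quiver of linearly oriented $A_{\ell-1}$ admits a glide automorphism $\phi_0$ realizing $\nu_1$ on $\dm{H}$ for which $\ind\widehat{H}$ is a single $\phi_0$-orbit; and $\widehat{\nu}=\phi_0^{-(\ell-1)}$, equivalently $\widehat{\nu}_*\simeq\nu_1^{1-\ell}$, which also follows from the fractional Calabi--Yau relation $\nu_1^\ell\simeq[-2]$ for $k\mathbb{A}_{\ell-1}$. Taking $\phi=\phi_0^n$ then yields exactly $n$ orbits on $\ind\widehat{H}$, so $\widehat{H}/\phi$ is Nakayama with $n$ simples, Loewy length $\ell$, and stable Auslander--Reiten quiver $\mathbb{Z}A_{\ell-1}/\langle\tau^n\rangle$; this confirms that no Dynkin reflection enters and that $\phi_*$ is the pure power $\nu_1^n$.

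With this presentation in hand, I would apply Theorem~\ref{basic construction}: the algebra $\L=\widehat{H}/\phi$ is $d$-representation-finite if and only if $\dm{H}$ contains a locally bounded $\phi$-equivariant $d$-cluster-tilting subcategory $\U$. The forward implication is the unconditional part of Theorem~\ref{basic construction}(a). For the converse, which in general requires $k$ algebraically closed, I would first reduce to that case by base field extension: $d$-representation-finiteness of a self-injective Nakayama algebra depends only on the numerical data $(n,\ell,d)$ and is therefore unaffected by extending the ground field, while the condition on angulations is manifestly field-independent.

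Next I would transport $\U$ to $\C_d(H)=\dm{H}/\nu_d$. As recalled before Proposition~\ref{Ccatbijection}, $d$-cluster-tilting subcategories of $\dm{H}$, which are automatically $\nu_d$-stable, correspond bijectively to basic $d$-cluster-tilting objects of $\C_d(H)$; under this correspondence the $\phi$-equivariance condition $\nu_1^n(\U)=\U$ becomes $\tau^n(\bar{\U})=\bar{\U}$ for the image $\bar{\U}$. Local boundedness of $\U$ is automatic here: a $(d+1)$-angulation of $P_N$ has only finitely many diagonals, so $\bar{\U}$ has finitely many indecomposable summands and $\U$ has finitely many $\nu_d$-orbits of indecomposables; since $\nu_d$ and $\phi=\tau^n$ commute and $\phi$ is admissible, Lemma~\ref{ab} (with $\psi=\nu_d$) gives finitely many $\phi$-orbits, whence $\U$ is locally bounded by the Remark following Theorem~\ref{basic construction}. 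Finally, Proposition~\ref{Ccatbijection} identifies basic $d$-cluster-tilting objects of $\C_d(H)$ with $(d+1)$-angulations $\mathscr{T}$ of $P_N$ and sends $\tau$ to $\rho^{d-1}$, hence $\tau^n$ to $\rho^{n(d-1)}$. Chaining the equivalences, $\L$ is $d$-representation-finite precisely when there is a $(d+1)$-angulation $\mathscr{T}$ with $\rho^{n(d-1)}(\mathscr{T})=\mathscr{T}$.

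The main obstacle is the realization in the first step: verifying that $\phi_*$ is the pure translation $\nu_1^n$ with no accompanying automorphism of the Dynkin diagram, so that it matches $\rho^{n(d-1)}$ rather than some twisted power, and that $\nu_1$ is induced by an honest automorphism of the category $\widehat{H}$ and not merely by an autoequivalence of $\dm{H}$, as the orbit construction requires. Once the presentation $\L\cong\widehat{H}/\phi_0^n$ is correctly pinned down, the remaining steps are a formal transport through the cluster category.
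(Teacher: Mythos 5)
Your proposal is correct and takes essentially the same route as the paper's proof: reduce to the standard form $k\tilde{\mathbb{A}}_n/I^\ell$, present it as $\widehat{H}/\psi^n$ for $H=k\mathbb{A}_{\ell-1}$ with the shift automorphism $\psi$ satisfying $\psi_*\simeq\nu_1$ on $\dm{H}\simeq\stmod\widehat{H}$, apply Theorem~\ref{basic construction}, and transport through the $d$-cluster category via Proposition~\ref{Ccatbijection}, where $\tau^n$ corresponds to $\rho^{n(d-1)}$. Your additional verifications -- local boundedness via Lemma~\ref{ab} with $\psi=\nu_d$, field-independence of $d$-representation-finiteness for Nakayama algebras to cover the algebraically-closed hypothesis in the converse of Theorem~\ref{basic construction}(a), and the absence of a diagram-automorphism twist in $\psi_*$ -- are all correct and merely make explicit points that the paper's proof leaves implicit.
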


\begin{proof} 
Let $K$ be an arbitrary field, and $\Gamma'=K\tilde{\mathbb{A}}_{n-1}/I^\ell$, where $\tilde{\mathbb{A}}_{n-1}$ is a cyclically oriented
quiver of extended Dynkin type $\widetilde{A}_{n-1}$, and $I$ is the ideal of
$K\tilde{\mathbb{A}}_{n-1}$ generated by all arrows.
Recall that the indecomposable modules of a Nakayama algebra are uniserial, and determined up to isomorphism by their tops and lengths, and that the almost split sequences are described by these data, independently of the ground field (see the proof of Theorem~V:2.1 in \cite{ars95}).
In particular, the Auslander--Reiten quiver of $\Gamma$ is isomorphic to that of $\Gamma'$, and we have a bijection between the isomorphism classes of indecomposable $\Gamma$-modules and those of $\Gamma'$. 
This bijection preserves the (non-)vanishing of $\Ext^i$, and therefore $\Gamma$ is
$d$-re\-pre\-sen\-ta\-tion-fin\-ite if and only if $\Gamma'$ is
$d$-re\-pre\-sen\-ta\-tion-fin\-ite.
Thus we can assume that $k$ is algebraically closed, and
  $\Gamma=k\tilde{\mathbb{A}}_{n-1}/I^\ell$, without loss of generality.

As before, we write $H=k\mathbb{A}_{\ell-1}$ where $\mathbb{A}_{\ell-1}$ is linearly
oriented. 
Then the repetitive category $\widehat H$ of $H$ is equivalent to 
$k\tilde{\mathbb{A}}_{\infty}^{\infty}/I^\ell$, where $\tilde{\mathbb{A}}_{\infty}^{\infty}$
is linearly oriented of type $A_{\infty}^{\infty}$
and $I$ is the ideal generated by all arrows in $\tilde{\mathbb{A}}_{\infty}^{\infty}$.
Denoting by $\psi$ the automorphism of $\widehat H$ given by a shift one step to the
  left in $\tilde{\mathbb{A}}_{\infty}^{\infty}$, we get $\Gamma\simeq \widehat H/\psi^n$.

By Theorem~\ref{basic construction}, the algebra $\Gamma$ is $d$-re\-pre\-sen\-ta\-tion-fin\-ite if and
only if $\dm{H}$ has a $\psi^n$-equivariant $d$-cluster-tilting subcategory. 
In $\stmod\widehat{H}\simeq\dm{H}$, we have $\psi_*\simeq \nu_1$ -- the Auslander--Reiten translation on $\dm{H}$ --
and this functor descends to $\nu_1$ on the $d$-cluster category $\C_d(H)$. 
Since $d$-cluster-tilting subcategories in $\dm{H}$ correspond bijectively to basic
$d$-cluster-tilting objects in $\C_d(H)$, Proposition~\ref{Ccatbijection} now implies
that $\dm{H}$ has a $\psi^n$-equivariant $d$-cluster-tilting subcategory if and only if
there exists a $(d+1)$-angulation of $P_N$ that is invariant under
$\left(\rho^{d-1}\right)^n=\rho^{n(d-1)}$. 
\end{proof}

In view of Proposition~\ref{nakayamaprop}, Theorem~\ref{nakayamaalg} follows directly from
the following result.

\begin{prop} \label{combinatoricprop}
  There exists a $(d+1)$-angulation $\mathscr{T}$ of $P_N$ such that
  $\rho^{n(d-1)}(\mathscr{T})=\mathscr{T}$, if and only if at least one of the following
  conditions is satisfied: 
\begin{enumerate}
\item $N\mid 2n$\,;
\item $N\mid tn, \quad\mbox{where}\quad t=\gcd(d+1,2(\ell-1))\,.$
\end{enumerate}
\end{prop}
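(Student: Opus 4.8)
The plan is to translate the statement into the combinatorics of rotationally symmetric $(d+1)$-angulations of $P_N$ and to classify these by the location of the centre $O$ of the polygon. Writing $r=n(d-1)$ and letting $m=N/\gcd(N,n(d-1))$ be the order of the rotation $\rho^{r}$, I would first note that a $(d+1)$-angulation $\mathscr{T}$ satisfies $\rho^{r}(\mathscr{T})=\mathscr{T}$ exactly when it is invariant under the cyclic group $\langle\rho^{r}\rangle$ of order $m$, which acts on the corners of $P_N$ by translation by $N/m$. Since this group fixes $O$, the cell or $(d-1)$-diagonal of $\mathscr{T}$ containing $O$ must itself be invariant, and there are precisely two cases: either $O$ lies in the interior of a cell, or it lies on a diagonal, the latter forcing that diagonal to be a diameter and hence $N$ even and $m\mid 2$.

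For the geometric heart of the argument I would prove that a $\rho^{r}$-invariant $(d+1)$-angulation exists if and only if either \emph{(C1)} $m\mid(d+1)$ and $m\mid(\ell-1)$, or \emph{(C2)} $\ell$ is even and $m\mid2$. In the central-cell case the invariant cell is a $(d+1)$-gon whose vertex set $S\subset\mathbb{Z}/N$ is a union of cosets of $\langle N/m\rangle$ with all gaps $\equiv1\pmod{d-1}$; counting cosets yields $m\mid(d+1)$, and since the $p=(d+1)/m$ gaps in one period sum to $N/m$ with each $\equiv1\pmod{d-1}$, the relation $N/m-p=(d-1)(\ell-1)/m$ forces $m\mid(\ell-1)$, a condition that is conversely sufficient to realise such a cell (one then angulates one representative of each orbit of outer regions and propagates by the group). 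In the diameter case, $[x,x+N/2]$ is a valid $(d-1)$-diagonal iff $N/2-1=(d-1)\ell/2\equiv0\pmod{d-1}$, i.e. iff $\ell$ is even, and a centrally symmetric angulation is obtained by angulating one half and copying it across. Checking that these two configurations exhaust all invariant angulations gives the necessity direction.

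It then remains to match ``(C1) or (C2)'' with ``(a) $N\mid2n$ or (b) $N\mid tn$''. Here I would introduce $m_0=N/\gcd(N,n)$, the order of $\rho^{n}$, and record the elementary equivalences $N\mid2n\Leftrightarrow m_0\mid2$ and, using $t=\gcd(N,d+1)$ together with $m_0\mid N$, $N\mid tn\Leftrightarrow m_0\mid(d+1)$. With $m=m_0/\gcd(m_0,d-1)$ and the identities $\gcd(N,d-1)=\gcd(2,d-1)$ and $N-(d+1)=(d-1)(\ell-1)$ (both immediate from $N=(d-1)\ell+2$), I would establish four implications: (b) $\Rightarrow$ (C1) and (C1) $\Rightarrow$ (b), via $m_0\mid(d+1)\Leftrightarrow m_0\mid(d-1)(\ell-1)$ and the coprimality $\gcd\!\bigl(m,(d-1)/\gcd(m_0,d-1)\bigr)=1$; and (a) $\Rightarrow$ (C1) or (C2), together with (C2) $\Rightarrow$ (a). These combine to give $[\text{(C1) or (C2)}]\Leftrightarrow[\text{(a) or (b)}]$, which is the assertion.

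The main obstacle I anticipate is this final number-theoretic bridge rather than the geometry. The rotation actually in play is $\rho^{n(d-1)}$, so the geometry produces conditions on $m$, whereas the clean hypotheses are phrased through $m_0$, and the factor $d-1$ interacts delicately with parity. In particular the implication (C2) $\Rightarrow$ (a) is not a formal divisibility: one first shows $m_0\mid4$, and must then exclude $m_0=4$ by observing that $m_0=4$ forces $\gcd(4,d-1)=2$, hence $d-1\equiv2\pmod4$, whereupon $4\mid N$ forces $\ell$ odd, contradicting (C2). Correctly isolating the central-cell feasibility condition $m\mid(\ell-1)$ and threading it through these parity distinctions is where the real care is required.
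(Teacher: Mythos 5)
Your proposal is correct and follows essentially the same route as the paper: the dichotomy on the position of the centre, the characterization of invariant central $(d+1)$-angles by $m\mid\gcd(d+1,\ell-1)$ via coset-counting and gap congruences (the paper's Lemma~\ref{centrangulation}, with your $m$ equal to the paper's $q$), and the diameter case characterized by $\ell$ even (Lemma~\ref{disangulation}) all match the paper's proof. Your arithmetic bridge differs only in packaging: you work with the orders $m_0$ of $\rho^n$ and $m$ of $\rho^{n(d-1)}$ together with the (correct) identity $t=\gcd(N,d+1)$, and exclude $m_0=4$ by a parity argument, whereas the paper proves $\gcd\bigl(N,(d-1)\gcd(d+1,\ell-1)\bigr)=t$ (Lemma~\ref{lcm}) and, in the diameter case, concludes $N\mid 2n$ directly from $\gcd(d-1,N/2)=1$ when $\ell$ is even -- equivalent computations in substance.
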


The remainder of this section is devoted to the proof of
Proposition~\ref{combinatoricprop}.
To abbreviate notation, we write $a\wedge b$ for $\gcd(a,b)$ and $a\vee b$ for
$\lcm(a,b)$, where $a,b\in\Z$.

\begin{lma} \label{centrangulation}
Let $q$ be a positive integer dividing $N$. 
The following statements are equivalent:
\begin{enumerate}
\item There exists a $(d+1)$-angulation $\mathscr{T}$ of $P_N$, containing a $(d+1)$-angle $G$,
  such that $\rho^{N/q}(\mathscr{T})=\mathscr{T}$ and $\rho^{N/q}(G)=G$;
  \label{tri}
\item $q\mid(\ell-1)\wedge(d+1)$.
  \label{r}
\end{enumerate}
\end{lma}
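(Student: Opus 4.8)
The plan is to prove the two implications separately. Throughout, write $\sigma=\rho^{N/q}$ for the rotation of $P_N$ of order $q$; this is well defined since $q\mid N$, and since a nontrivial rotation about the centre of $P_N$ fixes no corner, every nontrivial power of $\sigma$ acts freely on the corners of $P_N$.

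For \eqref{tri}$\Rightarrow$\eqref{r} I would argue by counting arc lengths. List the corners of the $(d+1)$-angle $G$ as $c_0,\dots,c_d$ in clockwise order, and let $a_i$ be the number of boundary edges of $P_N$ on the arc from $c_i$ to $c_{i+1}$ (indices mod $d+1$), so that $\sum_{i=0}^d a_i=N$. Every side $[c_i,c_{i+1}]$ of $G$ is a boundary edge or a $(d-1)$-diagonal, whence $a_i\equiv 1\pmod{d-1}$ in all cases. As $\sigma(G)=G$ and $\sigma$ preserves lengths and orientation, $\sigma$ permutes the $d+1$ arcs freely, with equal lengths within each $\langle\sigma\rangle$-orbit; freeness forces each orbit to have size $q$, so $q\mid(d+1)$. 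Writing $p=(d+1)/q$ and letting $b_1,\dots,b_p$ be the representative arc lengths, say $b_j=(d-1)m_j+1$, I obtain
\[
N=q\sum_{j=1}^p b_j=q(d-1)\sum_{j=1}^p m_j+qp=q(d-1)M+(d+1),\qquad M:=\sum_{j=1}^p m_j.
\]
Comparing with $N=(d-1)(\ell-1)+(d+1)$ gives $(d-1)(\ell-1)=q(d-1)M$, hence $\ell-1=qM$ and $q\mid(\ell-1)$; together this is precisely $q\mid(\ell-1)\wedge(d+1)$. I expect this congruence --- extracting $q\mid(\ell-1)$ from the single relation $\sum a_i=N$ --- to be the conceptual core of the lemma.

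For \eqref{r}$\Rightarrow$\eqref{tri} I would construct $\mathscr T$ and $G$ by hand. Set $p=(d+1)/q\ge1$ and $M=(\ell-1)/q\ge1$, where positivity of $M$ follows from $\ell\ge2$ together with $q\mid(\ell-1)$. Prescribe the arc-length sequence of $G$ as $q$ repetitions of the block $((d-1)M+1,\,1,\dots,1)$ of length $p$; since this block sums to $(d-1)M+p=N/q$, the resulting $(d+1)$-gon $G$ is $\sigma$-invariant and uses exactly $q$ genuine $(d-1)$-diagonals (the long arcs), constituting a single free $\langle\sigma\rangle$-orbit. Each such diagonal cuts off a $((d-1)M+2)$-gon, which is itself $(d+1)$-angulable by the same fact that makes $P_N$ $(d+1)$-angulable.

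It then remains to complete $G$ to a full angulation $\sigma$-equivariantly: I would fix an arbitrary $(d+1)$-angulation of one cut-off region and transport it by the powers of $\sigma$ to the remaining $q-1$ regions of its orbit. This is consistent and unambiguous because no cut-off region contains the centre of $P_N$, so each has trivial $\langle\sigma\rangle$-stabiliser. Adjoining these transported diagonals to the diagonals of $G$ produces a $\sigma$-invariant $(d+1)$-angulation $\mathscr T$ in which $G$ is left uncut, hence is a genuine $(d+1)$-angle with $\sigma(G)=G$. This equivariant completion is the only delicate point of the construction, but it is routine once the trivial-stabiliser observation is made, so the main obstacle really lies in the necessity computation above.
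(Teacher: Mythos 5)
Your proof is correct and essentially coincides with the paper's: in the forward direction the paper also first extracts $q\mid(d+1)$ by counting the corners of $G$ in a fundamental domain for $\rho^{N/q}$ (your free-orbit count on arcs is the same computation) and then obtains $q\mid(\ell-1)$ from the telescoping sum of the side lengths, each $\equiv 1 \pmod{d-1}$. Your construction for the converse is also the paper's, namely the $(d+1)$-angle with corners $\bigsqcup_{i=0}^{q-1}\rho^{iN/q}\{0,\dots,s-1\}$, $s=(d+1)/q$, completed by transporting a fixed $(d+1)$-angulation of one cut-off $\left((d-1)\frac{\ell-1}{q}+2\right)$-gon around by powers of $\rho^{N/q}$.
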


\begin{proof}
In the proof, the number $s=(d+1)/q$ plays a role. Since $N=(d-1)(\ell-1)+(d+1)$, we have $\frac{N}{q} - s = (d-1)\frac{\ell-1}{q}$.

\eqref{tri}$\Rightarrow$\eqref{r}:
Let $G_0\subset\{0,1,\ldots,N-1\}$ be the set of corners of $G$.
Without loss of generality, we may assume that $0\in G_0$ and hence $iN/q\in G_0$ for any $i$.
Setting $X=\left\{0,1,\ldots,N/q-1\right\}$, it follows that 
$G_0$ can be written as a disjoint union
$$G_0=\bigsqcup_{i=0}^{q-1}\rho^{iN/q}(G_0\cap X) \,.$$
Since $|G_0|=d+1$, this implies that $q$ divides $d+1$, and $|G_0\cap X|=(d+1)/q=s$. 

Let $G_0\cap X=\{x_1,\ldots,x_s\}$, where $0=x_1<x_2<\cdots<x_s$, and set $x_{s+1}=N/q$.
Then, for each $i=1,\ldots,s$, the edge $[x_i,x_{i+1}]$ is either an outer edge or a
$(d-1)$-diagonal, so $x_{i+1}-x_i-1\in(d-1)\Z$.
Consequently,
$$(d-1)\frac{\ell-1}{q} = \frac{N}{q} - s = x_{s+1} - x_1- s = \sum_{i=1}^s(x_{i+1}-x_i-1) \in(d-1)\Z$$ 
and thus $q$ divides $\ell-1$.
We have proved that $q\mid(\ell-1)$ and $q\mid(d+1)$, i.e., $q\mid(\ell-1)\wedge(d+1)$.

\eqref{r}$\Rightarrow$\eqref{tri}:
Let $G$ be the $(d+1)$-angle with corners
\[\bigsqcup_{i=0}^{q-1}\rho^{iN/q}\{0,1,\ldots,s-1\}.\]
Each edge of $G$ is either an outer edge or a $(d-1)$-diagonal,
since 
\[\frac{(i+1)N}{q} - \left(\frac{iN}{q}+s-1\right)-1 = \frac{N}{q} - s = (d-1)\frac{\ell-1}{q} \in (d-1)\Z \,.\]
For each $i=0,\ldots,q-1$, the edge $[iN/q +s-1,(i+1)N/q]$ of $G$ cuts out an
  $N'$-gon $P^{(i)}_{N'}$ with corners $\{iN/q +s-1,iN/q+s ,\ldots,(i+1)N/q\}$, where
  $N'=(d-1)\frac{\ell-1}{q}+2$. Thus $P_N$ is partitioned into $G$ and $P^{(i)}_{N'}$,
  $i=0,\ldots, q-1$, and $P^{(i)}_{N'}=\rho^{iN/q}\left(P^{(0)}_{N'}\right)$ for each $i$.
Any $(d+1)$-angulation $\mathscr{T}'$ of $P^{(0)}_{N'}$ gives a $(d+1)$-angulation
\[\mathscr{T}=\bigsqcup_{i=0}^{q-1}\rho^{iN/q}(\mathscr{T}'\sqcup\{[s-1,N/q]\})\]
of $P_N$, which is clearly $\rho^{N/q}$-invariant. 
This proves the statement \eqref{tri}.
\end{proof}

\begin{lma}\label{disangulation}
  Assume that $d\ge2$. 
  The following statements are equivalent:
  \begin{enumerate}
  \item there exists a $(d+1)$-angulation $\mathscr{T}$ of $P_N$ for which the centre point of
    $P_N$ lies on a $(d-1)$-diagonal, and $\rho^{N/2}(\mathscr{T})=\mathscr{T}$;
  \item $2\mid \ell$. 
  \end{enumerate}
\end{lma}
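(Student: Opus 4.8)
The plan is to translate both geometric conditions into arithmetic statements about $N$ and $\ell$, using the characterization of $(d-1)$-diagonals recalled before the lemma. The key observation is that a $(d-1)$-diagonal $[x,y]$ of $P_N$ passes through the centre of the regular polygon precisely when $x$ and $y$ are antipodal, i.e.\ $y\equiv x+N/2\pmod N$; in particular this already forces $N$ to be even. Such a diameter is a $(d-1)$-diagonal exactly when $N/2-1\in(d-1)\Z$. Since $N=(d-1)\ell+2$, whenever $N$ is even we have $N/2-1=(d-1)\ell/2$, and I would show that $(d-1)\ell/2\in(d-1)\Z$ holds if and only if $\ell$ is even. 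Here the hypothesis $d\ge2$, which makes $d-1\ge1$ cancellable, is used: from $(d-1)\ell=2(d-1)j$ one deduces $\ell=2j$. This single arithmetic equivalence is really the only content of the lemma; both implications are driven by it.

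For the implication (a)$\Rightarrow$(b), I would simply take the $(d-1)$-diagonal on which the centre lies; by the observation above it is a diameter, and its being a $(d-1)$-diagonal forces $\ell$ to be even by the computation just described. Note that the rotational symmetry $\rho^{N/2}(\mathscr{T})=\mathscr{T}$ is not needed for this direction. For the converse (b)$\Rightarrow$(a), assuming $\ell=2m$ with $m\ge1$, I would first check that $N=2((d-1)m+1)$ is even with $N/2=(d-1)m+1$, so that $N/2-1=(d-1)m\in(d-1)\Z$ and the diameter $[0,N/2]$ is a genuine $(d-1)$-diagonal. This diameter splits $P_N$ into two halves, each a polygon on the $N/2+1=(d-1)m+2$ consecutive corners $\{0,1,\ldots,N/2\}$ and $\{N/2,\ldots,N-1,0\}$ respectively. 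Since $(d-1)m+2\equiv2\pmod{d-1}$ and $(d-1)m+2\ge d+1$, each half admits a $(d+1)$-angulation. I would choose any $(d+1)$-angulation $\mathscr{T}_0$ of the first half and set
\[\mathscr{T}=\mathscr{T}_0\cup\rho^{N/2}(\mathscr{T}_0)\cup\{[0,N/2]\}.\]
Because $\rho^{N/2}$ carries the first half onto the second and fixes the diameter, $\mathscr{T}$ is a $(d+1)$-angulation of $P_N$ containing $[0,N/2]$, and $\rho^{N/2}(\mathscr{T})=\mathscr{T}$ follows at once from $\rho^N=\mathrm{id}$.

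The steps requiring the most care are bookkeeping rather than conceptual. I would need to confirm that a $(d-1)$-diagonal of one of the half-polygons, whose corners form a contiguous arc of $P_N$, is again a $(d-1)$-diagonal of $P_N$ (this holds because the relevant corner-distance is unchanged by the inclusion of the arc into $P_N$), so that $\mathscr{T}$ really consists of $(d-1)$-diagonals of $P_N$; and that the union is non-crossing and maximal, which is immediate since the diameter separates the two halves and each of $\mathscr{T}_0$ and $\rho^{N/2}(\mathscr{T}_0)$ is maximal within its half. I expect the only place where a slip is easy is the parity arithmetic tying $N/2-1\in(d-1)\Z$ to $2\mid\ell$, so I would isolate that as a clean preliminary computation and invoke it verbatim in both directions.
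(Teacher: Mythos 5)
Your proof is correct and takes essentially the same route as the paper's: both reduce condition (a) to the existence of a diameter that is a $(d-1)$-diagonal and then settle this by the arithmetic $\frac{N}{2}-1=\frac{(d-1)\ell}{2}\in(d-1)\Z\Leftrightarrow 2\mid\ell$. Your explicit doubling construction $\mathscr{T}=\mathscr{T}_0\cup\rho^{N/2}(\mathscr{T}_0)\cup\{[0,N/2]\}$ simply spells out the completion step that the paper leaves implicit.
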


\begin{proof}
A $(d+1)$-angulation of the specified type exists if and only if there exists a
$(d-1)$-diagonal passing through the centre of $P_N$. This is equivalent to $N$ being even
and the edge $[0,N/2]$ a $(d-1)$-diagonal. Since $\frac{N}{2}-1 = \frac{(d-1)\ell}{2}$, this is equivalent to $2\mid \ell$.
\end{proof}

\begin{lma} \label{lcm}
Let $r=(\ell-1)\wedge(d+1)$, $t=(d+1)\wedge2(\ell-1)$, and $n\in\Z$. Then 
$N\mid nr(d-1)$ is equivalent to $N\mid nt$.
\end{lma}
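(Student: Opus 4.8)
The plan is to reduce the stated equivalence to a single gcd identity and then verify that identity prime by prime. I would begin with the elementary observation that for integers $N,x$ and any $n\in\Z$ one has $N\mid nx$ if and only if $\frac{N}{N\wedge x}\mid n$ (cancel $N\wedge x$ and use that $N/(N\wedge x)$ and $x/(N\wedge x)$ are coprime). Applying this with $x=r(d-1)$ and with $x=t$ shows that $N\mid nr(d-1)$ and $N\mid nt$ are equivalent for every $n$ exactly when
\[
N\wedge r(d-1)=N\wedge t.
\]
Thus the lemma reduces to proving this identity.

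Two preliminary facts streamline the argument. Since $r\mid d+1$ and $r\mid\ell-1$, the expression $N=(d-1)(\ell-1)+(d+1)$ gives $r\mid N$, so $v_p(r)\le v_p(N)$ for every prime $p$ (here $v_p$ denotes the $p$-adic valuation). Moreover, from $N=(d+1)\ell-2(\ell-1)$ we get $N\equiv-2(\ell-1)\pmod{d+1}$, whence $N\wedge(d+1)=(d+1)\wedge 2(\ell-1)=t$; in particular $t\mid N$ and $N\wedge t=t$. It therefore suffices to prove $N\wedge r(d-1)=t$, i.e.\ to check $\min\bigl(v_p(N),\,v_p(r)+v_p(d-1)\bigr)=v_p(t)$ for each prime $p$.

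For an odd prime $p$ this is straightforward. If $p\nmid d-1$ then $v_p(d-1)=0$ and $v_p(r)=v_p(t)$, and $r\mid N$ forces the minimum to equal $v_p(r)=v_p(t)$. If $p\mid d-1$ then $p\nmid d+1$ (as an odd prime dividing both $d-1$ and $d+1$ would divide $2$), so $v_p(d+1)=0$ and $v_p(t)=0$; furthermore $N\equiv 2\pmod{d-1}$ gives $p\nmid N$, so $v_p(N)=0$ and both sides vanish. The case $p=2$ with $d$ even is equally painless, since then $d-1$ and $d+1$ are both odd and every valuation in sight is $0$.

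The main obstacle is the case $p=2$ with $d$ odd. Here $d-1$ and $d+1$ are consecutive even integers, so exactly one of them has $2$-adic valuation $1$ and the other valuation $\ge 2$, and I would split into these two subcases. If $v_2(d+1)=1$ (so $v_2(d-1)\ge2$), then $N=(d-1)\ell+2$ immediately yields $v_2(N)=1$, and since $v_2(r(d-1))\ge 2$ the minimum with $v_2(N)$ collapses to $1=v_2(t)$. If instead $v_2(d-1)=1$ and $v_2(d+1)\ge2$, the two quantities $v_2(r)+v_2(d-1)$ and $v_2(t)$ already agree unless $v_2(\ell-1)\ge v_2(d+1)$; in that remaining regime $\ell$ is odd, and computing $v_2(N)$ from $N=(d+1)\ell-2(\ell-1)$ shows $v_2(N)=v_2(d+1)$, which is exactly the value needed to make the two minima coincide. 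Choosing, in each subcase, whichever of the two expressions for $N$ isolates $v_2(N)$ cleanly is the crux of the computation; with the valuation identity established for all $p$, the gcd identity and hence the lemma follow.
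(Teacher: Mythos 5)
Your proof is correct, and it shares the paper's skeleton: both arguments reduce the lemma to the single gcd identity $N\wedge r(d-1)=N\wedge t$ (indeed both sides equal $t$) and then conclude by the cancellation principle $N\mid nx\Leftrightarrow \frac{N}{N\wedge x}\mid n$. Where you genuinely diverge is in how the identity is established. The paper proves $N\wedge r(d-1)=t$ globally, in one caseless five-step chain: it distributes the factor $(d-1)$ across the gcd via $(b\wedge c)x=bx\wedge cx$, then applies the Euclidean reduction $\gcd(a+b,b)=\gcd(a,b)$ twice — first replacing $N=(d-1)(\ell-1)+(d+1)$ by $d+1$ against $(\ell-1)(d-1)$, then rewriting $(\ell-1)(d-1)=(d+1)(\ell-1)-2(\ell-1)$ to land on $(d+1)\wedge 2(\ell-1)=t$. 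You instead verify the identity locally, prime by prime, via $p$-adic valuations, with the only real work at $p=2$ for $d$ odd; your preliminary observation that $N\wedge(d+1)=t$, hence $t\mid N$, is precisely the content of the paper's second Euclidean step, extracted in advance. What each buys: the paper's manipulation is shorter and uniform, while your local method is mechanical and makes the arithmetic visible, at the cost of the two-subcase analysis at $p=2$. One small elision worth fixing: in the subcase $v_2(d-1)=1$ with $v_2(\ell-1)<v_2(d+1)$, where you note that $v_2(r)+v_2(d-1)$ and $v_2(t)$ ``already agree,'' you still need $v_2(N)\ge v_2(t)$ for the minimum to come out equal to $v_2(t)$; this does follow from your earlier remark that $t\mid N$, but it should be invoked explicitly at that point (the same implicit appeal to $r\mid N$, which you do state, handles the odd primes).
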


\begin{proof}
In view of the computation
\begin{equation*} 
  \begin{split}
  N\wedge r(d-1)&=N\wedge[(\ell-1)\wedge(d+1)](d-1) \\
  &= [(d-1)(\ell-1)+(d+1)]\wedge (\ell-1)(d-1)\wedge(d+1)(d-1) \\
  &= (d+1)\wedge (\ell-1)(d-1)\wedge(d+1)(d-1) = (d+1)\wedge (\ell-1)(d-1) \\
  &= (d+1)\wedge[(d+1)(\ell-1) -2(\ell-1)] = (d+1)\wedge2(\ell-1) = t \,,
  \end{split}
\end{equation*}
the numbers $N/t$ and $r(d-1)/t$ are integers and coprime to each other. 
Thus we have the following chain of equivalences:
\begin{equation*} N\mid nr(d-1) \;\Leftrightarrow\; \frac{N}{t}\mid n\frac{r(d-1)}{t} \;\Leftrightarrow\;
  \frac{N}{t}\mid n \;\Leftrightarrow\;
  N\mid nt \,.\qedhere\end{equation*}
\end{proof}

\begin{proof}[Proof of Proposition~\ref{combinatoricprop}]
First, we prove that $N\mid tn$ if and only if $P_N$ has a $\rho^{n(d-1)}$-invariant
$(d+1)$-angulation that contains a $(d+1)$-angle $G$ such that $\rho^{n(d-1)}(G)=G$.
Define an integer $q$ by $N/q=N\wedge n(d-1)$. Then a $(d+1)$-angulation (respectively,
$(d+1)$-angle) of $P_N$ is $\rho^{n(d-1)}$-invariant if and only if it is
$\rho^{N/q}$-invariant. 
By Lemma~\ref{centrangulation}, the existence of a $(d+1)$-angulation $\mathscr{T}$
containing a $(d+1)$-angle $G$ such that $\rho^{N/q}(\mathscr{T})=\mathscr{T}$ and
$\rho^{N/q}(G)=G$ is equivalent to $q\mid r$, where $r=(\ell-1)\wedge(d+1)$.
Since $q$ and $\frac{n(d-1)}{N/q}$ are coprime, we have
\[q\mid r \;\Leftrightarrow\;
q\mid r\left(\frac{n(d-1)}{N/q}\right) \;\Leftrightarrow\; N\mid nr(d-1)\;\Leftrightarrow\; N\mid tn,\]
where the last equivalence comes from Lemma~\ref{lcm}. Thus the claim holds.

Next, we show that if $P_N$ has a $\rho^{n(d-1)}$-invariant $(d+1)$-angulation
$\mathscr{T}$ such that $\rho^{n(d-1)}(G)\ne G$ for every $(d+1)$-angle $G$, then 
$N\mid2n$. 
Observe that such a $\mathscr{T}$ must contain an edge that passes through the centre
point of $P_N$ and, consequently, this edge is fixed by $\rho^{n(d-1)}$. 
It follows that $n(d-1)$ is a multiple of $N/2$ but, since $\rho^{n(d-1)}\ne\I$, not of
$N$. Hence $\rho^{n(d-1)}=\rho^{N/2}$ which implies, by Lemma~\ref{disangulation}, that
the number $\ell$ is even. Thus $d-1$ and $N/2=(d-1)\ell/2 + 1$ are
coprime, so $\frac{N}{2}\mid n$ and consequently $N\mid 2n$ hold.

Finally, we prove that if $N\mid 2n$ then there exists a $\rho^{n(d-1)}$-invariant
triangulation of $P_N$. 
If $\ell$ is an even number, then, by Lemma~\ref{disangulation}, there exists
a $\rho^{N/2}$-invariant $(d+1)$-angulation $\mathscr{T}$ of $P_N$. 
The assumption $N\mid 2n$ implies that $\rho^{n}$ is a power of
$\rho^{N/2}$, and hence $\rho^{n(d-1)}(\mathscr{T})=\mathscr{T}$. 

Suppose that $\ell$ is an odd number.
If $t$ is even, then $N$ divides $tn$ and we are done.
Assume instead that $t$ is odd.
This implies that $d+1$ is odd, whence $N=(d-1)\ell+2$ is odd, too. 
Therefore, $N\mid2n$ implies $N\mid n$ and thus $N\mid n(d-1)$.
Since every $(d+1)$-angulation of $P_N$ is invariant under
$\rho^N=\I$, the result follows.
\end{proof}

\section{Open problems}\label{further}
Here, we point to some directions of further inquiry, and give some partial results.

In view of Corollaries~\ref{trivext}, \ref{fracCY} and \ref{trivrf}, it is natural to seek to
understand for which numbers $n$ and $d$ the algebra $T_n(\L)$ is
$d$-re\-pre\-sen\-ta\-tion-fin\-ite. 
To this end, we pose the following questions.

\begin{question} \renewcommand{\theenumi}{\arabic{enumi}}
For any finite-dimensional $k$-algebra $\L$, set 
$$\RF(\L)=\{(n,d)\in\Z_{>0}\!\times\!\Z_{>0}\mid 
T_n(\L)\mbox{ is $d$-re\-pre\-sen\-ta\-tion-fin\-ite}\}.$$
\begin{enumerate}
\item Given an algebra $\L$, describe the set $\RF(\L)$. \label{setRF}
\item 
  Is $\L$ twisted fractionally Calabi--Yau whenever $\RF(\L)$ is
  non-empty? \label{fraccyRF}
\end{enumerate}
\end{question}

By Corollary~\ref{fracCY}, we know that the converse of \eqref{fraccyRF} is true.
With respect to \eqref{setRF}, we hypothesize that for given $\L$ and $n\in\Z_{>0}$, there
are at most finitely many positive integers $d$ such that $(n,d)\in\RF(\L)$.
More generally, we make the following conjecture.
\begin{conj}
For a given finite-dimensional $k$-algebra $A$, there are only finitely many integers $d$
such that $A$ is $d$-re\-pre\-sen\-ta\-tion-fin\-ite.
\end{conj}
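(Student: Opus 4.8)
The plan is to reduce the conjecture to control of the vanishing range of a single $\Ext$-group, and then to treat separately the two regimes that this reduction produces.

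First I would exploit that the mandatory summands of any $d$-cluster-tilting module already constrain $d$. Note that for semisimple $A$ there are no higher extensions, so $A$ is $d$-representation-finite for every $d$ and the statement fails; I therefore read the conjecture, as is surely intended, for $A$ non-semisimple. Now suppose $A$ is $d$-representation-finite with $d$-cluster-tilting module $M$. Since $\add M$ is a generator--cogenerator of $\mod A$ (Definition~\ref{CTdef}), both $A$ and $DA$ are summands of $M$, and $d$-rigidity forces $\Ext^i_A(DA,A)=0$ for all $1\le i\le d-1$; the remaining three $\Ext$-groups among $A,DA$ vanish automatically because $A$ is projective and $DA$ is injective. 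Hence, if $i_0\ge 1$ is the least degree with $\Ext^{i_0}_A(DA,A)\ne 0$, then $A$ cannot be $d$-representation-finite for $d>i_0$, so $d\le i_0$ is bounded. This already settles the conjecture for every $A$ with $\Ext^{>0}_A(DA,A)\ne 0$ --- in particular for all non-semisimple hereditary algebras, where $\Ext^1_A(DA,A)\ne 0$ forces $d=1$.

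The remaining, and genuinely hard, regime is $\Ext^{>0}_A(DA,A)=0$, which contains every self-injective algebra (there $DA$ is projective) and which I regard as the decisive case. Here $A$ and $DA$ detect nothing, so I would pass to the stable category. For $A$ self-injective, $\stmod A$ is triangulated with suspension $\Omega^{-1}$ and Serre functor $S\simeq\nu_*\Omega$ (Proposition~\ref{AR duality}), and $\Ext^i_A(X,Y)\simeq\Hom_{\stmod A}(X,\Omega^{-i}Y)$ for $i\ge 1$; moreover every module over such a $d$-representation-finite algebra has complexity at most one \cite{eh08}, which severely restricts the stable structure. A $d$-cluster-tilting $A$-module with finitely many indecomposable summands yields a $d$-cluster-tilting subcategory $\U\subset\stmod A$ with $|\ind\U|<\infty$, on which $d$-rigidity reads $\Hom_{\stmod A}(\U,\Omega^{-i}\U)=0$ for $1\le i\le d-1$. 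The idea is to convert this long $\Hom$-vanishing into a periodicity obstruction: if the $d$-Auslander--Reiten translation permutes the finitely many indecomposables of $\U$, a power of it fixes $\U$, yielding a fractionally Calabi--Yau relation $(\nu_*\Omega^{\,c})^{a}(U)\simeq U$ for $U\in\U$. Serre duality $\Hom_{\stmod A}(U,\Omega^{-i}U)\simeq D\Hom_{\stmod A}(U,\Omega^{\,i}SU)$ then makes $i\mapsto\dim\Hom_{\stmod A}(U,\Omega^{-i}U)$ periodic in $i$; since this function is nonzero at $i\equiv0$ (where $\Omega^{-i}U\simeq U$), the vanishing on $1\le i\le d-1$ cannot persist once $d-1$ reaches a full period, forcing $d$ below that period. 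This is exactly the mechanism already visible in Section~\ref{section: nakayama}, where the period is the rotation $\rho$ of the polygon $P_N$ and the bound on $d$ comes from $N=\ell(d-1)+2$ growing linearly in $d$ while being forced to divide a fixed quantity (Proposition~\ref{combinatoricprop}).

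The main obstacle is precisely the hypothesis needed for the last step. The clean periodicity argument, like all the positive results in this paper, presupposes that $\U$ is \emph{orbital}, i.e. closed under the relevant shift $\U[d]=\U$, so that Proposition~\ref{serre and serre} applies and the Serre functor stabilises $\U$. A general finite $d$-cluster-tilting subcategory of a triangulated category with Serre functor need not enjoy this closure, and establishing an unconditional periodicity statement for such subcategories --- together with transporting the whole scheme from the self-injective case to all algebras with $\Ext^{>0}_A(DA,A)=0$ through their (relative) Gorenstein structure --- is where the real difficulty lies.
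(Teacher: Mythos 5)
This statement is posed in the paper as an open conjecture: the paper offers no proof at all, only the heuristic remark that the number of indecomposable summands of a $d$-cluster-tilting module tends to decrease as $d$ grows. So there is no proof to compare against, and the question is simply whether your attempt closes the problem. It does not, and you say so yourself. Your first reduction is correct and is a genuine partial result going beyond anything in the paper: since every $d$-cluster-tilting subcategory contains all projectives and injectives, $d$-rigidity forces $\Ext^i_A(DA,A)=0$ for $1\le i\le d-1$, so $d$ is bounded by the first nonvanishing degree of $\Ext^{*}_A(DA,A)$ whenever that graded space is nonzero; your observation that semisimple algebras are literal counterexamples (being $d$-representation-finite for every $d$) is also correct and shows the conjecture must implicitly exclude them. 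Your claim that $\Ext^1_A(DA,A)\neq0$ for non-semisimple hereditary $A$ also checks out via the Auslander--Reiten formula.

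The genuine gap is in the remaining case $\Ext^{>0}_A(DA,A)=0$ (in particular all self-injective algebras), and it is worth naming why your proposed periodicity mechanism fails quantitatively, beyond the orbital-hypothesis issue you flag. Stability of $d$-cluster-tilting subcategories under $\tau_d=\nu_*\Omega^{d+1}$ does hold without any orbital assumption (by Iyama's higher Auslander--Reiten theory), so for $|\ind\U|<\infty$ you do get $(\nu_*\Omega^{d+1})^a(U)\simeq U$, and combining with a power of $\nu_*$ fixing $\ind\U$ you obtain $\Omega$-periodicity of $U$ with period $p=ab(d+1)$. But this period is a \emph{multiple of} $d+1$, hence always strictly larger than the vanishing window $1\le i\le d-1$ in which rigidity forces $\Hom_{\stmod A}(U,\Omega^{-i}U)=0$; the function $i\mapsto\dim\Hom_{\stmod A}(U,\Omega^{-i}U)$ returning to its nonzero value at $i=0$ after one full period therefore produces no contradiction for any $d$. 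To make your scheme work one would need the $\Omega$-periods (or twisted Calabi--Yau parameters) of the modules occurring in $d$-cluster-tilting subcategories to be bounded by a constant depending on $A$ alone, uniformly in $d$ --- and that uniform bound is essentially equivalent to the conjecture itself in the self-injective case. This is visible even in the paper's Nakayama analysis (Proposition~\ref{combinatoricprop}): the bound on $d$ there comes from the divisibility constraint on $N=\ell(d-1)+2$, i.e.\ from explicit knowledge of \emph{all} candidate subcategories, not from an abstract periodicity argument. So the attempt is an honest and partially successful reduction, but the decisive case remains open, exactly as the conjecture's status in the paper indicates.
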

This expectation stems from the general phenomenon that, for a given $A$,
the number of indecomposable summands in a $d$-cluster-tilting $A$-modules tends to become
smaller as $d$ increases.

If a self-injective algebra $\Gamma$ can be written as an orbit algebra $\widehat{\L}/\phi$
for some algebra $\L$ of finite global dimension, then Theorem~\ref{stronger theorem},
together with the equivalence $\dml\simeq\stmod\widehat{\L}$ \eqref{happel equivalence}
gives a strong connection between $d$-cluster-tilting subcategories of $\mod\Gamma$ and
$\dml$. A natural question is therefore to which extent this construction exhausts all
$d$-re\-pre\-sen\-ta\-tion-fin\-ite self-injective algebras.

\begin{question}
  Assume that $k$ is an algebraically closed field of characteristic $0$ (or sufficiently
  large). Is any $d$-re\-pre\-sen\-ta\-tion-fin\-ite self-injective $k$-algebra $\Gamma$ isomorphic
  to $\widehat{\L}/\phi$ for some $\nu_d$-finite algebra $\L$ of finite global
  dimension and admissible automorphism $\phi$ of $\widehat{\L}$?
\end{question}

Whenever $\Gamma\simeq \widehat{\L}/\phi$ as above, Corollary~\ref{bijection} gives a
bijection between, on one hand, $d$-cluster-tilting $\Gamma$-modules and, on the other,
$\phi$-equivariant locally bounded $d$-cluster-tilting subcategories $\U$ of
$\dml\simeq\stmod\widehat{\L}$.

Recall that for a $\nu_d$-finite algebra $\L$, every tilting object $T\in\dml$
satisfying $\gldim(\End_{\dml}(T))\le d$ gives rise to an orbital
$d$-cluster-tilting subcategory $\U_d(T)$ of $\dml$. However, as we shall see below, this
construction is not exhaustive in general. 
Therefore, understanding the structure of $d$-clus\-ter-til\-ting subcategories
of $\dml$ is an important step towards understanding $d$-re\-pre\-sen\-ta\-tion-fin\-ite\-ness
for orbit algebras $\widehat{\L}/\phi$.

\begin{question} \label{UendT}
Given an algebra $\L$ of finite global dimension, which $d$-cluster-tilting
  subcategories of $\dml$ are orbital?
In particular, for which pairs $(\L,d)$ are all $d$-cluster-tilting subcategories of $\dml$
orbital?
\end{question}

The following observation gives a partial answer to Question~\ref{UendT}. 

\begin{prop} \label{2ct}
  Let $\L$ be an iterated tilted algebra. Then every $2$-cluster-tilting subcategory of
  $\dml$ is orbital.
\end{prop}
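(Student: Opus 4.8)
The plan is to reduce to the hereditary case and then show that any $2$-cluster-tilting subcategory is forced to be $\nu_2$-periodic, after which the cluster category of $H$ supplies the tilting complex exhibiting it as orbital. Since $\L$ is iterated tilted, there is a triangle equivalence $\dml\simeq\dm{H}$ for a hereditary algebra $H$, and both relevant notions are intrinsic to the triangulated structure: being $2$-cluster-tilting is defined purely via $\Ext$, while being orbital is defined via tilting complexes, their endomorphism algebras, and the Serre functor $\nu$ (hence $\nu_2=\nu\circ[-2]$), all preserved by triangle equivalences. So I may assume $\L=H$ is a basic connected hereditary algebra. I would first observe that the mere existence of a $2$-cluster-tilting subcategory $\U\subset\dm{H}$ forces $H$ to be of Dynkin type: for non-Dynkin $H$ the regular components of the Auslander--Reiten quiver of $\dm{H}$ are incompatible with a functorially finite, locally bounded maximal $1$-orthogonal subcategory. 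In particular $H$ is then $\nu_2$-finite, so that orbital subcategories are defined and Proposition~\ref{U_n} applies, and $\dm{H}\simeq\stmod\widehat{H}$.

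The heart of the argument is to prove that $\U[2]=\U$, equivalently $\nu_2(\U)=\U$ (using $\nu=\tau\circ[1]$, so $\nu_2=\tau\circ[-1]$ on $\dm{H}$, together with $[2]=\nu\circ\nu_2^{-1}$). I would use two soft inputs. First, since $[2]$ and $\nu_2$ are autoequivalences of $\dm{H}$, the subcategories $\U[2]$ and $\nu_2(\U)$ are again $2$-cluster-tilting. Second, if $\U'\subseteq\U$ are both $2$-cluster-tilting then $\U'=\U$: writing $\U^{\perp_1}=\{X\mid\Ext^1_{\dm H}(\U,X)=0\}$, the inclusion gives $\U=\U^{\perp_1}\subseteq(\U')^{\perp_1}=\U'$. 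Thus it suffices to prove the single containment $\U[2]\subseteq\U$, i.e.\ that $U[2]\in\U$ for every indecomposable $U\in\U$; equivalently $\Ext^1_{\dm H}(\U,U[2])=0$ and $\Ext^1_{\dm H}(U[2],\U)=0$. Here the hereditary structure enters decisively: on stalk complexes $\Hom_{\dm H}(M[a],N[b])=\Ext^{b-a}_H(M,N)$ vanishes unless $b-a\in\{0,1\}$, so these $\Ext^1$-groups can only be nonzero in a narrow range of relative degrees, and I would rule those out by propagating the maximality of $\U$ along the Auslander--Reiten quiver (exactly as in the pentagon computation for $\dm{kA_2}$, where maximality of the orthogonality conditions forces $5$-periodicity of the index set). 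This periodicity step is the main obstacle: rigidity alone is insufficient, and the full cluster-tilting condition (maximality together with functorial finiteness) must be exploited.

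Once $\nu_2(\U)=\U$ is established, the rest is formal. By functorial finiteness and local boundedness, $\U$ has only finitely many $\nu_2$-orbits of indecomposables, so under the projection $\pi\colon\dm{H}\to\C_2(H)=\dm{H}/\nu_2$ to the $2$-cluster category (triangulated by \cite{keller05}) the image $\pi(\U)$ is a basic cluster-tilting object $\bar T$ of $\C_2(H)$, and $\U=\pi^{-1}(\bar T)$. By the correspondence between cluster-tilting objects and tilting modules \cite{bmrrt06}, $\bar T$ lifts to a tilting complex $T\in\dm{H}$ lying in the standard fundamental domain, whose endomorphism algebra $\End_{\dm H}(T)$ is a tilted algebra and hence satisfies $\gldim\End_{\dm H}(T)\le2$. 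For such $T$ one has $\pi^{-1}(\bar T)=\add\{\nu_2^i(T)\mid i\in\Z\}=\U_2(T)$, so $\U=\U_2(T)$ is orbital, as desired.

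I expect the periodicity step $\U[2]\subseteq\U$ to be the crux; alternatively it can be packaged as the statement that every $2$-cluster-tilting subcategory of $\dm{H}$ descends along $\pi$ to a cluster-tilting object of $\C_2(H)$, with Proposition~\ref{serre and serre} (applied to $\TT=\dm{H}$ and $\ell=2$, once $\U[2]=\U$ is known) recording the resulting self-injectivity of $\U$ and its compatibility with the Serre functor.
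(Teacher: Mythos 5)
Your endgame (descend to $\C(H)=\dm{H}/\nu_2$, lift the cluster-tilting object to a tilting complex $T$ with $\gldim\End_{\dm{H}}(T)\le2$, conclude $\U=\pi\inv(\pi(T))=\U_2(T)$) is exactly the paper's route, but your self-declared crux is both unproven and, worse, aimed at a false statement. You assert that $\U[2]=\U$ is ``equivalent'' to $\nu_2(\U)=\U$; it is not (they differ by $\nu=[2]\circ\nu_2$, and $\nu(\U)=\U$ is not given), and $\U[2]=\U$ actually fails in general. Take $H=kA_2$ with quiver $1\to2$ and $\U=\U_2(H)$, which is $2$-cluster-tilting by Proposition~\ref{U_n} and orbital by definition. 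With $\nu_2=\tau[-1]$ one computes the two $\nu_2$-orbits explicitly: $\ind\U_2(H)=\{P_1[5k],\,P_2[2+5k],\,S_1[3+5k],\,P_2[5k],\,S_1[1+5k],\,P_1[3+5k]\mid k\in\Z\}$, so $P_1\in\U_2(H)$ but $P_1[2]\notin\U_2(H)$ (equivalently, $\nu P_1=S_1\notin\U_2(H)$). Ironically, the ``pentagon'' $\dm{kA_2}$ you invoke as a model is precisely where the periodicity you want to prove breaks down, so no amount of propagating maximality along the Auslander--Reiten quiver can establish $\U[2]\subseteq\U$. Your preliminary claim that the existence of a $2$-cluster-tilting subcategory forces $H$ to be of Dynkin type is also false: every hereditary algebra has global dimension at most $1=d-1$ and is therefore $\nu_2$-finite, so $\U_2(H)$ is $2$-cluster-tilting for tame and wild $H$ as well; this error happens to be harmless only because its sole consequence you use, $\nu_2$-finiteness of $H$, holds anyway.

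What the descent actually requires is only $\nu_2(\U)=\U$, and this is automatic by a soft Serre-duality argument you never invoke: $\Ext^1_{\dm{H}}(V,\nu_2U)\simeq D\Hom_{\dm{H}}(U,V[1])=D\Ext^1_{\dm{H}}(U,V)=0$ for all $U,V\in\U$, so the maximality in Definition~\ref{CTdef} gives $\nu_2(\U)\subseteq\U$, and the dual computation $\Ext^1_{\dm{H}}(\nu_2\inv U,V)\simeq D\Ext^1_{\dm{H}}(V,U)$ gives $\nu_2\inv(\U)\subseteq\U$; no hereditary input or AR-quiver analysis is needed for this step. Even granting equivariance, you still assert ``local boundedness'' to get finitely many $\nu_2$-orbits, which is not part of Definition~\ref{CTdef} and needs an argument. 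The paper sidesteps all of this: its proof is three lines, citing \cite[Proposition~2.2]{bmrrt06} for the full correspondence $\U=\pi\inv(V)$ with $V$ a cluster-tilting object of $\C(H)$ (which subsumes both the equivariance and the orbit-finiteness issues) and \cite[Theorem~3.3(a)]{bmrrt06} for the lift $V=\pi(T)$. So the gap is genuine: the one step you identify as the heart of the argument is missing, and the statement you propose to prove there is false as formulated.
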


\begin{proof}
If $\L$ is iterated tilted then it is derived equivalent to some hereditary algebra $H$. 
Let $\C(H)=\dm{H}/\nu_2$ be the cluster category \cite{bmrrt06} of $H$, and
$\pi:\dm{H}\to\C(H)$ the natural functor. 
Suppose that $\U\subset\dml\simeq\dm{H}$ is a $2$-cluster-tilting subcategory.
From \cite[Proposition~2.2]{bmrrt06} follows that $\U=\pi\inv(V)$ for some cluster-tilting
object $V\in\C(H)$, and \cite[Theorem~3.3(a)]{bmrrt06} gives that $V = \pi(T)$, where
$T\in\dm{H}$ is a tilting complex with $\gldim(\End_{\dm{H}}(T))\le2$.
Hence, $\U=\pi\inv(\pi(T))=\U_2(T)$.  
\end{proof}

Let $k$ be an algebraically closed field of characteristic different from $2$, and
$\Gamma$ a finite-di\-men\-sion\-al representation-finite self-injective $k$-algebra.

By Theorem~\ref{riedtmann}, $\Gamma$ is isomorphic to $\widehat\L/\phi$ for some
re\-pre\-sen\-ta\-tion-fin\-ite tilted algebra $\L$ and autoequivalence $\phi$ of $\widehat\L$.
Since $\L$ is a tilted algebra, there exists a triangle equivalence
$\dml\simeq\dm{H}$ for some hereditary re\-pre\-sen\-ta\-tion-fin\-ite algebra $H$.
The following is an immediate consequence of Proposition~\ref{2ct} and 
Theorem~\ref{basic construction}.

\begin{cor} \label{rf-nrfsi}
  In the above setting, the algebra $\Gamma$ is $2$-re\-pre\-sen\-ta\-tion-fin\-ite if and only if
  $\dml$ has a $\phi$-equivariant orbital $2$-cluster-tilting subcategory.
\end{cor}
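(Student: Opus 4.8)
The plan is to combine the equivalence $\dml \simeq \dm{H}$ with the two cited results so that "having a $2$-cluster-tilting subcategory" and "having an \emph{orbital} one" collapse to the same condition in this setting. The corollary is stated as an "if and only if", so I would treat the two directions separately, but the real content lies in showing that the general mechanism of Theorem~\ref{basic construction} can, under the hypothesis that $\L$ is tilted, always be realised orbitally.

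First I would record the setup: since $\Gamma \simeq \widehat{\L}/\phi$ with $\phi$ an admissible automorphism of $\widehat{\L}$, Theorem~\ref{basic construction}\eqref{basic1} tells us that $\Gamma$ is $2$-representation-finite precisely when $\dml \simeq \stmod\widehat{\L}$ contains a locally bounded $\phi$-equivariant $2$-cluster-tilting subcategory $\U$. (The hypothesis that $k$ is algebraically closed is what makes the converse direction of that theorem available, which is needed for the "only if" part.) This already reduces the statement to a question purely inside $\dml$: $\Gamma$ is $2$-representation-finite iff $\dml$ has a $\phi$-equivariant $2$-cluster-tilting subcategory.

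The decisive step is then to invoke Proposition~\ref{2ct}: because $\L$ is tilted, hence iterated tilted, \emph{every} $2$-cluster-tilting subcategory of $\dml$ is automatically orbital. Therefore any $\phi$-equivariant $2$-cluster-tilting subcategory $\U$ furnished by Theorem~\ref{basic construction} is of the form $\U = \U_2(T)$, which is exactly what "orbital" means. Conversely, a $\phi$-equivariant orbital $2$-cluster-tilting subcategory is in particular a $\phi$-equivariant $2$-cluster-tilting subcategory, so Theorem~\ref{basic construction}\eqref{basic1} immediately yields that $\Gamma = \widehat{\L}/\phi$ is $2$-representation-finite. Combining these gives the stated equivalence.

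The step I expect to require the most care is ensuring that the subcategory produced by the (abstract) existence direction of Theorem~\ref{basic construction} really is locally bounded and genuinely $\phi$-equivariant, so that Proposition~\ref{2ct} applies verbatim and no orbit-finiteness hypothesis is lost in translation. In particular I would check that local boundedness of $\U$ (equivalently, finiteness of the number of $\phi_*$-orbits of $\ind\U$, as noted in the remark following Theorem~\ref{basic construction}) is preserved when one identifies $\U$ with $\U_2(T)$ via Proposition~\ref{2ct}; this is where the characteristic-$\ne 2$ hypothesis underpinning Theorem~\ref{riedtmann} and the algebraic closedness of $k$ feed in. Everything else is a direct chaining of the quoted results, so the argument should be short.
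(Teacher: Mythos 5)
Your argument is correct and follows exactly the paper's route: the paper presents this corollary as an immediate consequence of Theorem~\ref{basic construction} and Proposition~\ref{2ct}, chained precisely as you do (the converse direction of Theorem~\ref{basic construction}, valid since $k$ is algebraically closed, gives a locally bounded $\phi$-equivariant $2$-cluster-tilting subcategory of $\dml$, which is orbital by Proposition~\ref{2ct} because $\L$ is tilted, hence iterated tilted). One small clarification on your final paragraph: in the ``if'' direction the needed local boundedness of the orbital subcategory $\U_2(T)$ comes for free from Proposition~\ref{U_n}, since $\L$ is derived equivalent to a representation-finite hereditary algebra and hence $\nu_2$-finite, while the characteristic-$\ne2$ hypothesis enters only through Theorem~\ref{riedtmann} to realise $\Gamma$ as $\widehat{\L}/\phi$ in the first place, not in any orbit-finiteness check.
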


The following two examples show that neither the assumption that $d=2$, nor that $\L$ is
iterated tilted, can be removed from Proposition~\ref{2ct}.

\begin{ex}
Let $\L$ be the path algebra a quiver of type $A_2$. 
Then $\dml$ has a $3$-cluster-tilting subcategory (indicated with black dots in the figure
below) that is not orbital.
\[\xymatrix@R1.5em@C1.5em{
\circ\ar[rd]&&\bullet\ar[rd]&&\circ\ar[rd]&&\bullet\ar[rd]&&\circ\\
\cdots&\circ\ar[ru]&&\circ\ar[ru]&&\circ\ar[ru]&&\circ\ar[ru]&\cdots
}\]
\end{ex}

To any algebra $\L$ of global dimension at most $2$ is associated a cluster category
$\C(\L)$. The cluster category is triangle equivalent to the Verdier quotient 
$\mathop{\rm per}\Gamma/\dm{\Gamma}$ where $\Gamma=\Gamma(Q_\L,W_\L)$ is the Ginzburg dg
algebra  of the quiver $(Q_\L,W_\L)$ with the potential constructed in 
Section~\ref{section: higher preprojective}. There is a fully faithful functor
$\dml/\nu_2\to\C(\L)$ \cite[Section 4.3]{amiot09} giving rise to an isomorphism
\begin{equation}\label{End is Pi3}
\End_{\C(\L)}(\L)\simeq\bigoplus_{i\in\Z}\Hom_{\dml}(\L,\nu_2^{-i}(\L))=\Pi_3(\L)
\end{equation}
of $\Z$-graded $k$-algebras.

From the following result, we can infer the existence of algebras $\L$ for which the
bounded derived category $\dml$ contains a non-orbital $2$-cluster-tilting subcategory.

\begin{prop} \label{tiltit}
Let $\L$ be an algebra of global dimension $2$ that is cluster-equivalent to
a hereditary algebra $H$. 
If every $2$-cluster-tilting subcategory of $\dml$ is orbital, then $\L$ is derived
equivalent to $H$.
\end{prop}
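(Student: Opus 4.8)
The plan is to manufacture, from the distinguished cluster-tilting object of $\C(H)$ and the orbitality hypothesis, a \emph{hereditary} algebra that is simultaneously derived equivalent to $\L$ and isomorphic to $H$. First I would record that $\L$ is $\nu_2$-finite: since $\C(H)$ is Hom-finite and $\C(\L)\simeq\C(H)$ by cluster-equivalence, the generalized cluster category $\C(\L)$ is Hom-finite, so by \cite{amiot09} the algebra $\L$ is $\nu_2$-finite and the canonical functor $\pi_\L\colon\dml\to\dml/\nu_2$ identifies $\dml/\nu_2$ with $\C(\L)$. Under this identification $\pi_\L(\L)$ is a cluster-tilting object, and for any tilting complex the endomorphism algebra of its image is computed by \eqref{End is Pi3}.

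Next I would transport the canonical seed. In $\C(H)=\dm{H}/\nu_2$ the image $\pi_H(H)$ is cluster-tilting with $\End_{\C(H)}(\pi_H(H))\cong H$, because every positive graded piece $\Hom_{\dm{H}}(H,\nu_2^i H)$ vanishes when $H$ is hereditary. Through the triangle equivalence $\C(H)\simeq\C(\L)$ this yields a cluster-tilting object $V\in\C(\L)$ with $\End_{\C(\L)}(V)\cong H$, and its preimage $\U:=\pi_\L^{-1}(\add V)$ is a $2$-cluster-tilting subcategory of $\dml$ — the generalized-cluster-category analogue of the step via \cite[Proposition~2.2]{bmrrt06} used in the proof of Proposition~\ref{2ct}. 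By hypothesis $\U$ is orbital, so $\U=\U_2(T)$ for a tilting complex $T$ with $\Gamma':=\End_{\dml}(T)$ of global dimension at most $2$. Since $T$ is tilting, $\dml\simeq\dm{\Gamma'}$, so $\L$ is derived equivalent to $\Gamma'$; moreover $\add\pi_\L(T)=\add V$, and transporting through this equivalence together with \eqref{End is Pi3} applied to $\Gamma'$ gives an isomorphism $H\cong\End_{\C(\L)}(V)\cong\Pi_3(\Gamma')$ of $\Z$-graded algebras with $H$ concentrated in degree $0$.

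The crux is then purely homological: $\Pi_3(\Gamma')_1=\Ext^2_{\Gamma'}(D\Gamma',\Gamma')=0$, and I claim this forces $\gldim\Gamma'\le1$. Indeed, if $\gldim\Gamma'=2$ choose simple modules $S,T$ with $\Ext^2_{\Gamma'}(S,T)\neq0$; applying $\Hom(S,-)$ to $0\to\rad P(T)\to P(T)\to T\to0$ and using $\Ext^3_{\Gamma'}=0$ gives a surjection $\Ext^2(S,P(T))\twoheadrightarrow\Ext^2(S,T)$, and applying $\Hom(-,P(T))$ to $0\to S\to I(S)\to C\to0$ gives $\Ext^2(I(S),P(T))\twoheadrightarrow\Ext^2(S,P(T))$; hence $\Ext^2(I(S),P(T))\neq0$, and since $I(S)$ is a summand of $D\Gamma'$ and $P(T)$ a summand of $\Gamma'$ this contradicts $\Ext^2_{\Gamma'}(D\Gamma',\Gamma')=0$. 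Therefore $\Gamma'$ is hereditary, so by the same computation as for $H$ one has $\Pi_3(\Gamma')=\Gamma'$ in degree $0$, whence $\Gamma'\cong H$ and $\L$ is derived equivalent to $\Gamma'\cong H$, as required.

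The main obstacle I anticipate is not the homological crux (which is elementary) but the bookkeeping in the middle paragraph: justifying carefully that the preimage under $\pi_\L$ of a cluster-tilting object of $\C(\L)$ is genuinely a $2$-cluster-tilting subcategory of $\dml$, and that the various endomorphism algebras match correctly under the chain of equivalences $\C(H)\simeq\C(\L)\simeq\C(\Gamma')$ and $\dml\simeq\dm{\Gamma'}$, so that \eqref{End is Pi3} applies to $\Gamma'$ and delivers the graded isomorphism $\Pi_3(\Gamma')\cong H$.
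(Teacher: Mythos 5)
Your overall architecture is the same as the paper's: transport the cluster-tilting object $\pi_H(H)$ across the cluster equivalence to get $V\in\C(\L)$, pull it back to a $2$-cluster-tilting subcategory $\U=\pi_\L\inv(\add V)$ of $\dml$ (the paper cites \cite[Propositions~3.1, 3.2]{ao14} for precisely this step), use the orbitality hypothesis to write $\U=\U_2(T)$ with $\Gamma'=\End_{\dml}(T)$ of global dimension at most $2$, and identify $H\simeq\End_{\C(\L)}(V)\simeq\Pi_3(\Gamma')$ via \eqref{End is Pi3}. The genuine gap is your assertion that this composite is an isomorphism ``of $\Z$-graded algebras with $H$ concentrated in degree $0$''. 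The grading on $\End_{\C(\L)}(V)$ is not intrinsic to the triangulated category $\C(\L)$: it comes from the orbit presentation $\dml/\nu_2$ (equivalently, from the lift $T$), and the triangle equivalence $F\colon\C(\L)\to\C(H)$ has no reason to respect it. If every such equivalence automatically transported the degree-$0$-concentrated grading of $\Pi_3(H)=H$, one could deduce that every cluster equivalence is a derived equivalence, contradicting Example~\ref{tiltitnot}, where $\L=kQ'/I$ is cluster equivalent but not derived equivalent to $H$; there the grading carried by $H$ as an orbit algebra is genuinely nontrivial, and the subtlety of these gradings is exactly the theme of \cite{ao14}. So at this point you only have an \emph{ungraded} algebra isomorphism $\Pi_3(\Gamma')\simeq H$, and your homological lemma -- which is correct, and a nice elementary observation: $\Ext^2_{\Gamma'}(D\Gamma',\Gamma')=0$ together with $\gldim\Gamma'\le2$ forces $\gldim\Gamma'\le1$ -- cannot be applied, because its hypothesis $\Pi_3(\Gamma')_1=0$ is precisely what remains unproved. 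Note that an ungraded isomorphism with a hereditary algebra does not by itself control the degree-zero part: quotients of hereditary algebras by the positively graded ideal need not be hereditary in general.

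The paper closes this step without any graded claim, using \eqref{Jacobi is preprojective}: $\Pi_3(\Gamma')$ is the (complete) Jacobi algebra of the quiver with potential $(Q_{\Gamma'},W_{\Gamma'})$, and if $\Gamma'$ admitted a nonempty minimal set of relations $r_1,\dots,r_\ell$, then $Q_{\Gamma'}$ would contain an oriented cycle (the extra arrow $a_i$ composed with a path occurring in $r_i$); since a finite-dimensional hereditary algebra over an algebraically closed field is the path algebra of an acyclic quiver, hereditariness of $\Pi_3(\Gamma')\simeq H$ forces $\ell=0$, so $\Gamma'$ is hereditary and $\Pi_3(\Gamma')=\Gamma'\simeq H$. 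Substituting this quiver-theoretic step for your graded claim repairs the argument, and the remaining bookkeeping you flagged (pulling back cluster-tilting objects, matching endomorphism algebras along $\C(H)\simeq\C(\L)\simeq\C(\Gamma')$) is exactly what \cite{ao14} supplies. Be aware that this repair uses that $k$ is algebraically closed, an assumption your elementary lemma would not have needed but which the quiver-with-potential description does.
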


Note that $\nu_2$-finiteness is equivalent to the cluster category being Hom-finite, hence
this property is preserved by cluster equivalence. 

\begin{proof} 
Let $F:\C(\L)\to\C(H)$ be a triangle equivalence, and $\pi_\L:\dml\to\C(\L)$ and
$\pi_H:\dm{H}\to\C(H)$ the natural functors.
The object $V=F\inv\pi_H(H)\in\C(\L)$ is a $2$-cluster-tilting object
and hence
$\U=\add(\pi_{\L}\inv(V))$ is a $2$-cluster-tilting subcategory of $\dml$
\cite[Propositions~3.1, 3.2]{ao14}.
Take $T\in\dml$ to be a tilting complex such that $\U=\U_2(T)$, and set $\L'=\End_{\dml}(T)$.
Then $\L$ and $\L'$ are derived equivalent and hence cluster equivalent. 
As $V=\pi_\L(T)$, by \eqref{End is Pi3} we get
\[H=\End_{\C(H)}(\pi_H(H))\simeq\End_{\C(\L)}(V)\simeq\End_{\C(\L')}(\pi_{\L'}(\L'))\simeq\Pi_3(\L')\,.\]
Since $H$ is hereditary, it follows from \eqref{Jacobi is preprojective} that 
so is the degree zero part $(\Pi_3(\L'))_0=\L'$ of $\Pi_3(\L')$.
Thus $\Pi_3(\L')=\L'$ and hence $H=\L'=\End_{\dml}(T)$ holds, so $H$ and $\L$ are derived
equivalent.
\end{proof}

Let $\L$ be an algebra of global dimension $2$ that is cluster equivalent
to a hereditary algebra but not iterated tilted. Then Proposition~\ref{tiltit} implies
that the derived category $\dml$ contains a non-orbital $2$-cluster-tilting subcategory.
Examples of such algebras $\L$ can be found in \cite{ao13}, in which a classification is
given of all algebras of global dimension $2$ that are cluster equivalent to a hereditary
algebra of extended Dynkin type $\widetilde{A}$.

\begin{ex} \label{tiltitnot} \cite[Example~8.8]{ao14}
Let $H=kQ$ and $\L=kQ'/I$ be given as below:
\[
Q = \vcentered{
\xymatrix{
  &2\ar[dl]_{\alpha} \\
  1&& 3\ar[ul]_{\beta}\ar[ll]^{\gamma}
}}
\qquad\mbox{and}\qquad
Q' = \vcentered{\xymatrix{
&2\ar[dr]^b\\
1\ar[ur]^a\ar@{..}@/^7.5mm/[rr]&&3\ar[ll]^c
}}
\;,\qquad
I = \langle ba\rangle \,.
\]
The associated quivers with potential are $(\widetilde{Q},W) = (Q,0)$ and
$(\widetilde{Q'},W')$, where

\[ \widetilde{Q'} = \vcentered{\xymatrix{
  &2\ar[dr]^(.4)b&\\ 1\ar[ur]^(.6)a&&3\ar@<1mm>[ll]^d\ar@<-1mm>[ll]_c}}
\;,\qquad W'=dba \,.
\]
Now, $(\widetilde{Q'},W')$ and $(\widetilde{Q},W)$ are related by a mutation
at the vertex $2$; hence, 
\[\C(\Lambda)=\C(\widetilde{Q'},W')\simeq\C(\widetilde{Q},W)=\C(H)\]
holds \cite{amiot09,ky11}.
Since the quiver $Q'$ is not acyclic, the algebra $\L=kQ'/I$ is not iterated tilted. 
By Proposition~\ref{tiltit}, it follows that $\dml$ has a non-orbital $2$-cluster-tilting
subcategory.

The $2$-cluster-tilting subcategory $\U=\pi_\L\inv\pi_H(H)$ of $\dml$, constructed in
  the proof of Proposition~\ref{tiltit}, is given in Figure~\ref{nonorbitalCT}.

\begin{figure}[h]
\[ 
\xymatrix@R=.25cm@C=.53cm{
&&\bullet &&\bullet &&\bullet &&\bullet \\
\cdots&\bullet\ar[ur] &&\bullet\ar[ur] &&\bullet\ar[ur] &&\bullet\ar[ur]&\cdots  \\
\bullet\ar[ur]&& \bullet\ar[ur]\ar[uu] &&\bullet\ar[ur]\ar[uu] &&\bullet\ar[ur]\ar[uu] 
}\]
\caption{The $2$-cluster-tilting subcategory $\U=\pi_\L\inv\pi_H(H)$ of $\dml$.
\label{nonorbitalCT}}
\end{figure}
\end{ex}

\section*{Acknowledgements}

The authors are indebted to Karin Erdmann, Andrzej Skowro\'nski and Kunio Yamagata for
helpful comments and suggestions about the paper. They also wish to thank Claire Amoit for
drawing their attention to the example~6.9, and an anonymous referee for helpful comments.

\def\cprime{$'$}

\end{document}